\documentclass{article}

\usepackage{microtype}
\usepackage{longtable}
\usepackage{graphicx}
\usepackage{subcaption}
\usepackage{booktabs}
\usepackage{array}  
\usepackage{hyperref}

\newcommand{\bm}[1]{\boldsymbol{#1}}

\newcommand{\cX}{{\mathcal{X}}}
\newcommand{\cY}{{\mathcal{Y}}}
\newcommand{\cN}{{\mathcal{N}}}
\newcommand{\cO}{{\mathcal{O}}}
\newcommand{\EE}{{\mathbb{E}}}

\usepackage[preprint]{icml2026}

\usepackage{amsmath}
\usepackage{amssymb}
\usepackage{mathtools}
\usepackage{amsthm}
\usepackage{multirow}

\usepackage[capitalize,noabbrev]{cleveref}

\theoremstyle{plain}
\newtheorem{theorem}{Theorem}[section]
\newtheorem{proposition}[theorem]{Proposition}
\newtheorem{lemma}[theorem]{Lemma}

\theoremstyle{definition}
\newtheorem{definition}[theorem]{Definition}
\newtheorem{assumption}[theorem]{Assumption}
\theoremstyle{remark}
\newtheorem{remark}[theorem]{Remark}

\usepackage[textsize=tiny]{todonotes}

\icmltitlerunning{VR framework for (non)smooth nonconvex--nonconcave stochastic minimax problems with K\L{}}

\begin{document}

\twocolumn[
  \icmltitle{A variance reduced framework for (non)smooth nonconvex–nonconcave stochastic minimax problems with extended Kurdyka–Łojasiewicz property}



  \icmlsetsymbol{equal}{*}

  \begin{icmlauthorlist}
    \icmlauthor{Muhammad Khan}{yyy}
    \icmlauthor{Yangyang Xu}{yyy}
  \end{icmlauthorlist}

  \icmlaffiliation{yyy}{Department of Mathematical Sciences, Rensselaer Polytechnic Institute, Troy, NY, USA}

  \icmlcorrespondingauthor{Muhammad Khan}{khanm7@rpi.edu}
  \icmlcorrespondingauthor{Yangyang Xu}{xuy21@rpi.edu}

  \icmlkeywords{Machine Learning, ICML}

  \vskip 0.3in
]



\printAffiliationsAndNotice{}  

\begin{abstract}
 In this paper, we study stochastic constrained minimax optimization problems with nonconvex--nonconcave structure, a central problem in modern machine learning, for which reliable and efficient algorithms 
 remain largely unexplored due to its inherent challenges.  
 Prior approaches for nonconvex minimax optimization often require (strong) concavity on the maximization part, or certain restrictive geometric assumptions on the joint objective to have guaranteed convergence. In contrast, our method only assumes weak convexity in the primal variable and the extended Kurdyka–Łojasiewicz (K\L{}) property, with exponent $\theta \in [0,1]$, in the dual variable, significantly broadening the class of tractable problems. To this end, we propose a variance reduced algorithm that provably handles this general setting and achieves an $\varepsilon$-stationary solution with state-of-the-art sample complexity: in the smooth finite-sum setting, the sample complexity is 
$\mathcal{O}\left(\sqrt{N}\,\varepsilon^{-\max\{4\theta,2\}}\right)$, where $N$ is the number of total samples,
and in the online smooth setting, it is 
$\mathcal{O}\Big(\varepsilon^{-\max\{6\theta,3\}}\Big)$. 
For the structured nonsmooth problem, the sample complexity is 
$\mathcal{O}\left(\sqrt{N}\,\max\Big\{\varepsilon^{-3}, \varepsilon^{-5\theta}, \varepsilon^{-\frac{11\theta-3}{2\theta}}\Big\}\right)$
and $\mathcal{O}\left(\max\left\{\varepsilon^{-4}, \varepsilon^{-\frac{15\theta-1}{2}}, \varepsilon^{-\frac{31\theta-9}{4\theta}}\right\}\right)$ 
respectively for the two 
settings. To the best of our knowledge, this is the first unified framework that jointly accommodates weak convexity, the extended K\L{} property, and variance-reduced stochastic updates, making it highly suitable for large-scale applications.
\end{abstract}
\section{Introduction}
In recent years, minimax optimization problems have emerged as a central framework in modern Machine Learning, encompassing a wide range of applications such as Generative Adversarial Networks (GANs)~\cite{goodfellow2014gan}, (RL)~\cite{pinto2017robust}, Adversarial Training~\cite{madry2018towards}, Distributionally Robust Optimization (DRO)~\cite{rahimian2019distributionally}, and Optimal Transport (OT)~\cite{kim2025optimal}. Despite their broad applicability, solving minimax problems remains challenging in general, especially when the objective is nonconvex–nonconcave and possibly nonsmooth. In this work, we study a class of constrained minimax optimization problems of the form
\begin{align}
    \min_{x \in \mathcal{X}} \max_{y \in \mathcal{Y}} \;
    F(x,y)
    := {\mathbb{E}_{\bm{\xi} \sim \mathbb{P}}\left[f(x,y;\bm{\xi})\right]},
    \tag{P}\label{eq:primal}
\end{align}
where $ F: \mathbb{R}^{d_x} \times \mathbb{R}^{d_y} \to \mathbb{R} $
is \textit{nonconvex} and \textit{{possibly nonsmooth}} with respect to the primal variable $x$,
and \textit{smooth} but \textit{nonconcave} with respect to the dual variable $y$. Here, $\mathbb{P}$ denotes the probability distribution with support $\Xi$ from which the data points are sampled. In the \emph{online} setting, the distribution $\mathbb{P}$ is unknown, and the algorithm
accesses the objective only through stochastic samples
$\bm{\xi} \sim \mathbb{P}$. When $\mathbb{P}$ is the empirical distribution induced by $N$ i.i.d.\ samples,
problem \eqref{eq:primal} reduces to the \emph{finite-sum} setting, {i.e., 
\begin{equation}\label{eq:finite-sum-obj}
F(x,y) = \frac{1}{N}\sum_{i=1}^N  f(x,y;\bm{\xi}_i).  
\end{equation}}The feasible sets $ \mathcal{X} \subseteq \mathbb{R}^{d_x} $ and $ \mathcal{Y} \subseteq \mathbb{R}^{d_y} $
are assumed to be nonempty, 
{closed}
convex, and
$ f: \mathbb{R}^{d_x} \times \mathbb{R}^{d_y} \times \Xi \to \mathbb{R} $
represents the cost function associated with a random variable $ \bm{\xi} \in \Xi $. In this work, we {first} focus 
on a smooth formulation of problem \eqref{eq:primal}, which
serves as the cornerstone of our algorithmic development and theoretical analysis. To demonstrate the versatility of our framework, we also consider a \textit{nonsmooth} variant where the cost function has a structured form commonly arising in DRO. By applying a suitable smoothing transformation, this variant can be efficiently brought into the smooth regime, allowing our framework to be applied directly while providing rigorous convergence guarantees.
\subsection{Related Works}
\begin{table*}[t]
\caption{{Comparison of {structural} assumptions and sample complexities in finite--sum and/or online regimes for obtaining an $\varepsilon$-stationary solution of minimax optimization problems {considered in several relevant works} in the regime of $N=\cO(\varepsilon^{-4})$. The columns \textbf{Primal} and \textbf{Dual} specify the assumptions on the {objective function about} primal and dual {variables} 
respectively; the column \textbf{Type} indicates whether the method uses stochastic or deterministic gradient. Here, {$\cX$ and $\cY$ are nonempty convex sets.} We denote D: Deterministic, ST: Stochastic, CP: Compact, Cl: Closed, S: Smooth, NS: Nonsmooth, WC: Weakly-Convex, C: Concave, SC: Strongly-Concave, and  e-K\L{}: extended K\L{}. 
$^\dagger$They do not consider the finite sum setting separately, we infer the same sample complexity as the one obtained in the online case. Here, $\tilde\cO$ hides a logarithmic factor of $\frac{1}{\varepsilon}$.} 
$^\ddagger$They consider different settings including a two-sided P\L{} condition and discusses cases when condition number dominates the number of samples. Here for a fair comparison, we assume the underlying condition number is small. $^*$They do not exploit the finite-sum structure, and all its complexity results are inferred from its online setting. $^\star$They consider a more general setting where nonsmooth convex regularizers exist.}
\label{tab:comparison_assumptions}
\centering
\resizebox{\textwidth}{!}{\begin{tabular}{c | c c c | c c c c}
\toprule
\textbf{Source} & \textbf{Type} & $\cX$ & $\cY$ &\textbf{Primal} & \textbf{Dual}& \textbf{Finite-Sum} & \textbf{Online} \\[0.07in]
\midrule
 \multirow{2}{*}{\cite{rafique2018weakly}} & \multirow{2}{*}{ST} & \multirow{2}{*}{$\mathbb{R}^{d_x}$}  &
  \multirow{2}{*}{$\mathbb{R}^{d_y}$} 
  & {S}
  & {S}\&C &{$\tilde{\cO}\left(N\varepsilon^{-2}+\varepsilon^{-6}\right)$}  & N/A  \\[0.07in]
  & & & &  NS\&WC  & C  & $\tilde{\cO}\left(\varepsilon^{-6}\right)^{\dagger}$ & $\tilde{\cO}\left(\varepsilon^{-6}\right)$ \\
  \hline
  \cite{chen2023faster}$^\ddagger$ & ST & $\mathbb{R}^{d_x} $  &
  $\mathbb{R}^{d_y}$ & S  & S\&P\L{} & {$\tilde{\cO}\left( \sqrt{N}\varepsilon^{-2}\right)$} & N/A  \\[0.07in]
   \cite{JiangZhuSoCuiSun2024} & ST & $\mathbb{R}^{d_x} $  &
  $\mathbb{R}^{d_y}$ 
  & S   & S\&SC & $\cO\left(N\varepsilon^{-2}\right)$  &N/A  \\[0.07in]\hline
 \multirow{2}{*}{\cite{zhang2024sapdplus}$^\star$} 
 & {\multirow{2}{*}{ST}} & {\multirow{2}{*}{Cl}}  &
  {\multirow{2}{*}{Cl} }
  & {NS \& WC}
  & {NS}\&{SC} &{$\cO\left(\varepsilon^{-3.5}\right)$}  & {$\cO\left(\varepsilon^{-3.5}\right)$}  \\[0.07in]
  & {ST} 
  & {Cl} 
  & {CP}  &  {S}  & {NS\&C}  & {$\cO\left(\varepsilon^{-6}\right)$} & {$\cO\left(\varepsilon^{-6}\right)$} \\
  \hline
    \cite{ZhengSoLi2025} & D & Cl  & CP & S   & S\&K\L{} &{$ \cO\left(N\varepsilon^{- \max\{4\theta, 2\}}\right)$}  & N/A  \\[0.07in]
     \cite{jiang2025single} & ST & CP  & CP & S   & S\&C &{$\cO\left(\sqrt{N}\varepsilon^{-4}\right)$}  &  N/A  \\[0.07in] \hline
     \multirow{4}{*}{\cite{lin2025two_timescale_ttgda}$^*$} & \multirow{4}{*}{ST} & \multirow{4}{*}{$\mathbb{R}^{d_x}$}  & \multirow{4}{*}{{CP}} & {S}  & {S\&SC}  &$\cO\left(\varepsilon^{-4}\right)$& $\cO\left(\varepsilon^{-4}\right)$ \\ 
     & & & & {S}  & {S\&C}  &$\cO\left(\varepsilon^{-8}\right)$& $\cO\left(\varepsilon^{-8}\right)$ \\ 
     & & & & {NS\&WC}  & {SC}  &$\tilde\cO\left(\varepsilon^{-6}\right)$& $\tilde\cO\left(\varepsilon^{-6}\right)$ \\
     & & & & {NS\&WC}  & {C}  &$\cO\left(\varepsilon^{-8}\right)$& $\cO\left(\varepsilon^{-8}\right)$ \\\hline
     \textbf{This paper} (Sect.~\ref{Section_Smooth}) & ST & {Cl} 
     & CP & S
     & S\&e-K\L{}  & $\cO\left(\sqrt{N}\varepsilon^{-\max\left\{4\theta, 2\right\}}\right)$ &  $\cO\left(\varepsilon^{-\max\left\{6\theta, 3\right\}}\right)$ \\[0.07in]
    \textbf{This paper} (Sect.~\ref{Nonsmooth-section}) & ST & CP  & CP & NS\&WC  & S\&e-K\L{} & 
    $\cO\left(\sqrt{N}\varepsilon^{-\max\{3,5\theta,\frac{11\theta-3}{2\theta}\}}\right)$&  
    $\cO\left(\varepsilon^{-\max\{4,\frac{15\theta-1}{2},\frac{31\theta-9}{4\theta}\}}\right)$\\[0.07in]
\bottomrule
\end{tabular}
}
\end{table*}

Practical challenges in minimax optimization have sparked extensive research across a wide spectrum of problem settings, from convex–concave to fully nonconvex–nonconcave regimes. Early works concentrated on convex–concave problems, leading to the development of optimal primal–dual methods~\cite{ChenLanOuyang2014,HamedaniAybat2021}. Subsequent work addressed nonconvex–concave settings using gradient descent–ascent (GDA) based algorithms and near-optimal methods~\cite{LinJinJordan2020_onGDA,LinJinJordan2020_nearOptimal}, while weakly convex–weakly concave problems were explored in~\cite{LiuRafiqueLinYang2021}. More recent advances have introduced adaptive and universal algorithms under Polyak–Łojasiewicz (P\L{}) and smooth{ness} 
assumptions \cite{YangOrvietoLucchiHe2022, Huang2023}, as well as universal GDA-type algorithms that provide convergence guarantees across multiple problem structures~\cite{ZhengSoLi2023,ZhengSoLi2025}, and rates for nonsmooth nonconvex–nonconcave problems leveraging primal–dual balance and K\L{} properties \cite{li2025nonsmooth}. 

A key limitation of much of the existing literature is the focus on deterministic settings {or unconstrained smooth cases}. While some methods have been extended to stochastic problems, they often require large batch sizes for each update ~\cite{LinJinJordan2020_onGDA, lin2025two_timescale_ttgda} and thus high computational costs, limiting their practicality. This motivates the incorporation of variance reduction techniques, such as SVRG~\cite{johnson2013accelerating} and SPIDER~\cite{fang2018spider}, which have demonstrated 
{effectiveness} in classical minimization tasks, into the more challenging minimax setting. Recent efforts in this direction aim to bridge the gap between theoretical efficiency and practical scalability. 
For instance, 
\citet{chen2023faster} analyzed the SPIDER{-based} algorithm under the P\L{} condition and demonstrated improved convergence rates, while~\citet{JiangZhuSoCuiSun2024} proposed a shuffling gradient descent-ascent method with variance reduction for nonconvex-strongly concave stochastic problems. More recently, 
~\citet{jiang2025single} introduced {single loop algorithms, incorporating variance reduction,} tailored to the finite-sum regime and general nonconvex-concave stochastic minimax problems. 

Despite these advances, a common limitation is the reliance on strong assumptions, such as unconstrained smoothness, and (strong) concavity, or the P\L{} condition on the maximization part, which restricts their applicability to general nonconvex–nonconcave minimax problems. Addressing this limitation remains a major open challenge in the design of scalable and 
{reliable} minimax optimization algorithms.

{To position our work relative to the existing literature, Table~\ref{tab:comparison_assumptions} summarizes relevant studies and their key structural assumptions on the considered problems as well as the sample complexities for obtaining a certain $\varepsilon$-stationary solution.} 

{We emphasize that our results are new. In certain special cases, they are either strictly better than existing ones or match with the best-known ones but under strictly weaker conditions; see more discussions in Remarks~\ref{rm:smooth-comp} and \ref{rm:ns-comp}.}

\subsection{Contributions}
We summarize our key contributions as follows.

 First,  in contrast to prior methods, our approach addresses the fully {nonconvex-nonconcave} minimax problem under significantly \emph{weaker} and more realistic assumptions in both the finite-sum and stochastic online regimes. 
 Specifically, we 
assume the extended {K\L{} property} in $y$ in the sense that the exponent parameter $\theta$ can be any value on $[0,1]$; see Assumption~\ref{assumption_KL_for_smooth_case}. {This property allows for significantly broader applications as compared to the P\L{} condition assumed in \cite{chen2023faster}, which only applies to unconstrained smooth cases with $\theta=\frac{1}{2}$. Though \citet{ZhengSoLi2025} have also studied minimax problems with a classical K\L{} property in $y$, their algorithm relies on deterministic gradients and thus will be inefficient on solving a finite-sum structured or stochastic online minimax problem. In addition, their analysis does not cover the case of $\theta=1$ that is implied by the restricted (a.k.a. star-) concavity (see Remark~\ref{rm:ccv-KL}) and is strictly weaker than the concavity condition assumed in \cite{jiang2025single}.}

Second, designed for both the stochastic \emph{online} and  \emph{finite-sum} settings, our algorithm employs a framework integrated with {SPIDER}-type variance reduction, enabling  
{guaranteed} convergence while significantly reducing computational as well as the sample complexity. In the smooth finite-sum setting, the {sample complexity} of our algorithm  is 
$\mathcal{O}\big(\sqrt{N}\,\varepsilon^{-\max\{4\theta,2\}}\big)$ to produce an $\varepsilon$-stationary solution in expectation. {Compared to the deterministic method in \cite{ZhengSoLi2025} applied to the finite-sum case, our result achieves a factor $\sqrt{N}$ improvement.} 
 In the {online smooth setting}, we obtain a sample complexity of 
$\mathcal{O}\big(\varepsilon^{-\max\{6\theta,3\}}\big)$, which is new. 

{Third, by using the Moreau-envelope smoothing technique, we further extend our method and analysis to a structured nonsmooth nonconvex-nonconcave minimax problem,} for which 
the {sample complexity results of our method} are 
$\mathcal{O}\left(\sqrt{N}\,\max\Big\{\varepsilon^{-3}, \varepsilon^{-5\theta}, \varepsilon^{-\frac{11\theta-3}{2\theta}}\Big\}\right)$ 
and 
$\mathcal{O}\left(\max\left\{\varepsilon^{-4}, \varepsilon^{-\frac{15\theta-1}{2}}, \varepsilon^{-\frac{31\theta-9}{4\theta}}\right\}\right)$ 
respectively for the finite-sum and the online 
settings. Both results are new.


To the best of our knowledge, our method is the {first unified framework} that jointly accommodates weak convexity, the {extended} K\L{} property, and variance-reduced stochastic updates for nonconvex–nonconcave minimax optimization.


 

\subsection{Notations and Definitions} 
Throughout this paper, $\|\cdot\|$ denotes the 2-norm. For an arbitrary set $\mathcal S$, let $\iota_{\mathcal S}$ denote its $0$-$\infty$ indicator function
and $\mathcal N_{\mathcal S}(s)$ for the normal cone to $\mathcal S$ at $s \in \mathcal S$. The distance from a point $a$ to a set $\mathcal S$ is denoted by
$\mathrm{dist}(a,\mathcal S)$, {and {$\mathrm{proj}_\mathcal{S}$} is the projection operator onto {$\mathcal{S}$}.} 
A function $\Psi:\mathcal{X}\to\mathbb{R}$ is called {${\rho}$}-weakly convex for some $\rho\ge0$, if $\Psi(\cdot) + \frac{{\rho}}{2}\|\cdot\|^2$ is convex on $\mathcal{X}$, and 
its {${\delta}$}-subdifferential is defined as
\begin{equation}\label{eq:-delta-subdiff}
\partial^{{\delta}} \Psi(x) =
\left\{
\begin{aligned}
v : \Psi(x') &\ge \Psi(x) + v^\top (x'-x) \\
&\quad \textstyle - \frac{{\rho}}{2}\|x' - x\|^2 - {\delta}
\end{aligned}
\right\}.
\end{equation}
When ${\delta}=0$, the $\delta$-subdifferential reduces to the standard subdifferential.

For {constrained} smooth problems, we pursue an $\varepsilon$-game stationary point defined below.
\begin{definition}
For a given $\varepsilon \geq 0$, a random solution $(x, y) \in \mathcal{X} \times \mathcal{Y}$ is called an $\varepsilon$-Game Stationary (GS) point in expectation of Problem \eqref{eq:primal} if 
\begin{equation*}
\mathbb{E}\left[\mathrm{dist}\left(0, \nabla_x F(x, y) + \mathcal{N}_\mathcal{X}(x)\right)^{2}\right] \leq \varepsilon^{2} \quad \text{and}
\end{equation*} 
\begin{equation*}\mathbb{E}\left[\mathrm{dist}\left(0, -\nabla_y F(x, y) + \mathcal{N}_\mathcal{Y}(y)\right)^{2}\right] \leq \varepsilon^{2}.
\end{equation*} 
\end{definition}
When $F$ is nonsmooth about $x$, we pursue a nearly $\varepsilon$-game stationary point defined as follows.
\begin{definition}
For a given $\varepsilon \geq 0$, a random solution $(x, y) \in \mathcal{X} \times \mathcal{Y}$ is called a \textit{nearly} $\varepsilon$-GS point in expectation of Problem  \eqref{eq:primal} if 
\begin{equation*}
\mathbb{E}\left[\mathrm{dist}\big(0, \partial_{x}^{\varepsilon} (F(x, y) + \iota_\mathcal{X}(x))\big)^{2}\right] \leq \varepsilon^{2} \quad \text{and}
\end{equation*} 
\begin{equation*} \mathbb{E}\left[\mathrm{dist}\left(0, -\nabla_y F(x, y) + \mathcal{N}_\mathcal{Y}(y)\right)^{2}\right] \leq \varepsilon^{2}.
\end{equation*} 
\end{definition}
For convenience of stating our main results, we list several key notations in Tables~\ref{tab:mylabel}. 
\renewcommand{\arraystretch}{1}
\begin{table}[t]
\centering
\caption{A few key notations and definitions}
\label{tab:mylabel}
\begin{tabular}{
>{\centering\arraybackslash}p{2.3cm}
>{\centering\arraybackslash}p{5.3cm}}
\toprule
\textbf{Notation} & \textbf{Definition} \\
\midrule
$\begin{aligned}F_{r}(x,y,z)\end{aligned}$ & $F(x,y) + \frac{r}{2}\left\|x - z\right\|^{2}$ \vspace{0.05in}\\

$\begin{aligned}x_{r}\left(y, z\right)\end{aligned}$ & $\begin{aligned}\arg\min_{x \in \mathcal{X}} \; F_{r}(x, y, z)\end{aligned}$ \vspace{0.05in}\\

$\begin{aligned}d_{r}(y,z)\end{aligned}$ & $\begin{aligned}\min_{x \in \mathcal{X}} \; F_{r}(x,y,z)\end{aligned}$ \vspace{0.05in}\\

$\begin{aligned}p_{r}(z)\end{aligned}$ & $\begin{aligned}\max_{y \in \mathcal{Y}} \min_{x \in \mathcal{X}} \; F_{r}(x, y,z)\end{aligned}$ \vspace{0.05in}\\

$\begin{aligned}F^{\lambda}_{r}(x,y,z)\end{aligned}$ & $F^{\lambda}(x,y) + \frac{r}{2}\left\|x - z\right\|^{2}$ \vspace{0.05in}\\

$\begin{aligned}d_{r}^{\lambda}(y,z)\end{aligned}$ & $ \begin{aligned}\min_{x \in \mathcal{X}} \; F^{\lambda}_{r}(x,y,z)\end{aligned}$ \vspace{0.05in}\\

$\begin{aligned}p_{r}^{\lambda}(z)\end{aligned}$ & $ \begin{aligned}\max_{y \in \mathcal{Y}} \min_{x \in \mathcal{X}} \; F^{\lambda}_{r}(x, y,z)\end{aligned}$  \vspace{0.05in}\\

\bottomrule
\end{tabular}
\end{table}

\section{Constrained Smooth Minimax Problems}\label{Section_Smooth}
We first study the case of smooth minimax problems, which satisfy 
the following assumptions. 
\begin{assumption} \label{assumption_for_smooth_case}
The following statements hold: 
\begin{enumerate}
\item[i.] The set $\mathcal{Y}$ is compact with diameter $D_\mathcal{Y} = \max_{y_1, y_2 \in \mathcal{Y}}\|y_1 - y_2\| < +\infty$.
    \item[ii.] {There exists $\ell>0$ such that 
     $\forall\, (x_1, y_1), (x_2, y_2) \in \mathcal{X} \times \mathcal{Y},$} 
    \begin{equation*}
    \begin{aligned}
    &{\mathbb{E}}\left\|f(x_1, y_1; \bm{\xi}) - f(x_2, y_2; \bm{\xi})\right\| \\
    &\leq \ell \left(\|x_1 - x_2\| + \|y_1 - y_2\|\right). 
    \end{aligned}
    \end{equation*}

    \item [iii. ]
    {There are constants $L_x$ and $L_y$ such that 
    for all $x, x_1, x_2\in \cX$ and $y, y_1, y_2 \in \cY$} 
    \begin{equation*}
    \begin{aligned}
    &\hspace{-2mm}\mathbb{E}\left\|\nabla_{x} f(x_1, y; \bm{\xi}) - \nabla_{x} f(x_2, y; \bm{\xi})\right\|^{2} 
    \leq L_{x}^{2} \|x_1 - x_2\|^{2}, \\
    &\hspace{-2mm}\mathbb{E} \left\|\nabla_{x} f(x, y_{1}; \bm{\xi}) - \nabla_{x} f(x, y_{2}; \bm{\xi})\right\|^{2}
    \leq L_{y}^{2} \|y_1 - y_2\|^{2},\\
    &\hspace{-2mm}{\mathbb{E}}\left\|\nabla_{y} f(x_1, y_1; \bm{\xi}) - \nabla_{y} f(x_2, y_2; \bm{\xi})\right\|^{2} \\
    &\leq L_{y}^{2} \left(\|x_1 - x_2\|^{2} + \|y_1 - y_2\|^{2}\right).
    \end{aligned}
    \end{equation*}

\item[iv. ] {For any $y\in \mathcal{Y}$,} $F(\cdot, y)$ is $\rho$-weakly convex. 
\item[v. ] There exist constants $\sigma_x, \sigma_y\geq 0$ such that {for all} $ x\in \mathcal{X}$ and $ y \in \mathcal{Y}$, 
\begin{align*}
&{\mathbb{E}[\nabla f(x,y;\bm{\xi})\, |\, (x,y)] = \nabla F(x,y)};\\
&\mathbb{E}\left\|\nabla_{x} f(x, y; \bm{\xi}) - \nabla_{x} {F}(x, y)\right\|^{2} \leq \sigma_{x}^2;\\
&\mathbb{E}\left\|\nabla_{y} f(x, y; \bm{\xi}) - \nabla_{y} {F}(x, y)\right\|^{2} \leq \sigma_{y}^2.
\end{align*}
{\item[vi. ] There exists a constant $\underline{F} \in\mathbb{R}$ such that 
\begin{align}\label{eq:low-bound-F}\max_{y \in \mathcal{Y}}\min_{x \in \mathcal{X}}F(x,y) \geq \underline{F}.\end{align}
}
\end{enumerate}    
    \end{assumption} 
    
    
    \begin{assumption}        \label{assumption_KL_for_smooth_case} For all $x \in \mathcal{X}$, 
    $ F(x,\cdot) $ satisfies the 
    {extended K\L{} property:}
    there exist constants $ \mu > 0 $ and $ \theta \in [0,1] $ such that $\forall\, x\in\mathcal{X}, y\in\mathcal{Y}$. 
    \begin{equation}
        \begin{aligned}
    &\mathrm{dist}\left(0, -\nabla_y F(x, y) + \mathcal{N}_\mathcal{Y}(y)\right) \\
    &\geq \mu \left( \max_{y' \in \mathcal{Y}} F(x, y') - F(x, y) \right)^{\theta} \label{KL}.
    \end{aligned}
    \end{equation}
    \end{assumption}

\begin{remark}\label{rm:ccv-KL}
It follows directly from  Assumption~\ref{assumption_for_smooth_case}(ii) that $F(x,y)$ is also $\ell$-Lipschitz continuous. In addition, 
{from Assumption~\ref{assumption_for_smooth_case}(iii) and (v), it follows that for each $x\in\cX$ and $y\in\cY$, $\nabla_x F(\cdot,y)$ is $L_{x}$-Lipschitz continuous, $\nabla_x F(x,\cdot)$ is $L_{y}$-Lipschitz continuous, and  $\nabla_y F(\cdot,\cdot)$ is $L_{y}$-Lipschitz continuous.}

{The classical K\L{} property assumes $\theta \in [0,1)$ and is satisfied by a wide and practically relevant class of functions commonly encountered in ML \cite{Bolte2014PALM}. {An illustrative example showing that nonconcavity does not rule out the satisfaction of the K\L{} property is provided in subsection~\ref{final-section-kl} of the appendix}.

When $\theta=1$, the condition in \eqref{KL} is implied by the restricted (a.k.a. star-) concavity by using the compactness of $\cY$. The proof is given in Section~\ref{sec:proof-rm-ccv-KL} of the appendix.
}
\end{remark}

\subsection{Algorithm}
Our algorithm is motivated by the smoothed proximal linear descent ascent (PLDA) method in \cite{li2025nonsmooth}, which is a deterministic gradient-type method for solving composite nonsmooth minimax problems, as well as the Probabilistic Variance-Reduced Smoothed Gradient Descent-Ascent (PVR-SGDA) in \cite{jiang2025single}. Different from the \emph{deterministic} PLDA 
and PVR-SGDA, 
our method uses \emph{stochastic} gradients together with the SPIDER variance reduction technique that is originally developed for minimization problems in \cite{fang2018spider}. 
The pseudocode of our algorithm is given in Algorithm~\ref{alg:smoothed_primal_dual_spider}, where the stochastic gradient estimators are given as follows: 
\begin{equation}
\begin{aligned}
G_{x,\tau}^{k} 
&= \begin{cases}
  {\frac{1}{B}\sum_{i=1}^{B}\nabla_{x}f\left(x^{k}_{\tau},y^{k}_{\tau};\bm{\xi}^{k}_{\tau,i}\right)}\, ,& \hspace{-0.5cm}\text{ if }\tau = 0, \\
 \frac{1}{M}\sum_{i = 1}^{M} \nabla_{x}f\left(x^{k}_{\tau},y^{k}_{\tau};\bm{\xi}^{k}_{\tau,i}\right)\\
 - \frac{1}{M}\sum_{i=1}^{M} \nabla_{x}f\left(x^{k}_{\tau-1}, y_{\tau-1}^{k};\bm{\xi}^{k}_{\tau,i}\right)\\
 +  G_{x,\tau-1}^{k}\,, & \text{else}.
 \end{cases}
 \label{eq:Gx} 
 \end{aligned}
\end{equation}
\begin{equation}
\begin{aligned}
G_{y,\tau}^{k} 
&= \begin{cases}
  {\frac{1}{B}\sum_{i=1}^{B}\nabla_{y}f\left(x^{k}_{\tau},y^{k}_{\tau};\bm{\xi}^{k}_{\tau,i}\right)}, & \hspace{-0.5cm}\text{ if }\tau = 0, \\
 \frac{1}{M}\sum_{i = 1}^{M} \nabla_{y}f\left(x^{k}_{\tau},y^{k}_{\tau};\bm{\xi}^{k}_{\tau,i}\right)\\
 - \frac{1}{M}\sum_{i=1}^{M} \nabla_{y}f\left(x^{k}_{\tau-1}, y_{\tau-1}^{k};\bm{\xi}^{k}_{\tau,i}\right) \\
 +  G_{y,\tau-1}^{k},   & \text{else}.
 \end{cases}
 \label{eq:Gy}
\end{aligned}
\end{equation}
Here, when $\tau = 0$, we 
draw a mini-batch of $B$ i.i.d. samples $\{\boldsymbol{\xi}_{0,1}^k,\ldots,\boldsymbol{\xi}_{0,B}^k\}$ {from the distribution $\mathbb{P}$} {for the online setting, and take all $N$ samples (thus $B=N$) for the finite-sum setting {in \eqref{eq:finite-sum-obj}};} 
when $0<\tau < T$, we draw a mini-batch of $M$ 
i.i.d. samples $\left\{\bm{\xi}^{k}_{\tau,1}, \ldots , \bm{\xi}^{k}_{\tau,M}\right\}$ {from $\mathbb{P}$}, compute sampled gradients at the current iterate {$(x_\tau^k,y_\tau^k)$} and the previous iterate {$(x_{\tau-1}^k,y_{\tau-1}^k)$}, and then apply the momentum term to update the \emph{stochastic} gradient estimator to reduce variance. Though the algorithm appears to be double-loop, it does not require any subroutine but instead the double iteration indices $k$ and $\tau$ are used for the gradient estimator.
\begin{algorithm}[htbp]
\caption{Smoothed Stochastic Gradient Descent-Ascent Method with SPIDER Variance Reduction}
\label{alg:smoothed_primal_dual_spider}
\begin{algorithmic}[1]
    \STATE \textbf{Input:} Initial point $\left( x^{0}_{0}, y^{0}_{0}, z_{0}^{0}\right)$, 
    {positive integers} $K , T, M, B \geq 1$, {and parameters}  $\alpha_{x},\; \alpha_{y},\; \beta,\; r > 0$. 
\FOR{$k = 0, 1, 2, \ldots, K-1$}
	\FOR{$\tau = 0, 1, 2, \ldots,T - 1 $}
\STATE $x^{k}_{\tau + 1} \gets \text{proj}_\mathcal{X} \left( x^{k}_{\tau} - \alpha_{x} \left[G_{x, \tau}^{k}+r\left(x_{\tau}^{k} - z_{\tau}^{k}\right)\right] \right) $
\STATE $y^{k}_{\tau + 1} \gets \text{proj}_\mathcal{Y} \left( y^{k}_{\tau} + \alpha_{y} G_{y, \tau}^{k}\right)$
\STATE $z^{k}_{\tau + 1} \gets z^{k}_{\tau} + \beta \left(x^{k}_{\tau + 1} - z^{k}_{\tau}\right)$
\ENDFOR
\STATE $x^{k+1}_{0}  \gets x^{k}_{T}$,  $y^{k+1}_{0}  \gets y^{k}_{T}$, $z^{k+1}_{0}  \gets z^{k}_{T}$
\ENDFOR
\STATE \textbf{Output:} Sample $(\tilde{x}, \tilde{y})$ uniformly at random from $\left\{\left(x_{\tau+1}^{k}, y_{\tau+1}^{k}\right)\right\}_{\tau = 0,1, \dots , T-1}^{k = 0,1, \dots , K-1}$
\end{algorithmic}
\end{algorithm}
\subsection{Roadmap of Convergence Analysis}
Here, we 
{sketch the key steps} for establishing the convergence of Algorithm~\ref{alg:smoothed_primal_dual_spider} under Assumptions~\ref{assumption_for_smooth_case} and \ref{assumption_KL_for_smooth_case}. Specifically, we start off by deriving the error bounds for the stochastic gradient estimators employed in the algorithm (see Lemma \ref{lem:err-grad-est}). {These errors carry over the entire analysis, and with appropriate choices of batches size and variance reduction, we are able to control them in a user-specified level.} 
To further understand the behavior of our algorithm, we then 
bound the error between the 
\emph{actual} iterate $ x_{\tau+1}^{k} $ and the \emph{virtual} regularized solution $x_{r}\big(y_{\tau}^k, z_{\tau}^k\big)$ {defined in Table~\ref{tab:mylabel}} (see Lemma \ref{lemma:lemma_primal_error_bound}) . Also, we analyze how changes in the dual variable $ y $ affect the regularized solution, which helps us understand the interaction between the primal and dual updates (see Lemma \ref{dual_error_bound}). These results play a central role in characterizing algorithmic progress. 

{With all these error bounds established,} we then 
{show the one-iteration progress of our algorithm, measured based on} 
a Lyapunov function similar to that used in \cite{li2025nonsmooth}. Specifically, we define $\Phi_{r}: \mathbb{R}^{d_{x}} \times \mathbb{R}^{d_{y}} \times \mathbb{R}^{d_{x}} \to \mathbb{R}$ as follows:
\begin{align}
\Phi_r(x, y, z) := &~ \underbrace{F_r(x, y, z) - d_r(y, z)}_{\text{Primal Descent}} \nonumber\\
\quad &+ \underbrace{p_r(z) - d_r(y, z)}_{\text{Dual Ascent}} + \underbrace{p_r(z)}_{\text{Proximal Descent}}. \label{Lyapunov_Function_Main}
\end{align}
This Lyapunov function is designed to reflect the structure of the algorithm by combining the behavior of the primal, dual, and proximal variable updates. Instead of analyzing each component separately, this unified function captures their interaction in a single expression. In particular, it reflects descent in $x$, ascent in $y$, and the descent in $z$. Analyzing decrease in this function allows us to track convergence in a principled and coherent way, closely aligned with the actual algorithmic updates (see Lemma \ref{lemma:Sufficient_Decrease_Init}).

Finally, we establish auxiliary bounds (see Lemma \ref{lemma:scenario1}) associated with the adopted optimality conditions (see Lemmas~\ref{lemma_sc} and~\ref{dz_for_smooth_case}) and combine all components to derive the iteration and sample complexity guarantees for the smooth case in Theorem \ref{main:theorem2} (presented below). We note that our results encompass both the finite--sum and online settings. 
\subsection{Complexity Results for Smooth Problems}
\begin{theorem}[Iteration and sample complexity for smooth case] 
Under Assumptions~\ref{assumption_for_smooth_case} and \ref{assumption_KL_for_smooth_case}, let $\varepsilon > 0$ be given. Suppose $\rho = \mathcal{O}\big( \min\{L_x, L_y\} \big)$ and $\min\{L_x, L_y\} = \Omega(1)$ and denote ${\Delta\Phi = (\Phi_r(x^{0}_{0}, y^{0}_{0}, z_{0}^{0})-\underline{F})}$, where $\underline{F}$ is the constant in Assumption~\ref{assumption_KL_for_smooth_case}[vi]. 
{For the finite-sum setting, take all $N$ samples to obtain $G_{x,\tau}^k$ and $G_{y,\tau}^k$ when $\tau=0$ in Algorithm \ref{alg:smoothed_primal_dual_spider}, thus $B=N$; for the online setting, let}
$$\begin{aligned}
B =
\begin{cases}
\Theta\bigg(\left(\frac{{L_y^2}(\sigma_x^2+\sigma_y^2)}{(L_y^2+L_x)\varepsilon^2}\right)\max\Big\{L_x + L_y^2,\frac{L_y+\sqrt{L_x}}{\mu^2}  
\Big\}\bigg),\\
  &\hspace{-2.2cm} \text{ if 
}\theta \in \left[0,\frac{1}{2}\right],\\
\Theta\Bigg(\left(\frac{\sigma_x^2+\sigma_y^2}{L_y^2+L_x}\right)\max\Bigg\{
\frac{L_{y}^2\left(L_{y}^2+L_{x}\right)}{\varepsilon^{2}},\\
~\frac{(L_y^2+L_y\sqrt{L_x})^{2\theta}L_y}{\mu^2\varepsilon^{4\theta}},
\frac{(L_x+L_y^2)^\frac{2\theta-1}{2\theta}L_y^\frac{1}{2\theta}(L_y^2+L_y\sqrt{L_x})}{\mu^\frac{1}{\theta}\varepsilon^{\frac{4\theta - 1}{\theta}}}, \\
~\frac{
{L_y^2(L_{y} +\sqrt{L_x})^\frac{3\theta-1}{\theta}}}{\mu^{\frac{1}{\theta}} \varepsilon^{\frac{4\theta-1}{\theta}}}
\Bigg\}\Bigg), &\hspace{-2.2cm} \text{ if 
}\theta \in \left(\frac{1}{2},1\right].
\end{cases}
\end{aligned}$$
In addition, take {$T=M=\left\lceil \sqrt{\frac{B}{2}}\,\right\rceil$}, choose 
$$\begin{aligned}
r = \max\Bigg\{ & 2\rho + 325(L_y+1) + 12\sqrt{L_x}\sqrt{2(L_y+1) },\\[-2mm] 
&\hspace{-0.8cm}2\rho + 54 L_y (L_y + 1) + 4\sqrt{L_x}\sqrt{3 L_y (L_y + 1)}\Bigg\},
\end{aligned}$$
$\alpha_x=\min\Bigg\{
\frac{1}{12(r+L_{x} + 2L_{y})},
\frac{(r-\rho)^{2}}{24(r+L_{x})^2(L_y+1)},  
\frac{r - (\rho + 2 L_y)}{2 L_y (L_x + r)}
\Bigg\}$  and {$$\alpha_y = \min\left\{\alpha_x, \frac{1}{40L_y},\frac{1}{4(2L_y+1)}\right\}.$$} Moreover, let
\begin{align*}
\beta=  \begin{cases}
\min\left\{\frac{1}{30},\; \frac{1}{30r},  \frac{L_{y}}{20r\varpi}\right\}, & \hspace{-0.3cm}\text{ if }\theta\in [0,\frac{1}{2}],\\[4mm]
\Theta\Bigg( \min\Bigg\{
 \frac{1}{r},\; \alpha_x^{\frac{2\theta - 1}{2\theta}} L_y^{-\frac{1}{2\theta}} \mu^\frac{1}{\theta}\varepsilon^{\frac{2\theta - 1}{\theta}}, \;\\
 \hspace{1.0cm}r^{-(2\theta - 1)} \mu^2 L_{y}^{-1} \varepsilon^{4\theta - 2}
,\;\\
\hspace{1.0cm}r^{-\frac{2\theta-1}{\theta}}\;L_{y}^{\frac{\theta-1}{\theta}} \mu^{\frac{1}{\theta}}\varepsilon^{\frac{2\theta-1}{\theta}}\Bigg\}\Bigg),  & \hspace{-0.3cm}\text{ if }\theta \in \left(\frac{1}{2},1\right].
\end{cases}
\end{align*}
Then Algorithm \ref{alg:smoothed_primal_dual_spider} can output an $\varepsilon$-GS point $(\tilde x, \tilde y)$ in expectation of \eqref{eq:primal} and have $\mathbb{E}\left\|\nabla_{z}d_{r}\left(\tilde{y}, \tilde{x}\right)\right\|^{2} =\mathcal{O}\left(\varepsilon^2\right)$ in 
\begin{equation*}
KT=  \begin{cases}
\mathcal{O}\left(\frac{\Delta\Phi\max\left\{L_x + L_y^2, \frac{(L_y+\sqrt{L_x})}{\mu^2} 
\right\}}
{\varepsilon^{2}}\right),
&\hspace{-1.8cm} \text{ if }\theta\in [0,\frac{1}{2}],\\[4mm]
\mathcal{O}\Bigg(\Delta\Phi\max\Bigg\{
\frac{L_{y}^2\left(L_{y}^2+L_{x}\right)}{\varepsilon^{2}}, 
\frac{(L_y^2+L_y\sqrt{L_x})^{2\theta}L_y}{\mu^2\varepsilon^{4\theta}}, \\
\hspace{0.8cm}\frac{(L_x+L_y^2)^\frac{2\theta-1}{2\theta}L_y^\frac{1}{2\theta}(L_y^2+L_y\sqrt{L_x})}{\mu^\frac{1}{\theta}\varepsilon^{\frac{4\theta - 1}{\theta}}},\; \\[1mm]
\hspace{0.8cm}\frac{
{L_y^2(L_{y} +\sqrt{L_x})^\frac{3\theta-1}{\theta}}}{\mu^{\frac{1}{\theta}} \varepsilon^{\frac{4\theta-1}{\theta}}}\Bigg\}\Bigg), & \hspace{-1.8cm}\text{ if }\theta \in \left(\frac{1}{2},1\right],
\end{cases}
\end{equation*}
iterations and by {$\cO(\lceil \sqrt{B} \rceil KT )$} sampled gradients.
\label{main:theorem2}
\end{theorem}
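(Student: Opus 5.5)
We follow the roadmap laid out above and build the argument around the Lyapunov function $\Phi_r$ in \eqref{Lyapunov_Function_Main}.

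\textbf{Step 1: variance of the estimators.} We first control the SPIDER estimators \eqref{eq:Gx}--\eqref{eq:Gy} by Lemma~\ref{lem:err-grad-est}. Within an epoch $k$, the error $\mathbb{E}\|G^k_{x,\tau}-\nabla_x F(x^k_\tau,y^k_\tau)\|^2$ (and likewise for $y$) is at most $\frac{\sigma_x^2}{B}$ (resp.\ $\frac{\sigma_y^2}{B}$), plus $\frac{L^2}{M}\sum_{s<\tau}\mathbb{E}\big(\|x^k_{s+1}-x^k_s\|^2+\|y^k_{s+1}-y^k_s\|^2\big)$. In the finite-sum case ($B=N$) the first term vanishes. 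With $T=M=\lceil\sqrt{B/2}\rceil$, the accumulated term summed over an epoch is $\frac{T}{M}\cdot\mathcal{O}(L^2)$ times the squared step lengths, which is $\mathcal{O}(L^2)\sum\|\Delta\|^2$. This can later be absorbed by the negative step-length terms of the descent inequality once $\alpha_x,\alpha_y$ are small as specified.

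\textbf{Step 2: one-step decrease of $\Phi_r$.} We combine the primal error bound of Lemma~\ref{lemma:lemma_primal_error_bound}, relating $x^k_{\tau+1}$ to $x_r(y^k_\tau,z^k_\tau)$, with the dual perturbation bound of Lemma~\ref{dual_error_bound}. Plugging these into the sufficient decrease result Lemma~\ref{lemma:Sufficient_Decrease_Init} gives an inequality of the form
\[
\mathbb{E}\Phi_r^{\tau+1}\le \mathbb{E}\Phi_r^{\tau}-c_1\,\mathbb{E}\|x^k_{\tau+1}-x^k_\tau\|^2/\alpha_x-c_2\,\mathbb{E}\|y^k_{\tau+1}-y^k_\tau\|^2/\alpha_y-c_3\,\beta\,\mathbb{E}\|z^k_\tau-x_r(y_+(z^k_\tau),z^k_\tau)\|^2+c_4\beta\,\mathbb{E}\|x_r(y^k_{\tau+1},z^k_\tau)-x_r(y_+(z^k_\tau),z^k_\tau)\|^2+\text{(variance terms)}.
\]
Here $y_+(z)$ is a maximizer of $d_r(\cdot,z)$. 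The prescribed $r$ (large relative to $\rho,L_y,\sqrt{L_x}$) and the step sizes make $c_1,c_2,c_3>0$. Summing over $\tau$ and $k$, and using $\Phi_r\ge p_r\ge\underline F$ from Assumption~\ref{assumption_for_smooth_case}(vi), the telescoped sum is bounded by $\Delta\Phi$ plus the positive $c_4\beta$ term plus $KT\cdot\mathcal{O}(\sigma^2/B)$.

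\textbf{Step 3: the positive term (main obstacle).} The crucial and hardest step is controlling the positive $c_4\beta$ term via the extended K\L{} property, using Lemma~\ref{lemma:scenario1}. The dual error bound converts $\|x_r(y^k_{\tau+1},z)-x_r(y_+(z),z)\|^2$ into a multiple of $\big(p_r(z)-d_r(y^k_{\tau+1},z)\big)$. By Assumption~\ref{assumption_KL_for_smooth_case}, that gap is at most $\mu^{-1/\theta}\,\mathrm{dist}(0,-\nabla_y F+\mathcal{N}_{\mathcal Y})^{1/\theta}$, and the dist is itself bounded by $\|y^k_{\tau+1}-y^k_\tau\|/\alpha_y$ plus primal and variance errors. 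We then split into two cases.
\begin{itemize}
\item For $\theta\le 1/2$, the exponent $1/\theta\ge2$ together with compactness of $\mathcal Y$ lets us bound the gap by a constant times the squared residual. A constant $\beta$ (the given $\min\{\frac1{30},\frac1{30r},\frac{L_y}{20r\varpi}\}$) then suffices to absorb it.
\item For $\theta>1/2$, we use the dichotomy of Lemma~\ref{lemma:scenario1}. Either the residual is already below $\varepsilon$-scale, or the gap is controlled. Young's inequality with exponents tuned to $\theta$ forces $\beta$ to scale like the stated $\varepsilon^{(2\theta-1)/\theta}$ and $\varepsilon^{4\theta-2}$ terms, leaving an additive $\mathcal{O}(\varepsilon^2)$ residual.
\end{itemize}
Getting the exponents consistent here is where the $\theta$-dependent entries of $\beta$, $B$, and $KT$ originate.

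\textbf{Step 4: stationarity and sample count.} Lemmas~\ref{lemma_sc} and~\ref{dz_for_smooth_case} convert the averaged quantities $\|x_{\tau+1}-x_\tau\|^2/\alpha_x^2$, $\|y_{\tau+1}-y_\tau\|^2/\alpha_y^2$ and $\|z-x_r\|^2$ into the GS residuals and $\|\nabla_z d_r\|^2$ at the uniformly sampled output. This yields a bound of the form $\mathcal{O}\big(\frac{\Delta\Phi}{KT\min\{\alpha,\beta\}}+\frac{\sigma^2}{B}\big)$. Choosing $B$ as stated makes the variance part $\le\varepsilon^2$, and choosing $KT$ as stated makes the first part $\le\varepsilon^2$ in each $\theta$ regime. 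Finally, each epoch uses $B+2M(T-1)=\mathcal{O}(\sqrt B\,T)$ samples, so the total is $\mathcal{O}(\lceil\sqrt B\rceil KT)$.
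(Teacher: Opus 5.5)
Your outline follows the paper's proof. It uses the same Lyapunov function $\Phi_r$, the SPIDER error terms absorbed via $T=M$, the primal and dual error bounds, and the constant $\beta\le\frac{L_y}{20r\varpi}$ for $\theta\le\frac12$. For $\theta>\frac12$ it uses Young's inequality with exponents $\frac{2\theta}{2\theta-1}$ and $2\theta$, leaving a per-step residual $C_\beta=\Theta(\beta^{2\theta/(2\theta-1)})$. That step is Lemma~\ref{lemma:dual:final}; Lemma~\ref{lemma:scenario1} only sums the one-step decrease and contains no dichotomy, though a pointwise case split would give the same bound.

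The step that is wrong as written is your final estimate in Step 4. The accumulated variance $KT\,C_\sigma$, with $C_\sigma=\cO\big(\alpha_x(\sigma_x^2+\sigma_y^2)/B\big)$, sits next to $\Delta\Phi$ in the telescoped sum. It is therefore divided by the same factor $r\beta$ when you bound $\sum_{k,\tau}\mathbb{E}\|x^k_{\tau+1}-z^k_\tau\|^2$. The stationarity estimate then contains $16rC_\sigma/\beta=\cO\big(\frac{r\alpha_x(\sigma_x^2+\sigma_y^2)}{\beta B}\big)$, not $\cO(\sigma^2/B)$. For $\theta>\frac12$ the prescribed $\beta$ shrinks with $\varepsilon$. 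This amplification is exactly why $B$ must have the same $\varepsilon$-order as $KT$, which gives the online rate $\varepsilon^{-\max\{6\theta,3\}}$. Your form would suggest $B=\Theta(\sigma^2\varepsilon^{-2})$ suffices, which the argument does not support. Since you take $B$ as stated, the conclusion still holds once this term is tracked correctly.

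Your Step 3 also uses a different dual error bound from the paper's, and it needs a justification you did not give. In the paper's notation, $y(z)$ is a maximizer of $d_r(\cdot,z)$ and $y_+(z)$ is the exact projected gradient step. The paper (Lemmas~\ref{dual_lemma_init} and~\ref{dual_error_bound}) bounds $\frac{r-\rho}{2}\|x_r(y(z),z)-x_r(y_+(z),z)\|^2$ by the gap of $F(x_r(y_+(z),z),\cdot)$ at $y_+(z)$. It applies K\L{} at this virtual point, whose residual is deterministically $\cO(\|y-y_+(z)\|/\alpha_y)$, and handles the randomness separately in Lemma~\ref{lemma:dual:final:2}.

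You instead claim $\frac{r-\rho}{2}\|x_r(y,z)-x_r(y(z),z)\|^2\le p_r(z)-d_r(y,z)$ and apply K\L{} at the actual iterate. That inequality is not automatic when $d_r(\cdot,z)$ is nonconcave, but it does hold here:
\begin{itemize}
\item By Danskin and first-order optimality, $y(z)$ is a stationary point of $F(x_r(y(z),z),\cdot)$ on $\cY$.
\item Assumption~\ref{assumption_KL_for_smooth_case} forces that stationary point to be a global maximizer, so $(x_r(y(z),z),y(z))$ is a saddle point of $F_r(\cdot,\cdot,z)$.
\item Hence $p_r(z)=\max_{y'}F_r(x_r(y(z),z),y',z)\ge F_r(x_r(y(z),z),y,z)$, and strong convexity of $F_r(\cdot,y,z)$ gives the claim.
\end{itemize}
Together with $p_r(z)-d_r(y,z)\le\max_{y'}F(x_r(y,z),y')-F(x_r(y,z),y)$, your route works. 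It simply places the variance error inside the K\L{} residual instead of in a separate lemma.
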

\begin{remark}\label{rm:smooth-comp}
Notice $4\theta \ge \frac{4\theta-1}{\theta}$ for all $\theta>0$. Hence, to only show the dependence on $\varepsilon$, we have the overall iteration complexity $\mathcal{O}\big(\varepsilon^{-\max\{4\theta, 2\}}\big)$ in both the finite-sum and  online settings. 
Consequently, the sample complexity in the finite--sum setting 
is $\mathcal{O}\big(\sqrt{N}\varepsilon^{-\max\{4\theta, 2\}}\big)$ while for the online setting, since $B$ has the same order dependence on $\varepsilon$ as $KT$, the overall sample complexity is $\mathcal{O}\big(\varepsilon^{-\max\{6\theta, 3\}}\big)$. {When $\theta\in[0,\frac{1}{2}]$, our results are optimal, matching the established lower bound of stochastic first-order methods for the special nonconvex minimization problem \cite{arjevani2023lower}.}

{Let us elaborate the superiority of our results over existing ones in a few special cases. We first note that when $\theta = \frac{1}{2}$, our sample complexity is $\cO(\sqrt{N}\varepsilon^{-2})$. It  matches the result in \cite{chen2023faster}, which assumes the P\L{} condition for unconstrained problems. In addition, under the same K\L{} condition, our 
sample complexity 
is lower by a factor of $\sqrt{N}$ 
than that in \cite{ZhengSoLi2025} 
 on solving the finite-sum structured problem. Moreover, when $\theta=1$, our result is $\mathcal{O}\big(\sqrt{N}\varepsilon^{-4}\big)$ for the finite-sum case. 
 It matches that of \cite{jiang2025single}, which assume concavity about $y$, a 
 stronger assumption than ours. 
 In the online setting, when $\theta=1$, our 
 sample complexity 
 is $\mathcal{O}\big(\varepsilon^{-6}\big)$. It matches the result in \cite{rafique2018weakly} but requires a weaker condition, and it improves over the result in  
 \cite{lin2025two_timescale_ttgda} by a factor of $\varepsilon^{-2}$.
 }
\end{remark}
\section{Composite Nonsmooth Minimax Problems}\label{Nonsmooth-section}

In this section, we consider a structured problem setting where the cost function $f$ in \eqref{eq:primal} {has the form of} 
\begin{align}\label{eq:composite-f}
f(x, y; \bm{\xi})
= \varphi\big(h(c(x; \bm{\xi})),\, y;\, \bm{\xi}\big),
\end{align}
with functions
$ \varphi: \mathbb{R}^{d_h} \times \mathbb{R}^{d_y} \times \Xi \to \mathbb{R} $,
$ h: \mathbb{R}^{d_c} \to \mathbb{R}^{d_h} $, and
$ c: \mathbb{R}^{d_x} \times \Xi \to \mathbb{R}^{d_c} $.
Here, $ \varphi $ and $ c $ are assumed to be \textit{smooth}, while $ h $ can be \textit{nonsmooth}.
Consequently, we note that the 
{population function} $F(x,y)$ is possibly \textit{nonsmooth} in $x$ and \textit{smooth} in $y$. 

\subsection{Motivating Applications}
{Several applications in machine learning naturally fall within this framework. 
Notable examples include $\phi$-Divergence distributionally robust optimization (DRO) \cite{levy2020large} as well as Group DRO~\cite{SagawaKohHashimotoLiang2020} which focuses on maintaining strong performance across different subgroups in the data to improve model fairness and robustness. In subsection \ref{application sections} of the appendix, we elaborate on each of these applications and show that, under suitable choices of $\varphi$, $h$, and $c$, the corresponding objective functions can be expressed in the form of \eqref{eq:composite-f}.}

\subsection{Convergence Results}
In this section, we make the following assumptions.
\begin{assumption}\label{assumption:F-nonsmooth}
The sets $\mathcal{X}, \mathcal{Y}$ along with the functions $c$, $h$, and $\varphi$ in \eqref{eq:composite-f} satisfy: 
\begin{itemize}
\item[i.] The set $\mathcal{X}$ is compact with diameter $D_\mathcal{X} = \max_{x_1, x_2 \in \mathcal{X}}\|x_1 - x_2\| < +\infty$.
\item[ii.] The set $\mathcal{Y}$ is compact with diameter $D_\mathcal{Y} = \max_{y_1, y_2 \in \mathcal{Y}}\|y_1 - y_2\| < +\infty$.
    {\item[iii.] 
    {For each {$\bm{\xi} \in \Xi$}, $c(\cdot\,; \bm{\xi})$ 
    } is $\ell_{c}$-Lipschitz continuous.;
    \item[iv.] Each component of $h$ 
    is convex, potentially nonsmooth, and $\ell_h$-Lipschitz continuous; 
    \item[v.] {For each {$\bm{\xi} \in \Xi$}, $\varphi(\cdot, \cdot\,; \bm{\xi})$,} 
    is nondecreasing in its first argument, and $\ell_{\varphi}$-Lipschitz continuous;
    \item[vi.] There exists $L_c$ such that $\forall x_1,x_2 \in \mathbb{R}^{d_x}$
    \begin{align}
    &\hspace{-5mm}\mathbb{E}\left\|\nabla_{x} c(x_1; \bm{\xi}) - \nabla_{x} c(x_2; \bm{\xi})\right\|^{2}
    \leq L_{c}^{2} \|x_1 - x_2\|^{2}. \label{nabla_x_for_c}
    \end{align}
    \item[vii.] {There exists $L_{\varphi}>0$ such that for any $u_1, u_2 \in \mathbb{R}^{d_h}$, $y_1, y_2 \in \mathbb{R}^{d_y}$, and $\bm{\xi} \in \Xi$,}
    \begin{align}
    &~\left\|\nabla_{1} \varphi(u_1, y_{1}; \bm{\xi}) - \nabla_{1} \varphi(u_2, y_{2}; \bm{\xi})\right\|^{2}\nonumber\\ 
    &~\leq L_{\varphi}^{2} \left(\|u_1 - u_2\|^{2} +\|y_1 - y_2\|^{2} \right),\label{nabla_1_phi}\\
    &~ \left\|\nabla_{y} \varphi(u_1, y_{1}; \bm{\xi}) - \nabla_{y} \varphi(u_2, y_{2}; \bm{\xi})\right\|^{2}\nonumber\\ 
    &~\leq L_{\varphi}^{2} \left(\|u_1 - u_2\|^{2} +\|y_1 - y_2\|^{2} \right),\label{nabla_y_phi}
    \end{align}
    where $\nabla_{1}\varphi$ denotes the gradient wrt the first argument. 
    }
\end{itemize}
\end{assumption}

\begin{assumption}\label{assumption:g-kl} 
There exist $\tilde{\delta}>0$, $\mu>0$, and $\theta\in[0,1]$ such that for all $x\in\mathcal{X}$ and all $u: \Xi \to \mathbb{R}^{d_h}$ satisfying $\| u({\bm{\xi}}) - h(c(x;\bm{\xi})) \| \leq \tilde{\delta}, \forall\, \bm{\xi} \in \Xi$, it holds {for \(
    g(u,y) := \mathbb{E}_{\bm{\xi} \sim \mathbb{P} }\left[\varphi(u({\bm{\xi}}), y; \bm{\xi})\right]
    \) that }
\begin{align}
    &\mathrm{dist}\left(0, -\nabla_y g(u, y) + \mathcal{N}_\mathcal{Y}(y)\right)\nonumber\\
    &\geq \mu \left[ \max_{y' \in \mathcal{Y}} g(u, y') - g(u, y) \right]^{\theta} \label{KL-g}, \forall\, y\in\mathcal{Y}.
    \end{align}
    \end{assumption}

\subsubsection{Smoothing by {Moreau Envelope} and Key Properties}
The potential nonsmoothness of $h$ implies that 
$F(x,y)$ is typically nonsmooth in $x$, although it remains smooth in $y$. Hence, Algorithm~\ref{alg:smoothed_primal_dual_spider} cannot be directly applied. To address the challenge caused by the nonsmoothness, we propose to apply the Moreau envelope smoothing technique to each component of $h$, yielding a smooth approximation. The resulting smoothed objective function is given by
\begin{align}\label{eq:smoothed-func}
F^{\lambda}(x, y) 
&:= \mathbb{E}_{\bm{\xi} \sim \mathbb{P}} \left[f^{\lambda}(x, y; \bm{\xi})\right] \nonumber\\
&:= \mathbb{E}_{\bm{\xi} \sim \mathbb{P}} \left[\varphi\big( h^{\lambda}(c(x; \bm{\xi})), y; \bm{\xi} \big)\right] .
\end{align}
{Here,} $h^\lambda: \mathbb{R}^{d_c} \to \mathbb{R}^{d_h}$ is the component-wise Moreau envelope {of $h$}, defined as 
\[
h^\lambda(w) := \Big( h^{\lambda}_{1}(w), \dots, h^{\lambda}_{d_{h}}(w) \Big), \quad \forall w \in \mathbb{R}^{d_c},
\]
where
\[
h^{\lambda}_{j}(w) := \min_{q \in \mathbb{R}^{d_c}} \left\{ h_{j}(q) + \frac{1}{2\lambda} \| w - q \|^2 \right\}, \; j = 1, \dots, d_h.
\]
{\begin{remark}\label{remark-f_lambda_compact}
Since $\cX$ and $\cY$ are both compact, there must exist a constant $\underline{F} \in\mathbb{R}$ such that \eqref{eq:low-bound-F} holds.
Moreover, denote $\underline{F^{\lambda}} = \underline{F} - \frac{\lambda L_{\varphi}L_{h}^{2}\sqrt{d_{h}}}{2} < \infty$. 
\end{remark}}

This smoothing step enables us to apply Algorithm \ref{alg:smoothed_primal_dual_spider} developed for smooth problems while carefully controlling the approximation error introduced by the Moreau envelope. 


%








However, before applying Algorithm~\ref{alg:smoothed_primal_dual_spider} and deriving iteration and sample complexity guarantees in the nonsmooth regime, we 
{need to} establish several key properties of the smoothed objective $F^{\lambda}$ that are required for the algorithm’s convergence analysis.

{Details on showing these properties are given in the appendix. Here we simply state what properties $F^\lambda$ has.} We begin by establishing the Lipschitz continuity of $F^{\lambda}(x,y)$ (see Lemma~\ref{lem:Lip-f-lambda}). We then show that the gradient $\nabla F^{\lambda}$ is Lipschitz continuous (see Lemma~\ref{lemma for smooth f lambda}). 
{Moreover,} we characterize the weak convexity moduli of both the original objective $F$ and its smoothed counterpart $F^{\lambda}$ (see Lemma~\ref{lem:weak-f-lambda}). Finally, we verify that for all $x\in\cX$, 
the function $F^{\lambda}(x,\cdot)$ satisfies 
the {extended} K\L{} property (see Lemma~\ref{lemma:kl-regularized}). 

Having established the necessary properties, we are consequently in a position to apply Algorithm \ref{alg:smoothed_primal_dual_spider} to the smoothed objective $F^{\lambda}(x,y)$. While smoothing allows us to use the algorithm designed in the smooth regime, it introduces an additional approximation error. This error affects the stationarity measures of the original nonsmooth problem, as $F^{\lambda}(x,y)$ only approximates $F(x,y)$, introducing a residual in the stationarity conditions. As a result, the algorithm guarantees convergence only to a \textit{nearly} $\varepsilon$-GS point of the original problem, rather than an exact $\varepsilon$-GS point. In Lemma \ref{lemma:for_application}, we formalize the relationship between the stationarity measures of the original nonsmooth problem and the corresponding measures of the smoothed problem.

\subsubsection{Complexity Results -- Nonsmooth regime}
We now apply Algorithm \ref{alg:smoothed_primal_dual_spider} developed in Section~\ref{Section_Smooth} to the smoothed problem 
$\min_{x\in\mathcal{X}}\max_{y\in\mathcal{Y}} F^{\lambda}(x,y)$. 
As shown in Lemmas~\ref{lem:Lip-f-lambda}, \ref{lemma for smooth f lambda}, and \ref{lem:weak-f-lambda}, the smoothed function $F^{\lambda}(x,y)$ satisfies 
Assumption~\ref{assumption_for_smooth_case}
with smoothness constants and weak convexity constant
\[
\ell = \ell_{\lambda},\; L_x = L_{\lambda,x}, \; L_y = L_{\lambda,y}, \;\rho = \rho_{\lambda}.
\] 
%
Consequently, all relevant constants in Section~\ref{Section_Smooth} that depend on these parameters need to be updated accordingly. 
Additionally, when $\lambda \ll 1$ and $d_h = \Omega(1)$,
we have $\rho_{\lambda} = \mathcal{O}\big( \min\left\{L_{\lambda,x}, L_{\lambda,y} \right\}\big)$ and $\min\left\{L_{\lambda,x}, L_{\lambda,y} \right\}=\Omega(1)$. 
Furthermore, assume there exist constants $\sigma_{\lambda,x},\, \sigma_{\lambda,y}\geq 0$ such that $\forall x \in \mathbb{R}^{d_x},\, y \in \mathbb{R}^{d_y},\, \bm{\xi} \in \Xi$,
\begin{subequations}
\begin{align}
&{\mathbb{E}[\nabla f^{\lambda}(x,y;\bm{\xi})\, |\, (x,y)] = \nabla F^{\lambda}(x,y)};\label{eq:unbias-ns}\\
&\mathbb{E}\left\|\nabla_{x} f^{\lambda}(x, y; \bm{\xi}) - \nabla_{x} {F^{\lambda}}(x, y)\right\|^{2} \leq \sigma_{\lambda, x}^2\;,\;\label{eq:var-x-ns}\\
&\mathbb{E}\left\|\nabla_{y} f^{\lambda}(x, y; \bm{\xi}) - \nabla_{y} {F^{\lambda}}(x, y)\right\|^{2} \leq \sigma_{\lambda, y}^2.\label{eq:var-y-ns}
\end{align}
\end{subequations}
Finally, similar to Section \ref{Section_Smooth}, we define the Lyapunov function as {$\hat{\Phi}_{r}: \mathbb{R}^{d_{x}} \times \mathbb{R}^{d_{y}} \times \mathbb{R}^{d_{x}} \to \mathbb{R}$ as follows:} 
\begin{align*}\hat{\Phi}_{r}(x,y,z)=&~F^{\lambda}_r(x, y, z) - d^{\lambda}_r(y, z)  + p^{\lambda}_r(z)\\
&~- d^{\lambda}_r(y, z) + p^{\lambda}_r(z).\end{align*}
{With all the established properties and updated constants,} we can now directly obtain
the following convergence result for the nonsmooth problem from Theorem~\ref{main:theorem2}.
\begin{theorem}[Iteration and sample complexity for nonsmooth case]
Under Assumptions~\ref{assumption:F-nonsmooth} and \ref{assumption:g-kl} {and conditions in \eqref{eq:unbias-ns}-\eqref{eq:var-y-ns},} let $\varepsilon > 0$ be given. Choose $\lambda=\Theta(\varepsilon)$ and {$T=M=\left\lceil \sqrt{\frac{B}{2}}\,\right\rceil$}. Let $\hat{\Phi}_r^0 = \hat{\Phi}_{r}\left(x^{0}_{0},y^{0}_{0}, z^{0}_{0}\right)$
and denote $\Delta \hat{\Phi} = \hat{\Phi}_{r}^0 - \underline{F}^{\lambda}$.
Apply Algorithm~\ref{alg:smoothed_primal_dual_spider} to the smoothed problem $\min_{x\in\cX}\max_{y\in\cY} F^{\lambda}(x,y)$ with 
{$$\begin{aligned}r = \max\Big\{  &2\rho_{\lambda} + 325(L_{\lambda,y}+1) \\
&+ 12\sqrt{L_{\lambda,x}}\sqrt{2(L_{\lambda,y}+1) }\;, \\
&2\rho + 54 L_{\lambda,y} (L_{\lambda,y} + 1) \\
&+ 4\sqrt{L_{\lambda,x}}\sqrt{3 L_{\lambda,y} (L_{\lambda,y} + 1)}\Big\},
\end{aligned}$$ }
$$\begin{aligned}\alpha_x=\min\Bigg\{&
\frac{1}{12(r+L_{\lambda,x} + 2L_{\lambda,y})},\\
&\hspace{-3mm}\frac{(r-\rho_{\lambda})^{2}}{24(r+L_{\lambda,x})^2(L_{\lambda,y}+1)},\
\frac{r - (\rho_{\lambda} + 2 L_{\lambda,y})}{2 L_{\lambda,y} (L_{\lambda,x} + r)}
\Bigg\}
\end{aligned}$$ and $$\alpha_y = \min\left\{{\alpha_x}, \frac{1}{40L_{\lambda,y}}, \; \frac{1}{4(2L_{\lambda,y}+1)}\right\}.$$ {Additionally, {for the finite-sum setting, take all $N$ samples to obtain the gradient estimators when $\tau=0$, thus $B=N$,} 
while for the online setting, let} 
\begin{equation*}
B =
\begin{cases}
\cO\Bigg(\left(\frac{\sigma_{\lambda,x}^2+\sigma_{\lambda,y}^2}{\left(L_{\lambda,y}^2+L_{\lambda,x}\right)\varepsilon^2}\right)\max\Bigg\{L_{\lambda,x} + L_{\lambda,y}^2, \\
\hspace{3mm}\frac{(L_{\lambda,y}+\sqrt{L_{\lambda,x}})}{\mu^2},
\;L_{\lambda,y}\left(L_{\lambda,y}+\sqrt{L_{\lambda,x}}\right)^2  
\Bigg\}\Bigg),\\
&\hspace{-2.0cm} \text{ if }\theta \in \left[0,\frac{1}{2}\right],\\[2mm]
\cO\Bigg(\left(\frac{\sigma_{\lambda,x}^2+\sigma_{\lambda,y}^2}{L_{\lambda,y}^2+L_{\lambda,x}}\right)\max\Bigg\{
\frac{L_{\lambda,y}^2\left(L_{\lambda,y}^2+L_{\lambda,x}\right)}{\varepsilon^{2}}, \\
\hspace{3mm}\frac{(L_{\lambda,y}^2+L_{\lambda,y}\sqrt{L_{\lambda,x}})^{2\theta}L_{\lambda,y}}{\mu^2\varepsilon^{4\theta}}, \\
\hspace{3mm}\frac{(L_{\lambda,x}+L_{\lambda,y}^2)^\frac{2\theta-1}{2\theta}L_{\lambda,y}^\frac{1}{2\theta}\big(L_{\lambda,y}^2+L_{\lambda,y}\sqrt{L_{\lambda,x}}\big)}{\mu^\frac{1}{\theta}\varepsilon^{\frac{4\theta - 1}{\theta}}},\\
\hspace{3mm}\frac{
{L_{\lambda,y}^2(L_{\lambda,y} +\sqrt{L_{\lambda,x}})^\frac{3\theta-1}{\theta}}}{\mu^{\frac{1}{\theta}} \varepsilon^{\frac{4\theta-1}{\theta}}}\Bigg\}\Bigg),&\hspace{-2.0cm} \text{ if   
}\theta \in \left(\frac{1}{2},1\right].
\end{cases}
\end{equation*}
Moreover, let
\begin{equation*}
\beta=  \begin{cases}
\min\left\{\frac{1}{30},\; \frac{1}{30r},  \frac{L_{\lambda,y}}{20r\varpi_{\lambda}}\right\}, &\hspace{-1.5cm} \text{ if }\theta\in [0,\frac{1}{2}],\\[2mm]
\Theta\Bigg( \min\Bigg\{
 \frac{1}{r},\; \alpha_x^{\frac{2\theta - 1}{2\theta}} L_{\lambda,y}^{-\frac{1}{2\theta}} \mu^\frac{1}{\theta}\varepsilon^{\frac{2\theta - 1}{\theta}}, \\
 \hspace{2mm}r^{-(2\theta - 1)} \mu^2 L_{\lambda, y}^{-1} \varepsilon^{4\theta - 2}
,\\
r^{-\frac{2\theta-1}{\theta}}\;L_{\lambda, y}^{\frac{\theta-1}{\theta}} \mu^{\frac{1}{\theta}}\varepsilon^{\frac{2\theta-1}{\theta}}\Bigg\}\Bigg),
&\hspace{-1.5cm} \text{ if }\theta \in \left(\frac{1}{2},1\right].
\end{cases}
\end{equation*}
Then the algorithm can output a nearly $\varepsilon$-GS point in expectation of \eqref{eq:primal} in
\begin{equation*}
KT=  \begin{cases}
\cO\Bigg(\left(\frac{\Delta \hat{\Phi}  L_{\lambda,y}^2}{\varepsilon^2}\right)\max\Bigg\{L_{\lambda,x} + L_{\lambda,y}^2,\\
\hspace{3mm}\frac{(L_{\lambda,y}+\sqrt{L_{\lambda,x}})}{\mu^2}
\Bigg\}\Bigg),
&\hspace{-1.5cm} \text{ if }\theta\in [0,\frac{1}{2}],\\[2mm]
\cO\Bigg(\Delta \hat{\Phi}\max\Bigg\{
\frac{L_{\lambda,y}^2\left(L_{\lambda,y}^2+L_{\lambda,x}\right)}{\varepsilon^{2}},\\
\hspace{2mm}\frac{(L_{\lambda,y}^2+L_{\lambda,y}\sqrt{L_{\lambda,x}})^{2\theta}L_{\lambda,y}}{\mu^2\varepsilon^{4\theta}}, \\
\hspace{2mm}\frac{(L_{\lambda,x}+L_{\lambda,y}^2)^\frac{2\theta-1}{2\theta}L_{\lambda,y}^\frac{1}{2\theta}(L_{\lambda,y}^2+L_{\lambda,y}\sqrt{L_{\lambda,x}})}{\mu^\frac{1}{\theta}\varepsilon^{\frac{4\theta - 1}{\theta}}}, \\[2mm]
\hspace{2mm}\frac{
{L_{\lambda,y}^2(L_{\lambda,y} +\sqrt{L_{\lambda,x}})^\frac{3\theta-1}{\theta}}}{\mu^{\frac{1}{\theta}} \varepsilon^{\frac{4\theta-1}{\theta}}}\Bigg\}\Bigg),&\hspace{-1.5cm} \text{ if }\theta \in \left(\frac{1}{2},1\right],
\end{cases}
\end{equation*}
iterations and by 
{$\cO(\lceil \sqrt{B} \rceil KT )$} sampled gradients.
\label{last:theorem}
\end{theorem}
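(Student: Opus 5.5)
The plan is a reduction: apply Theorem~\ref{main:theorem2} to the smoothed problem $\min_{x\in\cX}\max_{y\in\cY}F^{\lambda}(x,y)$, then convert its guarantee into a nearly $\varepsilon$-GS guarantee for the original problem \eqref{eq:primal}. The statement already substitutes $\ell_\lambda, L_{\lambda,x}, L_{\lambda,y}, \rho_\lambda$ and $\sigma_{\lambda,x},\sigma_{\lambda,y}$ into the parameter choices of Theorem~\ref{main:theorem2}, so most of the work is checking hypotheses.

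\textbf{Step 1: verify the hypotheses of Theorem~\ref{main:theorem2} for $F^\lambda$.}
\begin{itemize}
\item Assumption~\ref{assumption_for_smooth_case}(i) is Assumption~\ref{assumption:F-nonsmooth}(ii).
\item Item (ii), Lipschitz continuity with constant $\ell_\lambda$, comes from Lemma~\ref{lem:Lip-f-lambda}.
\item Item (iii), gradient Lipschitz continuity with $L_{\lambda,x}, L_{\lambda,y}$, comes from Lemma~\ref{lemma for smooth f lambda}. The Moreau envelope $h^\lambda$ has $1/\lambda$-Lipschitz gradient, so $L_{\lambda,x}=\Theta(1/\lambda)$.
\item Item (iv), $\rho_\lambda$-weak convexity, comes from Lemma~\ref{lem:weak-f-lambda}.
\item Item (v) is exactly \eqref{eq:unbias-ns}--\eqref{eq:var-y-ns}.
\item Item (vi) follows from compactness of $\cX,\cY$, with lower bound $\underline{F^\lambda}$ as in Remark~\ref{remark-f_lambda_compact}.
\item Assumption~\ref{assumption_KL_for_smooth_case} is supplied by Lemma~\ref{lemma:kl-regularized}. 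That lemma uses Assumption~\ref{assumption:g-kl} with $u(\bm\xi)=h^\lambda(c(x;\bm\xi))$, which is admissible because $\|h^\lambda(w)-h(w)\|\le \lambda\ell_h^2\sqrt{d_h}/2\le\tilde\delta$ once $\lambda=\Theta(\varepsilon)$ is small.
\item The side conditions $\rho_\lambda=\cO(\min\{L_{\lambda,x},L_{\lambda,y}\})$ and $\min\{L_{\lambda,x},L_{\lambda,y}\}=\Omega(1)$ hold for $\lambda\ll1$, as noted before the statement.
\end{itemize}
Theorem~\ref{main:theorem2} then shows that the output $(\tilde x,\tilde y)$ is an $\varepsilon'$-GS point of the smoothed problem in expectation, with $\mathbb{E}\|\nabla_z d^\lambda_r(\tilde y,\tilde x)\|^2=\cO(\varepsilon'^2)$. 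It does so within the stated $KT$ iterations and $\cO(\lceil\sqrt B\rceil KT)$ samples, with $\Delta\hat\Phi$ replacing $\Delta\Phi$.

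\textbf{Step 2: transfer back to the original problem.} Use Lemma~\ref{lemma:for_application}.
\begin{itemize}
\item \emph{Dual measure.} Since $\nabla_y\varphi$ is $L_\varphi$-Lipschitz in $u$ and $\|h^\lambda-h\|=\cO(\lambda)$, we get $\|\nabla_yF^\lambda-\nabla_yF\|=\cO(\lambda)$. Hence the dual measure for $F$ is at most that for $F^\lambda$ plus $\cO(\lambda)$.
\item \emph{Primal measure.} Take $v\in\nabla_xF^\lambda(\tilde x,\tilde y)+\mathcal N_\cX(\tilde x)$. Combine weak convexity of $F^\lambda(\cdot,\tilde y)$ with $|F^\lambda-F|\le\lambda L_\varphi\ell_h^2\sqrt{d_h}/2$. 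This gives
\[
F(x')+\iota_\cX(x')\ge F(\tilde x)+v^\top(x'-\tilde x)-\tfrac{\rho}{2}\|x'-\tilde x\|^2-\cO(\lambda),
\]
after adjusting the weak-convexity modulus so that $\rho\ge\rho_\lambda$ (Lemma~\ref{lem:weak-f-lambda}). So $v\in\partial^{\cO(\lambda)}_x(F+\iota_\cX)(\tilde x)$.
\end{itemize}
Choosing $\lambda=c\varepsilon$ with $c$ small and running with target accuracy $\varepsilon/2$ makes both measures at most $\varepsilon^2$ in expectation, which is the nearly $\varepsilon$-GS property.

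\textbf{Main obstacle: bookkeeping with $\lambda$-dependent constants.} Because $L_{\lambda,x}$ and possibly $\rho_\lambda$ scale like $1/\lambda=\Theta(\varepsilon^{-1})$, the whole parameter chain inflates: $r$, then $\alpha_x,\alpha_y$, then $\beta$, then $KT$ and $B$. I would check two things.
\begin{itemize}
\item Every step of the proof of Theorem~\ref{main:theorem2} uses only the stated inequalities among the constants, and never treats them as $\cO(1)$ except via the explicit side conditions. If this holds, the theorem applies verbatim with the $\lambda$-constants, and the $\varepsilon$-exponents in the abstract come out of the explicit formulas.
\item The KL constants $\mu,\theta$ from Lemma~\ref{lemma:kl-regularized} are uniform in $\lambda$. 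This is what Assumption~\ref{assumption:g-kl} guarantees, so no extra $\varepsilon$-dependence enters through $\mu$.
\end{itemize}
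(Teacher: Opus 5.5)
Your reduction matches the paper's. You verify the hypotheses of Theorem~\ref{main:theorem2} for $F^\lambda$ through the lemmas of the nonsmooth section, invoke that theorem, and transfer the dual residual using $\|\nabla_yF^\lambda-\nabla_yF\|=\cO(\lambda)$ and Young's inequality. Your primal transfer, however, takes a different and valid route.

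The paper (Lemma~\ref{lemma:for_application}) uses $\nabla_z d_r^{\lambda}(\tilde y,\tilde x)=r(\tilde x-x_r^{\lambda}(\tilde y,\tilde x))$, which is an exact stationarity certificate for $F^\lambda$ at the proximal point $x_r^\lambda(\tilde y,\tilde x)$. It carries this back to $\tilde x$ via weak convexity, \eqref{f_minus_f_lambda}, \eqref{f_minus_f_lambda2} and $D_\cX$. This forces it to rely on the second guarantee of Theorem~\ref{main:theorem2}, namely $\EE\|\nabla_z d_r(\tilde y,\tilde x)\|^2=\cO(\varepsilon^2)$. Its tolerance $\delta$ also contains terms in $\|\nabla_z d_r^\lambda(\tilde y,\tilde x)\|$, so $\delta$ is random and only $\cO(\varepsilon)$ in expectation.

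You instead use the primal GS residual of the smoothed problem at $\tilde x$ itself. For $v\in\nabla_xF^\lambda(\tilde x,\tilde y)+\cN_\cX(\tilde x)$, combine the gradient inequality for the $\rho_\lambda$-weakly convex $F^\lambda(\cdot,\tilde y)$, the normal-cone inequality on $\cX$, and two applications of \eqref{f_minus_f_lambda}. This gives $v\in\partial_x^{\delta}(F+\iota_\cX)(\tilde x)$ with a deterministic $\delta=\cO(\lambda)$, which is at most $\varepsilon$ once $\lambda=c\varepsilon$ with $c$ small.

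Your route is shorter and needs only the GS part of Theorem~\ref{main:theorem2}. It also fits the fixed superscript $\varepsilon$ in the definition of a nearly $\varepsilon$-GS point more cleanly than the paper's realization-dependent $\delta$. The paper's route instead gives a certificate through the Moreau-envelope-type quantity $\nabla_z d_r^\lambda$, which is standard in weakly convex analysis and is the reason Theorem~\ref{main:theorem2} tracks it.

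One small correction to your bookkeeping remark: $\rho_\lambda$ in Lemma~\ref{lem:weak-f-lambda} does not depend on $\lambda$. The only $\varepsilon$-dependence entering $r$, $\alpha_x$, $\alpha_y$ and $\beta$ therefore comes from $L_{\lambda,x}=\Theta(\varepsilon^{-1})$.
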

\begin{remark}\label{rm:ns-comp} 
Given $\varepsilon \in (0,1)$, note that when $\lambda = \Theta\left(\varepsilon\right)$, we have from from Lemma \ref{lemma for smooth f lambda} that $L_{\lambda,x} = \Theta\left(\lambda^{-1}\right) = \Theta\left(\varepsilon^{-1}\right)$, and $L_{\lambda,y}$ is independent of $\varepsilon$. Thus, 
without specifying the dependence on other quantities, we have the overall iteration complexity $KT=\cO\left(
\max\left\{
\varepsilon^{-3},
\varepsilon^{-5\theta},
\varepsilon^{-\frac{11\theta-3}{2\theta}}
\right\}
\right)$. Consequently, the sample complexity in the finite-sum setting 
is $\cO\left(\sqrt{N}\max\left\{
\varepsilon^{-3},
\varepsilon^{-5\theta},
\varepsilon^{-\frac{11\theta-3}{2\theta}}
\right\}\right)$. For the online setting, since $L_{\lambda,x} = \Theta(\varepsilon^{-1})$ and $L_{\lambda,y}$ is independent of $\varepsilon$, we have $B = \cO\left(\max\{\varepsilon^{-2}, \varepsilon^{-5\theta+1}, \varepsilon^{-\frac{9\theta-3}{2\theta}}\}\right)$, thus the overall sample complexity is $\cO\left(\max\big\{\varepsilon^{-4}, \varepsilon^{-\frac{15\theta-1}{2}}, \varepsilon^{-\frac{31\theta-9}{4\theta}}\big\}\right)$.

{These results are novel. We discuss a few special cases below. In the finite-sum setting, when $N=\Omega(\varepsilon^{-2})$, our result is better than that of the deterministic method in \cite{li2025nonsmooth}, which assumes a stronger proximal-linear-type oracle. When $\theta=1$, our result reduces to $\cO(\sqrt{N}\varepsilon^{-5})$. Recall that in this case, the e-K\L{} condition is implied by the concavity on $y$. Thus our result is better than the $\cO(\varepsilon^{-8})$ complexity in \cite{lin2025two_timescale_ttgda} if $N = \cO(\varepsilon^{-6})$ under a weaker condition. Moreover, in the online setting, when $\theta\in [0,\frac{3}{5}]$, our result becomes $\cO(\varepsilon^{-4})$, which matches the $\cO(\varepsilon^{-4})$ complexity in \cite{lin2025two_timescale_ttgda} for the smooth nonconvex-strongly-concave case and strictly better than $\tilde\cO(\varepsilon^{-6})$ complexity in \cite{lin2025two_timescale_ttgda} for the nonsmooth nonconvex-strongly-concave case. When $\theta=1$, our result reduces to $\mathcal{O}(\varepsilon^{-7})$; it is worse than the $\tilde\cO(\varepsilon^{-6})$ complexity in \cite{rafique2018weakly}, but notice that our assumption of e-K\L{} is weaker than the concavity assumed by the latter.}


\end{remark}
\section{Concluding Remarks}
We have presented a variance-reduced framework for solving fully nonconvex-nonconcave stochastic minimax optimization problems under weak convexity and the {extended} K\L{} property --- assumptions which are substantially weaker than those commonly adopted in literature. By integrating SPIDER-type variance reduction, our method achieves guaranteed convergence with improved iteration and sample complexity in both the \emph{online} and \emph{finite-sum} settings. By Moreau-envelope smoothing, we are able to further extend 
our framework by applying it to a structured nonsmooth problem and establish rigorous complexity results. 

We observe that, in the nonsmooth case, the sample complexity becomes $\cO(\varepsilon^{-7})$ in the extreme case of $\theta=1$, which is implied by a restricted concavity condition in the dual variable. Although this result is new, we conjecture that this complexity can be further reduced, which is worth exploring in future work. 



\bibliography{minimax}
\bibliographystyle{icml2026}

\newpage
\appendix
\onecolumn

{We provide complete proofs of all theorems claimed in the main body of the paper. Beyond the key notations in Table~\ref{tab:mylabel}, we list a few more in Table~\ref{tab:mylabel-key} for convenience of our analysis.}


\renewcommand{\arraystretch}{2.0}
\begin{table}[htpb]
\centering
\caption{More Notations and Definitions}
\label{tab:mylabel-key}
\begin{tabular}{
>{\centering\arraybackslash}p{2.3cm}
>{\centering\arraybackslash}p{5.3cm}}
\toprule
\textbf{Notation} & \textbf{Definition} \\
\midrule

$\begin{aligned}y_{+}(z)\end{aligned}$ & $\begin{aligned}\text{proj}_{\mathcal{Y}}\left(y + \alpha_{y} \nabla_{y}F\left(x_{r}\left(y,z\right),y\right)\right)\end{aligned}$ \\

$\begin{aligned}Y(z)\end{aligned}$ & $\begin{aligned}\arg\max_{y \in \mathcal{Y}} \; d_{r}(y,z)\text{; } y(z) \in Y(z)\end{aligned}$ \\

$\begin{aligned}x_{\tau , +}^{k}\left(y, z\right)\end{aligned}$ & $\begin{aligned}\text{proj}_\mathcal{X} \left( x_{\tau}^{k} - \alpha_{x} \nabla_{x}F_{r}(x_{\tau}^{k},y,z) \right)\end{aligned}$ \\

$\begin{aligned}y_{\tau , +}^{k}(z)\end{aligned}$ & $\begin{aligned}\text{proj}_{\mathcal{Y}}\left(y_{\tau}^{k} + \alpha_{y} \nabla_{y}F\left(x_{r}\left(y_{\tau}^{k},z\right),y_{\tau}^{k}\right)\right)\end{aligned}$ \\

$\begin{aligned}x_{r}^{\lambda}\left(y, z\right)\end{aligned}$ & $\begin{aligned}\arg\min_{x \in \mathcal{X}} \; F^{\lambda}_{r}(x, y, z)\end{aligned}$ \\
\bottomrule
\end{tabular}
\end{table}

\section{Proofs for the smooth case}

We first provide the claim in Remark~\ref{rm:ccv-KL} that our assumed e-K\L{} condition is implied by the star-concavity in the case of $\theta=1$.
\subsection{Proof of the claim in Remark~\ref{rm:ccv-KL}}\label{sec:proof-rm-ccv-KL}To show this claim, for any $y\in \mathcal{Y}$,
%
let us define 
\begin{equation*}
\begin{aligned}
&y^* \in \text{argmax}_{y' \in \mathcal{Y}} F(x, y'),\\ 
&g^* \in \text{argmin}_{g \in -\nabla_y F(x, y) + \mathcal{N}_\mathcal{Y}(y)}\left\|g\right\|.
\end{aligned}
\end{equation*}
 When $F(x,\cdot)$ is {restricted concave on $\cY$, i.e.,
 \begin{equation*}
 F(x,y^*) \le F(x,y) + \langle \nabla_y F(x, y), y^*-y\rangle , \forall\, y\in \cY,
 \end{equation*}}we have from Cauchy-Schwarz inequality that
\begin{equation*}
\begin{aligned}
&\max_{y' \in \mathcal{Y}} F(x, y') = F(x,y^*) \\
&\leq F(x, y) + \left\langle -g^*,\; y^* - y\right\rangle\\
&\leq  F(x, y) + \left\|g^*\right\|\left\| y^* - y\right\|\\
&\leq F(x,y) + D_{\mathcal{Y}}\;\mathrm{dist}\left(0, -\nabla_y F(x, y) + \mathcal{N}_\mathcal{Y}(y)\right).
\end{aligned}
\end{equation*}
Thus $F(x,\cdot)$ satisfies \eqref{KL} with parameters $\mu = \frac{1}{D_{\mathcal{Y}}}$ and $\theta = 1$.

\subsection{Proofs for the smooth case} {To establish our main complexity results in the smooth case, we start from showing a few error bound inequalities.}

\begin{lemma}[Error Bounds for Gradient Estimators]
\label{lem:err-grad-est}
Let $G_{x, \tau}^{k}$ and $G_{y, \tau}^{k}$ be defined in \eqref{eq:Gx} and \eqref{eq:Gy} and {denote
\begin{align*}
C_{\sigma,x} =  C_{\sigma,y} = 0,  & \text{ for finite-sum setting}, \\
C_{\sigma,x} = \frac{\sigma_x^2}{B}, \ C_{\sigma,y} = \frac{\sigma_y^2}{B}, & \text{ for online setting}.
\end{align*}
}
Then under Assumption \ref{assumption_for_smooth_case}, for any $k \geq 0$ and any  $0 \leq \tau \leq T - 1$, 
the following error bounds hold:
\begin{align}
\mathbb{E}\left\|G_{y,\tau}^{k} -\nabla_{y}F\left(x_{\tau}^{k},y_{\tau}^{k}\right)\right\|^{2} \leq  \frac{L_{y}^{2}}{M} \sum_{b=0}^{\tau-1}\mathbb{E}\left\| x^{k}_{b+1} - x^{k}_{b}\right\|^{2} + \frac{L_{y}^{2}}{M} \sum_{b=0}^{\tau-1}\mathbb{E}\left\| y^{k}_{b+1} - y^{k}_{b}\right\|^{2}{+C_{\sigma,y}},
\label{eq:lemma0_y}\\
\mathbb{E}\left\|G_{x, \tau}^{k}  - \nabla_{x}F\left(x_{\tau}^{k},y_{\tau}^{k}\right)\right\|^{2} \leq \frac{2L_{x}^{2}}{M} \sum_{b=0}^{\tau-1}\mathbb{E}\left\| x^{k}_{b+1} - x^{k}_{b}\right\|^{2} + \frac{2L_{y}^{2}}{M} \sum_{b=0}^{\tau-1}\mathbb{E}\left\| y^{k}_{b+1} - y^{k}_{b}\right\|^{2} {+C_{\sigma,x}}.
\label{eq:lemma0_x}
\end{align}
\end{lemma}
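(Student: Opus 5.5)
The plan is the standard SPIDER martingale argument, run within a single epoch $k$ by induction on $\tau$. Write the estimator error as $e_{y,\tau} := G_{y,\tau}^{k}-\nabla_y F(x_\tau^k,y_\tau^k)$ and similarly $e_{x,\tau}$. Let $\mathcal{F}_\tau$ be the $\sigma$-algebra generated by all randomness up to the computation of $(x_\tau^k,y_\tau^k)$, including $G_{\cdot,\tau-1}^k$ but not the fresh samples $\{\bm{\xi}^k_{\tau,i}\}_{i=1}^M$. For $\tau\ge1$, the recursions \eqref{eq:Gx}--\eqref{eq:Gy} give
\[
e_{y,\tau}=e_{y,\tau-1}+\frac{1}{M}\sum_{i=1}^M \Big(\Delta_i - \mathbb{E}[\Delta_i\mid\mathcal{F}_\tau]\Big),
\]
where $\Delta_i=\nabla_y f(x_\tau^k,y_\tau^k;\bm{\xi}^k_{\tau,i})-\nabla_y f(x_{\tau-1}^k,y_{\tau-1}^k;\bm{\xi}^k_{\tau,i})$. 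This uses the unbiasedness in Assumption~\ref{assumption_for_smooth_case}(v): $(x_\tau^k,y_\tau^k)$ and $(x_{\tau-1}^k,y_{\tau-1}^k)$ are $\mathcal{F}_\tau$-measurable and the new samples are independent of $\mathcal{F}_\tau$.

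Because $e_{y,\tau-1}$ is $\mathcal{F}_\tau$-measurable and the added term has zero conditional mean, the cross term vanishes in expectation. Since the $M$ samples are i.i.d., the variance of their average is at most $\frac{1}{M}$ times the single-sample second moment. Hence
\[
\mathbb{E}\|e_{y,\tau}\|^2\le \mathbb{E}\|e_{y,\tau-1}\|^2+\frac{1}{M}\mathbb{E}\|\Delta_1\|^2 \le \mathbb{E}\|e_{y,\tau-1}\|^2+\frac{L_y^2}{M}\Big(\mathbb{E}\|x_\tau^k-x_{\tau-1}^k\|^2+\mathbb{E}\|y_\tau^k-y_{\tau-1}^k\|^2\Big),
\]
where the last step is the joint mean-square Lipschitz bound on $\nabla_y f$ in Assumption~\ref{assumption_for_smooth_case}(iii). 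For the $x$-estimator, the first two bounds in (iii) are separate in $x$ and $y$. I would therefore split $\Delta_1$ through the intermediate point $(x_{\tau-1}^k,y_\tau^k)$ and apply $\|a+b\|^2\le 2\|a\|^2+2\|b\|^2$. This produces exactly the factors $2L_x^2$ and $2L_y^2$ appearing in \eqref{eq:lemma0_x}.

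Telescoping from $b=0$ to $\tau-1$ leaves the base case $\mathbb{E}\|e_{\cdot,0}\|^2$. In the finite-sum setting $B=N$ and the full gradient is computed, so this error is zero. In the online setting the estimator is an average of $B$ i.i.d. unbiased samples, so the error is at most $\sigma_y^2/B$ (resp. $\sigma_x^2/B$) by (v). This gives $C_{\sigma,y}$ and $C_{\sigma,x}$.

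The main obstacle is the conditioning argument. One must specify the filtration carefully so that the iterates at steps $\tau-1$ and $\tau$ are measurable with respect to it, while the batch drawn at step $\tau$ is independent of it. Only then are the cross terms zero and the $1/M$ variance reduction valid. It also requires that the single-sample bound $\mathbb{E}\|\Delta_1\|^2$ use (iii) conditionally at fixed points, which is how (iii) is stated. Everything after that is routine.
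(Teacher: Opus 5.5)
Your proposal is correct and follows essentially the same route as the paper. The paper also uses the one-step SPIDER recursion (citing Zhang--Xiao, Lemma 1), bounds the variance of the $M$-sample average by $\frac{1}{M}$ times the single-sample second moment, and applies (iii). For the $x$-estimator it splits through the intermediate point $(x_{\tau-1}^k,y_\tau^k)$, and it then unrolls to the base case, which is $0$ in the finite-sum setting and $\sigma^2/B$ online. Your explicit filtration argument simply spells out the conditioning that the paper delegates to that citation.
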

\begin{proof} For ease of notation, denote the variance of a random variable $\zeta$ by $\mathrm{Var}(\zeta) = \mathbb{E}\left\|\zeta - \mathbb{E}\left(\zeta\right)\right\|^{2}$. Then following the proof of \citep[Lemma 1]{ZhangXiao1906.10186}, we have 
\begin{align}
&\mathbb{E}\left\|G_{y,\tau}^{k} -\nabla_{y}F\left(x_{\tau}^{k},y_{\tau}^{k}\right)\right\|^{2}\nonumber\\
\leq &~ \mathbb{E}\left\|G_{y,\tau-1}^{k} - \nabla_{y}F\left(x^{k}_{\tau-1},y^{k}_{\tau-1}\right) \right\|^{2} +  \mathrm{Var}\left( \frac{1}{M}\sum_{i=1}^{M}\left(\nabla_{y}f\left(x^{k}_{\tau},y^{k}_{\tau};\bm{\xi}^{k}_{\tau,i}\right) -  \nabla_{y}f\left(x^{k}_{\tau-1}, y_{\tau-1}^{k};\bm{\xi}^{k}_{\tau,i}\right)\right)\right)\nonumber\\
\overset{(i)}{\leq} &~ \mathbb{E}\left\|G_{y,\tau-1}^{k} - \nabla_{y}F\left(x^{k}_{\tau-1},y^{k}_{\tau-1}\right) \right\|^{2} + \frac{1}{M} \mathbb{E}\left\| \nabla_{y}f\left(x^{k}_{\tau},y^{k}_{\tau};\bm{\xi}^{k}_{\tau,i}\right) -  \nabla_{y}f\left(x^{k}_{\tau-1}, y_{\tau-1}^{k};\bm{\xi}^{k}_{\tau,i}\right)\right\|^{2}\nonumber\\
\overset{(ii)}{\leq} &~\mathbb{E}\left\|G_{y,\tau-1}^{k} - \nabla_{y}F\left(x^{k}_{\tau-1},y^{k}_{\tau-1}\right) \right\|^{2} + \frac{L_{y}^{2}}{M} \left(\mathbb{E}\left\| x^{k}_{\tau} - x^{k}_{\tau-1}\right\|^{2} + \mathbb{E}\left\| y^{k}_{\tau} - y^{k}_{\tau-1}\right\|^{2}\right)\nonumber\\
\overset{(iii)}{\leq} & ~\underbrace{\mathbb{E}\left\|G_{y,0}^{k} - \nabla_{y}F\left(x^{k}_{0},y^{k}_{0}\right) \right\|^{2}}_{\textcircled{1}} + \frac{L_{y}^{2}}{M} \sum_{b=1}^{\tau}\left(\mathbb{E}\left\| x^{k}_{b} - x^{k}_{b-1}\right\|^{2} + \mathbb{E}\left\| y^{k}_{b} - y^{k}_{b-1}\right\|^{2}\right)\label{first_eq_for_error_bounds:x}
\end{align}
where $(i)$ follows since the samples are i.i.d and $\mathbb{E}\left\|\zeta - \mathbb{E}\left(\zeta\right)\right\|^{2} \leq \mathbb{E}\left\|\zeta\right\|^{2}$, $(ii)$ follows from the $L_y$-Lipschitz continuity of $\nabla_{y}f(\cdot,\cdot; \bm{\xi})$, and $(iii)$ follows from applying the inequality recursively. 

{
We now bound \textcircled{1} in \eqref{first_eq_for_error_bounds:x}. 
In the finite-sum setting, since {we take all $N$ samples,} 
we have
\[
G_{y,0}^k = \frac{1}{N}\sum_{i=1}^{N} \nabla_y f(x_0^k,y_0^k;\bm{\xi}_{0,i}^{k}) = \nabla_y F(x_0^k,y_0^k),
\]
and thus 
$\textcircled{1} = 0$. 
In the online setting, since the samples are i.i.d., 
Assumption~\ref{assumption_for_smooth_case}$(v.)$ implies
$\textcircled{1} \leq \frac{\sigma_y^2}{B}$. 
Plugging these bounds back into \eqref{first_eq_for_error_bounds:x} yields the desired result in \eqref{eq:lemma0_y}.
}

Similarly, we have 
\begin{align}
&\mathbb{E}\left\|G_{x,\tau}^{k} -\nabla_{x}F\left(x_{\tau}^{k},y_{\tau}^{k}\right)\right\|^{2}\nonumber\\
\leq &~ \mathbb{E}\left\|G_{x,\tau-1}^{k} - \nabla_{x}F\left(x^{k}_{\tau-1},y^{k}_{\tau-1}\right) \right\|^{2} + \frac{1}{M} \mathbb{E}\left\| \nabla_{x}f\left(x^{k}_{\tau},y^{k}_{\tau};\bm{\xi}^{k}_{\tau,i}\right) -  \nabla_{x}f\left(x^{k}_{\tau-1}, y_{\tau-1}^{k};\bm{\xi}^{k}_{\tau,i}\right)\right\|^{2}.\label{first-eq}
\end{align}
Now, by applying Young's inequality and using the fact that 
$\nabla_x F(x,y)$ is $L_{x}$-Lipschitz in $x$ and $L_{y}$-Lipschitz in $y$, 
we can bound the second term in \eqref{first-eq} by
\begin{align*}
&\mathbb{E}\left\| \nabla_{x}f\left(x^{k}_{\tau},y^{k}_{\tau};\bm{\xi}^{k}_{\tau,i}\right) -  \nabla_{x}f\left(x^{k}_{\tau-1}, y_{\tau-1}^{k};\bm{\xi}^{k}_{\tau,i}\right)\right\|^{2}\\
\leq &~ 2\mathbb{E}\left\| \nabla_{x}f\left(x^{k}_{\tau},y^{k}_{\tau};\bm{\xi}^{k}_{\tau,i}\right) -  \nabla_{x}f\left(x^{k}_{\tau-1}, y_{\tau}^{k};\bm{\xi}^{k}_{\tau,i}\right)\right\|^{2} + 2\mathbb{E}\left\| \nabla_{x}f\left(x^{k}_{\tau-1},y^{k}_{\tau};\bm{\xi}^{k}_{\tau,i}\right) -  \nabla_{x}f\left(x^{k}_{\tau-1}, y_{\tau-1}^{k};\bm{\xi}^{k}_{\tau,i}\right)\right\|^{2}\\
\leq &~ 2L_{x}^{2}\left\|x_{\tau}^{k} - x_{\tau-1}^{k}\right\|^{2} + 2L_{y}^{2}\left\|y_{\tau}^{k} - y_{\tau-1}^{k}\right\|^{2}
\end{align*}
Plugging this bound back into \eqref{first-eq} and utilizing a line of reasoning similar to that in the proof for \eqref{eq:lemma0_y}, we retrieve \eqref{eq:lemma0_x}.
\end{proof}

\begin{lemma}\label{lem:lip-cont-x-r} For any $y, y' \in \mathcal{Y}$ and $z, z' \in \mathbb{R}^{d_{x}}$, we have
\begin{align}
&\left\|x_{r}(y,z) - x_{r}(y,z')\right\|^{2} \leq \sigma_{1}^{2}\left\|z - z'\right\|^{2}, 
\label{eq:sc1}\\
&\left\|x_{r}(y,z) - x_{r}(y',z)\right\|^{2} \leq \sigma_{2}^{2}\left\|y - y'\right\|^{2},
\label{eq:sc2}
\end{align}
where $\sigma_{1} = \frac{r}{r-\rho}$ and $\sigma_{2} = 2+\frac{L_{y}}{r-\rho}$.  
\end{lemma}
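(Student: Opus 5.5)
The plan is to use strong convexity of $F_r(\cdot,y,z)+\iota_{\mathcal X}$. Since $F(\cdot,y)$ is $\rho$-weakly convex and $r>\rho$ by the choice of $r$, the function $x\mapsto F_r(x,y,z)$ is $(r-\rho)$-strongly convex on $\mathcal X$. Hence $x_r(y,z)$ is unique and is characterized by the variational inequality
\[
\langle \nabla_x F(x_r(y,z),y) + r(x_r(y,z)-z),\, x - x_r(y,z)\rangle \ge 0,\quad \forall x\in\mathcal X .
\]
Both estimates will follow by writing this inequality at two parameter values, testing each against the other solution, and adding.

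For \eqref{eq:sc1}, let $x=x_r(y,z)$ and $x'=x_r(y,z')$. Adding the two variational inequalities gives
\[
\langle \nabla_x F(x,y)-\nabla_x F(x',y) + r(x-x'),\, x'-x\rangle + r\langle z'-z,\,x'-x\rangle \ge 0 .
\]
Weak convexity gives monotonicity of $\nabla_xF(\cdot,y)+\rho I$, that is, $\langle \nabla_x F(x,y)-\nabla_x F(x',y), x-x'\rangle \ge -\rho\|x-x'\|^2$. Combining the two yields $(r-\rho)\|x-x'\|^2\le r\langle z-z',x-x'\rangle\le r\|z-z'\|\|x-x'\|$. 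Dividing through gives $\|x-x'\|\le \frac{r}{r-\rho}\|z-z'\|$, which is \eqref{eq:sc1} after squaring.

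For \eqref{eq:sc2}, let $x=x_r(y,z)$ and $x'=x_r(y',z)$. Adding the variational inequalities now gives
\[
\langle \nabla_xF(x,y)-\nabla_xF(x',y'),\,x'-x\rangle - r\|x-x'\|^2\ge 0 .
\]
I will split $\nabla_xF(x,y)-\nabla_xF(x',y') = [\nabla_xF(x,y)-\nabla_xF(x',y)]+[\nabla_xF(x',y)-\nabla_xF(x',y')]$. The first bracket is handled by weak convexity as before. The second bracket is bounded using the $L_y$-Lipschitz continuity of $\nabla_xF(x',\cdot)$ from Remark~\ref{rm:ccv-KL}. Together these give $(r-\rho)\|x-x'\|^2\le L_y\|y-y'\|\|x-x'\|$, hence $\|x-x'\|\le \frac{L_y}{r-\rho}\|y-y'\|$. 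This is at most $(2+\frac{L_y}{r-\rho})\|y-y'\|$, so the stated $\sigma_2$ holds with slack; the constant $2$ is not needed and only makes the statement looser.

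The main point to check is that $r>\rho$ under the chosen parameters, which is immediate since $r\ge 2\rho+325(L_y+1)$. One must also make sure that the monotonicity inequality is applied using weak convexity of $F(\cdot,y)$ on $\mathcal X$. This is valid because both $x$ and $x'$ lie in $\mathcal X$, and the first-order characterization of weak convexity of a differentiable function on a convex set gives the required monotonicity of $\nabla_x F(\cdot,y)+\rho I$ there. No other obstacle is expected.
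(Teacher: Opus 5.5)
Your proof is correct. For \eqref{eq:sc1} it matches what the paper does: the paper defers to \cite{li2025nonsmooth}, and your variational-inequality argument is the standard self-contained version of that. For \eqref{eq:sc2} you take a genuinely different route. The paper never uses $\nabla_x F$ there. It starts from the quadratic-growth inequalities of the $(r-\rho)$-strongly convex function $F_r(\cdot,y,z)$ at its minimizer, together with the mirror inequality for $y'$. It then bounds the cross differences $F_r(x,y',z)-F_r(x,y,z)$ with the descent lemma in $y$, and finally uses the joint $L_y$-Lipschitz continuity of $\nabla_y F$. The curvature terms from the descent lemma leave an extra $2L_y\|y-y'\|^2$. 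So the paper ends with $\varrho^2\le \frac{L_y}{r-\rho}\varrho+\frac{2L_y}{r-\rho}$, whose solution scales like $\sqrt{L_y/(r-\rho)}$ and is then relaxed to $2+\frac{L_y}{r-\rho}$. You instead add the two first-order optimality conditions and use the cross-Lipschitz bound $\|\nabla_xF(x',y)-\nabla_xF(x',y')\|\le L_y\|y-y'\|$. This bound follows from Assumption~\ref{assumption_for_smooth_case}(iii) plus Jensen, as recorded in Remark~\ref{rm:ccv-KL}, and it gives the sharper constant $\frac{L_y}{r-\rho}$ directly. The paper's route buys generality: it needs only weak convexity (no gradient) in $x$ plus smoothness in $y$, which is why it carries over verbatim from the nonsmooth-in-$x$ setting of \cite{li2025nonsmooth}. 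Your route buys a shorter proof and a tighter constant, for example $\sigma_2\le 1$ instead of $\sigma_2\le 3$ once $r\ge L_y+\rho$, which would also shrink $L_{d_r}=(\sigma_2+1)L_y$. The price is that you need differentiability of $F$ in $x$ with $\nabla_xF(x,\cdot)$ Lipschitz. That holds in Section~\ref{Section_Smooth}, and also for $F^\lambda$ by Lemma~\ref{lemma for smooth f lambda}.
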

\begin{proof}
First, we note that the proof of \eqref{eq:sc1} in \cite{li2025nonsmooth} fundamentally relies on the weak convexity of the function in $x$ and the Lipschitz smoothness of the objective in $y$. Since our assumptions on $F(x, y)$ satisfy these same conditions, we can directly adopt their proof without modification. 

We now turn to the proof of \eqref{eq:sc2}. The overall structure of our proof closely follows that of \cite{li2025nonsmooth}, with minor adjustments to accommodate our specific setting. In particular, since $F(\cdot, y)$ is $\rho$-weakly convex in $x$ for any fixed $y$, it follows that
\begin{align}
F_{r}(x_{r}(y', z), y, z) - F_{r}(x_{r}(y, z), y, z) \geq \frac{r-\rho}{2}\left\|x_{r}(y',z) - x_{r}(y, z)\right\|^{2}, \label{lemma3_1}
\\
F_{r}(x_{r}(y, z), y', z) - F_{r}(x_{r}(y', z), y', z) \geq \frac{r-\rho}{2}\left\|x_{r}(y,z) - x_{r}(y', z)\right\|^{2}, \label{lemma3_2}
\end{align}
Additionally, from the $L_{y}$-Lipschitz continuity of $\nabla_{y}F(\cdot,\cdot)$, we have 
\begin{align}
F_{r}(x_{r}(y, z), y', z) - F_{r}(x_{r}(y, z), y, z) \leq \langle \nabla_{y}F(x_{r}(y,z),y,z), y'-y \rangle + \frac{L_{y}}{2}\left\|y - y'\right\|^{2}, \label{lemma3_3}
\\
F_{r}(x_{r}(y', z), y, z) - F_{r}(x_{r}(y', z), y', z) \leq \langle \nabla_{y}F(x_{r}(y',z),y',z), y-y' \rangle + \frac{L_{y}}{2}\left\|y - y'\right\|^{2}, \label{lemma3_4}
\end{align}
Adding \eqref{lemma3_1} and \eqref{lemma3_2}, plugging in the sum of \eqref{lemma3_3} and \eqref{lemma3_4}, and using the Lipschitz continuity of $\nabla_{y}F$ along with the Cauchy–Schwarz inequality, we get
\begin{align}
&(r - \rho)\|x_r(y, z) - x_r(y', z)\|^2 \nonumber\\
\leq &~ \langle \nabla_y F_r(x_r(y, z), y, z) - \nabla_y F_r(x_r(y', z), y', z), y' - y \rangle + L_{y}\|y - y'\|^2 \nonumber\\
\leq &~ \left\| \nabla_y F_r(x_r(y, z), y, z) - \nabla_y F_r(x_r(y', z), y', z)\right\| \left\| y' - y \right\| + L_{y}\|y - y'\|^2\nonumber\\
\leq &~ \Bigg(L_{y}\|x_r(y, z) - x_r(y', z)\| +  L_{y}\left\| y' - y \right\| \Bigg) \left\| y' - y \right\| + L_{y}\|y - y'\|^2    \nonumber\\ 
= &~ L_{y}\|x_r(y, z) - x_r(y', z)\| \left\| y' - y \right\| + 2L_{y}\|y - y'\|^2 \label{lemma3_5}
\end{align}
Letting $\varrho = \frac{\|x_r(y, z) - x_r(y', z)\|}{\left\| y' - y \right\|}$ and rearranging \eqref{lemma3_5}, we get
\begin{align*}
\varrho^2 
\leq &~ \frac{L_{y}}{r - \rho}\varrho + \frac{2L_{y}}{r - \rho} \\
\overset{(i)}{\leq} &~ \frac{1}{2}\varrho^2 + \frac{L_{y}^2}{2(r - \rho)^2} + \frac{2L_{y}}{r - \rho} \\
= &~ \frac{1}{2}\varrho^2 + \frac{L_{y}^2 + 4L_{y}(r - \rho)}{2(r - \rho)^2} \\
\leq &~ \frac{1}{2}\varrho^2 + \frac{(L_{y} + 2(r - \rho))^2}{2(r - \rho)^2},
\end{align*}
where $(i)$ follows from Young's inequality. The desired result follows immediately from the above and this completes the proof.
\end{proof}

The next lemma is taken directly from \cite{li2025nonsmooth}. Though our algorithm is stochastic, the definition of $x_r$ and $d_r$ is based on the deterministic function. Thus this lemma also holds for our case.

\begin{lemma}\label{lem:smooth-dr-y} For any $y, y', y'' \in \mathcal{Y}$ and $z, z', z'' \in \mathbb{R}^{d_{x}}$, we have 
\begin{align}
\left\|\nabla_{y}d_{r}(y',z) - \nabla_{y}d_{r}(y'',z)\right\| \leq L_{d_{r}}\left\|y' - y''\right\|,
\label{eq:sc3}
\end{align}
where $L_{d_{r}} = (\sigma_{2}+1)L_{y}$.
\end{lemma}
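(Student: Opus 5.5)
The plan is to prove the lemma in two stages. First, a Danskin-type formula gives $\nabla_y d_r(y,z) = \nabla_y F(x_r(y,z),y)$. Second, the Lipschitz bound follows from the Lipschitz continuity of $\nabla_y F$ combined with the stability estimate \eqref{eq:sc2} of Lemma~\ref{lem:lip-cont-x-r}.

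\textbf{Stage 1: the gradient formula.} Since $r>\rho$, the function $F_r(\cdot,y,z)$ is $(r-\rho)$-strongly convex on the closed convex set $\mathcal{X}$. Hence the minimizer $x_r(y,z)$ exists and is unique. By \eqref{eq:sc2}, the map $y\mapsto x_r(y,z)$ is Lipschitz, so it is continuous. The function $F_r(x,y,z)$ is continuously differentiable in $y$, with $\nabla_y F_r(x,y,z)=\nabla_y F(x,y)$ jointly continuous. Danskin's theorem for a min-function with a unique minimizer then gives the formula.

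Because $\mathcal{X}$ may be unbounded, I would instead argue directly with a sandwich. For $y,y'$, the upper bound is
\begin{align*}
d_r(y',z)-d_r(y,z) \le F_r(x_r(y,z),y',z)-F_r(x_r(y,z),y,z).
\end{align*}
The lower bound is the analogous difference evaluated at $x_r(y',z)$. Applying the descent lemma in $y$ (using the $L_y$-Lipschitz continuity of $\nabla_y F$) to both differences shows
\begin{align*}
d_r(y',z)-d_r(y,z)-\langle \nabla_y F(x_r(y,z),y),y'-y\rangle
\end{align*}
is bounded in absolute value by $\frac{L_y}{2}\|y'-y\|^2$ plus the term $\|\nabla_y F(x_r(y',z),y)-\nabla_y F(x_r(y,z),y)\|\,\|y'-y\|$. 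By \eqref{eq:sc2}, the latter is at most $L_y\sigma_2\|y'-y\|^2$. The whole remainder is therefore $O(\|y'-y\|^2)$, which yields differentiability together with the formula.

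\textbf{Stage 2: the Lipschitz bound.} With the formula in hand, for $y',y''\in\mathcal{Y}$,
\begin{align*}
\|\nabla_y d_r(y',z)-\nabla_y d_r(y'',z)\| &= \|\nabla_y F(x_r(y',z),y')-\nabla_y F(x_r(y'',z),y'')\|\\
&\le L_y\big(\|x_r(y',z)-x_r(y'',z)\|+\|y'-y''\|\big).
\end{align*}
This uses the joint $L_y$-Lipschitz continuity of $\nabla_y F$ from Remark~\ref{rm:ccv-KL}, via $\sqrt{a^2+b^2}\le a+b$. Applying \eqref{eq:sc2} bounds the right-hand side by $L_y(\sigma_2+1)\|y'-y''\|$, which is exactly $L_{d_r}$ times $\|y'-y''\|$.

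\textbf{Main obstacle.} The only delicate step is Stage 1. Justifying differentiability of $d_r$ requires a unique minimizer and uniform control without assuming $\mathcal{X}$ compact, and the sandwich argument handles this. Stage 2 is routine.
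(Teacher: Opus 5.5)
Your proposal is correct and is essentially the argument the paper relies on. The paper gives no proof and simply imports the lemma from \cite{li2025nonsmooth}, where it follows from Danskin's formula $\nabla_y d_r(y,z)=\nabla_y F(x_r(y,z),y)$, the joint $L_y$-Lipschitz continuity of $\nabla_y F$, and \eqref{eq:sc2}. Your sandwich argument for the gradient formula is a self-contained addition that avoids needing $\mathcal{X}$ compact, which matters here since the smooth setting only assumes $\mathcal{X}$ closed.
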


\begin{lemma}\label{eq:primal:init}
Under Assumption \ref{assumption_for_smooth_case}, for any $k\geq 0$ and $0 \leq \tau \leq T-1$, we have
\begin{align}
\left\|x_{\tau}^{k} - x_{r}(y_{\tau}^{k},z_{\tau}^{k})\right\| \leq  \eta\left\|x_{\tau}^{k} - x_{\tau, +}^{k}(y_{\tau}^{k},z_{\tau}^{k})\right\|,
\end{align}
where $\eta = \frac{\alpha_{x}L_{x}+\alpha_{x}r+1}{\alpha_{x}r - \alpha_{x}\rho}$.
\end{lemma}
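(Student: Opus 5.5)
This is a standard error bound for the projected gradient step on the $(r-\rho)$-strongly convex function $F_r(\cdot,y,z)$. Fix $k,\tau$ and write $y=y_\tau^k$, $z=z_\tau^k$, $x=x_\tau^k$, $x_+=x_{\tau,+}^k(y,z)=\mathrm{proj}_\mathcal{X}(x-\alpha_x\nabla_x F_r(x,y,z))$, and $x^*=x_r(y,z)$. Two facts are available. Since $F(\cdot,y)$ is $\rho$-weakly convex and $r>\rho$ by the choice of $r$, $F_r(\cdot,y,z)$ is $(r-\rho)$-strongly convex, so $\nabla_x F_r(\cdot,y,z)$ is $(r-\rho)$-strongly monotone. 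Also, $\nabla_x F_r(\cdot,y,z)=\nabla_x F(\cdot,y)+r(\cdot-z)$ is $(L_x+r)$-Lipschitz by Remark~\ref{rm:ccv-KL}.

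First, write the optimality conditions. The projection gives
\[
\langle x-\alpha_x\nabla_x F_r(x,y,z)-x_+,\, w-x_+\rangle\le 0 \quad \forall w\in\mathcal{X},
\]
and optimality of $x^*$ gives $\langle \nabla_x F_r(x^*,y,z),\, w-x^*\rangle\ge 0$ for all $w\in\mathcal{X}$. Take $w=x^*$ in the first inequality and $w=x_+$ in the second, multiply the second by $\alpha_x$, and add. This yields
\[
\alpha_x\langle \nabla_x F_r(x,y,z)-\nabla_x F_r(x^*,y,z),\, x_+-x^*\rangle \le \langle x-x_+,\, x_+-x^*\rangle .
\]

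Second, split $x_+-x^*=(x-x^*)-(x-x_+)$ in the left-hand side. Strong monotonicity gives the lower bound $\alpha_x(r-\rho)\|x-x^*\|^2$ on the main term. Cauchy--Schwarz with the Lipschitz bound controls the cross term, so the left-hand side is at least $\alpha_x(r-\rho)\|x-x^*\|^2-\alpha_x(L_x+r)\|x-x^*\|\|x-x_+\|$. For the right-hand side,
\[
\langle x-x_+,\, x_+-x^*\rangle=\langle x-x_+,\, x-x^*\rangle-\|x-x_+\|^2\le \|x-x_+\|\|x-x^*\|.
\]
Combining and dividing by $\|x-x^*\|$ (the claim is trivial if this is zero) gives
\[
\alpha_x(r-\rho)\|x-x^*\|\le (\alpha_xL_x+\alpha_xr+1)\|x-x_+\|,
\]
which is the stated bound with $\eta=\frac{\alpha_xL_x+\alpha_xr+1}{\alpha_x r-\alpha_x\rho}$.

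The only real care is in the sign bookkeeping when combining the two variational inequalities and in choosing the test points $w$ correctly. Beyond that the argument is routine. It needs $r>\rho$, which the theorem's choice of $r$ guarantees, and it is deterministic: no stochastic estimator enters, because $x_{\tau,+}^k$ is defined with the exact gradient.
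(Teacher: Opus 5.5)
Your proof is correct and follows essentially the same route as the paper's (adapted from Pang, 1987). You use the projection inequality tested at $x_r(y,z)$ and the optimality condition of $x_r(y,z)$ tested at $x_{\tau,+}^k$, add them, and then combine the $(r-\rho)$-strong monotonicity of $\nabla_x F_r(\cdot,y,z)$ with its $(L_x+r)$-Lipschitz continuity to get the constant $\eta$. The paper's only difference is notational: it writes $u=x_\tau^k-x_{\tau,+}^k$.
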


\begin{proof}
The proof follows \cite{Pang1987}, with modifications to fit our setting. Let us first define 
\[u_{\tau}^{k} = x_{\tau}^{k} - x_{\tau, +}^{k}(y_{\tau}^{k},z_{\tau}^{k}) \implies x_{\tau}^{k} - u_{\tau}^{k} = x_{\tau, +}^{k}(y_{\tau}^{k},z_{\tau}^{k}) \in \cX.\] By the definition of $x_{r}(y_{\tau}^{k},z_{\tau}^{k})$, the first-order optimality condition yields
\begin{align}
\left \langle \nabla_{x}F_{r}\left(x_{r}(y_{\tau}^{k},z_{\tau}^{k}),y_{\tau}^{k},z_{\tau}^{k}\right), \left(x_{\tau}^{k} - u_{\tau}^{k}\right) - x_{r}(y_{\tau}^{k},z_{\tau}^{k})  \right\rangle \geq 0 \nonumber\\
\implies \left \langle {\alpha_{x}}\nabla_{x}F_{r}\left(x_{r}(y_{\tau}^{k},z_{\tau}^{k}),y_{\tau}^{k},z_{\tau}^{k}\right), \left(x_{\tau}^{k} - u_{\tau}^{k}\right) - x_{r}(y_{\tau}^{k},z_{\tau}^{k})  \right\rangle \geq 0. \label{eq-supp-2}
\end{align}
Similarly, by the definition of the projected gradient step $x_{\tau, +}^{k}(y_{\tau}^{k},z_{\tau}^{k})$ we obtain
\begin{align}
\left \langle \left(x_{\tau}^{k} - u_{\tau}^{k}\right) -  \left(x_{\tau}^{k} - \alpha_{x} \nabla_{x}F_{r}(x_{\tau}^{k},y_{\tau}^{k},z_{\tau}^{k})\right), x_{r}(y_{\tau}^{k},z_{\tau}^{k}) - \left(x_{\tau}^{k} - u_{\tau}^{k}\right)   \right\rangle \geq 0. \label{eq-supp-3}
\end{align}
Adding \eqref{eq-supp-2} and \eqref{eq-supp-3} leads to
\[\left \langle {\alpha_{x}}\nabla_{x}F_{r}\left(x_{r}(y_{\tau}^{k},z_{\tau}^{k}),y_{\tau}^{k},z_{\tau}^{k}\right) - \alpha_{x} \nabla_{x}F_{r}(x_{\tau}^{k},y_{\tau}^{k},z_{\tau}^{k}) + u_{\tau}^{k}\;, \left(x_{\tau}^{k} - x_{r}(y_{\tau}^{k},z_{\tau}^{k})\right) - u_{\tau}^{k}   \right\rangle \geq 0, \]
which, after rearranging terms, implies
\begin{align}
&\left \langle \alpha_{x} \nabla_{x}F_{r}(x_{\tau}^{k},y_{\tau}^{k},z_{\tau}^{k}) - {\alpha_{x}}\nabla_{x}F_{r}\left(x_{r}(y_{\tau}^{k},z_{\tau}^{k}),y_{\tau}^{k},z_{\tau}^{k}\right)  \;, x_{\tau}^{k} - x_{r}(y_{\tau}^{k},z_{\tau}^{k}) \right\rangle \nonumber\\
\leq &~ \left \langle \alpha_{x} \nabla_{x}F_{r}(x_{\tau}^{k},y_{\tau}^{k},z_{\tau}^{k}) - {\alpha_{x}}\nabla_{x}F_{r}\left(x_{r}(y_{\tau}^{k},z_{\tau}^{k}),y_{\tau}^{k},z_{\tau}^{k}\right)  \;, u_{\tau}^{k}   \right\rangle + \left \langle u_{\tau}^{k}, x_{\tau}^{k} - x_{r}(y_{\tau}^{k},z_{\tau}^{k}) \right\rangle  - \left\|u_{\tau}^{k}\right\|^{2}\nonumber\\
\leq &~ \left\| \alpha_{x} \nabla_{x}F_{r}(x_{\tau}^{k},y_{\tau}^{k},z_{\tau}^{k}) - {\alpha_{x}}\nabla_{x}F_{r}\left(x_{r}(y_{\tau}^{k},z_{\tau}^{k}),y_{\tau}^{k},z_{\tau}^{k}\right) \right\|\left\|u_{\tau}^{k}\right\| + \left\|x_{\tau}^{k} - x_{r}(y_{\tau}^{k},z_{\tau}^{k})\right\|\left\|u_{\tau}^{k}\right\|\nonumber\\
\overset{(i)}{\leq} &~ \left({\alpha_{x}}\left(r+L_{x}\right)+1\right)\left\|x_{\tau}^{k} - x_{r}(y_{\tau}^{k},z_{\tau}^{k})\right\|\left\|x_{\tau}^{k} - x_{\tau, +}^{k}(y_{\tau}^{k},z_{\tau}^{k})\right\|, \label{eq-supp-4}
\end{align}
where $(i)$ follows from the $(r+L_{x})$-Lipschitz smoothness of $\nabla_{x}F_{r}(\cdot,y,z)$ and the definition of $u_{\tau}^{k}$.

Now, since $F_{r}(\cdot,y,z)$ is $(r-\rho)$-strongly convex, we have
\begin{align}
 \left\langle  {\alpha_{x}}\nabla_{x}F_{r}\left(x_{\tau}^{k},y_{\tau}^{k},z_{\tau}^{k}\right) - {\alpha_{x}}\nabla_{x}F_{r}\left(x_{r}(y_{\tau}^{k},z_{\tau}^{k}),y_{\tau}^{k},z_{\tau}^{k}\right), x_{\tau}^{k} - x_{r}(y_{\tau}^{k},z_{\tau}^{k})  \right \rangle \geq {\alpha_{x}}(r - \rho)\left\|x_{\tau}^{k} - x_{r}(y_{\tau}^{k},z_{\tau}^{k})\right\|^{2}  \label{eq-supp-1}
\end{align}
Finally, combining \eqref{eq-supp-1} and \eqref{eq-supp-4} yields the desired inequality, completing the proof.
%
\end{proof}

{We are now ready to establish the primal error bound.}

\begin{lemma}[Primal Error Bound] Under Assumption \ref{assumption_for_smooth_case}, for any $k\geq 0$ and $0 \leq \tau \leq T-1$, we have

\begin{align*} 
\mathbb{E}\left\|x^{k}_{\tau + 1} - x_{r}\left(y_{\tau}^{k}, z_{\tau}^{k}\right)\right\|^{2} \leq &~ \left(\frac{5\eta^{2}}{2}+5\right)\mathbb{E}\left\|x^{k}_{\tau+1} - x^{k}_{\tau}  \right\|^{2} + \frac{5\eta^{2}\alpha_{x}^{2}L_{x}^{2}}{M}\sum_{b=0}^{\tau-1}\mathbb{E}\left\| x^{k}_{b+1} - x^{k}_{b}\right\|^{2}\\
\quad & +\frac{5\eta^{2}\alpha_{x}^{2}L_{y}^{2}}{M}\sum_{b=0}^{\tau-1}\mathbb{E}\left\| y^{k}_{b+1} - y^{k}_{b}\right\|^{2}{+\frac{5\alpha_x^2\eta^2C_{\sigma,x}}{2}}.
\end{align*}
\label{lemma:lemma_primal_error_bound}
\end{lemma}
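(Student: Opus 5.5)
We split $x^{k}_{\tau+1} - x_{r}(y_{\tau}^{k},z_{\tau}^{k})$ through two intermediate points: the current iterate $x_\tau^k$ and the exact projected gradient step $x_{\tau,+}^{k}(y_\tau^k,z_\tau^k)$. Concretely,
\[
x^{k}_{\tau+1} - x_{r}(y_{\tau}^{k},z_{\tau}^{k}) = \big(x^{k}_{\tau+1}-x^{k}_{\tau}\big) + \big(x^{k}_{\tau}-x_{r}(y_{\tau}^{k},z_{\tau}^{k})\big).
\]
The second piece is controlled by Lemma~\ref{eq:primal:init}:
\[
\|x^{k}_{\tau}-x_{r}(y_\tau^k,z_\tau^k)\|\le \eta\,\|x^{k}_{\tau}-x_{\tau,+}^{k}(y_\tau^k,z_\tau^k)\|.
\]
We then split once more:
\[
x^{k}_{\tau}-x_{\tau,+}^{k} = \big(x^{k}_{\tau}-x^{k}_{\tau+1}\big)+\big(x^{k}_{\tau+1}-x_{\tau,+}^{k}\big).
\]

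The key observation concerns the last difference. Both $x^k_{\tau+1}$ and $x_{\tau,+}^k(y_\tau^k,z_\tau^k)$ are projections onto $\mathcal{X}$ of points sharing the same term $x_\tau^k - \alpha_x r(x_\tau^k - z_\tau^k)$. They differ only in using $G^k_{x,\tau}$ versus $\nabla_x F(x_\tau^k,y_\tau^k)$. Since the projection is nonexpansive,
\[
\|x^{k}_{\tau+1}-x_{\tau,+}^{k}\|\le \alpha_x\|G^k_{x,\tau}-\nabla_xF(x_\tau^k,y_\tau^k)\|.
\]

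For the squared bound we use Young's inequality:
\[
\|a+b\|^2\le (1+c)\|a\|^2+(1+c^{-1})\|b\|^2,
\]
with weights chosen to match the constants $5$ and $\tfrac{5}{2}$. A natural choice is to first bound
\[
\|x^k_{\tau+1}-x_r\|^2\le 5\|x^k_{\tau+1}-x^k_\tau\|^2+\tfrac54\|x^k_\tau-x_r\|^2 .
\]
Then we use
\[
\|x^k_\tau-x_{\tau,+}^k\|^2\le 2\|x^k_\tau-x^k_{\tau+1}\|^2+2\alpha_x^2\|G^k_{x,\tau}-\nabla_xF\|^2 .
\]
This gives $\tfrac{5\eta^2}{2}$ on $\|x^k_{\tau+1}-x^k_\tau\|^2$ and $\tfrac{5\eta^2\alpha_x^2}{2}$ on the gradient-error term. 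Exact constants may need adjusting, but the structure is clear.

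Finally, we take expectations and insert the bound \eqref{eq:lemma0_x} from Lemma~\ref{lem:err-grad-est}. Its factor $2$ on the $L_x^2$ and $L_y^2$ sums, times $\tfrac{5\eta^2\alpha_x^2}{2}$, produces exactly the coefficients $\tfrac{5\eta^2\alpha_x^2L_x^2}{M}$ and $\tfrac{5\eta^2\alpha_x^2L_y^2}{M}$ in the statement. The variance contribution becomes $\tfrac{5\alpha_x^2\eta^2}{2}C_{\sigma,x}$.

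We need one bookkeeping check: Lemma~\ref{eq:primal:init} is deterministic, so it holds pathwise and can be applied before taking expectations. The main, though mild, obstacle is tuning the Young weights so the constants land exactly as stated.
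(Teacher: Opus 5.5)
Your proposal is correct and follows the paper's proof essentially step by step: the same Young split with weights $5$ and $\tfrac{5}{4}$, then Lemma~\ref{eq:primal:init}, a factor-$2$ split through $x^k_{\tau+1}$, nonexpansiveness of $\mathrm{proj}_{\mathcal{X}}$, and finally \eqref{eq:lemma0_x}. The constants you computed match the stated ones exactly, so no adjustment of the Young weights is needed.
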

\begin{proof} Utilizing Young's inequality, we have
\begin{align}
\mathbb{E}\left\|x^{k}_{\tau + 1} - x_{r}\left(y_{\tau}^{k}, z_{\tau}^{k}\right)\right\|^{2} 
\leq &~ 5\mathbb{E}\left\|x^{k}_{\tau + 1} - x^{k}_{\tau}\right\|^{2} + \frac{5}{4}\mathbb{E}\left\|x^{k}_{\tau} - x_{r}\left(y_{\tau}^{k}, z_{\tau}^{k}\right)\right\|^{2}\nonumber\\
\overset{(i)}
{\leq} &~ 5\mathbb{E}\left\|x^{k}_{\tau + 1} - x^{k}_{\tau}\right\|^{2} + \frac{5\eta^{2}}{4}\mathbb{E}\left\|x^{k}_{\tau}  - x_{\tau , +}^{k}\left(y_{\tau}^{k}, z_{\tau}^{k}\right) \right\|^{2}\nonumber\\
\overset{(i)}{\leq}&~\left(\frac{5\eta^{2}}{2}+5\right)\mathbb{E}\left\|x^{k}_{\tau+1} - x^{k}_{\tau} \right\|^{2} + \frac{5\eta^{2}}{2}\mathbb{E}\left\| x^{k}_{\tau+1} - x_{\tau , +}^{k}\left(y_{\tau}^{k}, z_{\tau}^{k}\right) \right\|^{2},
\label{eq:lemma1_init_1}
\end{align}
where $(i)$ follows from 
Lemma \ref{eq:primal:init}. 
We now bound the second term in \eqref{eq:lemma1_init_1} by
\begin{align}
\mathbb{E}\left\|x^{k}_{\tau + 1} - x_{\tau , +}^{k}\left(y_{\tau}^{k}, z_{\tau}^{k}\right)  \right\|^{2} 
= &~ \mathbb{E}\left\| \text{proj}_\mathcal{X} \left( x^{k}_{\tau} - \alpha_{x} \left[G_{x,\tau}^{k} + r\left(x^{k}_{\tau} - z^{k}_{\tau}\right)\right] \right) - \text{proj}_\mathcal{X} \left( x^{k}_{\tau} - \alpha_{x} \nabla_{x}F_{r}(x^{k}_{\tau},y_{\tau}^{k},z_{\tau}^{k}) \right)  \right\|^{2} \nonumber \nonumber \\
\overset{(i)}{\leq} &~ \alpha_{x}^{2}\mathbb{E}\left\| \nabla_{x}F(x^{k}_{\tau},y_{\tau}^{k}) - G_{x,\tau}^{k} \right\|^{2} \nonumber \\
\overset{(ii)}{\leq} &~ \frac{2\alpha_{x}^{2}L_{x}^{2}}{M}\sum_{b=0}^{\tau-1}\mathbb{E}\left\| x^{k}_{b+1} - x^{k}_{b}\right\|^{2} + \frac{2\alpha_{x}^{2}L_{y}^{2}}{M}\sum_{b=0}^{\tau-1}\mathbb{E}\left\| y^{k}_{b+1} - y^{k}_{b}\right\|^{2}{+\alpha_x^2C_{\sigma,x}}\,,
\label{eq:lastline}
\end{align}
where $(i)$ follows from non-expansiveness of the projection operator, and $(ii)$ holds by  \eqref{eq:lemma0_x}. Consequently, plugging \eqref{eq:lastline} into \eqref{eq:lemma1_init_1}, we get the desired inequality.
\end{proof}
The next lemma is adapted directly from \cite{li2025nonsmooth}. The only modification is that we assume $\rho$-weak convexity, and apart from this, all other arguments in its proof remain unchanged.
\begin{lemma} Under Assumption \ref{assumption_for_smooth_case}, for any $y \in \mathcal{Y}$ and $z \in \mathbb{R}^{d_{x}}$, we have 
\begin{align}
\frac{r - \rho}{2} \left\| x_r(y(z), z) - x_r(y_+(z), z) \right\|^2 
\leq \max_{y' \in \mathcal{Y}} F\left(x_r(y_+(z), z), y'\right) - F\left(x_r(y_+(z), z), y_+(z)\right). \label{dual_bound_intermediate}
\end{align}
\label{dual_lemma_init}
\end{lemma}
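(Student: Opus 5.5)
We need to prove Lemma \ref{dual_lemma_init}. Write $y_+ = y_+(z)$, $y^\star = y(z)\in Y(z)$, $x_+ = x_r(y_+,z)$ and $x^\star = x_r(y^\star,z)$. The plan is to compare the two optimal values $d_r(y^\star,z)$ and $d_r(y_+,z)$, using strong convexity of $F_r(\cdot,y,z)$ in $x$ and the maximality of $y^\star$ for $d_r(\cdot,z)$.

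First, since $r>\rho$ and $F(\cdot,y)$ is $\rho$-weakly convex, $F_r(\cdot,y^\star,z)$ is $(r-\rho)$-strongly convex on $\cX$ with minimizer $x^\star$. The quadratic growth property at the minimizer, evaluated at $x_+\in\cX$, gives
\begin{equation*}
\frac{r-\rho}{2}\|x_+-x^\star\|^2 \le F_r(x_+,y^\star,z) - F_r(x^\star,y^\star,z) = F_r(x_+,y^\star,z) - d_r(y^\star,z).
\end{equation*}

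Next, $y^\star\in Y(z)$ maximizes $d_r(\cdot,z)$ over $\cY$ and $y_+\in\cY$, so $d_r(y^\star,z)\ge d_r(y_+,z) = F_r(x_+,y_+,z)$. Substituting this gives
\begin{equation*}
\frac{r-\rho}{2}\|x_+-x^\star\|^2 \le F_r(x_+,y^\star,z) - F_r(x_+,y_+,z) = F(x_+,y^\star) - F(x_+,y_+),
\end{equation*}
because the proximal term $\frac r2\|x_+-z\|^2$ cancels when $x$ is held fixed. Finally, $y^\star\in\cY$ implies $F(x_+,y^\star)\le \max_{y'\in\cY}F(x_+,y')$, which is exactly \eqref{dual_bound_intermediate}.

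The only point that needs care is the first step. We must use the quadratic growth of a strongly convex function around its constrained minimizer over $\cX$, and this follows from the variational inequality $\langle\nabla_x F_r(x^\star,y^\star,z),x-x^\star\rangle\ge0$ for all $x\in\cX$. We do not need a zero gradient there. The argument never uses any specific property of $y_+$ beyond $y_+\in\cY$, so the projected-gradient definition of $y_+(z)$ plays no role. The only change from \cite{li2025nonsmooth} is that the strong convexity modulus is $r-\rho$.
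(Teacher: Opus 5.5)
Your argument is correct and is essentially the proof the paper imports from \cite{li2025nonsmooth}: quadratic growth of the $(r-\rho)$-strongly convex $F_r(\cdot,y(z),z)$ at its constrained minimizer, combined with $d_r(y(z),z)\ge d_r(y_+(z),z)$ from the maximality of $y(z)$, after which the proximal terms cancel at the common point $x_r(y_+(z),z)$. You are also right that only $y_+(z)\in\cY$ (together with $r>\rho$) is used, not the specific projected-gradient form of $y_+(z)$.
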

We are now in a position to formally establish the dual error bound. Its proof adapts from that in \cite{li2025nonsmooth} but tightens the bound for the case of $\theta\in \left(0,\frac{1}{2}\right]$.
\begin{lemma}[Dual Error Bound]\label{dual_error_bound} Under Assumptions \ref{assumption_for_smooth_case} and \ref{assumption_KL_for_smooth_case}, for any $y \in \mathcal{Y}$ and $z \in \mathbb{R}^{d_{x}}$, we have 
\begin{align}
&\left\|x_{r}(y(z),z) - x_{r}(y_{+}(z),z)\right\|^{2} \leq 
\varpi\left\|y - y_{+}(z)\right\|^{2},
\text{ $ for \ \textstyle \theta \in \left[0,\; \frac{1}{2}\right]$},
\label{eq:dual_theta_half}\\
&\left\|x_{r}(y(z),z) - x_{r}(y_{+}(z),z)\right\|^{2} \leq 
\kappa \left\|y - y_{+}(z)\right\|^{\frac{1}{\theta}},
\text{ $ for \  \textstyle \theta \in \left(\frac{1}{2},\; 1\right]$},
\label{eq:dual_theta_1}
\end{align}
where 
$\varpi = \frac{2\left(\ell D_\cY\right)^{1 - 2\theta}}{r-\rho}\left(\frac{\frac{2}{\alpha_{y}^{2}} + 2L_{y}^{2}\sigma_{2}^{2} + 2L_{y}^{2} }{\mu^{2}}\right)$, and $\kappa = \frac{2}{r-\rho}\left(\frac{\sqrt{\frac{2}{\alpha_{y}^{2}} + 2L_{y}^{2}\sigma_{2}^{2}+ 2L_{y}^{2}} }{\mu}\right)^{\frac{1}{\theta}}
$, with $\sigma_{2} = 2+\frac{L_{y}}{r-\rho}$ the same as that in Lemma~\ref{lem:lip-cont-x-r}.

\end{lemma}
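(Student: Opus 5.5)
Starting point: Lemma~\ref{dual_lemma_init}, which gives
\[
\tfrac{r-\rho}{2}\|x_r(y(z),z)-x_r(y_+(z),z)\|^2\le \max_{y'\in\cY}F(\bar x,y')-F(\bar x,y_+(z)),\qquad \bar x:=x_r(y_+(z),z).
\]
It therefore suffices to bound the primal--dual gap of $F(\bar x,\cdot)$ at $y_+(z)$ by a power of $\|y-y_+(z)\|$. I will apply the extended K\L{} inequality \eqref{KL} of Assumption~\ref{assumption_KL_for_smooth_case} at the point $(\bar x,y_+(z))$. This yields
\[
\max_{y'}F(\bar x,y')-F(\bar x,y_+(z))\le \Big(\mu^{-1}\,\mathrm{dist}\big(0,-\nabla_yF(\bar x,y_+(z))+\cN_\cY(y_+(z))\big)\Big)^{1/\theta}.
\]
The case $\theta=0$ will be handled separately, either by interpreting the inequality appropriately or by reducing to the $\theta=1/2$ argument below.

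Next I bound the distance term. By the definition of $y_+(z)=\mathrm{proj}_\cY(y+\alpha_y\nabla_yF(x_r(y,z),y))$, the projection optimality condition gives
\[
\tfrac{1}{\alpha_y}(y-y_+(z))+\nabla_yF(x_r(y,z),y)\in\cN_\cY(y_+(z)).
\]
Hence
\[
\mathrm{dist}\le \tfrac{1}{\alpha_y}\|y-y_+\|+\|\nabla_yF(x_r(y,z),y)-\nabla_yF(x_r(y_+,z),y_+)\|.
\]
The second term is at most $L_y(\|x_r(y,z)-x_r(y_+,z)\|+\|y-y_+\|)\le L_y(\sigma_2+1)\|y-y_+\|$, using the $L_y$-Lipschitz continuity of $\nabla_yF$ and \eqref{eq:sc2} of Lemma~\ref{lem:lip-cont-x-r}. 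Squaring with Young's inequality (splitting into three pieces, or into two and then two again) yields
\[
\mathrm{dist}^2\le\Big(\tfrac{2}{\alpha_y^2}+2L_y^2\sigma_2^2+2L_y^2\Big)\|y-y_+\|^2,
\]
which matches the constant appearing in $\varpi$ and $\kappa$. Perhaps a constant factor in the Young splitting must be adjusted, but the stated form suggests exactly this.

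Case $\theta\in(\tfrac12,1]$: raise the above to the power $\tfrac{1}{2\theta}$ and multiply by $\tfrac{2}{r-\rho}$. This gives \eqref{eq:dual_theta_1} with $\kappa$ as stated.

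Case $\theta\in[0,\tfrac12]$: this is the main obstacle, since the exponent $1/\theta\ge2$ must be converted to exactly $2$. Write the gap as $\Delta:=\max F-F\ge0$. Since $F$ is $\ell$-Lipschitz (Remark~\ref{rm:ccv-KL}) and $\cY$ has diameter $D_\cY$, we have $\Delta\le \ell D_\cY$. Therefore
\[
\Delta=\Delta^{1-2\theta}\,\Delta^{2\theta}\le (\ell D_\cY)^{1-2\theta}\,\Delta^{2\theta}.
\]
The K\L{} inequality gives $\Delta^{2\theta}\le \mathrm{dist}^2/\mu^2$. Combining this with the squared distance bound produces \eqref{eq:dual_theta_half} with constant $\varpi$. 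For $\theta=0$, the same chain holds with $\Delta^0=1$ when $\Delta>0$ (and the claim is trivial when $\Delta=0$), giving $\Delta\le \ell D_\cY\,\mathrm{dist}^2/\mu^2$, consistent with $\varpi$ at $\theta=0$. I will need to check the edge convention $0^0$ carefully here.
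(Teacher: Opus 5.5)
Your proposal follows the paper's proof essentially step for step: the same reduction via Lemma~\ref{dual_lemma_init}, the same projection-optimality bound on $\mathrm{dist}\big(0,-\nabla_yF(\bar x,y_+(z))+\mathcal{N}_\cY(y_+(z))\big)$, the same interpolation $\Delta\le(\ell D_\cY)^{1-2\theta}\Delta^{2\theta}$ for $\theta\le\frac12$, and the same separate treatment of $\Delta=0$ when $\theta=0$. The one adjustment concerns the constant: to get exactly $\frac{2}{\alpha_y^2}+2L_y^2\sigma_2^2+2L_y^2$, apply $\|a+b\|^2\le 2\|a\|^2+2\|b\|^2$ once and bound $\|b\|^2$ with the squared joint Lipschitz bound $\|\nabla_yF(x_1,y_1)-\nabla_yF(x_2,y_2)\|^2\le L_y^2(\|x_1-x_2\|^2+\|y_1-y_2\|^2)$ (from Assumption~\ref{assumption_for_smooth_case}(iii) plus Jensen). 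Your triangle-inequality bound $L_y(\sigma_2+1)\|y-y_+(z)\|$ produces $(\sigma_2+1)^2$ in place of $\sigma_2^2+1$, and neither of your suggested Young splittings reproduces the stated constant.
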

\begin{proof} 
We first discuss the case when $\theta = 0$. We note that if $\max_{y' \in \mathcal{Y}} F(x_r(y_+(z),z), y') - F(x_r(y_+(z),z), y_+(z)) = 0$, then from \eqref{dual_bound_intermediate}, it follows trivially that 
\[\frac{r - \rho}{2} \left\| x_r(y(z), z) - x_r(y_+(z), z) \right\|^2 
\leq 0 \leq \frac{\ell \cdot D_\cY\left(\frac{2}{\alpha_{y}^{2}} + 2L_{y}^{2}\sigma_{2}^{2}+ 2L_{y}^{2}\right)}{\mu^{2}} \left\|y - y_{+}(z)\right\|^{2},\] which renders the desired inequality in \eqref{eq:dual_theta_half} for $\theta=0$.  Hence, we suppose $\max_{y' \in \mathcal{Y}} F(x_r(y_+(z),z), y') - F(x_r(y_+(z),z), y_+(z)) \neq 0$. Then from \eqref{KL}, it holds
\begin{align}
&\mathrm{dist}\left(0, -\nabla_y F(x_r(y_+(z),z), y_+(z)) + \cN_\cY(y_+(z))\right) \geq \mu. 
\label{dual_for_theta_0}
\end{align}
By the definition of $y_{+}(z)$ in Table~\ref{tab:mylabel}, 
we have
\begin{align*}
0 &\in \left(y_{+}(z)- y\right) - \alpha_{y} \nabla_{y}F\left(x_{r}(y,z), y\right) + \cN_\cY\left(y_{+}(z)\right)\\
\implies & \frac{1}{\alpha_{y}}\left(y - y_{+}(z)\right) + \nabla_{y}F\left(x_{r}(y,z), y\right) - \nabla_{y}F\left(x_{r}(y_+(z),z), y_{+}(z)\right) \in - \nabla_{y}F\left(x_{r}(y_+(z),z), y_{+}(z)\right) + \cN_\cY\left(y_{+}(z)\right)
\end{align*}
Hence, by Young's inequality, the $L_{y}$-Lipschitz continuity of $\nabla_{y}F$ and \eqref{eq:sc2}, it follows
\begin{align}
&\Bigg(\mathrm{dist}\left(0, -\nabla_y F(x_r(y_+(z),z), y_+(z)) + \cN_\cY(y_+(z))\right)\Bigg)^{2}\nonumber\\
\leq &~ \left\|\frac{1}{\alpha_{y}}\left(y - y_{+}(z)\right) + \nabla_{y}F\left(x_{r}(y,z), y\right) - \nabla_{y}F\left(x_{r}(y_+(z),z), y_{+}(z)\right)  \right\|^{2}\nonumber\\
{\leq} &~\frac{2}{\alpha_{y}^{2}}\left\|y - y_{+}(z)\right\|^{2} + 2\left\|\nabla_{y}F\left(x_{r}(y,z), y\right) - \nabla_{y}F\left(x_{r}(y_+(z),z), y_{+}(z)\right)  \right\|^{2}\nonumber\\
{\leq} &~ \left(\frac{2}{\alpha_{y}^{2}} + 2L_{y}^{2}\sigma_{2}^{2}+ 2L_{y}^{2}\right)\left\|y - y_{+}(z)\right\|^{2} \label{dist-inequal-for-dual}, 
\end{align}
which together with \eqref{dual_for_theta_0} gives
\begin{equation}\label{dual_for_theta_0_3}
\frac{1}{\mu^2} \left(\frac{2}{\alpha_{y}^{2}} + 2L_{y}^{2}\sigma_{2}^{2}+ 2L_{y}^{2}\right)\left\|y - y_{+}(z)\right\|^{2}  \ge 1. 
\end{equation}
In addition, by the $\ell$-Lipschitz continuity of $F$ from Assumption~\ref{assumption_for_smooth_case}, it holds that 
\begin{align}
&\max_{y' \in \mathcal{Y}} F\left(x_r(y_+(z), z), y'\right) - F\left(x_r(y_+(z), z), y_+(z)\right) \leq \ell \cdot D_\cY. 
\label{dual_for_theta_0_second}
\end{align}
Now by \eqref{dual_bound_intermediate}, \eqref{dual_for_theta_0_3}, and \eqref{dual_for_theta_0_second}, we get \eqref{eq:dual_theta_half} for $\theta=0$.

Second, we discuss the case when $\theta \in \left(0,\frac{1}{2}\right]$. 
We have
\begin{align}
&\mu \left( 
\max_{y' \in \mathcal{Y}} F\left(x_r(y_{+}(z), z), y'\right) - F\left(x_r(y_{+}(z), z), y_{+}(z)\right) 
\right)^{\frac{1}{2}} \nonumber\\
= &~ \mu \left( 
\max_{y' \in \mathcal{Y}} F\left(x_r(y_{+}(z), z), y'\right) - F\left(x_r(y_{+}(z), z), y_{+}(z)\right) 
\right)^{\theta} \nonumber\\
\quad & \left( 
\max_{y' \in \mathcal{Y}} F\left(x_r(y_{+}(z), z), y'\right) - F\left(x_r(y_{+}(z), z), y_{+}(z)\right) 
\right)^{\frac{1}{2} - \theta} \nonumber\\
{\leq} &~\left(\ell D_\cY\right)^{\frac{1}{2} - \theta}\left(\sqrt{\frac{2}{\alpha_{y}^{2}} + 2L_{y}^{2}\sigma_{2}^{2}+ 2L_{y}^{2}} \right)\left\|y - y_{+}(z)\right\|,  \nonumber
\end{align}
where the inequality follows from \eqref{KL}, \eqref{dist-inequal-for-dual}, and \eqref{dual_for_theta_0_second}. 
The above inequality together with \eqref{dual_bound_intermediate} implies \eqref{eq:dual_theta_half}. 

Third, we discuss the case when $\theta \in \left(\frac{1}{2},1\right]$. 
We have from \eqref{KL} and \eqref{dist-inequal-for-dual} that
\begin{align}
&\mu \left( 
\max_{y' \in \mathcal{Y}} F\left(x_r(y_{+}(z), z), y'\right) - F\left(x_r(y_{+}(z), z), y_{+}(z)\right) 
\right)^{\theta} \nonumber\\
\leq &~ \left(\sqrt{\frac{2}{\alpha_{y}^{2}} + 2L_{y}^{2}\sigma_{2}^{2}+ 2L_{y}^{2}} \right)\left\|y - y_{+}(z)\right\| \nonumber
\end{align}
which together with \eqref{dual_bound_intermediate} gives \eqref{eq:dual_theta_1}. This completes the proof. 
\end{proof}

{In preparation for establishing a sufficient decrease condition for the Lyapunov function defined in \eqref{Lyapunov_Function_Main}, we first present several auxiliary lemmas that will be vital in the subsequent analysis.}

\begin{lemma}
Under Assumption \ref{assumption_for_smooth_case}, let $\{x_\tau^k, y_\tau^k,z_\tau^k \}$ be generated from Algorithm \ref{alg:smoothed_primal_dual_spider}. Then it holds for any $k\ge0$ and any $0\le \tau \le T-1$ that
\begin{align*}
&\mathbb{E}\left[F_r(x^{k}_{\tau}, y^{k}_{\tau}, z^{k}_{\tau}) -  F_r(x^{k}_{\tau + 1}, y^{k}_{\tau + 1}, z^{k}_{\tau + 1}) \right]\\
\geq &~  \left(\frac{1}{2\alpha_{x}} - \frac{r+L_{x}}{2}\right)\mathbb{E}\left\|x^{k}_{\tau+1} - x^{k}_{\tau}\right\|^{2} - \frac{\alpha_{x}L_{x}^{2}}{M}\sum_{b=0}^{\tau-1}\mathbb{E}\left\| x^{k}_{b+1} - x^{k}_{b}\right\|^{2} - \frac{\alpha_{x}L_{y}^{2}}{M}\sum_{b=0}^{\tau-1}\mathbb{E}\left\| y^{k}_{b+1} - y^{k}_{b}\right\|^{2}    \\
& +  \mathbb{E}\left\langle \nabla_y F_r\left(x^{k}_{\tau + 1}, y^{k}_{\tau}, z^{k}_{\tau}\right), y^{k}_{\tau} - y^{k}_{\tau + 1} \right\rangle - \frac{L_{y}}{2} \mathbb{E}\left\|y^{k}_{\tau + 1} - y^{k}_{\tau} \right\|^2 + \frac{(2 - \beta)r\beta}{2} \mathbb{E}\left\|x^{k}_{\tau+1} - z^{k}_{\tau}\right\|^2{-\frac{\alpha_xC_{\sigma,x}}{2}}. 
\end{align*}
\label{lemma:sufficient_decrease_primal}
\end{lemma}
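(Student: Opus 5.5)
We split the decrease of $F_r$ into three pieces, one per block update:
\begin{align*}
&F_r(x^{k}_{\tau}, y^{k}_{\tau}, z^{k}_{\tau}) -  F_r(x^{k}_{\tau + 1}, y^{k}_{\tau + 1}, z^{k}_{\tau + 1})\\
=&~\underbrace{F_r(x^{k}_{\tau}, y^{k}_{\tau}, z^{k}_{\tau}) - F_r(x^{k}_{\tau+1}, y^{k}_{\tau}, z^{k}_{\tau})}_{\text{(a)}}
+\underbrace{F_r(x^{k}_{\tau+1}, y^{k}_{\tau}, z^{k}_{\tau}) - F_r(x^{k}_{\tau+1}, y^{k}_{\tau+1}, z^{k}_{\tau})}_{\text{(b)}}\\
&+\underbrace{F_r(x^{k}_{\tau+1}, y^{k}_{\tau+1}, z^{k}_{\tau}) - F_r(x^{k}_{\tau+1}, y^{k}_{\tau+1}, z^{k}_{\tau+1})}_{\text{(c)}}.
\end{align*}
We bound each piece separately and then take expectations.

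\textbf{Term (c).} Only the proximal term depends on $z$. Using $z^k_{\tau+1}-x^k_{\tau+1}=(1-\beta)(z^k_\tau-x^k_{\tau+1})$, we get
\[
\text{(c)}=\frac r2\big(1-(1-\beta)^2\big)\|x^k_{\tau+1}-z^k_\tau\|^2=\frac{(2-\beta)r\beta}{2}\|x^k_{\tau+1}-z^k_\tau\|^2 ,
\]
which is exactly the claimed term.

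\textbf{Term (b).} By the $L_y$-Lipschitz continuity of $\nabla_y F(x,\cdot)$ (Remark~\ref{rm:ccv-KL}), the descent lemma applied to $-F_r(x^k_{\tau+1},\cdot,z^k_\tau)$ gives
\[
\text{(b)}\ge\langle \nabla_yF_r(x^k_{\tau+1},y^k_\tau,z^k_\tau),\,y^k_\tau-y^k_{\tau+1}\rangle-\frac{L_y}{2}\|y^k_{\tau+1}-y^k_\tau\|^2,
\]
which is again exactly the claimed form.

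\textbf{Term (a).} This is the main step. Since $\nabla_xF_r(\cdot,y,z)$ is $(L_x+r)$-Lipschitz,
\[
F_r(x^k_{\tau+1},y^k_\tau,z^k_\tau)\le F_r(x^k_\tau,y^k_\tau,z^k_\tau)+\langle\nabla_xF_r(x^k_\tau,y^k_\tau,z^k_\tau),d\rangle+\frac{L_x+r}{2}\|d\|^2,
\qquad d=x^k_{\tau+1}-x^k_\tau .
\]
The projection step gives the variational inequality $\langle x^k_{\tau+1}-x^k_\tau+\alpha_x[G^k_{x,\tau}+r(x^k_\tau-z^k_\tau)],\,x^k_\tau-x^k_{\tau+1}\rangle\ge0$. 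It follows that
\[
\langle G^k_{x,\tau}+r(x^k_\tau-z^k_\tau),\,d\rangle\le-\frac1{\alpha_x}\|d\|^2 .
\]
Hence the linear term is at most $-\frac1{\alpha_x}\|d\|^2+\langle\nabla_xF(x^k_\tau,y^k_\tau)-G^k_{x,\tau},d\rangle$. We bound the error inner product by Young's inequality as $\frac{\alpha_x}{2}\|\nabla_xF-G^k_{x,\tau}\|^2+\frac1{2\alpha_x}\|d\|^2$. This yields
\[
\text{(a)}\ge\Big(\frac1{2\alpha_x}-\frac{r+L_x}{2}\Big)\|d\|^2-\frac{\alpha_x}{2}\|G^k_{x,\tau}-\nabla_xF(x^k_\tau,y^k_\tau)\|^2 .
\]
Taking expectations and invoking \eqref{eq:lemma0_x} from Lemma~\ref{lem:err-grad-est} turns the error term into
\[
\frac{\alpha_xL_x^2}{M}\sum_{b}\mathbb{E}\|x^k_{b+1}-x^k_b\|^2+\frac{\alpha_xL_y^2}{M}\sum_b\mathbb{E}\|y^k_{b+1}-y^k_b\|^2+\frac{\alpha_xC_{\sigma,x}}{2}.
\]
The factor $2$ in \eqref{eq:lemma0_x} cancels against the $\tfrac12$ from Young's inequality, which recovers the stated coefficients.

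Summing (a), (b) and (c) and taking expectations completes the argument. The only delicate point is getting the Young's-inequality split in (a) right, so that the constants match $\frac{1}{2\alpha_x}$ and the error coefficients exactly.
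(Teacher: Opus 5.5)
Your proposal is correct and follows the paper's proof essentially step for step. It uses the same three-way split by block update, the exact identity for the $z$-term, and the descent lemma in $y$; for the $x$-term it uses the projection inequality tested at $x=x_\tau^k$ together with $(r+L_x)$-smoothness, the Young's split $\frac{\alpha_x}{2}\|\cdot\|^2+\frac{1}{2\alpha_x}\|\cdot\|^2$, and \eqref{eq:lemma0_x}, which yields exactly the stated constants.
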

\begin{proof} Our proof technique 
is similar to that in \cite{jiang2025single}, upon which we build to accommodate our setting and extend the result. We split the change of function to three parts as follows
\begin{align}
&\mathbb{E}\left[F_r(x^{k}_{\tau}, y^{k}_{\tau}, z^{k}_{\tau}) -  F_r(x^{k}_{\tau + 1}, y^{k}_{\tau + 1}, z^{k}_{\tau + 1}) \right] \nonumber\\
= &~ \underbrace{\mathbb{E}\left[F_r(x^{k}_{\tau}, y^{k}_{\tau}, z^{k}_{\tau}) - F_r(x^{k}_{\tau+1}, y^{k}_{\tau}, z^{k}_{\tau})\right]}_{\textcircled{1}} + \underbrace{\mathbb{E}\left[F_r(x^{k}_{\tau+1}, y^{k}_{\tau}, z^{k}_{\tau}) - F_r(x^{k}_{\tau+1}, y^{k}_{\tau+1}, z^{k}_{\tau})\right]}_{\textcircled{2}} \nonumber\\
\quad & + \underbrace{\mathbb{E}\left[F_r(x^{k}_{\tau+1}, y^{k}_{\tau+1}, z^{k}_{\tau}) -  F_r(x^{k}_{\tau + 1}, y^{k}_{\tau + 1}, z^{k}_{\tau + 1}) \right]}_{\textcircled{3}}
\label{sufficient_decrease_x_only}
\end{align}
We first bound \textcircled{1} in \eqref{sufficient_decrease_x_only}. From the update for $x_{\tau+1}^k$ in Algorithm \ref{alg:smoothed_primal_dual_spider}, we have that for any $x \in \mathcal{X}$, it holds
\begin{align*}
\left\langle x_{\tau+1}^{k} - \left(x^{k}_{\tau} - \alpha_{x} \left[G_{x,\tau}^{k} + r\left(x^{k}_{\tau} - z^{k}_{\tau}\right)\right]\right), x - x_{\tau+1}^{k}\right\rangle \geq 0.  
\end{align*}
Letting $x=x_\tau^k$ in the above inequality and rearranging terms yields
\begin{align}
\left\langle G_{x,\tau}^{k} + r\left(x^{k}_{\tau} - z^{k}_{\tau}\right), x_{\tau+1}^{k} - x_{\tau}^{k}\right\rangle \leq -\frac{1}{\alpha_{x}}\left\|x_{\tau+1}^{k} - x_{\tau}^{k}\right\|^{2}.
\label{eq:primal_descent_init}
\end{align}
In addition, by the $(r+L_{x})$-smoothness of $F_{r}(.,y,z)$ , we have 
\begin{equation}\label{eq:ineq-smooth-Fr}
F_r\left(x^{k}_{\tau+1}, y^{k}_{\tau}, z^{k}_{\tau}\right) 
\leq F_r\left(x^{k}_{\tau}, y^{k}_{\tau}, z^{k}_{\tau}\right) + \left\langle \nabla_{x}F_{r}\left(x_{\tau}^{k}, y^{k}_{\tau}, z^{k}_{\tau}\right) , x^{k}_{\tau+1} - x^{k}_{\tau}\right\rangle + \frac{r+L_{x}}{2}\left\|x^{k}_{\tau+1} - x^{k}_{\tau}\right\|^{2}.   
\end{equation}
For the inner product term in \eqref{eq:ineq-smooth-Fr}, we split it and then bound each term by
\begin{align}\label{eq:ineq-smooth-Fr-prod}
&\left\langle \nabla_{x}F_{r}\left(x_{\tau}^{k}, y^{k}_{\tau}, z^{k}_{\tau}\right) , x^{k}_{\tau+1} - x^{k}_{\tau}\right\rangle \nonumber\\
= &~ \left\langle \nabla_{x}F\left(x_{\tau}^{k}, y^{k}_{\tau}\right) - G_{x,\tau}^{k}  , x^{k}_{\tau+1} - x^{k}_{\tau}\right\rangle + \left\langle G_{x,\tau}^{k} + r\left(x^{k}_{\tau} - z^{k}_{\tau}\right) , x^{k}_{\tau+1} - x^{k}_{\tau}\right\rangle \nonumber \\
\leq &~ \frac{\alpha_x}{2} \|\nabla_{x}F\left(x_{\tau}^{k}, y^{k}_{\tau}\right) - G_{x, \tau}^{k}\|^2 + \frac{1}{2\alpha_{x}}\left\|x_{\tau+1}^{k} - x_{\tau}^{k}\right\|^{2} - \frac{1}{\alpha_{x}}\left\|x_{\tau+1}^{k} - x_{\tau}^{k}\right\|^{2},
\end{align}
where the inequality follows from the Young's inequality and \eqref{eq:primal_descent_init}. Now taking expectation on both sides of \eqref{eq:ineq-smooth-Fr-prod} and  using \eqref{eq:lemma0_x}, we obtain
\begin{align*}
\EE\big[\left\langle \nabla_{x}F_{r}\left(x_{\tau}^{k}, y^{k}_{\tau}, z^{k}_{\tau}\right) , x^{k}_{\tau+1} - x^{k}_{\tau}\right\rangle\big]  \leq &~  \frac{\alpha_{x} L_{x}^{2}}{M}\sum_{b=0}^{\tau-1}\mathbb{E}\left\| x^{k}_{b+1} - x^{k}_{b}\right\|^{2} + \frac{\alpha_{x} L_{y}^{2}}{M}\sum_{b=0}^{\tau-1}\mathbb{E}\left\| y^{k}_{b+1} - y^{k}_{b}\right\|^{2} \\
\quad & -\frac{1}{2\alpha_{x}}\EE\left\|x_{\tau+1}^{k} - x_{\tau}^{k}\right\|^{2} {+\frac{\alpha_xC_{\sigma,x}}{2}}. 
\end{align*}
Taking expectation on both sides of \eqref{eq:ineq-smooth-Fr}, plugging the above inequality, and rearranging terms, we obtain 
\begin{align}
\mathbb{E}\left[ F_r\left(x^{k}_{\tau}, y^{k}_{\tau}, z^{k}_{\tau}\right) - F_r\left(x^{k}_{\tau+1}, y^{k}_{\tau}, z^{k}_{\tau}\right)\right] 
\geq &~   \left(\frac{1}{2\alpha_{x}} - \frac{r+L_{x}}{2}\right)\mathbb{E}\left\|x^{k}_{\tau+1} - x^{k}_{\tau}\right\|^{2} - \frac{\alpha_{x}L_{x}^{2}}{M}\sum_{b=0}^{\tau-1}\mathbb{E}\left\| x^{k}_{b+1} - x^{k}_{b}\right\|^{2}  \nonumber\\
\quad & - \frac{\alpha_{x}L_{y}^{2}}{M}\sum_{b=0}^{\tau-1}\mathbb{E}\left\| y^{k}_{b+1} - y^{k}_{b}\right\|^{2}{-\frac{\alpha_xC_{\sigma,x}}{2}}.
\label{eq:suff_decrease_1}
\end{align}

Second, for \textcircled{2} in \eqref{sufficient_decrease_x_only}, since $ \nabla_y F_r(x, \cdot, z) $ is $ L_{y} $-Lipschitz continuous, we have
\begin{align}
\mathbb{E}\left[F_r(x^{k}_{\tau + 1}, y^{k}_{\tau}, z^{k}_{\tau}) - F_r(x^{k}_{\tau + 1}, y^{k}_{\tau + 1}, z^{k}_{\tau})\right] \geq &~ \mathbb{E}\left[\langle \nabla_y F_r(x^{k}_{\tau + 1}, y^{k}_{\tau}, z^{k}_{\tau}), y^{k}_{\tau} - y^{k}_{\tau + 1} \rangle\right] - \frac{L_{y}}{2} \mathbb{E}\left\|y^{k}_{\tau + 1} - y^{k}_{\tau} \right\|^2. 
\label{eq:suff_decrease_2}
\end{align}

Third, for \textcircled{3} in \eqref{sufficient_decrease_x_only}, based on the update for $z$, we get
\begin{align}
\mathbb{E}\left[F_r(x^{k}_{\tau + 1}, y^{k}_{\tau + 1}, z^{k}_{\tau}) - F_r(x^{k}_{\tau + 1}, y^{k}_{\tau + 1}, z^{k}_{\tau + 1})\right] 
=\frac{(2 - \beta)r\beta}{2} \mathbb{E}\left\|x^{k}_{\tau+1} - z^{k}_{\tau}\right\|^2.
\label{eq:suff_decrease_3}
\end{align}
Plugging \eqref{eq:suff_decrease_1}, \eqref{eq:suff_decrease_2}, \eqref{eq:suff_decrease_3} into \eqref{sufficient_decrease_x_only} gives the desired inequality. 
\end{proof}

{The next two lemmas can be shown by the same proofs of those in {\citep[Lemma 6 and Lemma 7]{li2025nonsmooth}}, which fundamentally rely on weak convexity in $x$ and smoothness 
in $y$. Our assumptions on $F(x,y)$ align with these conditions, thus the proofs in \cite{li2025nonsmooth} 
apply directly to our setting.}

\begin{lemma}
Under Assumption \ref{assumption_for_smooth_case}, let $\{x_\tau^k, y_\tau^k,z_\tau^k \}$ be generated from Algorithm \ref{alg:smoothed_primal_dual_spider}. Then it holds for any $k\ge0$ and any $0\le \tau \le T-1$ that
\begin{align*}
d_{r}\left(y_{\tau+1}^{k}, z_{\tau+1}^{k}\right) - d_{r}\left(y_{\tau}^{k}, z_{\tau}^{k}\right)
\geq &~ \left\langle \nabla_y F_r(x_r(y^{k}_{\tau}, z^{k}_{\tau}), y^{k}_{\tau}, z^{k}_{\tau}), y^{k}_{\tau + 1} - y^{k}_{\tau} \right\rangle - \frac{L_{d_r}}{2} \left\|y^{k}_{\tau + 1} - y^{k}_{\tau} \right\|^2\\ 
\quad & + \frac{r}{2}\left\langle z^{k}_{\tau + 1} - z^{k}_{\tau}, z^{k}_{\tau + 1} + z^{k}_{\tau} - 2 x_r(y^{k}_{\tau + 1}, z^{k}_{\tau + 1})\right\rangle,
\end{align*}
where $L_{d_r}$ is given in Lemma~\ref{lem:smooth-dr-y}.
\label{lemma:sufficient_decrease_dual}
\end{lemma}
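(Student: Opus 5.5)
I would split the increment of $d_r$ into a dual step and a proximal step:
\[
d_r(y^k_{\tau+1},z^k_{\tau+1})-d_r(y^k_\tau,z^k_\tau)=\underbrace{d_r(y^k_{\tau+1},z^k_\tau)-d_r(y^k_\tau,z^k_\tau)}_{\text{change in }y}+\underbrace{d_r(y^k_{\tau+1},z^k_{\tau+1})-d_r(y^k_{\tau+1},z^k_\tau)}_{\text{change in }z},
\]
and bound each piece from below separately. The argument is deterministic, path by path, since $d_r$ and $x_r$ are defined through $F$ rather than through the sampled functions.

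\emph{The $y$-change.} By a Danskin-type argument, using the $(r-\rho)$-strong convexity of $F_r(\cdot,y,z)$ and the uniqueness of $x_r(y,z)$, we get $\nabla_y d_r(y,z)=\nabla_y F(x_r(y,z),y)=\nabla_y F_r(x_r(y,z),y,z)$. Lemma~\ref{lem:smooth-dr-y} says this gradient is $L_{d_r}$-Lipschitz in $y$ on $\cY$. The standard quadratic lower bound for an $L$-smooth function, applied along the segment $[y^k_\tau,y^k_{\tau+1}]\subset\cY$ (convexity of $\cY$), then gives
\[
d_r(y^k_{\tau+1},z^k_\tau)-d_r(y^k_\tau,z^k_\tau)\ge \langle\nabla_y F_r(x_r(y^k_\tau,z^k_\tau),y^k_\tau,z^k_\tau),y^k_{\tau+1}-y^k_\tau\rangle-\tfrac{L_{d_r}}{2}\|y^k_{\tau+1}-y^k_\tau\|^2 .
\]
These are exactly the first two terms of the claim.

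\emph{The $z$-change.} Write $y=y^k_{\tau+1}$ and $\hat x=x_r(y,z^k_{\tau+1})$. Since $d_r(y,z^k_\tau)=\min_x F_r(x,y,z^k_\tau)\le F_r(\hat x,y,z^k_\tau)$, we have
\[
d_r(y,z^k_{\tau+1})-d_r(y,z^k_\tau)\ge F_r(\hat x,y,z^k_{\tau+1})-F_r(\hat x,y,z^k_\tau)=\tfrac r2\big(\|\hat x-z^k_{\tau+1}\|^2-\|\hat x-z^k_\tau\|^2\big).
\]
The identity $\|a-b\|^2-\|a-c\|^2=\langle c-b,\,b+c-2a\rangle$ with $a=\hat x$, $b=z^k_{\tau+1}$, $c=z^k_\tau$ rewrites this as $\frac r2\langle z^k_\tau-z^k_{\tau+1},\,z^k_{\tau+1}+z^k_\tau-2\hat x\rangle$. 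That carries the opposite sign to the displayed term, so I must recheck the orientation. Evaluating instead at $\tilde x=x_r(y,z^k_\tau)$ in the other direction would give the term with $x_r(y^k_{\tau+1},z^k_\tau)$, which is not what the claim has either. So the sign/argument convention has to be matched to the paper's source, \citep[Lemma 6]{li2025nonsmooth}.

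\textbf{Main obstacle.} The only real care point is this sign and argument bookkeeping in the $z$-term, together with justifying differentiability of $d_r$ in $y$. The latter follows from strong convexity in $x$ for $r>\rho$. Summing the two pieces then yields the stated inequality.
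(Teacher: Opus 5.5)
Your decomposition is the argument the paper relies on: it defers to \citep[Lemma 6]{li2025nonsmooth}, which handles the $y$-step by the $L_{d_r}$-smoothness of $d_r(\cdot,z^k_\tau)$ from Lemma~\ref{lem:smooth-dr-y}, and the $z$-step by evaluating $F_r$ at $x_r(y^k_{\tau+1},z^k_{\tau+1})$. Your $y$-part is fine. Differentiability of $d_r(\cdot,z)$ with $\nabla_y d_r(y,z)=\nabla_yF(x_r(y,z),y)$ follows by sandwiching $d_r(y',z)-d_r(y,z)$ between $F(x_r(y',z),y')-F(x_r(y',z),y)$ and $F(x_r(y,z),y')-F(x_r(y,z),y)$, using the Lipschitz continuity of $x_r(\cdot,z)$ from Lemma~\ref{lem:lip-cont-x-r}; no compactness of $\mathcal X$ is needed.

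The flaw is an algebra slip in the $z$-part. It made you believe the sign is wrong and leave the proof unfinished. The correct identity is
\[
\|a-b\|^2-\|a-c\|^2=\|b\|^2-\|c\|^2-2\langle a,b-c\rangle=\langle b-c,\,b+c-2a\rangle ,
\]
not $\langle c-b,\,b+c-2a\rangle$. Take $a=\hat x=x_r(y^k_{\tau+1},z^k_{\tau+1})$, $b=z^k_{\tau+1}$, $c=z^k_\tau$ in your (correct) inequality $d_r(y^k_{\tau+1},z^k_{\tau+1})-d_r(y^k_{\tau+1},z^k_\tau)\ge F_r(\hat x,y^k_{\tau+1},z^k_{\tau+1})-F_r(\hat x,y^k_{\tau+1},z^k_\tau)$. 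It then yields exactly $\frac r2\langle z^k_{\tau+1}-z^k_\tau,\,z^k_{\tau+1}+z^k_\tau-2x_r(y^k_{\tau+1},z^k_{\tau+1})\rangle$, with the sign and the argument of $x_r$ as in the statement. Adding your $y$-step bound completes the proof, and no convention needs to be imported from the source.

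The fallback you considered, evaluating at $x_r(y^k_{\tau+1},z^k_\tau)$, is not a viable alternative. At that point $d_r(y^k_{\tau+1},z^k_\tau)$ is attained, but $d_r(y^k_{\tau+1},z^k_{\tau+1})$ is only bounded above by $F_r$ there. So it gives an upper bound on the $z$-increment. A lower bound on $d_r(\cdot,z^k_{\tau+1})-d_r(\cdot,z^k_\tau)$ forces evaluation at the minimizer for the new center $z^k_{\tau+1}$, which is why the statement features $x_r(y^k_{\tau+1},z^k_{\tau+1})$. The opposite choice, $x_r(y(z^k_{\tau+1}),z^k_\tau)$, is what appears in Lemma~\ref{lemma:sufficient_decrease_proximal}, because there the difference of $p_r$ is taken in the other order.
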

\begin{lemma}\label{lemma:sufficient_decrease_proximal}
Under Assumption \ref{assumption_for_smooth_case}, let $\{x_\tau^k, y_\tau^k,z_\tau^k \}$ be generated from Algorithm \ref{alg:smoothed_primal_dual_spider}, and $y\left(z^{k}_{\tau + 1}\right) \in Y\left(z^{k}_{\tau + 1}\right)$. Then it holds for any $k\ge0$ and any $0\le \tau \le T-1$ that
\begin{align*}
p_{r}\left(z_{\tau}^{k}\right) - p_{r}\left(z_{\tau+1}^{k}\right)
\geq &~ \frac{r}{2}\left\langle z^{k}_{\tau + 1} - z^{k}_{\tau}, 2 x_r\left(y\left(z^{k}_{\tau + 1}\right), z^{k}_{\tau}\right) - z^{k}_{\tau} - z^{k}_{\tau+1}\right\rangle.
\end{align*}
\end{lemma}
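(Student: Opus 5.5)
The lower bound on $p_r(z_\tau^k)-p_r(z_{\tau+1}^k)$ comes from writing $p_r$ as a max-min, freezing the maximizer at the new proximal center, and comparing the two inner minimum values at a common $y$. Fix $y^\star := y(z^k_{\tau+1}) \in Y(z^k_{\tau+1})$, so that $p_r(z^k_{\tau+1}) = d_r(y^\star, z^k_{\tau+1})$. Since $p_r(z^k_\tau) = \max_{y\in\cY} d_r(y,z^k_\tau) \ge d_r(y^\star, z^k_\tau)$, it suffices to lower bound $d_r(y^\star, z^k_\tau) - d_r(y^\star, z^k_{\tau+1})$. This step uses only the definitions in Table~\ref{tab:mylabel}, the existence of the maximizer (from compactness of $\cY$ and continuity of $d_r(\cdot,z)$), and Assumption~\ref{assumption_for_smooth_case} to guarantee $x_r$ is well defined. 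No expectation or stochasticity enters: the statement is deterministic and pathwise, holding for whatever realization of $z^k_\tau, z^k_{\tau+1}$ the algorithm produces.

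Next, evaluate both inner minima at the single point $\hat x := x_r(y^\star, z^k_\tau)$. Since $\hat x$ attains the minimum in $d_r(y^\star,z^k_\tau)$, we have $d_r(y^\star, z^k_\tau) = F(\hat x, y^\star) + \frac r2\|\hat x - z^k_\tau\|^2$. Since $\hat x \in \cX$ is merely feasible for the other problem, $d_r(y^\star, z^k_{\tau+1}) \le F(\hat x,y^\star) + \frac r2\|\hat x - z^k_{\tau+1}\|^2$. Subtracting, the $F$ terms cancel and
\[
p_r(z^k_\tau)-p_r(z^k_{\tau+1}) \ge \frac r2\Big(\|\hat x - z^k_\tau\|^2 - \|\hat x - z^k_{\tau+1}\|^2\Big).
\]
The right-hand side is a difference of squares, $\|a\|^2-\|b\|^2=\langle a-b,a+b\rangle$ with $a=\hat x-z^k_\tau$, $b=\hat x - z^k_{\tau+1}$. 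This equals $\langle z^k_{\tau+1}-z^k_\tau,\, 2\hat x - z^k_\tau - z^k_{\tau+1}\rangle$, which is exactly the claimed expression with $x_r(y(z^k_{\tau+1}), z^k_\tau)$.

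There is essentially no hard step. The only point requiring care is choosing which minimizer to plug in: one must use $x_r$ at the fixed $y^\star$ with the \emph{old} center $z^k_\tau$, so that its value is attained exactly on one side and is only an upper bound on the other. Also, no weak convexity or smoothness is needed beyond ensuring $x_r$ exists, which holds since $r>\rho$ makes $F_r(\cdot,y,z)$ strongly convex.
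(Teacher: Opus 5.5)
Your proof is correct and is essentially the standard argument the paper defers to (Lemma 7 of \cite{li2025nonsmooth}). There, too, $p_r(z^k_\tau)$ is bounded below by $d_r(y(z^k_{\tau+1}),z^k_\tau)$, the value $d_r(y(z^k_{\tau+1}),z^k_{\tau+1})$ is bounded above by plugging in the old-center minimizer $x_r(y(z^k_{\tau+1}),z^k_\tau)$, and the resulting difference of squares is expanded, with weak convexity entering only to make $x_r$ well defined.
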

We still need the following lemma that upper bounds the actual $y$-iterate with the virtual iterate $y_{\tau,+}^k$.
\begin{lemma} 
Under Assumption \ref{assumption_for_smooth_case}, let $\{x_\tau^k, y_\tau^k,z_\tau^k \}$ be generated from Algorithm \ref{alg:smoothed_primal_dual_spider} with $r \geq 2 \rho$.
Then, it holds   
\begin{align}
&\mathbb{E} \left\| y_{\tau , +}^{k}(z^{k}_{\tau + 1}) - y^{k}_{\tau + 1}  \right\|^{2} \nonumber\\ 
\leq &~  24\alpha_{y}^2 \beta^{2} L_{y}^{2}\mathbb{E}\left\|x^{k}_{\tau + 1} - z^{k}_{\tau}\right\|^{2}  + 6 \alpha_{y}^2L_{y}^{2}\mathbb{E} \left\|x^{k}_{\tau + 1} - x_{r}(y^{k}_{\tau},z^{k}_{\tau})\right\|^{2} + 3\alpha_{y}^2 L_{y}^{2}\mathbb{E}\left\| x^{k}_{\tau+1} - x^{k}_{\tau}\right\|^{2}\nonumber\\
\quad &  + \frac{3\alpha_{y}^{2}L_{y}^{2}}{M}\sum_{b=0}^{\tau-1}\mathbb{E}\left\| y^{k}_{b+1} - y^{k}_{b}\right\|^{2} + \frac{3\alpha_{y}^{2}L_{y}^{2}}{M}\sum_{b=0}^{\tau-1}\mathbb{E}\left\| x^{k}_{b+1} - x^{k}_{b}\right\|^{2} {+3\alpha_y^2C_{\sigma,y}}\;.
\label{eq:y_minus_y_2}
\end{align}
\end{lemma}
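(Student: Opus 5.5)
We compare the two projected steps directly. By definition,
\[
y_{\tau,+}^k(z_{\tau+1}^k)=\mathrm{proj}_\cY\big(y_\tau^k+\alpha_y\nabla_yF(x_r(y_\tau^k,z_{\tau+1}^k),y_\tau^k)\big),\qquad y_{\tau+1}^k=\mathrm{proj}_\cY\big(y_\tau^k+\alpha_yG_{y,\tau}^k\big).
\]
Nonexpansiveness of $\mathrm{proj}_\cY$ then gives
\[
\big\|y_{\tau,+}^k(z_{\tau+1}^k)-y_{\tau+1}^k\big\|^2\le \alpha_y^2\big\|\nabla_yF(x_r(y_\tau^k,z_{\tau+1}^k),y_\tau^k)-G_{y,\tau}^k\big\|^2 .
\]
We split the right-hand gradient difference into three pieces, using $(a+b+c)^2\le 3(a^2+b^2+c^2)$.

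The first piece is $\nabla_yF(x_r(y_\tau^k,z_{\tau+1}^k),y_\tau^k)-\nabla_yF(x_{\tau+1}^k,y_\tau^k)$. The second is $\nabla_yF(x_{\tau+1}^k,y_\tau^k)-\nabla_yF(x_\tau^k,y_\tau^k)$. The third is $\nabla_yF(x_\tau^k,y_\tau^k)-G_{y,\tau}^k$.

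The third piece, in expectation, is controlled directly by \eqref{eq:lemma0_y}. Multiplied by $3\alpha_y^2$, it produces the two sums with coefficient $3\alpha_y^2L_y^2/M$ and the term $3\alpha_y^2C_{\sigma,y}$.

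The second piece is at most $L_y\|x_{\tau+1}^k-x_\tau^k\|$, by $L_y$-Lipschitz continuity of $\nabla_yF$ in $x$ (Remark~\ref{rm:ccv-KL}). This gives the term $3\alpha_y^2L_y^2\,\mathbb{E}\|x_{\tau+1}^k-x_\tau^k\|^2$.

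The first piece is at most $L_y\|x_r(y_\tau^k,z_{\tau+1}^k)-x_{\tau+1}^k\|$. We split it again with Young's inequality $(a+b)^2\le 2a^2+2b^2$, inserting $x_r(y_\tau^k,z_\tau^k)$. The part $\|x_r(y_\tau^k,z_\tau^k)-x_{\tau+1}^k\|^2$ gives, after the factor $3\cdot 2$, the term $6\alpha_y^2L_y^2\,\mathbb{E}\|x_{\tau+1}^k-x_r(y_\tau^k,z_\tau^k)\|^2$. For the other part, \eqref{eq:sc1} gives
\[
\|x_r(y_\tau^k,z_{\tau+1}^k)-x_r(y_\tau^k,z_\tau^k)\|^2\le\sigma_1^2\|z_{\tau+1}^k-z_\tau^k\|^2 .
\]
The $z$-update gives $z_{\tau+1}^k-z_\tau^k=\beta(x_{\tau+1}^k-z_\tau^k)$. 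Since $r\ge2\rho$, we have $\sigma_1=r/(r-\rho)\le2$, so $\sigma_1^2\le4$. This part therefore contributes at most $3\cdot2\cdot4\,\alpha_y^2L_y^2\beta^2\,\mathbb{E}\|x_{\tau+1}^k-z_\tau^k\|^2=24\alpha_y^2\beta^2L_y^2\,\mathbb{E}\|x_{\tau+1}^k-z_\tau^k\|^2$, which matches the stated constant.

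Taking expectations and summing the three contributions yields \eqref{eq:y_minus_y_2}. The only delicate point is choosing the splitting so that the constants $24,6,3$ come out exactly. This works with the order above: a three-way split first, then a two-way split of the $x_r$ term. No conditioning subtleties arise, because the bound on the third piece is exactly the already-proved Lemma~\ref{lem:err-grad-est} applied at $(x_\tau^k,y_\tau^k)$.
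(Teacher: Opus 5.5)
Your proof is correct and follows the paper's argument step for step. Both use nonexpansiveness of $\mathrm{proj}_\cY$, the same three-way split through $\nabla_yF(x^k_{\tau+1},y^k_\tau)$ and $\nabla_yF(x^k_\tau,y^k_\tau)$ together with \eqref{eq:lemma0_y}, and then a two-way split inserting $x_r(y^k_\tau,z^k_\tau)$ combined with \eqref{eq:sc1}, $\sigma_1\le 2$, and the $z$-update, which gives exactly the constants $24$, $6$, $3$.
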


\begin{proof}
By the definition of $y_{\tau , +}^{k}(z)$ and the update formula of $y$, it follows
\begin{align}
&\mathbb{E} \left\| y_{\tau , +}^{k}(z^{k}_{\tau + 1}) - y^{k}_{\tau + 1}  \right\|^{2} \nonumber\\ 
= &~ \mathbb{E}\left\| \text{proj}_{\mathcal{Y}}\left(y^{k}_{\tau} + \alpha_{y} \nabla_{y}F(x_{r}(y^{k}_{\tau},z^{k}_{\tau + 1}),y^{k}_{\tau})  \right) - \text{proj}_{\mathcal{Y}}\left(y^{k}_{\tau} + \alpha_{y} G_{y, \tau}^{k} \right)   \right\|^{2} \nonumber \\
\overset{(i)}{\leq} &~ \mathbb{E}\left\| y^{k}_{\tau} + \alpha_{y} \nabla_{y}F(x_{r}(y^{k}_{\tau},z^{k}_{\tau + 1}),y^{k}_{\tau})  - y^{k}_{\tau} - \alpha_{y} G_{y, \tau}^{k} \right\|^{2} \nonumber\\
\overset{(ii)}{\leq} &~  3\alpha_{y}^2\mathbb{E}\left\|\nabla_{y}F(x_{r}(y^{k}_{\tau},z^{k}_{\tau + 1}),y^{k}_{\tau}) - \nabla_{y}F(x^{k}_{\tau + 1},y^{k}_{\tau}) \right\|^{2}+ 3\alpha_{y}^2\mathbb{E} \left\|\nabla_{y}F(x^{k}_{\tau+1},y^{k}_{\tau}) -  \nabla_{y}F(x^{k}_{\tau},y^{k}_{\tau}) \right\|^{2}\nonumber\\
\quad & +3\alpha_{y}^2\mathbb{E} \left\|\nabla_{y}F(x^{k}_{\tau},y^{k}_{\tau}) -  G_{y, \tau}^{k} \right\|^{2} \nonumber \\
\overset{(iii)}{\leq} &~  3\alpha_{y}^2 L_{y}^{2}\mathbb{E} \left\|x_{r}\left(y^{k}_{\tau},z^{k}_{\tau + 1}\right) - x^{k}_{\tau + 1}\right\|^{2} + 3\alpha_{y}^2 L_{y}^{2}\mathbb{E}\left\| x^{k}_{\tau+1} - x^{k}_{\tau}\right\|^{2}  + \frac{3\alpha_{y}^{2}L_{y}^{2}}{M}\sum_{b=0}^{\tau-1}\mathbb{E}\left\| y^{k}_{b+1} - y^{k}_{b}\right\|^{2}\nonumber \\
\quad & +\frac{3\alpha_{y}^{2}L_{y}^{2}}{M}\sum_{b=0}^{\tau-1}\mathbb{E}\left\| x^{k}_{b+1} - x^{k}_{b}\right\|^{2}  {+3\alpha_y^2C_{\sigma,y}}\nonumber \\
\overset{(iv)}{\leq} &~  6\alpha_{y}^2 L_{y}^{2}\mathbb{E} \left\|x_{r}(y^{k}_{\tau},z^{k}_{\tau + 1}) - x_{r}(y^{k}_{\tau},z^{k}_{\tau})\right\|^{2} + 6 \alpha_{y}^2L_{y}^{2}\mathbb{E} \left\|x^{k}_{\tau + 1} - x_{r}(y^{k}_{\tau},z^{k}_{\tau})\right\|^{2} + 3\alpha_{y}^2 L_{y}^{2}\mathbb{E}\left\| x^{k}_{\tau+1} - x^{k}_{\tau}\right\|^{2} \nonumber\\
\quad &  + \frac{3\alpha_{y}^{2}L_{y}^{2}}{M}\sum_{b=0}^{\tau-1}\mathbb{E}\left\| y^{k}_{b+1} - y^{k}_{b}\right\|^{2} + \frac{3\alpha_{y}^{2}L_{y}^{2}}{M}\sum_{b=0}^{\tau-1}\mathbb{E}\left\| x^{k}_{b+1} - x^{k}_{b}\right\|^{2}{+3\alpha_y^2C_{\sigma,y}}, 
\label{eq:y_minus_y}
\end{align}
where $(i)$ follows from the non-expansiveness of the projection operator, $(ii)$ and $(iv)$ are by Young's inequality, and $(iii)$ holds from the $L_y\text{-Lipschitz continuity of }\nabla_{y}F(\cdot,y) $ and \eqref{eq:lemma0_y}. 
Moreover, by \eqref{eq:sc1}, it holds
\begin{align}
\mathbb{E} \left\|x_{r}(y^{k}_{\tau},z^{k}_{\tau + 1}) - x_{r}(y^{k}_{\tau},z^{k}_{\tau})\right\|^{2} 
{\leq} \sigma_{1}^{2}\mathbb{E} \left\|z^{k}_{\tau + 1} - z^{k}_{\tau}\right\|^{2}
{\leq}4\beta^{2}\mathbb{E} \left\|x^{k}_{\tau + 1} - z^{k}_{\tau}\right\|^{2},
\label{intermediate_y}
\end{align}
where 
the second inequality follows from the update of $z$ in Algorithm \ref{alg:smoothed_primal_dual_spider} and $\sigma_{1} \leq 2$ (since $r\geq 2\rho)$. Plugging \eqref{intermediate_y} into \eqref{eq:y_minus_y} yields the desired result.
\end{proof}
{We are now ready to establish the sufficient decrease condition for the Lyapunov function defined in \eqref{Lyapunov_Function_Main}.}
\begin{lemma}\label{lemma:Sufficient_Decrease_Init}
Under Assumption \ref{assumption_for_smooth_case}, let $\{x_\tau^k, y_\tau^k,z_\tau^k \}$ be generated from Algorithm \ref{alg:smoothed_primal_dual_spider} with $r\geq \max\{2\rho, L_{y}+\rho\}$. Then it holds for any $k\ge0$ and any $0\le \tau \le T-1$ that
\begin{align}\label{eq:Sufficient_Decrease_Init}
&\mathbb{E}\left[\Phi_r(x^{k}_{\tau}, y^{k}_{\tau}, z^{k}_{\tau}) - \Phi_r(x^{k}_{\tau + 1}, y^{k}_{\tau + 1}, z^{k}_{\tau + 1})\right]\\
\geq &~ \left(\frac{1}{2\alpha_{x}} - \frac{r+L_{x}}{2}-16\alpha_{y}L_{y}^{2}-540r\beta\alpha_{y}^{2}L_{y}^{2}\right)\mathbb{E}\left\|x^{k}_{\tau+1} - x^{k}_{\tau}\right\|^{2} + \left(\frac{11}{16\alpha_{y}} - 5L_{y}\right)\mathbb{E}\left\|y^{k}_{\tau + 1} - y^{k}_{\tau} \right\|^2 \nonumber\\
\quad & + \left(\frac{(2-\beta)r\beta}{2} - 4r\beta^{2}-\frac{r\beta}{10} -4320r\beta^{3}\alpha_{y}^{2}L_{y}^{2}\right)\mathbb{E} \left\|x^{k}_{\tau + 1} -z^{k}_{\tau}\right\|^{2}\nonumber\\
\quad & - \left(1080r\beta\alpha_{y}^{2}L_{y}^{2} + 4\alpha_{y}L_{y}^{2}\right)\mathbb{E} \left\|x^{k}_{\tau + 1} -x_{r}(y_{\tau}^{k},z_{\tau}^{k})\right\|^{2} - \left(\frac{16\alpha_{y}L_{y}^{2}+540r\beta\alpha_{y}^{2}L_{y}^{2}+\alpha_{x}L_{x}^{2}}{M}\right)\sum_{b=0}^{\tau-1}\mathbb{E}\left\| x^{k}_{b+1} - x^{k}_{b}\right\|^{2}\nonumber\\
\quad & -\left(\frac{16\alpha_{y}L_{y}^{2}+540r\beta\alpha_{y}^{2}L_{y}^{2}+\alpha_{x}
{L_{y}^{2}}}{M}\right)\sum_{b=0}^{\tau-1}\mathbb{E}\left\| y^{k}_{b+1} - y^{k}_{b}\right\|^{2}\nonumber\\
\quad & -20r\beta\mathbb{E}\left\|x_{r}\left( y(z^{k}_{\tau + 1}),z^{k}_{\tau + 1}  \right)-x_{r}\left( {y_{\tau,+}^k}(z^{k}_{\tau + 1}),z^{k}_{\tau + 1}  \right)\right\|^{2}{-\frac{\alpha_xC_{\sigma,x}}{2} - \left(16\alpha_y+540r\beta\alpha_y^2\right)C_{\sigma,y}}\;.\nonumber
\end{align}
\end{lemma}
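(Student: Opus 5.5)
We write the Lyapunov difference as a sum of three one-step differences and bound each with an earlier lemma. Since $\Phi_r = F_r - 2d_r + 2p_r$, we have
\begin{align*}
\Phi_r^{\tau}-\Phi_r^{\tau+1} = \big[F_r^\tau - F_r^{\tau+1}\big] + 2\big[d_r(y^k_{\tau+1},z^k_{\tau+1})-d_r(y^k_\tau,z^k_\tau)\big] + 2\big[p_r(z^k_\tau)-p_r(z^k_{\tau+1})\big].
\end{align*}
The first bracket is bounded below by Lemma~\ref{lemma:sufficient_decrease_primal}, the second by Lemma~\ref{lemma:sufficient_decrease_dual}, and the third by Lemma~\ref{lemma:sufficient_decrease_proximal}.

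\textbf{Combining the $z$-terms.} Add the $z$-inner products from the dual and proximal lemmas. The terms $z_{\tau+1}+z_\tau$ cancel, leaving
\[
2r\langle z_{\tau+1}-z_\tau,\; x_r(y(z_{\tau+1}),z_\tau)-x_r(y_{\tau+1},z_{\tau+1})\rangle .
\]
Substitute $z_{\tau+1}-z_\tau=\beta(x_{\tau+1}-z_\tau)$ and split the difference of $x_r$'s through the following chain of points:
\[
x_r(y(z_{\tau+1}),z_\tau)\to x_r(y(z_{\tau+1}),z_{\tau+1})\to x_r(y^k_{\tau,+}(z_{\tau+1}),z_{\tau+1})\to x_r(y_\tau,z_{\tau+1})\to x_r(y_{\tau+1},z_{\tau+1}).
\]
Bound each link as follows:
\begin{itemize}
\item the $z$-change by \eqref{eq:sc1} with $\sigma_1\le 2$, which contributes $O(r\beta^2)\|x_{\tau+1}-z_\tau\|^2$;
\item the middle link by Young's inequality with weight $r\beta/10$ on $\|x_{\tau+1}-z_\tau\|^2$, which produces the $-20r\beta\|x_r(y(z_{\tau+1}),z_{\tau+1})-x_r(y^k_{\tau,+},z_{\tau+1})\|^2$ term;
\item the $y$-links by \eqref{eq:sc2} (with $\sigma_2$ bounded by a constant, since $r\ge L_y+\rho$), giving $\|y^k_{\tau,+}(z_{\tau+1})-y_{\tau+1}\|^2$ and $\|y_{\tau+1}-y_\tau\|^2$ with coefficient $O(r\beta)$.
\end{itemize}
Feeding $\|y^k_{\tau,+}(z_{\tau+1})-y_{\tau+1}\|^2$ into \eqref{eq:y_minus_y_2} generates the constants $540r\beta\alpha_y^2L_y^2$, $1080r\beta\alpha_y^2L_y^2$ and $4320r\beta^3\alpha_y^2L_y^2$, as well as the $\sum_b$ accumulations and the $540r\beta\alpha_y^2 C_{\sigma,y}$ term.

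\textbf{Combining the $y$-inner products.} Lemma~\ref{lemma:sufficient_decrease_primal} contributes $\langle\nabla_yF(x_{\tau+1},y_\tau),y_\tau-y_{\tau+1}\rangle$. Twice the dual lemma contributes $2\langle \nabla_yF(x_r(y_\tau,z_\tau),y_\tau),y_{\tau+1}-y_\tau\rangle$. Their sum can be rewritten as
\[
\langle \nabla_yF(x_{\tau+1},y_\tau)-G^k_{y,\tau},\;y_\tau-y_{\tau+1}\rangle
+\langle G^k_{y,\tau},\;y_{\tau+1}-y_\tau\rangle
+2\langle \nabla_yF(x_r(y_\tau,z_\tau),y_\tau)-\nabla_yF(x_{\tau+1},y_\tau),\;y_{\tau+1}-y_\tau\rangle .
\]
By the projection optimality of the $y$-update, $\langle G^k_{y,\tau},y_{\tau+1}-y_\tau\rangle\ge \frac1{\alpha_y}\|y_{\tau+1}-y_\tau\|^2$. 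For the remaining products, apply Young's inequality with weight of order $1/\alpha_y$ on $\|y_{\tau+1}-y_\tau\|^2$. This leaves terms of order $\alpha_y$:
\begin{itemize}
\item $\|\nabla_yF(x_{\tau+1},y_\tau)-G^k_{y,\tau}\|^2\le 2L_y^2\|x_{\tau+1}-x_\tau\|^2+2\|\nabla_yF(x_\tau,y_\tau)-G^k_{y,\tau}\|^2$, with the second piece controlled by \eqref{eq:lemma0_y};
\item $L_y^2\|x_{\tau+1}-x_r(y_\tau,z_\tau)\|^2$.
\end{itemize}
This yields the coefficients $16\alpha_yL_y^2$ and $4\alpha_yL_y^2$, plus the $16\alpha_y C_{\sigma,y}$ term.

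\textbf{Collecting terms.} The $\|y_{\tau+1}-y_\tau\|^2$ coefficient becomes $\frac{c}{\alpha_y}-\frac{L_y}{2}-L_{d_r}$ minus the $O(r\beta)$ pieces. Using $L_{d_r}=(\sigma_2+1)L_y\le 4L_y$ and $\beta\le 1/(30r)$, this is at least $\frac{11}{16\alpha_y}-5L_y$. The $x$-related accumulated sums come from Lemma~\ref{lemma:sufficient_decrease_primal} together with the two substitutions above.

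\textbf{Main obstacle.} The hard part is the bookkeeping of Young's-inequality weights so that the exact numerical constants ($11/16$, $540$, $4320$, $r\beta/10$, $20r\beta$) come out. The weights must be chosen so that the inner-product losses absorb into $1/\alpha_y$ and into $(2-\beta)r\beta/2$ without destroying positivity. I would first fix the weight $r\beta/10$ for the $x_{\tau+1}-z_\tau$ cross term and a weight of $\frac{1}{16\alpha_y}$-type for each $y$ cross term, and then read off the rest.
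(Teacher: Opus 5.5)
\textbf{Gap: the $z$-term chain.} Your architecture matches the paper's: the split $\Phi_r=F_r-2d_r+2p_r$, the collapse of the $z$-terms to $2r\langle z^k_{\tau+1}-z^k_\tau,\,x_r(y(z^k_{\tau+1}),z^k_\tau)-x_r(y^k_{\tau+1},z^k_{\tau+1})\rangle$, the projection inequality for the $y$-step, and feeding \eqref{eq:y_minus_y_2} into the $z$-term. But the chain you use for the $z$-term does not give the stated bound.

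Passing through the extra point $x_r(y^k_\tau,z^k_{\tau+1})$ creates a term $-c\,r\beta\,\mathbb{E}\|y^k_{\tau+1}-y^k_\tau\|^2$. Here $c$ is an absolute constant of order hundreds, once the factor $10r\beta$ from Young and $\sigma_2^2\le 9$ are included. You absorb this term by invoking $\beta\le\frac{1}{30r}$, which is not a hypothesis of this lemma. Only $r\ge\max\{2\rho,L_y+\rho\}$ is assumed; the restrictions on $\beta$ and $\alpha_y$ enter only in Lemma~\ref{lemma:scenario1}.

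Even granting that restriction, the absorption fails. The slack in the target coefficient is $5L_y-\frac{L_y}{2}-L_{d_r}\ge\frac{L_y}{2}$, which scales with $L_y$, whereas $c\,r\beta\le c/30$ is an absolute constant. Pushing it into the $\frac{1}{\alpha_y}$ part would need yet another condition coupling $\alpha_y$, $r$ and $\beta$.

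There are two further problems with the chain. The link $x_r(y^k_{\tau,+}(z^k_{\tau+1}),z^k_{\tau+1})\to x_r(y^k_\tau,z^k_{\tau+1})$ produces $\|y^k_{\tau,+}(z^k_{\tau+1})-y^k_\tau\|^2$, which is not what \eqref{eq:y_minus_y_2} controls. And a three-term split changes the Young factor from $2$ to $3$, so you would get $30r\beta$ instead of $20r\beta$, and larger multiples than $540,1080,4320$.

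\textbf{Fix (the paper's route).} Drop $y^k_\tau$ from the chain: \eqref{eq:sc2} holds for any pair in $\mathcal{Y}$. Set $B=x_r(y(z^k_{\tau+1}),z^k_{\tau+1})$ and $E=x_r(y^k_{\tau+1},z^k_{\tau+1})$. Apply a single Young inequality $2r\langle z^k_{\tau+1}-z^k_\tau,\,B-E\rangle\ge-\frac{r}{10\beta}\|z^k_{\tau+1}-z^k_\tau\|^2-10r\beta\|B-E\|^2$, and then
\begin{equation*}
\|B-E\|^2\le 2\big\|B-x_r(y^k_{\tau,+}(z^k_{\tau+1}),z^k_{\tau+1})\big\|^2+2\sigma_2^2\big\|y^k_{\tau,+}(z^k_{\tau+1})-y^k_{\tau+1}\big\|^2 .
\end{equation*}
The factor $10r\beta\cdot2\cdot9=180r\beta$ times \eqref{eq:y_minus_y_2} gives exactly $4320$, $1080$, $540$ and $540r\beta\alpha_y^2C_{\sigma,y}$. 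Moreover, $\|y^k_{\tau+1}-y^k_\tau\|^2$ never enters the $z$-term, so its coefficient is $\frac{11}{16\alpha_y}-\frac{L_y}{2}-L_{d_r}\ge\frac{11}{16\alpha_y}-5L_y$ with no condition on $\beta$.

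\textbf{Two smaller points on the $y$-part.} First, the $\frac{11}{16}$ comes from two different weights:
\begin{itemize}
\item $\frac{1}{16\alpha_y}$ on the estimator-error product, with cost $8\alpha_y$, doubled to $16\alpha_y$ when you split at $x^k_\tau$;
\item $\frac{1}{4\alpha_y}$ on $2\langle\nabla_yF(x_r(y^k_\tau,z^k_\tau),y^k_\tau)-\nabla_yF(x^k_{\tau+1},y^k_\tau),\,y^k_{\tau+1}-y^k_\tau\rangle$, with cost $4\alpha_yL_y^2$.
\end{itemize}
A $\frac{1}{16\alpha_y}$ weight on both, as your last paragraph suggests, would give $16\alpha_yL_y^2$ instead of $4\alpha_yL_y^2$. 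Second, your rewrite has a harmless sign slip: the first term should be $\langle\nabla_yF(x^k_{\tau+1},y^k_\tau)-G^k_{y,\tau},\,y^k_{\tau+1}-y^k_\tau\rangle$.
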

\begin{proof} Our proof technique 
is similar to that in \cite{li2025nonsmooth}, upon which we build to accommodate our setting and extend the result. By the definition of $\Phi_r$ in \eqref{Lyapunov_Function_Main}, we utilize $\sigma_{2} \leq 3$ (since $r\geq L_{y} + \rho)$,  Lemmas \ref{lemma:sufficient_decrease_primal}, \ref{lemma:sufficient_decrease_dual}, and \ref{lemma:sufficient_decrease_proximal} to have 
\begin{align}
&\mathbb{E}\left[\Phi_r(x^{k}_{\tau}, y^{k}_{\tau}, z^{k}_{\tau}) - \Phi_r(x^{k}_{\tau + 1}, y^{k}_{\tau + 1}, z^{k}_{\tau + 1})\right] \nonumber\\
{=} &~ \mathbb{E}\left[F_r\left(x_{\tau}^{k}, y_{\tau}^{k}, z_{\tau}^{k}\right) - F_r\left(x_{\tau+1}^{k}, y_{\tau+1}^{k}, z_{\tau+1}^{k}\right)\right] + 2\mathbb{E}\left[d_r\left(y_{\tau+1}^{k}, z_{\tau+1}^{k}\right) - d_r\left(y_{\tau}^{k}, z_{\tau}^{k}\right)\right]  + 2\mathbb{E}\left[p_r\left(z_{\tau}^{k}\right) - p_r\left(z_{\tau+1}^{k}\right) \right]\nonumber\\
\geq &~ \left(\frac{1}{2\alpha_{x}} - \frac{r+L_{x}}{2}\right)\mathbb{E}\left\|x^{k}_{\tau+1} - x^{k}_{\tau}\right\|^{2} - \frac{\alpha_{x}L_{x}^{2}}{M}\sum_{b=0}^{\tau-1}\mathbb{E}\left\| x^{k}_{b+1} - x^{k}_{b}\right\|^{2} - \frac{\alpha_{x}L_{y}^{2}}{M}\sum_{b=0}^{\tau-1}\mathbb{E}\left\| y^{k}_{b+1} - y^{k}_{b}\right\|^{2}   \nonumber \\
\quad & +  \mathbb{E}\left\langle \nabla_y F_r\left(x^{k}_{\tau + 1}, y^{k}_{\tau}, z^{k}_{\tau}\right), y^{k}_{\tau} - y^{k}_{\tau + 1} \right\rangle- \frac{L_{y}}{2} \mathbb{E}\left\|y^{k}_{\tau + 1} - y^{k}_{\tau} \right\|^2 + \frac{(2 - \beta)r\beta}{2} \mathbb{E}\left\|x^{k}_{\tau+1} - z^{k}_{\tau}\right\|^2 \nonumber\\ 
\quad &  + \mathbb{E}\left[\left\langle 2\nabla_y F_r(x_r(y^{k}_{\tau}, z^{k}_{\tau}), y^{k}_{\tau}, z^{k}_{\tau}), y^{k}_{\tau + 1} - y^{k}_{\tau} \right\rangle - L_ {d_r} \|y^{k}_{\tau} - y^{k}_{\tau + 1}\|^2\right]\nonumber\\ 
\quad & + r\mathbb{E}\left[\left\langle z^{k}_{\tau + 1} - z^{k}_{\tau}, z^{k}_{\tau + 1} + z^{k}_{\tau} - 2 x_r(y^{k}_{\tau + 1}, z^{k}_{\tau + 1})\right\rangle \right] + r\mathbb{E}\left[\left\langle z^{k}_{\tau + 1} - z^{k}_{\tau}, 2 x_r(y(z^{k}_{\tau + 1}), z^{k}_{\tau}) - z^{k}_{\tau} - z^{k}_{\tau + 1}\right\rangle \right]{-\frac{\alpha_xC_{\sigma,x}}{2}}\nonumber\\
{\geq} &~  \left(\frac{1}{2\alpha_{x}} - \frac{r+L_{x}}{2}\right)\mathbb{E}\left\|x^{k}_{\tau+1} - x^{k}_{\tau}\right\|^{2} - 5L_{y} \mathbb{E}\left\|y^{k}_{\tau + 1} - y^{k}_{\tau} \right\|^2 + \frac{(2 - \beta)r\beta}{2} \mathbb{E}\left\|x^{k}_{\tau+1} - z^{k}_{\tau}\right\|^2   \nonumber\\
\quad & + \underbrace{\mathbb{E}\left[\left\langle 2\nabla_y F_r(x_r(y^{k}_{\tau}, z^{k}_{\tau}), y^{k}_{\tau}, z^{k}_{\tau}), 
-\nabla_y F_r\left(x^{k}_{\tau + 1}, y^{k}_{\tau}, z^{k}_{\tau}\right), y^{k}_{\tau+1} - y^{k}_{\tau } \right\rangle\right]}_{\textcircled{1}}  \nonumber\\
\quad &  + \underbrace{2r\mathbb{E}\left[\left\langle z^{k}_{\tau + 1} - z^{k}_{\tau}, x_r(y(z^{k}_{\tau + 1}), z^{k}_{\tau}) - x_{r}(y^{k}_{\tau + 1},z^{k}_{\tau + 1})\right\rangle \right]}_{\textcircled{2}} - \frac{\alpha_{x}L_{x}^{2}}{M}\sum_{b=0}^{\tau-1}\mathbb{E}\left\| x^{k}_{b+1} - x^{k}_{b}\right\|^{2} \nonumber\\
\quad & - \frac{\alpha_{x}L_{y}^{2}}{M}\sum_{b=0}^{\tau-1}\mathbb{E}\left\| y^{k}_{b+1} - y^{k}_{b}\right\|^{2}{-\frac{\alpha_xC_{\sigma,x}}{2}}.\label{eq:orig_Lyapunov_decrease}
\end{align}
Below we bound \textcircled{1} and \textcircled{2} in \eqref{eq:orig_Lyapunov_decrease}.
First, we have 
\begin{align}
&\mathbb{E}\left\langle 2\nabla_y F_r(x_r(y^{k}_{\tau}, z^{k}_{\tau}), y^{k}_{\tau}, z^{k}_{\tau}) - \nabla_y F_r(x^{k}_{\tau + 1}, y^{k}_{\tau}, z^{k}_{\tau}), y^{k}_{\tau + 1} - y^{k}_{\tau} \right\rangle \nonumber\\ 
= &~ 2\mathbb{E}\left\langle \nabla_y F_r(x_r(y^{k}_{\tau}, z^{k}_{\tau}), y^{k}_{\tau}, z^{k}_{\tau}) - \nabla_y F_r(x^{k}_{\tau + 1}, y^{k}_{\tau}, z^{k}_{\tau}), y^{k}_{\tau + 1} - y^{k}_{\tau} \right\rangle + \mathbb{E}\left\langle\nabla_y F_r(x^{k}_{\tau + 1}, y^{k}_{\tau}, z^{k}_{\tau}), y^{k}_{\tau + 1} - y^{k}_{\tau} \right\rangle. \label{eq:decrease_Lyapunov_init}
\end{align}
For the first inner product term in \eqref{eq:decrease_Lyapunov_init}, we bound it by
\begin{align}
&2\mathbb{E}\left\langle \nabla_y F_r(x_r(y^{k}_{\tau}, z^{k}_{\tau}), y^{k}_{\tau}, z^{k}_{\tau}) - \nabla_y F_r(x^{k}_{\tau + 1}, y^{k}_{\tau}, z^{k}_{\tau}), y^{k}_{\tau + 1} - y^{k}_{\tau} \right\rangle \nonumber\\
\overset{(i)}{\geq} & -4\alpha_{y}\mathbb{E}\left\|\nabla_y F_r(x_r(y^{k}_{\tau}, z^{k}_{\tau}), y^{k}_{\tau}, z^{k}_{\tau}) - \nabla_y F_r(x^{k}_{\tau + 1}, y^{k}_{\tau}, z^{k}_{\tau})\right\|^{2} -\frac{1}{4\alpha_{y}}\mathbb{E}\left\|y^{k}_{\tau + 1} - y^{k}_{\tau}\right\|^{2}\nonumber\\
\overset{(ii)}{\geq} & -4\alpha_{y} L_{y}^{2}\mathbb{E} \left\|x^{k}_{\tau + 1} -  x_r(y^{k}_{\tau}, z^{k}_{\tau})  \right\|^{2} - \frac{1}{4\alpha_{y}} \mathbb{E}\left\|y^{k}_{\tau + 1} - y^{k}_{\tau}\right\|^{2},  \label{eq:Lyapunov_2}
\end{align}
where $(i)$ follows from Young's inequality, 
and $(ii)$ is by the $L_{y}$ Lipschitz continuity of $\nabla_{y}F_{r}(\cdot,y,z)$.
For the second inner product term in \eqref{eq:decrease_Lyapunov_init}, we split and bound it by
\begin{align}
&\mathbb{E}\left\langle\nabla_y F_r(x^{k}_{\tau + 1}, y^{k}_{\tau}, z^{k}_{\tau}), y^{k}_{\tau + 1} - y^{k}_{\tau} \right\rangle \nonumber\\
= &~ \frac{1}{\alpha_{y}}\mathbb{E}\left\langle  y^{k}_{\tau} + \alpha_{y} G_{y, \tau}^{k} - y^{k}_{\tau + 1}, y^{k}_{\tau + 1} - y^{k}_{\tau} \right\rangle - \mathbb{E} \left\langle G_{y, \tau}^{k}  - \nabla_y F_r(x^{k}_{\tau + 1}, y^{k}_{\tau}, z^{k}_{\tau}) , y^{k}_{\tau + 1} - y^{k}_{\tau} \right\rangle  + \frac{1}{\alpha_{y}}\mathbb{E}\left\|y^{k}_{\tau + 1} - y^{k}_{\tau}\right\|^{2}\nonumber\\
\ge & ~- \mathbb{E} \left\langle G_{y, \tau}^{k}  - \nabla_y F_r(x^{k}_{\tau + 1}, y^{k}_{\tau}, z^{k}_{\tau}) , y^{k}_{\tau + 1} - y^{k}_{\tau} \right\rangle  + \frac{1}{\alpha_{y}}\mathbb{E}\left\|y^{k}_{\tau + 1} - y^{k}_{\tau}\right\|^{2},  \label{intermediate_1}
\end{align}
where the inequality holds because $\left\langle y^{k}_{\tau} + \alpha_{y}G_{y, \tau}^{k} - y^{k}_{\tau+1}, y_{\tau+1}^{k} - y_{\tau}^{k} \right\rangle \geq 0$ by the update of $y$.
The first inner product term in \eqref{intermediate_1} is bounded by
\begin{align}
&\mathbb{E} \left\langle G_{y, \tau}^{k}  - \nabla_y F_r(x^{k}_{\tau + 1}, y^{k}_{\tau}, z^{k}_{\tau}) , y^{k}_{\tau + 1} - y^{k}_{\tau} \right\rangle \nonumber\\
= &~\mathbb{E} \left\langle G_{y, \tau}^{k}  - \nabla_y F(x^{k}_{\tau + 1}, y^{k}_{\tau}) , y^{k}_{\tau + 1} - y^{k}_{\tau} \right\rangle \nonumber\\
\leq &~ 8\alpha_{y}\mathbb{E}\left\| G_{y, \tau}^{k}  - \nabla_y F(x^{k}_{\tau + 1}, y^{k}_{\tau}) \right\|^{2} + \frac{1}{16\alpha_{y}}\mathbb{E}\left\|y^{k}_{\tau + 1} - y^{k}_{\tau} \right\|^{2} \nonumber\\
\leq &~ 16\alpha_{y}\mathbb{E}\left\| G_{y, \tau}^{k}  - \nabla_y F(x^{k}_{\tau}, y^{k}_{\tau}) \right\|^{2} + 16\alpha_{y}\mathbb{E}\left\| \nabla_y F(x^{k}_{\tau}, y^{k}_{\tau})  - \nabla_y F(x^{k}_{\tau + 1}, y^{k}_{\tau}) \right\|^{2} + \frac{1}{16\alpha_{y}}\mathbb{E}\left\|y^{k}_{\tau + 1} - y^{k}_{\tau} \right\|^{2} \nonumber\\
\leq &~ \frac{16\alpha_{y}L_{y}^{2}}{M}\sum_{b=0}^{\tau-1}\mathbb{E}\left\| y^{k}_{b+1} - y^{k}_{b}\right\|^{2} + \frac{16\alpha_{y}L_{y}^{2}}{M}\sum_{b=0}^{\tau-1}\mathbb{E}\left\| x^{k}_{b+1} - x^{k}_{b}\right\|^{2}+ 16\alpha_{y}L_{y}^{2}\mathbb{E}\left\|x^{k}_{\tau + 1} - x^{k}_{\tau} \right\|^{2} + \frac{1}{16\alpha_{y}}\mathbb{E}\left\|y^{k}_{\tau + 1} - y^{k}_{\tau} \right\|^{2}\nonumber\\
\quad & {+16\alpha_yC_{\sigma,y}}\;,
\label{intermediate_1_second}
\end{align}
where 
the last inequality follows from \eqref{eq:lemma0_y} and the $L_{y}$-Lipschitz continuity of $\nabla_{y}F(\cdot,y)$. Plugging \eqref{intermediate_1_second} into \eqref{intermediate_1} we get
\begin{align}
&\mathbb{E}\left\langle\nabla_y F_r(x^{k}_{\tau + 1}, y^{k}_{\tau}, z^{k}_{\tau}), y^{k}_{\tau + 1} - y^{k}_{\tau} \right\rangle \nonumber\\
\geq &~ \frac{15}{16\alpha_{y}}\mathbb{E}\left\| y_{\tau+1}^{k} - y_{\tau}^{k} \right\|^{2} - \frac{16\alpha_{y}L_{y}^{2}}{M}\sum_{b=0}^{\tau-1}\mathbb{E}\left\| y^{k}_{b+1} - y^{k}_{b}\right\|^{2} - 16\alpha_{y}L_{y}^{2}\mathbb{E}\left\| x_{\tau+1}^{k} - x_{\tau}^{k} \right\|^{2} - \frac{16\alpha_{y}L_{y}^{2}}{M}\sum_{b=0}^{\tau-1}\mathbb{E}\left\| x^{k}_{b+1} - x^{k}_{b}\right\|^{2}\nonumber\\
\quad & {-16\alpha_yC_{\sigma,y}}\;. \label{eq:Lyapunov_1}
\end{align}
Plugging \eqref{eq:Lyapunov_2} and \eqref{eq:Lyapunov_1}  into \eqref{eq:decrease_Lyapunov_init} yields
\begin{align}
&\mathbb{E}\left\langle 2\nabla_y F_r(x_r(y^{k}_{\tau}, z^{k}_{\tau}), y^{k}_{\tau}, z^{k}_{\tau}) - \nabla_y F_r(x^{k}_{\tau + 1}, y^{k}_{\tau}, z^{k}_{\tau}), y^{k}_{\tau + 1} - y^{k}_{\tau} \right\rangle \nonumber\\
\geq &~ \frac{11}{16\alpha_{y}}\mathbb{E}\left\|y^{k}_{\tau + 1} - y^{k}_{\tau}\right\|^{2}- \frac{16\alpha_{y}L_{y}^{2}}{M}\sum_{b=0}^{\tau-1}\mathbb{E}\left\| y^{k}_{b+1} - y^{k}_{b}\right\|^{2}- 16\alpha_{y}L_{y}^{2}\mathbb{E}\left\|x^{k}_{\tau+1} - x^{k}_{\tau}\right\|^{2}   - \frac{16\alpha_{y}L_{y}^{2}}{M}\sum_{b=0}^{\tau-1}\mathbb{E}\left\| x^{k}_{b+1} - x^{k}_{b}\right\|^{2}\nonumber\\
\quad & -4\alpha_{y} L_{y}^{2}\mathbb{E} \left\|x^{k}_{\tau + 1} -  x_r(y^{k}_{\tau}, z^{k}_{\tau})  \right\|^{2}{-16\alpha_yC_{\sigma,y}}\;.\label{eq:final_term_2}
\end{align}

Second, we bound \textcircled{2} in \eqref{eq:orig_Lyapunov_decrease} by
\begin{align}
&2r\mathbb{E}\left[\left\langle z^{k}_{\tau + 1} - z^{k}_{\tau}, x_r(y(z^{k}_{\tau + 1}), z^{k}_{\tau}) - x_{r}(y^{k}_{\tau + 1},z^{k}_{\tau + 1})\right\rangle \right] \nonumber\\
= &~ 2r\mathbb{E}\left[\left\langle  z^{k}_{\tau+1}-z^{k}_{\tau}, x_r(y(z^{k}_{\tau + 1}), z^{k}_{\tau}) - x_r(y(z^{k}_{\tau + 1}), z^{k}_{\tau+1})\right\rangle \right] \nonumber\\
\quad & +2r\mathbb{E}\left[\left\langle  z^{k}_{\tau+1}-z^{k}_{\tau }, x_r(y(z^{k}_{\tau + 1}), z^{k}_{\tau+1}) - x_{r}(y^{k}_{\tau + 1},z^{k}_{\tau + 1})\right\rangle \right]  \nonumber\\
\overset{(i)}{\geq} &~ -2r\mathbb{E}\big(\|z^{k}_{\tau + 1}-z^{k}_{\tau}\|\left\|x_r(y(z^{k}_{\tau + 1}), z^{k}_{\tau}) - x_r(y(z^{k}_{\tau + 1}), z^{k}_{\tau+1})\right\|\big) \nonumber\\
\quad & -2r\mathbb{E}\big(\|z^{k}_{\tau + 1}-z^{k}_{\tau}\|\left\|x_{r}\left( y(z^{k}_{\tau + 1}),z^{k}_{\tau + 1}  \right)-x_{r}\left( y^{k}_{\tau + 1},z^{k}_{\tau + 1}  \right)\right\|\big)   \nonumber\\
\overset{(ii)}{\geq} &~ -2r\sigma_{1}\mathbb{E}\|z^{k}_{\tau + 1}-z^{k}_{\tau}\|^{2} - \frac{r}{10\beta}\mathbb{E}\|z^{k}_{\tau + 1}-z^{k}_{\tau}\|^{2}  - 10r\beta\mathbb{E}\left\|x_{r}\left( y(z^{k}_{\tau + 1}),z^{k}_{\tau + 1}  \right)-x_{r}\left( y^{k}_{\tau + 1},z^{k}_{\tau + 1}  \right)\right\|^{2} \nonumber\\
\overset{(iii)}{\geq} &~ -\left(4r\beta^{2}+\frac{r\beta}{10}\right)\mathbb{E}\left\|x^{k}_{\tau + 1}-z^{k}_{\tau}\right\|^{2} - 10r\beta\mathbb{E}\left\|x_{r}\left( y(z^{k}_{\tau + 1}),z^{k}_{\tau + 1}  \right)-x_{r}\left( y^{k}_{\tau + 1},z^{k}_{\tau + 1}  \right)\right\|^{2},
\label{eq:third_term_decrease}
\end{align}
where $(i)$ holds by Cauchy-Schwarz inequality, $(ii)$ follows from \eqref{eq:sc1} and Young's inequality, and $(iii)$ is obtained from the update of $z$ and  $\sigma_{1} \leq 2$ {(since $r \geq 2\rho$)}. For the second term in \eqref{eq:third_term_decrease}, we bound it by
\begin{align}
&\mathbb{E}\left\|x_{r}\left( y(z^{k}_{\tau + 1}),z^{k}_{\tau + 1}  \right)-x_{r}\left( y^{k}_{\tau + 1},z^{k}_{\tau + 1}  \right)\right\|^{2}\nonumber\\ 
\overset{(i)}{\leq} &~\mathbb{E} \left(  2\left\|x_{r}\left( y(z^{k}_{\tau + 1}),z^{k}_{\tau + 1}  \right)-x_{r}\left( y_{\tau, +}^k(z^{k}_{\tau + 1}),z^{k}_{\tau + 1}  \right)\right\|^{2}  + 2\left\| x_{r}\left( y_{\tau, +}^k(z^{k}_{\tau + 1}),z^{k}_{\tau + 1}  \right)-x_{r}\left( y^{k}_{\tau + 1},z^{k}_{\tau + 1}  \right)  \right\|^{2}  \right)\nonumber\\
\overset{(ii)}{\leq} &~ 2\mathbb{E}   \left\|x_{r}\left( y(z^{k}_{\tau + 1}),z^{k}_{\tau + 1}  \right)-x_{r}\left( y_{\tau, +}^k(z^{k}_{\tau + 1}),z^{k}_{\tau + 1}  \right)\right\|^{2} + 2\sigma_{2}^{2} \mathbb{E}\left\|y_{\tau, +}^k(z^{k}_{\tau + 1}) - y^{k}_{\tau + 1}   \right\|^{2} \nonumber\\
\overset{(iii)}{\leq} &~ 2\mathbb{E}   \left\|x_{r}\left( y(z^{k}_{\tau + 1}),z^{k}_{\tau + 1}  \right)-x_{r}\left( y_{\tau, +}^k(z^{k}_{\tau + 1}),z^{k}_{\tau + 1}  \right)\right\|^{2}  + 18 \Bigg(  24\alpha_{y}^2 \beta^{2} L_{y}^{2}\mathbb{E}\left\|x^{k}_{\tau + 1} - z^{k}_{\tau}\right\|^{2}   \nonumber\\
\quad & + 6 \alpha_{y}^2L_{y}^{2}\mathbb{E} \left\|x^{k}_{\tau + 1} - x_{r}(y^{k}_{\tau},z^{k}_{\tau})\right\|^{2} + 3\alpha_{y}^2 L_{y}^{2}\mathbb{E}\left\| x^{k}_{\tau+1} - x^{k}_{\tau}\right\|^{2}  + \frac{3\alpha_{y}^{2}L_{y}^{2}}{M}\sum_{b=0}^{\tau-1}\mathbb{E}\left\| y^{k}_{b+1} - y^{k}_{b}\right\|^{2}  \nonumber\\
\quad & + \frac{3\alpha_{y}^{2}L_{y}^{2}}{M}\sum_{b=0}^{\tau-1}\mathbb{E}\left\| x^{k}_{b+1} - x^{k}_{b}\right\|^{2} {+3\alpha_y^2C_{\sigma,y}} \Bigg) \nonumber\\
= &~2\mathbb{E}   \left\|x_{r}\left( y(z^{k}_{\tau + 1}),z^{k}_{\tau + 1}  \right)-x_{r}\left( y_{\tau, +}^k(z^{k}_{\tau + 1}),z^{k}_{\tau + 1}  \right)\right\|^{2}+ 432\beta^{2}\alpha_{y}^{2}L_{y}^{2}\mathbb{E} \left\|x^{k}_{\tau + 1} -z^{k}_{\tau}\right\|^{2}   \nonumber\\ 
\quad & + 108\sigma_{2}^{2}\alpha_{y}^{2}L_{y}^{2}\mathbb{E} \left\|x^{k}_{\tau + 1} - x_{r}(y^{k}_{\tau},z^{k}_{\tau})\right\|^{2} + 54\alpha_{y}^{2}L_{y}^{2}\mathbb{E}\left\| x^{k}_{\tau+1} - x^{k}_{\tau}\right\|^{2} + \frac{54\alpha_{y}^{2}L_{y}^{2}}{M}\sum_{b=0}^{\tau-1}\mathbb{E}\left\| y^{k}_{b+1} - y^{k}_{b}\right\|^{2} \nonumber\\ 
\quad & + \frac{54\alpha_{y}^{2}L_{y}^{2}}{M}\sum_{b=0}^{\tau-1}\mathbb{E}\left\| x^{k}_{b+1} - x^{k}_{b}\right\|^{2}{+54\alpha_y^2C_{\sigma,y}},
\label{third_term_decrease_2}
\end{align}
where the $(i)$ holds by Young's inequality, $(ii)$ follows from \eqref{eq:sc2}, while $(iii)$ is obtained from $\sigma_{2} \leq 3$ (since $r \geq L_{y} + \rho$) and \eqref{eq:y_minus_y_2}. Plugging \eqref{third_term_decrease_2} into \eqref{eq:third_term_decrease} gives
\begin{align}
&2r\mathbb{E}\left[\left\langle z^{k}_{\tau + 1} - z^{k}_{\tau}, x_r(y(z^{k}_{\tau + 1}), z^{k}_{\tau}) - x_{r}(y^{k}_{\tau + 1},z^{k}_{\tau + 1})\right\rangle \right] \nonumber\\  
\geq &~ -\left(4r\beta^{2}+\frac{r\beta}{10} + 4320r\beta^{3}\alpha_{y}^{2}L_{y}^{2}\right)\mathbb{E} \left\|x^{k}_{\tau + 1} -z^{k}_{\tau}\right\|^{2}  -\left(1080r\beta\alpha_{y}^{2}L_{y}^{2}\right)\mathbb{E} \left\|x^{k}_{\tau + 1} - x_{r}(y^{k}_{\tau},z^{k}_{\tau})\right\|^{2}\nonumber\\
\quad &  -540r\beta\alpha_{y}^{2}L_{y}^{2}\mathbb{E}\left\| x^{k}_{\tau+1} - x^{k}_{\tau}\right\|^{2} -\frac{540r\beta\alpha_{y}^{2}L_{y}^{2}}{M}\sum_{b=0}^{\tau-1}\mathbb{E}\left\| y^{k}_{b+1} - y^{k}_{b}\right\|^{2}    - \frac{540r\beta\alpha_{y}^{2}L_{y}^{2}}{M}\sum_{b=0}^{\tau-1}\mathbb{E}\left\| x^{k}_{b+1} - x^{k}_{b}\right\|^{2}\nonumber\\
\quad &-20r\beta\mathbb{E}\left\|x_{r}\left( y(z^{k}_{\tau + 1}),z^{k}_{\tau + 1}  \right)-x_{r}\left( y_{\tau, +}^k(z^{k}_{\tau + 1}),z^{k}_{\tau + 1}  \right)\right\|^{2}{-540r\beta\alpha_y^2C_{\sigma,y}}. \label{eq:final_third_term}
\end{align}
Now we obtain the desired result by 
plugging \eqref{eq:final_term_2} and \eqref{eq:final_third_term} into \eqref{eq:orig_Lyapunov_decrease}.
\end{proof}

\begin{lemma}\label{lemma:dual:final}
Under Assumptions \ref{assumption_for_smooth_case} and \ref{assumption_KL_for_smooth_case}, let 
$\{x_{\tau}^k, y_{\tau}^k, z_{\tau}^k\}$ be generated by Algorithm \ref{alg:smoothed_primal_dual_spider} with $\beta \leq \frac{L_{y}}{20r\varpi}$ if $\theta \in [0,\frac{1}{2}]$, where $\varpi$ is given in Lemma~\ref{dual_error_bound}. Then
for any $k \geq 0$ and any $0 \leq \tau \leq T-1$, it holds
\begin{align}
&20r\beta   \left\|x_{r}\left( y(z^{k}_{\tau + 1}),z^{k}_{\tau + 1}  \right)-x_{r}\left( y_{\tau , +}^k(z^{k}_{\tau + 1}),z^{k}_{\tau + 1}  \right)\right\|^{2} \nonumber\\
\leq &~ 
L_{y} \left\|y^{k}_{\tau} - y_{\tau , +}^{k}(z^{k}_{\tau + 1})\right\|^{2} + \chi_{\theta}C_{\beta},
\label{eq:dual_theta_applied_1}
\end{align}
where $C_{\beta} = \left(\left(\frac{2\theta-1}{2\theta}\right)\left(\frac{20r\kappa}{\left(2\theta L_{y}\right)^{\frac{1}{2\theta}}}\right)^{\frac{2\theta}{2\theta-1}}\right)\beta^{\frac{2\theta}{2\theta-1}}$ with $\kappa$ defined in Lemma~\ref{dual_error_bound}.
\end{lemma}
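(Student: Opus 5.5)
The plan is to apply Lemma~\ref{dual_error_bound} at the point $y=y_\tau^k$ and $z=z_{\tau+1}^k$. Then $y_+(z)$ in Table~\ref{tab:mylabel-key} coincides with $y_{\tau,+}^k(z_{\tau+1}^k)$, and $y(z)=y(z_{\tau+1}^k)\in Y(z_{\tau+1}^k)$. The left side of \eqref{eq:dual_theta_applied_1} then becomes $20r\beta$ times exactly the quantity bounded in \eqref{eq:dual_theta_half}/\eqref{eq:dual_theta_1}, with $\|y-y_+(z)\|$ replaced by $\|y_\tau^k-y_{\tau,+}^k(z_{\tau+1}^k)\|$. I read $\chi_\theta$ as the indicator that equals $0$ for $\theta\in[0,\frac12]$ and $1$ for $\theta\in(\frac12,1]$, and I split into these two cases.

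\emph{Case $\theta\in[0,\frac12]$.} By \eqref{eq:dual_theta_half} we get
\[
20r\beta\|\cdot\|^2\le 20r\beta\varpi\,\|y_\tau^k-y_{\tau,+}^k(z_{\tau+1}^k)\|^2 .
\]
The hypothesis $\beta\le \frac{L_y}{20r\varpi}$ gives $20r\beta\varpi\le L_y$, so the claim holds with $\chi_\theta C_\beta=0$.

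\emph{Case $\theta\in(\frac12,1]$.} By \eqref{eq:dual_theta_1},
\[
20r\beta\|\cdot\|^2\le 20r\beta\kappa\, s^{1/\theta}, \qquad s:=\|y_\tau^k-y_{\tau,+}^k(z_{\tau+1}^k)\|.
\]
Write $s^{1/\theta}=(s^2)^{1/(2\theta)}$ and apply Young's inequality $ab\le \frac{a^p}{p}+\frac{b^q}{q}$ with $q=2\theta$ and $p=\frac{2\theta}{2\theta-1}$; these are conjugate since $\frac{1}{2\theta}+\frac{2\theta-1}{2\theta}=1$. Split $20r\beta\kappa s^{1/\theta}=a\,b$ with
\[
b=(2\theta L_y)^{\frac{1}{2\theta}}s^{1/\theta},\qquad a=\frac{20r\kappa\beta}{(2\theta L_y)^{1/(2\theta)}} .
\]
Then $\frac{b^q}{q}=\frac{2\theta L_y s^2}{2\theta}=L_y s^2$. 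Also
\[
\frac{a^p}{p}=\frac{2\theta-1}{2\theta}\left(\frac{20r\kappa}{(2\theta L_y)^{1/(2\theta)}}\right)^{\frac{2\theta}{2\theta-1}}\beta^{\frac{2\theta}{2\theta-1}}=C_\beta,
\]
which is exactly the stated constant. This gives \eqref{eq:dual_theta_applied_1} with $\chi_\theta=1$.

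There is no real obstacle. The only care needed is in two places. The first is checking that the virtual point $y_{\tau,+}^k(z_{\tau+1}^k)$ is indeed $y_+(z)$ with base point $y_\tau^k$: both use $\nabla_y F(x_r(y_\tau^k,z_{\tau+1}^k),y_\tau^k)$. The second is choosing the Young weights so that the coefficient on $s^2$ is exactly $L_y$, matching the form of $C_\beta$. The statement is deterministic and holds pointwise for every realization, so no expectation is involved. At $\theta=1$ the exponent $\frac{2\theta}{2\theta-1}=2$, so no degeneracy arises.
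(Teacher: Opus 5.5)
Your proposal is correct and follows the paper's proof essentially step for step. You instantiate Lemma~\ref{dual_error_bound} at $(y,z)=(y_\tau^k, z_{\tau+1}^k)$, use $\beta\le \frac{L_y}{20r\varpi}$ when $\theta\in[0,\frac12]$, and for $\theta\in(\frac12,1]$ apply Young's inequality with exponents $p=\frac{2\theta}{2\theta-1}$, $q=2\theta$ to the same split $a=\frac{20r\kappa\beta}{(2\theta L_y)^{1/(2\theta)}}$, $b=(2\theta L_y)^{1/(2\theta)}s^{1/\theta}$. Your explicit check that $y_{\tau,+}^k(z_{\tau+1}^k)$ coincides with $y_+(z)$ at base point $y_\tau^k$ is a detail the paper leaves implicit.
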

\begin{proof} 
When $\theta \in [0,\frac{1}{2}]$, since $\beta \leq \frac{L_{y}}{20r\varpi}$, we have from \eqref{eq:dual_theta_half} that
\begin{align}
&20r\beta   \left\|x_{r}\left( y(z^{k}_{\tau + 1}),z^{k}_{\tau + 1}  \right)-x_{r}\left( y_{\tau , +}^k(z^{k}_{\tau + 1}),z^{k}_{\tau + 1}  \right)\right\|^{2}\nonumber\\ 
\leq &~ 20r\beta\varpi \left\|y^{k}_{\tau} - y_{\tau , +}^{k}(z^{k}_{\tau + 1})\right\|^{2}  \nonumber\\ 
\leq &~ L_{y} \left\|y^{k}_{\tau} - y_{\tau , +}^{k}(z^{k}_{\tau + 1})\right\|^{2}. \label{dual_final_bound_1}
\end{align}
When $\theta \in \left(\frac{1}{2},1\right]$, 
we have from \eqref{eq:dual_theta_1} that
\begin{align}
&20r\beta   \left\|x_{r}\left( y(z^{k}_{\tau + 1}),z^{k}_{\tau + 1}  \right)-x_{r}\left( y_{\tau , +}^k(z^{k}_{\tau + 1}),z^{k}_{\tau + 1}  \right)\right\|^{2}\nonumber\\
\leq &~ 20r\beta\kappa \left\|y^{k}_{\tau} - y_{\tau , +}^{k}(z^{k}_{\tau + 1})\right\|^{\frac{1}{\theta}}  \nonumber\\
= &~\left(\frac{20r\kappa\beta}{\left(2\theta L_{y}\right)^{\frac{1}{2\theta}}}\right)\left(\left(2\theta L_{y}\right)^{\frac{1}{2\theta}}   \left\|y^{k}_{\tau} - y_{\tau , +}^{k}(z^{k}_{\tau + 1})\right\|^{\frac{1}{\theta}}\right) \nonumber\\
\overset{(i)}{\leq} &~ \left(\frac{2\theta-1}{2\theta}\right)\left(\frac{20r\kappa\beta}{\left(2\theta L_{y}\right)^{\frac{1}{2\theta}}}\right)^{\frac{2\theta}{2\theta-1}} + L_{y} \left(   \left\|y^{k}_{\tau} - y_{\tau , +}^{k}(z^{k}_{\tau + 1})\right\|^{\frac{1}{\theta}}\right)^{2\theta},  \label{dual_final_bound_2}
\end{align}
where $(i)$ follows from Young's inequality for products, i.e., $ab \leq \frac{a^{p}}{p} + \frac{b^{q}}{q} \text{ where } a,b \geq 0, \;p, \;q > 1 \text{ such that }\linebreak \frac{1}{p}+\frac{1}{q} = 1$.

Combining \eqref{dual_final_bound_1} and \eqref{dual_final_bound_2} and utilizing the definition of $\chi_{\theta}$ yields the desired result.
\end{proof}
\begin{lemma}\label{lemma:dual:final:2}
Suppose Assumption \ref{assumption_for_smooth_case} holds and let  $
 \alpha_y \leq \frac{1}{10 L_{y}}, \; \beta \leq \frac{1}{30}.
$ Then for any $k\ge0$ and any $0\le \tau \le T-1$, we have
\begin{align*}
\mathbb{E}\left\|y^{k}_{\tau} - y_{\tau , +}^{k}(z^{k}_{\tau + 1})\right\|^{2} 
\leq &~ 2\mathbb{E}\left\|y^{k}_{\tau + 1} - y^{k}_{\tau}\right\|^{2} 
 + \frac{2\beta}{125}\mathbb{E}\left\|x^{k}_{\tau + 1} - z^{k}_{\tau}\right\|^{2}  + \frac{3}{25}\mathbb{E} \left\|x^{k}_{\tau + 1} - x_{r}(y^{k}_{\tau},z^{k}_{\tau})\right\|^{2} \nonumber\\
\quad &  + \frac{3}{50}\mathbb{E}\left\| x^{k}_{\tau+1} - x^{k}_{\tau}\right\|^{2} + \frac{3}{50M}\sum_{b=0}^{\tau-1}\mathbb{E}\left\| y^{k}_{b+1} - y^{k}_{b}\right\|^{2} + \frac{3}{50M}\sum_{b=0}^{\tau-1}\mathbb{E}\left\| x^{k}_{b+1} - x^{k}_{b}\right\|^{2}{+6\alpha_{y}^2C_{\sigma,y}}
\end{align*}
\end{lemma}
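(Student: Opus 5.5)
The bound is a triangle-inequality split around the actual iterate $y^{k}_{\tau+1}$, followed by the already-established estimate \eqref{eq:y_minus_y_2} and a substitution of the step-size restrictions. By Young's inequality,
\[
\mathbb{E}\left\|y^{k}_{\tau} - y_{\tau , +}^{k}(z^{k}_{\tau + 1})\right\|^{2} \leq 2\mathbb{E}\left\|y^{k}_{\tau+1} - y^{k}_{\tau}\right\|^{2} + 2\mathbb{E}\left\|y_{\tau , +}^{k}(z^{k}_{\tau + 1}) - y^{k}_{\tau+1}\right\|^{2}.
\]
The first term already appears in the claimed bound with coefficient $2$, so the remaining work is on the second term.

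For the second term we invoke \eqref{eq:y_minus_y_2}. This requires $r\ge 2\rho$, which we take as implicit in the standing parameter choices, since it was used there to get $\sigma_1\le 2$. Multiplying that inequality by $2$ gives the coefficients
\[
48\alpha_y^2\beta^2L_y^2,\quad 12\alpha_y^2L_y^2,\quad 6\alpha_y^2L_y^2,\quad \frac{6\alpha_y^2L_y^2}{M},\quad \frac{6\alpha_y^2L_y^2}{M},\quad 6\alpha_y^2C_{\sigma,y}.
\]
These multiply, respectively, $\|x^k_{\tau+1}-z^k_\tau\|^2$, $\|x^k_{\tau+1}-x_r(y^k_\tau,z^k_\tau)\|^2$, $\|x^k_{\tau+1}-x^k_\tau\|^2$, the two running sums, and the variance term. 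The variance coefficient $6\alpha_y^2C_{\sigma,y}$ already matches the claim exactly.

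Next we use $\alpha_y\le \frac{1}{10L_y}$, which gives $\alpha_y^2L_y^2\le \frac{1}{100}$. The coefficients then become:
\begin{itemize}
\item $12/100=3/25$ on $\|x^k_{\tau+1}-x_r(y^k_\tau,z^k_\tau)\|^2$;
\item $6/100=3/50$ on $\|x^k_{\tau+1}-x^k_\tau\|^2$;
\item $3/(50M)$ on each of the two running sums.
\end{itemize}
All of these match the statement. For the $z$-term we additionally use $\beta\le\frac{1}{30}$ to turn $\beta^2$ into a multiple of $\beta$:
\[
48\alpha_y^2\beta^2L_y^2\le \frac{48}{100}\cdot\frac{\beta}{30}=\frac{4\beta}{250}=\frac{2\beta}{125}.
\]
This matches the claimed coefficient.

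No real obstacle is expected; the proof is bookkeeping. The only points needing care are the following.
\begin{itemize}
\item Apply Young's inequality with weights $(2,2)$, not a lopsided split, so that the factor $2$ on $\|y^k_{\tau+1}-y^k_\tau\|^2$ is exact.
\item Confirm that the assumption $r\ge 2\rho$ behind \eqref{eq:y_minus_y_2} is available under the stated hypotheses. It holds for the choice of $r$ in Theorem~\ref{main:theorem2}, so at worst it must be listed as a standing condition.
\end{itemize}
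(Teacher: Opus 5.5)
Your proposal is correct and matches the paper's proof exactly: a $(2,2)$ Young split around $y^k_{\tau+1}$, followed by doubling \eqref{eq:y_minus_y_2} and using $\alpha_y^2L_y^2\le\frac{1}{100}$ and $\beta\le\frac{1}{30}$ to reach the stated constants. You are also right that the lemma silently inherits the requirement $r\ge 2\rho$ from \eqref{eq:y_minus_y_2}; the paper's statement omits it, though it holds wherever the lemma is applied.
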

\begin{proof}Utilizing Young's inequality, we get
\begin{align*}
&~\mathbb{E}\left\|y^{k}_{\tau} - y_{\tau , +}^{k}(z^{k}_{\tau + 1})\right\|^{2}\\
\leq &~ 2\mathbb{E}\left\|y^{k}_{\tau + 1} - y^{k}_{\tau}\right\|^{2} + 2\mathbb{E}\left\| y^{k}_{\tau + 1} - y_{\tau , +}^{k}(z^{k}_{\tau + 1}) \right\|^{2}\nonumber \\
\overset{(i)}{\leq} &~  2\mathbb{E}\left\|y^{k}_{\tau + 1} - y^{k}_{\tau}\right\|^{2} 
 + 48\alpha_{y}^2 \beta^{2} L_{y}^{2}\mathbb{E}\left\|x^{k}_{\tau + 1} - z^{k}_{\tau}\right\|^{2}  + 12 \alpha_{y}^2L_{y}^{2}\mathbb{E} \left\|x^{k}_{\tau + 1} - x_{r}(y^{k}_{\tau},z^{k}_{\tau})\right\|^{2}  + 6\alpha_{y}^2 L_{y}^{2}\mathbb{E}\left\| x^{k}_{\tau+1} - x^{k}_{\tau}\right\|^{2} \nonumber\\
\quad & + \frac{6\alpha_{y}^{2}L_{y}^{2}}{M}\sum_{b=0}^{\tau-1}\mathbb{E}\left\| y^{k}_{b+1} - y^{k}_{b}\right\|^{2} + \frac{6\alpha_{y}^{2}L_{y}^{2}}{M}\sum_{b=0}^{\tau-1}\mathbb{E}\left\| x^{k}_{b+1} - x^{k}_{b}\right\|^{2}  {+6\alpha_{y}^2C_{\sigma,y}} \\
\overset{(ii)}{\leq} &~2\mathbb{E}\left\|y^{k}_{\tau + 1} - y^{k}_{\tau}\right\|^{2} 
 + \frac{2\beta}{125}\mathbb{E}\left\|x^{k}_{\tau + 1} - z^{k}_{\tau}\right\|^{2}  + \frac{3}{25}\mathbb{E} \left\|x^{k}_{\tau + 1} - x_{r}(y^{k}_{\tau},z^{k}_{\tau})\right\|^{2} + \frac{3}{50}\mathbb{E}\left\| x^{k}_{\tau+1} - x^{k}_{\tau}\right\|^{2} \nonumber\\
\quad &   + \frac{3}{50M}\sum_{b=0}^{\tau-1}\mathbb{E}\left\| y^{k}_{b+1} - y^{k}_{b}\right\|^{2} + \frac{3}{50M}\sum_{b=0}^{\tau-1}\mathbb{E}\left\| x^{k}_{b+1} - x^{k}_{b}\right\|^{2}{+6\alpha_{y}^2C_{\sigma,y}}\;,
\end{align*}
where $(i)$ follows from \eqref{eq:y_minus_y_2}, and $(ii)$ holds by $\alpha_{y} \leq \frac{1}{10L_{y}}$ and $\beta\leq \frac{1}{30}$. Rearranging the above gives the desired bound.
\end{proof}
{The following lemma provides auxiliary bounds that will be used in the later lemmas to establish the optimality of the algorithm’s output.}
\begin{lemma}
Under Assumptions \ref{assumption_for_smooth_case} and \ref{assumption_KL_for_smooth_case}, let 
$\{x_{\tau}^k, y_{\tau}^k, z_{\tau}^k\}$ be generated by Algorithm \ref{alg:smoothed_primal_dual_spider} with $T=M$ and other parameters satisfying  
\begin{align}
&r\geq \max\{2\rho, L_{y}+\rho\}, \label{eq:cond-r}\\
&\frac{24(L_{y}+1)}{(r-\rho)^2} \leq \alpha_{x} \leq  
\bar{\alpha}_x:=\min\Bigg\{
\frac{1}{12(r+L_{x} + 2L_{y})},\;
\frac{(r-\rho)^{2}}{24(r+L_{x})^2(L_y+1)},\;  
\frac{r - (\rho + 2 L_y)}{2 L_y (L_x + r)}
\Bigg\},\label{eq:cond-alpha-x}\\
&\alpha_{y} \leq \min\left\{\frac{1}{40L_y}, \; \frac{1}{4(2L_y+1)}\right\}, \label{eq:cond-alpha-y}\\
&\beta \leq  \begin{cases}
    \min\left\{\frac{1}{30}, \frac{1}{30r}, \frac{L_{y}}{20r\varpi}\right\}  & \text{ if }\theta \in \left[0, \frac{1}{2}\right],\\
     \min\left\{\frac{1}{30}, \frac{1}{30r}\right\} & \text{ if }\theta \in \left(\frac{1}{2},1\right]. 
\end{cases}\label{eq:cond-beta}
\end{align}
Additionally, 
denote $\chi_\theta :=
\begin{cases}
0, & \theta \in [0,\frac{1}{2}],\\
1, & \theta \in (\frac{1}{2},1]
\end{cases}$, $\Phi_r^0 = \Phi_{r}\left(x^{0}_{0},y^{0}_{0}, z^{0}_{0}\right)$ {along with $$C_\sigma = C_{\sigma,x}\left(2(1+L_y) \left( \frac{\alpha_x^2(r+L_{x})^2}{(r-\rho)^2} + \frac{1}{(r-\rho)^{2}} \right) + \frac{\alpha_x}{2}\right)+C_{\sigma,y}\left(16\alpha_y + 6\alpha_y^2\left(L_y+3\right)\right).$$} Then for any $k \geq 0$ and any $0 \leq \tau \leq T-1$, we have
\begin{align*}
\sum_{k=0}^{K-1}\sum_{\tau=0}^{T-1}\mathbb{E}\left\|x^{k}_{\tau+1} - x^{k}_{\tau}\right\|^{2} \leq 8\alpha_{x}\left((\Phi_0^0 - \underline{F}) +  \chi_{\theta}C_{\beta}KT{+C_{\sigma}KT}\right),
\end{align*}
\begin{align*}
\sum_{k=0}^{K-1}\sum_{\tau=0}^{T-1}\mathbb{E}\left\|y^{k}_{\tau+1} - y^{k}_{\tau}\right\|^{2} \leq 4\alpha_{y}\left((\Phi_0^0 - \underline{F}) +  \chi_{\theta}C_{\beta}KT{+C_{\sigma}KT}\right),
\end{align*}
\begin{align*}
\sum_{k=0}^{K-1}\sum_{\tau=0}^{T-1}\mathbb{E}\left\|x^{k}_{\tau+1} - z^{k}_{\tau}\right\|^{2} \leq \frac{2}{r\beta}\left((\Phi_0^0 - \underline{F}) +  \chi_{\theta}C_{\beta}KT{+C_{\sigma}KT}\right),
\end{align*}
\label{lemma:scenario1}
where $C_\beta$ is defined in Lemma~\ref{lemma:dual:final}.
\end{lemma}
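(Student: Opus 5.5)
The plan is to start from the one-step decrease inequality of Lemma~\ref{lemma:Sufficient_Decrease_Init} and eliminate its three ``bad'' terms: the dual-gap term $20r\beta\|x_r(y(z^k_{\tau+1}),z^k_{\tau+1})-x_r(y^k_{\tau,+}(z^k_{\tau+1}),z^k_{\tau+1})\|^2$, the primal error term $\|x^k_{\tau+1}-x_r(y^k_\tau,z^k_\tau)\|^2$, and the SPIDER accumulation sums $\frac1M\sum_{b<\tau}$. Once these are absorbed, we sum over $\tau$ and $k$ and telescope $\Phi_r$.

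First I handle the dual-gap term. Lemma~\ref{lemma:dual:final} bounds it by $L_y\|y^k_\tau-y^k_{\tau,+}(z^k_{\tau+1})\|^2+\chi_\theta C_\beta$. Lemma~\ref{lemma:dual:final:2} then converts $\|y^k_\tau-y^k_{\tau,+}\|^2$ into $2\|y^k_{\tau+1}-y^k_\tau\|^2$ plus small multiples of the other quantities and the sums. The conditions $\alpha_y\le 1/(40L_y)$ and $\beta\le 1/30$ in \eqref{eq:cond-alpha-y}--\eqref{eq:cond-beta} make these admissible.

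Next I replace every occurrence of $\|x^k_{\tau+1}-x_r(y^k_\tau,z^k_\tau)\|^2$ using Lemma~\ref{lemma:lemma_primal_error_bound}. This gives $(\tfrac{5\eta^2}{2}+5)\|x^k_{\tau+1}-x^k_\tau\|^2$ plus accumulated sums and a $C_{\sigma,x}$ term. The lower bound $\alpha_x\ge 24(L_y+1)/(r-\rho)^2$ in \eqref{eq:cond-alpha-x} is what keeps $\eta^2\alpha_y L_y^2$ and $\eta^2 r\beta\alpha_y^2L_y^2$ small compared with $1/\alpha_x$. The reason is that $\eta\approx\frac{1+\alpha_x(r+L_x)}{\alpha_x(r-\rho)}$, so $\eta^2\lesssim 1/(\alpha_x^2(r-\rho)^2)$, and the coefficient in front is $O(L_y+1)$. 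With this, the coefficient of $\mathbb{E}\|x^k_{\tau+1}-x^k_\tau\|^2$ becomes at least $\frac1{8\alpha_x}$, using the upper bound $\alpha_x\le \bar\alpha_x$. The coefficient of $\mathbb{E}\|y^k_{\tau+1}-y^k_\tau\|^2$ becomes at least $\frac1{4\alpha_y}$, since $\frac{11}{16\alpha_y}-5L_y-2L_y\cdot(\text{const})\ge \frac1{4\alpha_y}$ under $\alpha_y\le 1/(40L_y)$. The coefficient of $\|x^k_{\tau+1}-z^k_\tau\|^2$ is at least $\frac{r\beta}{2}$, using $\beta\le 1/30$ and $\beta\le 1/(30r)$ so that the $4r\beta^2$ and $r\beta^3$ terms are dominated. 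Collecting all variance contributions gives exactly $C_\sigma$. Its $C_{\sigma,x}$ factor is $2(1+L_y)\eta^2\alpha_x^2$ expanded, plus $\alpha_x/2$.

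The accumulated sums are handled next. Within epoch $k$, summing over $\tau=0,\dots,T-1$ gives $\sum_\tau\frac{c}{M}\sum_{b<\tau}a_b\le \frac{cT}{M}\sum_{b<T}a_b=c\sum_b a_b$, because $T=M$. Each sum coefficient is of the form $16\alpha_yL_y^2+540r\beta\alpha_y^2L_y^2+\alpha_xL_x^2+O(\eta^2\alpha_x^2L_x^2(\ldots))$. Under the step-size conditions these are at most a fraction of $\frac1{2\alpha_x}$ and $\frac1{2\alpha_y}$ respectively, so they are absorbed. This leaves, per step in aggregate, $\frac1{8\alpha_x}\|\Delta x\|^2+\frac1{4\alpha_y}\|\Delta y\|^2+\frac{r\beta}{2}\|x-z\|^2$.

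Finally I sum over all $k,\tau$ and telescope. I use $x^{k+1}_0=x^k_T$ and $\Phi_r\ge p_r\ge\underline F$, which holds since $F_r-d_r\ge0$, $p_r-d_r\ge0$, and $p_r(z)\ge\max_y\min_xF$ by \eqref{eq:low-bound-F}. The three claimed bounds follow by dropping the other nonnegative terms and multiplying by $8\alpha_x$, $4\alpha_y$ and $2/(r\beta)$. I expect the main obstacle to be the constant bookkeeping in the absorption step, in particular verifying that the $\eta^2$ factors multiplying $\alpha_yL_y^2$ and $r\beta\alpha_y^2L_y^2$ stay below $\frac{1}{8\alpha_x}$. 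I would verify this first by bounding $\eta\le 2/(\alpha_x(r-\rho))$, using $\alpha_x(r+L_x)\le 1/12$, and then checking each coefficient against the two-sided condition \eqref{eq:cond-alpha-x}.
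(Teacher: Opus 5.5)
Your plan is essentially the paper's own proof. It substitutes Lemmas~\ref{lemma:dual:final} and~\ref{lemma:dual:final:2}, and then Lemma~\ref{lemma:lemma_primal_error_bound}, into Lemma~\ref{lemma:Sufficient_Decrease_Init}. It absorbs the SPIDER sums using $T=M$ and telescopes with $\Phi_r\ge p_r\ge\underline{F}$.

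One piece of constant accounting needs fixing.

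\textbf{The issue.} With $T=M$, each accumulated term $\frac{1}{M}\sum_{b<\tau}$ gives back a full copy of $\sum_{k,\tau}\mathbb{E}\|x^k_{\tau+1}-x^k_\tau\|^2$ (resp.\ of the $y$-differences). So per-step coefficients of $\frac{1}{8\alpha_x}$ and $\frac{1}{4\alpha_y}$, minus sum coefficients that are only ``a fraction of $\frac{1}{2\alpha_x}$'', do not leave the $\frac{1}{8\alpha_x}$ and $\frac{1}{4\alpha_y}$ the lemma needs.

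\textbf{What is needed instead.} The per-step coefficients must be at least $\frac{1}{4\alpha_x}$ and $\frac{1}{2\alpha_y}$. The sum coefficients must be at most $\frac{1}{8\alpha_x M}$ and $\frac{1}{4\alpha_y M}$.

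\textbf{Why these hold.} This is exactly what the paper verifies:
\begin{itemize}
\item The bracket subtracted from $\frac{1}{2\alpha_x}$ is at most $r+L_x+2L_y+\eta^2(L_y+1)\le\frac{1}{4\alpha_x}$. This uses the lower bound and the first two upper bounds in \eqref{eq:cond-alpha-x}.
\item One has $\frac{11}{16\alpha_y}-7L_y\ge\frac{1}{2\alpha_y}$.
\item The $y$-sum bound is where $\alpha_y\le\frac{1}{4(2L_y+1)}$ enters.
\end{itemize}
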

\begin{proof} By substituting the bounds established in Lemmas \ref{lemma:dual:final} and \ref{lemma:dual:final:2} into \eqref{eq:Sufficient_Decrease_Init} and rearranging terms, we obtain
\begin{align}
&\mathbb{E}\left[\Phi_r(x^{k}_{\tau}, y^{k}_{\tau}, z^{k}_{\tau}) - \Phi_r(x^{k}_{\tau + 1}, y^{k}_{\tau + 1}, z^{k}_{\tau + 1})\right]\nonumber\\
\geq &~ \left(\frac{1}{2\alpha_{x}} - \frac{r+L_{x}}{2}-16\alpha_{y}L_{y}^{2}-540r\beta\alpha_{y}^{2}L_{y}^{2}-\frac{3L_{y}}{50}\right)\mathbb{E}\left\|x^{k}_{\tau+1} - x^{k}_{\tau}\right\|^{2} + \left(\frac{11}{16\alpha_{y}} - 7L_{y}\right)\mathbb{E}\left\|y^{k}_{\tau + 1} - y^{k}_{\tau} \right\|^2 \nonumber\\
\quad & + \left(\frac{(2-\beta)r\beta}{2} - 4r\beta^{2}-\frac{r\beta}{10} -4320r\beta^{3}\alpha_{y}^{2}L_{y}^{2} - \frac{2\beta L_{y}}{125}\right)\mathbb{E} \left\|x^{k}_{\tau + 1} -z^{k}_{\tau}\right\|^{2} \nonumber\\
\quad & - \left(1080r\beta\alpha_{y}^{2}L_{y}^{2} + 4\alpha_{y}L_{y}^{2}+\frac{3L_{y}}{25}\right)\mathbb{E} \left\|x^{k}_{\tau + 1} -x_{r}(y_{\tau}^{k},z_{\tau}^{k})\right\|^{2} \nonumber\\
\quad & - \left(\frac{16\alpha_{y}L_{y}^{2}+540r\beta\alpha_{y}^{2}L_{y}^{2}+\alpha_{x}L_{x}^{2}}{M} + \frac{3L_{y}}{50M}\right)\sum_{b=0}^{\tau-1}\mathbb{E}\left\| x^{k}_{b+1} - x^{k}_{b}\right\|^{2}
\nonumber\\
\quad & - \left(\frac{16\alpha_{y}L_{y}^{2}+540r\beta\alpha_{y}^{2}L_{y}^{2}+\alpha_{x}
{L_{y}^{2}}}{M} + \frac{3L_{y}}{50M}\right)\sum_{b=0}^{\tau-1}\mathbb{E}\left\| y^{k}_{b+1} - y^{k}_{b}\right\|^{2}-\frac{\alpha_xC_{\sigma,x}}{2} \nonumber\\
\quad & - \left(16\alpha_y+540r\beta\alpha_y^2+6\alpha_y^2L_y\right)C_{\sigma,y} - \chi_{\theta}C_{\beta}.\nonumber
\end{align}
Plugging the bound established in Lemma \ref{lemma:lemma_primal_error_bound} into the above inequality, noting $L_{y}\leq r$ and rearranging terms renders
\begin{align}\label{sufficient_decrease_init_0}
&\mathbb{E}\left[\Phi_r(x^{k}_{\tau}, y^{k}_{\tau}, z^{k}_{\tau}) - \Phi_r(x^{k}_{\tau + 1}, y^{k}_{\tau + 1}, z^{k}_{\tau + 1})\right]\\
\geq &~ \left(\frac{1}{2\alpha_x} - \underbrace{\Bigg(\begin{aligned}&\frac{r+L_x}{2}  +5940  r \beta \alpha_y^2 L_y^2  + 2700  r \beta \alpha_y^2 L_y^2 \eta^2 + 36  \alpha_y L_y^2  + 10  \alpha_y L_y^2 \eta^2 \\
&+ \frac{33}{50} L_y  + \frac{15}{50} L_y \eta^2\end{aligned}\Bigg)}_{\textcircled{1}}\right)\mathbb{E}\left\|x^{k}_{\tau+1} - x^{k}_{\tau}\right\|^{2} \nonumber\\
\quad & + \underbrace{\left(\frac{11}{16\alpha_{y}} - 7L_{y}\right)}_{\textcircled{2}}\mathbb{E}\left\|y^{k}_{\tau + 1} - y^{k}_{\tau} \right\|^2 + \underbrace{\left(\frac{(2-\beta)r\beta}{2} - 4r\beta^{2}-\frac{r\beta}{10} -4320r\beta^{3}\alpha_{y}^{2}L_{y}^{2} - \frac{2r\beta}{125}\right)}_{\textcircled{3}}\mathbb{E} \left\|x^{k}_{\tau + 1} -z^{k}_{\tau}\right\|^{2} \nonumber\\
\quad & - \underbrace{\Bigg(\begin{aligned}&\frac{16 \alpha_y L_y^2 + 540 r \beta \alpha_y^2 L_y^2 + \alpha_x L_x^2+5400 r \beta \alpha_x^2 \alpha_y^2 L_x^2 L_y^2 \eta^2 + 20 \alpha_x^2 \alpha_y L_x^2 L_y^2 \eta^2}{M} \\
&+ \frac{3 L_y}{50 M} 
+ \frac{3 \alpha_x^2 L_x^2 L_y \eta^2}{5 M}\end{aligned}\Bigg)}_{\textcircled{4}}\sum_{b=0}^{\tau-1}\mathbb{E}\left\| x^{k}_{b+1} - x^{k}_{b}\right\|^{2}
\nonumber\\
\quad & - \underbrace{\Bigg(\begin{aligned}&\frac{16 \alpha_y L_y^2 + 540 r \beta \alpha_y^2 L_y^2 + \alpha_x L_y^2+5400 r \beta \alpha_x^2 \alpha_y^2 L_y^4 \eta^2+20 \alpha_x^2 \alpha_y L_y^4 \eta^2}{M} \\
&+ \frac{3 L_y}{50 M} 
+ \frac{3 \alpha_x^2 L_y^3 \eta^2}{5 M}\end{aligned}\Bigg)}_{\textcircled{5}}\sum_{b=0}^{\tau-1}\mathbb{E}\left\| y^{k}_{b+1} - y^{k}_{b}\right\|^{2}\nonumber\\
 \quad &  
 {-\underbrace{\left(2700r\beta\alpha_{y}^{2}L_{y}^{2}\alpha_x^2\eta^2 + 10\alpha_{y}L_{y}^{2}\alpha_x^2\eta^2+\frac{3L_{y}\alpha_x^2\eta^2}{10}+\frac{\alpha_x}{2}\right)}_{\textcircled{6}}C_{\sigma,x} -\underbrace{\left(16\alpha_y+540r\beta\alpha_y^2+6\alpha_y^2L_y\right)  }_{\textcircled{7}}C_{\sigma,y}}    - \chi_{\theta}C_{\beta}. \nonumber
\end{align}
Below we bound the seven underbraced terms in \eqref{sufficient_decrease_init_0}. 
First, noting that $\alpha_{y} \leq \frac{1}{40L_y}$ and $\beta \leq \min\left\{\frac{1}{30}, \frac{1}{30r}\right\}$, we  have 
\begin{align}
\textcircled{1}
\leq &~ \frac{r+L_x}{2} +\frac{5940r}{30\cdot1600}+ \frac{2700\eta^{2}}{30\cdot1600} + \frac{36L_{y}}{40} + \frac{10L_{y}\eta^{2}}{40} + \frac{33L_{y}}{50}+\frac{15\eta^{2}L_{y}}{50}\nonumber\\
\leq &~ \frac{r+L_x}{2} + \frac{r}{2}+\eta^{2}+L_{y} +\frac{\eta^2L_{y}}{2}+L_{y}+\frac{\eta^2L_{y}}{2}\nonumber\\
\leq &~ r + L_x + 2L_y + \eta^2(L_y+1).\label{eq:for:constants:1}
\end{align}
By Young's inequality, it holds
\begin{equation}\label{eq:bound-eta-2}\eta^{2} = \frac{\left(\alpha_{x}(L_{x}+r)+1\right)^{2}}{\alpha_{x}^{2}\left(r - \rho\right)^{2}} \leq \frac{2(r+L_{x})^2}{(r-\rho)^2} + \frac{2}{\alpha_{x}^{2}(r-\rho)^{2}}.
\end{equation}
Plugging this into \eqref{eq:for:constants:1} and from the conditions of $\alpha_x$ gives
\begin{align}
  \textcircled{1}
  \leq r + L_x + 2L_y + \eta^2(L_y+1) \le &~ r + L_x + 2L_y + \frac{2(r+L_{x})^2(L_y+1)}{(r-\rho)^2} + \frac{2(L_y+1)}{\alpha_{x}^{2}(r-\rho)^{2}} \le \frac{1}{4\alpha_x}.\label{eq:for:constants:2}
\end{align}
For \textcircled{2} in \eqref{sufficient_decrease_init_0}, we use 
$\alpha_{y} \leq \frac{1}{40L_y}$ to  have 
\begin{align}
    \textcircled{2} \geq \frac{11}{16\alpha_y} - \frac{7}{40 \alpha_{y}} \geq \frac{1}{2\alpha_y}\label{second_constant_final}.
\end{align}
For \textcircled{3} in \eqref{sufficient_decrease_init_0}, by $\beta \leq \frac{1}{30}$ and $\alpha_{y} \leq \frac{1}{40L_y}$, it follows
\begin{align}
    \textcircled{3} \geq \frac{(2-\frac{1}{30})r\beta}{2} - \frac{4r\beta}{30} - \frac{r\beta}{10} - \frac{4320r\beta}{900\cdot1600} - \frac{2r\beta}{125} \geq \frac{r\beta}{2}\label{Third_constant_final}.
\end{align}
For \textcircled{4} in \eqref{sufficient_decrease_init_0}, by $\alpha_{x} \leq \frac{1}{2L_x}$, $\alpha_y \leq \frac{1}{40L_y}$, and   $\beta \leq \min\left\{\frac{1}{30}, \frac{1}{30r}\right\}$, 
it holds
\begin{align}
\textcircled{4}
\leq &~ \frac{1}{M}\left(\frac{16L_y}{40}+\frac{540r}{30\cdot1600}+\frac{L_x}{2}+\frac{5400\eta^2}{30\cdot4\cdot1600}+\frac{20\eta^2L_y}{4\cdot40}+\frac{3L_y}{50}+\frac{3\eta^2L_y}{20}\right)\nonumber\\
\leq &~ \frac{1}{2M}\left(r + L_x + 2L_y + \eta^2(L_y+1)\right)\nonumber\\
\leq &~ \frac{1}{8\alpha_xM},\label{Fourth_constant_Final}
\end{align}
where the last inequality follows from \eqref{eq:for:constants:2}.
For  \textcircled{5} in \eqref{sufficient_decrease_init_0}, it follows from $\alpha_x \leq \frac{1}{2L_y}$, $\alpha_y \leq \frac{1}{40L_y}$, and $\beta \leq \frac{1}{30r}$ that
\begin{align}
\textcircled{5}
\leq &~ \frac{1}{M}\left(\frac{16L_y}{40}+\frac{540}{30\cdot1600}+\frac{L_y}{2}+\frac{5400\alpha_x^2L_{y}^2\eta^2}{30\cdot1600}+\frac{20\eta^2L_y^3\alpha_x^2}{40}+\frac{3L_y}{50}+\frac{3\eta^2L_y^3\alpha_x^2}{20}\right).\label{Fifth_constant_Final_init}
\end{align}
Moreover, from \eqref{eq:bound-eta-2} 
and 
$\alpha_x \leq \frac{r - (\rho + 2 L_y)}{2 L_y (L_x + r)}$, we have
\begin{align}
\alpha_{x}^2L_{y}^2\eta^2
\leq &~ \frac{2\alpha_x^2(r+L_{x})^2L_{y}^2}{(r-\rho)^2} + \frac{2L_{y}^2}{(r-\rho)^{2}} \nonumber\\
\leq &~ \frac{2(r+L_{x})^2L_{y}^2}{(r-\rho)^2}\left(\frac{r - (\rho + 2 L_y)}{2 L_y (L_x + r)}\right)^{2} + \frac{2L_{y}^2}{(r-\rho)^{2}}\nonumber\\
= & ~ \frac{1}{2} +\frac{2L_{y}}{(r-\rho)} \left(\frac{2L_{y}}{(r-\rho)} - 1\right)\nonumber\\
{\leq} &~ \frac{1}{2} - \frac{L_{y}}{r-\rho}\nonumber\\
\leq &~ \frac{1}{2}\label{eq:alpha_L_y_eta}, 
\end{align}
where the third inequality follows from $r \geq \rho + 4L_{y}$. Plugging this bound into \eqref{Fifth_constant_Final_init} yields
\begin{align}
\textcircled{5}
\leq &~ \frac{1}{M}\left(\frac{16L_y}{40}+\frac{540}{30\cdot1600}+\frac{L_y}{2}+\frac{5400L_y}{60\cdot1600}+\frac{20L_y}{80}+\frac{3L_y}{50}+\frac{3L_y}{40}\right) \leq \frac{1}{M}\left(2L_{y}+1\right) \leq \frac{1}{4\alpha_y}.\label{Fifth_constant_Final}
\end{align}
where in the last inequality, we have used $\alpha_y \leq \frac{1}{4(2L_y+1)}$. {
 {For  \textcircled{6} in \eqref{sufficient_decrease_init_0}, by 
 $\alpha_y \le \frac{1}{40 L_y}$, $\beta\leq \frac{1}{30r}$, and \eqref{eq:bound-eta-2}, we get
\begin{align}
\textcircled{6} \leq \frac{90}{1600}\alpha_x^2 \eta^2 + \frac{L_y \alpha_x^2 \eta^2}{4}  + \frac{3L_y \alpha_x^2 \eta^2}{10} + \frac{\alpha_x}{2} \le 2(1+L_y) \left( \frac{\alpha_x^2(r+L_{x})^2}{(r-\rho)^2} + \frac{1}{(r-\rho)^{2}} \right) + \frac{\alpha_x}{2} 
\label{Sixth_constant_Final}.
\end{align}
}
Similarly, from $\beta\leq\frac{1}{30r}$, we have
\begin{align}
\textcircled{7} \leq 16\alpha_y+18\alpha_y^2+6\alpha_y^2L_y = 16\alpha_y + 6\alpha_y^2\left(L_y+3\right)\label{seventh_constant}.
\end{align}}

Now substituting \eqref{eq:for:constants:2}, \eqref{second_constant_final}, \eqref{Third_constant_final}, \eqref{Fourth_constant_Final}, \eqref{Fifth_constant_Final}, \eqref{Sixth_constant_Final} and \eqref{seventh_constant} into \eqref{sufficient_decrease_init_0}, we get
\begin{align}
&\mathbb{E}\left[\Phi_r(x^{k}_{\tau}, y^{k}_{\tau}, z^{k}_{\tau}) - \Phi_r(x^{k}_{\tau + 1}, y^{k}_{\tau + 1}, z^{k}_{\tau + 1})\right]\nonumber\\
\geq &~ \frac{1}{4\alpha_{x}}\mathbb{E}\left\|x^{k}_{\tau+1} - x^{k}_{\tau}\right\|^{2} + \frac{1}{2\alpha_{y}}\mathbb{E}\left\|y^{k}_{\tau+1} - y^{k}_{\tau}\right\|^{2}+\frac{r\beta}{2}\mathbb{E}\left\|x^{k}_{\tau+1} - z^{k}_{\tau}\right\|^{2}-\frac{1}{8\alpha_{x}M}\sum_{b=0}^{\tau-1}\mathbb{E}\left\| x^{k}_{b+1} - x^{k}_{b}\right\|^{2}\nonumber\\
\quad & -\frac{1}{4\alpha_{y}M}\sum_{b=0}^{\tau-1}\mathbb{E}\left\| y^{k}_{b+1} - y^{k}_{b}\right\|^{2}- \chi_{\theta}C_{\beta}{-C_{\sigma}}. \label{sufficient_decrease_init_0_final}
\end{align}


Before proceeding further we first note that 
\begin{align}\sum_{k=0}^{K-1}\left(\sum_{\tau=0}^{T-1}\sum_{b=0}^{\tau-1}\mathbb{E}\left\| x^{k}_{b+1} - x^{k}_{b}\right\|^{2}\right) \leq T\sum_{k=0}^{K-1}\sum_{\tau=0}^{T-1}\mathbb{E}\left\| x^{k}_{\tau+1} - x^{k}_{\tau}\right\|^{2},\label{intermediate_0}\\
\sum_{k=0}^{K-1}\left(\sum_{\tau=0}^{T-1}\sum_{b=0}^{\tau-1}\mathbb{E}\left\| y^{k}_{b+1} - y^{k}_{b}\right\|^{2}\right) \leq T\sum_{k=0}^{K-1}\sum_{\tau=0}^{T-1}\mathbb{E}\left\| y^{k}_{\tau+1} - y^{k}_{\tau}\right\|^{2}\label{intermediate_1_1st}.\end{align}
Additionally, we have
\begin{align}
\sum_{k=0}^{K-1}\sum_{\tau=0}^{T-1}\mathbb{E}\left[\Phi_r(x^{k}_{\tau}, y^{k}_{\tau}, z^{k}_{\tau}) - \Phi_r(x^{k}_{\tau + 1}, y^{k}_{\tau + 1}, z^{k}_{\tau + 1})\right] = \mathbb{E}\left[\Phi_r(x^{0}_{0}, y^{0}_{0}, z^{0}_{0}) - \Phi_r(x^{K}_{0}, y^{K}_{0}, z^{K}_{0})\right]. 
\label{intermediate_1_4th}
\end{align}
{We note that from the definitions of $F_{r}(x,y,z)$, $d_{r}(y,z)$ and $p_{r}(z)$ in Table \ref{tab:mylabel}, we have for all $x \in \cX$, $y \in \cY$ and $z \in \mathbb{R}^{d_{x}}$,
\begin{align}
\Phi_r(x, y, z) =&~ \left(F_{r}(x,y,z) - d_{r}(y,z)\right) + \left(p_{r}(z) - d_{r}(y,z)\right) + p_{r}(z)\nonumber\\
\geq&~p_{r}(z)\nonumber\\
{=}&~\max_{y \in \mathcal{Y}}\min_{x \in \mathcal{X}}F_{r}(x,y,z)\nonumber\\
\ge&~\max_{y \in \mathcal{Y}}\min_{x \in \mathcal{X}}F(x,y)\nonumber\\
{\geq}&~ \underline{F},
\label{intermediate_1_5th}
\end{align}
where 
{the second inequality follows from 
$F_{r}(x,y,z)\ge F(x,y)$, and 
the last inequality} holds from Assumption \ref{assumption_for_smooth_case}~$[vi.]$. Plugging \eqref{intermediate_1_5th} into \eqref{intermediate_1_4th} yields
\begin{align}
\sum_{k=0}^{K-1}\sum_{\tau=0}^{T-1}\mathbb{E}\left[\Phi_r(x^{k}_{\tau}, y^{k}_{\tau}, z^{k}_{\tau}) - \Phi_r(x^{k}_{\tau + 1}, y^{k}_{\tau + 1}, z^{k}_{\tau + 1})\right] \leq \Phi_0^0 - \underline{F}.
\label{intermediate_1_2nd}
\end{align}}
Taking sum on both sides of \eqref{sufficient_decrease_init_0_final}, plugging \eqref{intermediate_0}, \eqref{intermediate_1_1st} and \eqref{intermediate_1_2nd}, utilizing $T = M$, and rearranging terms renders
\begin{align*}
&\frac{1}{8\alpha_{x}}\sum_{k=0}^{K-1}\sum_{\tau=0}^{T-1}\mathbb{E}\left\|x^{k}_{\tau+1} - x^{k}_{\tau}\right\|^{2} + \frac{1}{4\alpha_{y}}\sum_{k=0}^{K-1}\sum_{\tau=0}^{T-1}\mathbb{E}\left\|y^{k}_{\tau+1} - y^{k}_{\tau}\right\|^{2}+\frac{r\beta}{2}\sum_{k=0}^{K-1}\sum_{\tau=0}^{T-1}\mathbb{E}\left\|x^{k}_{\tau+1} - z^{k}_{\tau}\right\|^{2}\\
\leq &~ (\Phi_0^0 - \underline{F}) +  \chi_{\theta}C_{\beta}KT{+C_{\sigma}KT}.
\end{align*}
The desired results follow immediately from the above, thus completing the proof.
\end{proof}

\begin{remark}
In \eqref{eq:cond-alpha-x}, both lower and upper bounds are imposed on $\alpha_x$. We discuss how to choose $r$ such that the interval for $\alpha_x$ is not empty, namely, we require
\[\underbrace{\frac{24(L_{y}+1)}{(r-\rho)^2}}_{\textcircled{i}} \leq  \min\left\{\underbrace{\frac{1}{12(r+L_{x} + 2L_{y})}}_{\textcircled{ii}},\;\underbrace{\frac{(r-\rho)^{2}}{24(r+L_{x})^2(L_y+1)}}_{\textcircled{iii}},\; \underbrace{\frac{r - (\rho + 2 L_y)}{2 L_y (L_x + r)}}_{\textcircled{iv}}\right\}.\]  
First, we have
\begin{align*}
\textcircled{ii} \geq \textcircled{i} \iff & (r-\rho)^2 \geq 288\left(L_{y}+1\right)\left(r+L_{x}+2L_y\right) \\
\iff & \big(r - (\rho + 144(L_y+1))\big)^2 \;\ge\; 288(L_y+1)\big[\rho + 72(L_y+1) + L_x + 2L_y\big],
\end{align*}
which is implied by
\[r \ge \rho + 144(L_y+1) + \sqrt{288(L_y+1)\big(\rho + 72(L_y+1) + L_x + 2L_y\big) \big)}.\]
Second, it holds
\begin{align*}\textcircled{iii} \geq \textcircled{i} \iff (r - \rho)^4 \ge 576 (L_y + 1)^2 (r + L_x)^2 \iff (r - \rho)^2 \ge 24 (L_y + 1) (r + L_x),
\end{align*}
which is indicated by 
\[r \ge \rho + 12(L_y + 1) + \sqrt{24(L_y + 1)\big(\rho + L_x + 6(L_y + 1)\big)}.\]
Third, assuming $r\geq \rho + 2L_{y} + 1$, we have
$\textcircled{iv} \geq \frac{1}{2 L_y (L_x + r)}.$ 
Hence, it suffices to require
\[\frac{1}{2 L_y (L_x + r)} \geq \textcircled{i} \iff \big(r - (\rho + 24 L_y (L_y + 1))\big)^2 \geq
48 L_y (L_y + 1) \, \big(\rho + L_x + 12 L_y (L_y + 1)\big),\]
which is implied by
\[r \ge \rho + 24 L_y (L_y + 1) + \sqrt{48 L_y (L_y + 1) \, \big(\rho + L_x + 12 L_y (L_y + 1)\big)}.\]
Therefore, a valid value of $\alpha_x$ exists if
\begin{align*}
r \ge \max\Bigg\{ & \rho + 144(L_y+1) + \sqrt{288(L_y+1) }\;\sqrt{ \rho +L_x +74L_y+ 72 },\\
& ~~\rho + 24 L_y (L_y + 1) + \sqrt{48 L_y (L_y + 1)}\;\sqrt{\rho + L_x + 12 L_y (L_y + 1)}\Bigg\}.
\end{align*}
By $\sqrt{a+b}\le \sqrt{a} + \sqrt{b}$ and $2\sqrt{ab} \le a+b$ for any nonnegative numbers $a$ and $b$, we can further relax the bound on $r$ and take it as 
\begin{align}\label{eq:bound-on-r}
r = \max\Bigg\{ & 2\rho + 325(L_y+1) + 12\sqrt{L_x}\sqrt{2(L_y+1) },\  2\rho + 54 L_y (L_y + 1) + 4\sqrt{L_x}\sqrt{3 L_y (L_y + 1)}\Bigg\}.
\end{align}
\end{remark}
{Building on these auxiliary bounds, the next two lemmas establish results that are crucial for proving the optimality of the output of Algorithm \ref{alg:smoothed_primal_dual_spider}.}
\begin{lemma}\label{lemma_sc}
 Under Assumption \ref{assumption_for_smooth_case}, let 
$\{x_{\tau}^k, y_{\tau}^k, z_{\tau}^k\}$ be generated by Algorithm \ref{alg:smoothed_primal_dual_spider} with $T = M$. Then for any $k\geq 0$, $0 \leq \tau \leq T-1$, it holds 
\begin{align*}
&\mathbb{E}\left[\mathrm{dist}\left(0, \nabla_x F(\tilde x, \tilde y) + \cN_\cX(\tilde x)\right)^{2}\right]+ \mathbb{E}\left[\mathrm{dist}\left(0, -\nabla_{y}F\left(\tilde x, \tilde y\right) + \cN_\cY(\tilde y)\right)^{2} \right] \\
\leq &~ \frac{1}{KT}\left(\frac{4}{\alpha_{x}^{2}}+16L_{x}^{2}+8r^{2}+6L_{y}^{2}\right)\sum_{k=0}^{K-1}\sum_{\tau=0}^{T-1}\mathbb{E}\left\|x^{k}_{\tau+1} - x_{\tau}^{k}\right\|^{2} + \frac{1}{KT}\left(\frac{3}{\alpha_{y}^{2}}+22L_{y}^{2}\right)\sum_{k=0}^{K-1}\sum_{\tau=0}^{T-1}\mathbb{E}\left\|y^{k}_{\tau+1} - y_{\tau}^{k}\right\|^{2}\nonumber\\
\quad & + \frac{8r^{2}}{KT}\sum_{k=0}^{K-1}\sum_{\tau=0}^{T-1}\mathbb{E}\left\|x_{\tau+1}^{k} - z_{\tau}^{k}\right\|^{2}+{\left(4C_{\sigma,x}+3C_{\sigma,y}\right)}\;.
\end{align*}
\end{lemma}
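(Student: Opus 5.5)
Since $(\tilde x,\tilde y)$ is drawn uniformly from $\{(x^k_{\tau+1},y^k_{\tau+1})\}$, both expectations on the left equal $\frac{1}{KT}\sum_{k,\tau}$ of the corresponding quantities at $(x^k_{\tau+1},y^k_{\tau+1})$. It therefore suffices to bound, for each fixed $(k,\tau)$, the two squared distances at $(x^k_{\tau+1},y^k_{\tau+1})$ by the per-iteration terms $\|x^k_{\tau+1}-x^k_\tau\|^2$, $\|y^k_{\tau+1}-y^k_\tau\|^2$, $\|x^k_{\tau+1}-z^k_\tau\|^2$, the SPIDER error sums $\frac1M\sum_{b<\tau}(\cdot)$, and $C_{\sigma,x},C_{\sigma,y}$. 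Then we sum over $(k,\tau)$ and absorb the inner sums using \eqref{intermediate_0}--\eqref{intermediate_1_1st} together with $T=M$. This last step turns $\frac{c}{M}\sum_{\tau}\sum_{b<\tau}$ into at most $c$ times the corresponding full sum, which explains why the coefficients of the $L_x^2$ and $L_y^2$ terms roughly double.

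\textbf{Primal residual.} The projection step gives
\[
0\in \tfrac{1}{\alpha_x}(x^k_{\tau+1}-x^k_\tau)+G^k_{x,\tau}+r(x^k_\tau-z^k_\tau)+\cN_\cX(x^k_{\tau+1}).
\]
Hence
\[
\nabla_xF(x^k_{\tau+1},y^k_{\tau+1})-\tfrac1{\alpha_x}(x^k_{\tau+1}-x^k_\tau)-r(x^k_\tau-z^k_\tau)+\big(\nabla_xF(x^k_{\tau+1},y^k_{\tau+1})-G^k_{x,\tau}\big)\cdot(\text{sign adjusted})
\]
lies in $\nabla_xF(x^k_{\tau+1},y^k_{\tau+1})+\cN_\cX(x^k_{\tau+1})$. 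I would split the resulting vector into four pieces and apply Young's inequality with factor $4$:
\begin{itemize}
\item $\frac1{\alpha_x}(x^k_{\tau+1}-x^k_\tau)$, contributing $\frac{4}{\alpha_x^2}$;
\item $r(x^k_\tau-z^k_\tau)=r(x^k_\tau-x^k_{\tau+1})+r(x^k_{\tau+1}-z^k_\tau)$, contributing $8r^2\|x^k_{\tau+1}-x^k_\tau\|^2+8r^2\|x^k_{\tau+1}-z^k_\tau\|^2$;
\item $\nabla_xF(x^k_{\tau+1},y^k_{\tau+1})-\nabla_xF(x^k_\tau,y^k_\tau)$, bounded via the $L_x$ and $L_y$ Lipschitz constants;
\item $\nabla_xF(x^k_\tau,y^k_\tau)-G^k_{x,\tau}$, bounded by \eqref{eq:lemma0_x}, which yields $4C_{\sigma,x}$ and the $\frac{8L_x^2}{M}$, $\frac{8L_y^2}{M}$ sums.
\end{itemize}
The third piece gives $8L_x^2\|\Delta x\|^2+8L_y^2\|\Delta y\|^2$, which after summation combines with the $8L_x^2$ from the SPIDER sums into $16L_x^2$.

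\textbf{Dual residual.} Similarly,
\[
0\in \tfrac{1}{\alpha_y}(y^k_{\tau+1}-y^k_\tau)-G^k_{y,\tau}+\cN_\cY(y^k_{\tau+1}).
\]
We split $-\nabla_yF(x^k_{\tau+1},y^k_{\tau+1})+G^k_{y,\tau}-\frac1{\alpha_y}\Delta y$ into three pieces with Young factor $3$: the step term contributes $\frac{3}{\alpha_y^2}$, the Lipschitz difference contributes $3L_y^2(\|\Delta x\|^2+\|\Delta y\|^2)$, and \eqref{eq:lemma0_y} contributes $3C_{\sigma,y}$ plus the $\frac{3L_y^2}{M}$ sums.

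Adding the two residual bounds gives coefficients of the stated form. The $6L_y^2$ on $\|\Delta x\|^2$ and the $22L_y^2$ on $\|\Delta y\|^2$ arise from collecting $8L_y^2$ and $3L_y^2$ terms with their doubled SPIDER counterparts. The main obstacle is only bookkeeping: matching the paper's exact constants requires the right choice of Young splits, for instance whether to expand $\nabla_xF(x^k_{\tau+1},y^k_{\tau+1})-\nabla_xF(x^k_\tau,y^k_\tau)$ into two terms. Since the stated constants are generous, I would choose splits conservatively and verify that each coefficient is dominated by the stated one.
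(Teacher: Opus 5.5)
Your proposal is correct and follows the paper's proof essentially line by line. It uses the same projection optimality inclusions, the same four-way Young split for the primal residual (with $r(x^k_\tau-z^k_\tau)$ re-split through $x^k_{\tau+1}$ and the gradient difference split into its $L_x$ and $L_y$ parts), the same three-way split for the dual residual, and the same absorption of the SPIDER sums via \eqref{intermediate_0}--\eqref{intermediate_1_1st} with $T=M$, yielding exactly the stated constants. The only blemish is your displayed primal vector, which contains $\nabla_xF(x^k_{\tau+1},y^k_{\tau+1})$ twice; the correct element of $\nabla_xF(x^k_{\tau+1},y^k_{\tau+1})+\cN_\cX(x^k_{\tau+1})$ is $\frac{1}{\alpha_x}(x^k_\tau-x^k_{\tau+1})+r(z^k_\tau-x^k_\tau)-G^k_{x,\tau}+\nabla_xF(x^k_{\tau+1},y^k_{\tau+1})$, which is what your four-piece split actually uses.
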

\begin{proof} 
We first bound $\mathbb{E}\left[\mathrm{dist}\left(0, \nabla_{x}F\left(x^{k}_{\tau + 1},y^{k}_{\tau + 1}\right) + \cN_\cX(x^{k}_{\tau + 1})\right)^{2} \right]$. By the update for $x_{\tau+1}^{k}$ in Algorithm \ref{alg:smoothed_primal_dual_spider}, it holds 
\begin{align*}
0 &\in \left(x^{k}_{\tau + 1}- x_{\tau}^{k}\right) + \alpha_{x} \Bigg(G_{x,\tau}^{k} + r(x_{\tau}^{k}-z_{\tau}^{k})\Bigg) + \cN_\cX\left(x^{k}_{\tau+1}\right)\\
&\implies \frac{1}{\alpha_{x}}\left(x^{k}_{\tau} - x^{k}_{\tau + 1}\right) + r(z_{\tau}^{k} - x_{\tau}^{k}) - G_{x,\tau}^{k} + \nabla_{x}F\left(x_{\tau+1}^{k},y_{\tau + 1}^{k}\right) \in  \nabla_{x}F\left(x_{\tau+1}^{k},y_{\tau + 1}^{k}\right) + \cN_\cX\left(x^{k}_{\tau + 1}\right)
\end{align*}
Hence, we use Young's inequality and Lemma~\ref{lem:err-grad-est} to have 
\begin{align}
&\mathbb{E}\left[\mathrm{dist}\left(0, \nabla_{x}F\left(x^{k}_{\tau + 1},y^{k}_{\tau + 1}\right) + \cN_\cX(x^{k}_{\tau + 1})\right)^{2} \right]\nonumber\\
\leq &~ \mathbb{E}\left\| \frac{1}{\alpha_{x}}\left(x^{k}_{\tau} - x^{k}_{\tau + 1}\right) + r(z_{\tau}^{k} - x_{\tau}^{k}) - G_{x,\tau}^{k} + \nabla_{x}F\left(x_{\tau+1}^{k},y_{\tau + 1}^{k}\right) \right\|^{2}  \nonumber\\
{\leq} &~\frac{4}{\alpha_{x}^{2}}\mathbb{E}\left\|x^{k}_{\tau+1} - x_{\tau}^{k}\right\|^{2} + 4r^{2}\mathbb{E}\left\|x_{\tau}^{k} - z_{\tau}^{k}\right\|^{2} + 4\mathbb{E}\left\|G_{x,\tau}^{k} - \nabla_{x}F\left(x_{\tau}^{k},y_{\tau}^{k}\right)  \right\|^{2}  \nonumber\\
\quad & + 4\mathbb{E}\left\|\nabla_{x}F\left(x_{\tau}^{k},y_{\tau}^{k}\right)   - \nabla_{x}F\left(x_{\tau+1}^{k},y_{\tau+1}^{k}\right)  \right\|^{2}\nonumber\\
{\leq} &~\left(\frac{4}{\alpha_{x}^{2}}+8L_{x}^{2}+8r^{2}\right)\mathbb{E}\left\|x^{k}_{\tau+1} - x_{\tau}^{k}\right\|^{2} + 8r^{2}\mathbb{E}\left\|x_{\tau+1}^{k} - z_{\tau}^{k}\right\|^{2}+ \frac{8L_{x}^{2}}{M}\sum_{b=0}^{\tau-1}\mathbb{E}\left\| x^{k}_{b+1} - x^{k}_{b}\right\|^{2}\nonumber\\
\quad & + \frac{8L_{y}^{2}}{M}\sum_{b=0}^{\tau-1}\mathbb{E}\left\| y^{k}_{b+1} - y^{k}_{b}\right\|^{2} + 8L_{y}^{2}\mathbb{E}\left\|y^{k}_{\tau+1} - y_{\tau}^{k}\right\|^{2}{+4C_{\sigma,x}}\;.
\label{eq:main_bound-supp}
\end{align}

Second, we bound 
$\mathbb{E}\left[\mathrm{dist}\left(0, -\nabla_{y}F\left(x^{k}_{\tau + 1},y^{k}_{\tau + 1}\right) + \cN_\cY(y^{k}_{\tau + 1})\right)^{2} \right]$.
By the update of $y_{\tau+1}^{k}$ in Algorithm \ref{alg:smoothed_primal_dual_spider}, 
we have
\begin{align*}
0 &\in \left(y^{k}_{\tau + 1}- y_{\tau}^{k}\right) - \alpha_{y} G_{y,\tau}^{k} + \cN_\cY\left(y^{k}_{\tau+1}\right)\\
\implies &~ \frac{1}{\alpha_{y}}\left(y^{k}_{\tau} - y^{k}_{\tau + 1}\right) + G_{y,\tau}^{k} - \nabla_{y}F\left(x_{\tau+1}^{k},y_{\tau + 1}^{k}\right) \in -  \nabla_{y}F\left(x_{\tau+1}^{k},y_{\tau + 1}^{k}\right) + \cN_\cY\left(y^{k}_{\tau + 1}\right)
\end{align*}
Hence, by Young's inequality, the smoothness of $F$, and Lemma~\ref{lem:err-grad-est}, it follows 
\begin{align}
&~\mathbb{E}\left[\mathrm{dist}\left(0, -\nabla_{y}F\left(x^{k}_{\tau + 1},y^{k}_{\tau + 1}\right) + \cN_\cY(y^{k}_{\tau + 1})\right)^{2} \right]\nonumber\\
 \leq &~ \mathbb{E}\left\| \frac{1}{\alpha_{y}}\left(y^{k}_{\tau} - y^{k}_{\tau + 1}\right) + G_{y,\tau}^{k} - \nabla_{y}F\left(x_{\tau+1}^{k},y_{\tau + 1}^{k}\right) \right\|^{2}  \nonumber\\
{\leq} &~\frac{3}{\alpha_{y}^{2}}\mathbb{E}\left\|y^{k}_{\tau+1} - y_{\tau}^{k}\right\|^{2} + 3\mathbb{E}\left\|G_{y,\tau}^{k} - \nabla_{y}F\left(x_{\tau}^{k},y_{\tau}^{k}\right)  \right\|^{2}  + 3\mathbb{E}\left\|\nabla_{y}F\left(x_{\tau}^{k},y_{\tau}^{k}\right)   - \nabla_{y}F\left(x_{\tau+1}^{k},y_{\tau+1}^{k}\right)  \right\|^{2}\nonumber\\
{\leq} &~\left(\frac{3}{\alpha_{y}^{2}}+3L_{y}^{2}\right)\mathbb{E}\left\|y^{k}_{\tau+1} - y_{\tau}^{k}\right\|^{2} + \frac{3L_{y}^{2}}{M}\sum_{b=0}^{\tau-1}\mathbb{E}\left\| y^{k}_{b+1} - y^{k}_{b}\right\|^{2}+ \frac{3L_{y}^{2}}{M}\sum_{b=0}^{\tau-1}\mathbb{E}\left\| x^{k}_{b+1} - x^{k}_{b}\right\|^{2}+ 3L_{y}^{2}\mathbb{E}\left\|x^{k}_{\tau+1} -  x^{k}_{\tau} \right\|^{2}\nonumber\\
\quad & {+3C_{\sigma,x}} .
\label{stationarity_second_condition-supp}
\end{align}
Adding \eqref{eq:main_bound-supp} and \eqref{stationarity_second_condition-supp} we get
\begin{align}\label{supp-9}
&\mathbb{E}\left[\mathrm{dist}\left(0, \nabla_{x}F\left(x^{k}_{\tau + 1},y^{k}_{\tau + 1}\right) + \cN_\cX(x^{k}_{\tau + 1})\right)^{2} \right] + \mathbb{E}\left[\mathrm{dist}\left(0, \nabla_{y}F\left(x^{k}_{\tau + 1},y^{k}_{\tau + 1}\right) + \cN_\cX(y^{k}_{\tau + 1})\right)^{2} \right]\nonumber\\ 
\leq &~ \left(\frac{4}{\alpha_{x}^{2}}+8L_{x}^{2}+8r^{2}+3L_{y}^{2}\right)\mathbb{E}\left\|x^{k}_{\tau+1} - x_{\tau}^{k}\right\|^{2} + \left(\frac{3}{\alpha_{y}^{2}}+11L_{y}^{2}\right)\mathbb{E}\left\|y^{k}_{\tau+1} - y_{\tau}^{k}\right\|^{2} + 8r^{2}\mathbb{E}\left\|x_{\tau+1}^{k} - z_{\tau}^{k}\right\|^{2}\nonumber\\
\quad & + \frac{8L_{x}^{2}+3L_{y}^{2}}{M}\sum_{b=0}^{\tau-1}\mathbb{E}\left\| x^{k}_{b+1} - x^{k}_{b}\right\|^{2}+ \frac{11L_{y}^{2}}{M}\sum_{b=0}^{\tau-1}\mathbb{E}\left\| y^{k}_{b+1} - y^{k}_{b}\right\|^{2} +{4C_{\sigma,x}+3C_{\sigma,y}}.
\end{align}
Taking sum on both sides of \eqref{supp-9}, plugging \eqref{intermediate_0}, \eqref{intermediate_1_1st} and utilizing $T = M$, we get
\begin{align}
&\sum_{k=0}^{K-1}\sum_{\tau=0}^{T-1}\mathbb{E}\left[\mathrm{dist}\left(0, \nabla_{x}F\left(x^{k}_{\tau + 1},y^{k}_{\tau + 1}\right) + \cN_\cX(x^{k}_{\tau + 1})\right)^{2} \right] + \mathbb{E}\left[\mathrm{dist}\left(0, -\nabla_{y}F\left(x^{k}_{\tau + 1},y^{k}_{\tau + 1}\right) + \cN_\cX(y^{k}_{\tau + 1})\right)^{2} \right]\nonumber\\ 
\leq &~ \left(\frac{4}{\alpha_{x}^{2}}+16L_{x}^{2}+8r^{2}+6L_{y}^{2}\right)\sum_{k=0}^{K-1}\sum_{\tau=0}^{T-1}\mathbb{E}\left\|x^{k}_{\tau+1} - x_{\tau}^{k}\right\|^{2} + \left(\frac{3}{\alpha_{y}^{2}}+22L_{y}^{2}\right)\sum_{k=0}^{K-1}\sum_{\tau=0}^{T-1}\mathbb{E}\left\|y^{k}_{\tau+1} - y_{\tau}^{k}\right\|^{2}\nonumber\\
\quad & + 8r^{2}\sum_{k=0}^{K-1}\sum_{\tau=0}^{T-1}\mathbb{E}\left\|x_{\tau+1}^{k} - z_{\tau}^{k}\right\|^{2}+{\left(4C_{\sigma,x}+3C_{\sigma,y}\right)KT}\nonumber. 
\end{align}
Dividing both sides of the above by $KT$, and using the definition of the uniform random output $\left(\tilde{x}, \tilde{y}\right)$, yields the desired result.
\end{proof}

\begin{lemma}\label{dz_for_smooth_case}
Under Assumption \ref{assumption_for_smooth_case}, let 
$\{x_{\tau}^k, y_{\tau}^k, z_{\tau}^k\}$ be generated by Algorithm \ref{alg:smoothed_primal_dual_spider} with $T = M$, 
{and $r$ and $\alpha_x$ satisfying the conditions in \eqref{eq:cond-r} and \eqref{eq:cond-alpha-x}.} Then for any $k\geq 0$ and $0 \leq \tau \leq T-1$, it holds
\begin{align*}
&~\mathbb{E}\left\|\nabla_{z}d_{r}\left(\tilde{y}, \tilde{x}\right)\right\|^{2}\\
\leq &~ \frac{1}{KT}\left(
15r^2 + \frac{97}{\alpha_x^2}\right)\sum_{k=0}^{K-1}\sum_{\tau=0}^{T-1}\mathbb{E}\left\|x^{k}_{\tau+1} - x^{k}_{\tau}  \right\|^{2}  + \frac{35r^2}{KT}\sum_{k=0}^{K-1}\sum_{\tau=0}^{T-1}\mathbb{E} \|y^{k}_{\tau} - y^{k}_{\tau + 1}\|^{2} \\
\quad & + \frac{12r^{2}}{KT}\sum_{k=0}^{K-1}\sum_{\tau=0}^{T-1}\mathbb{E} \| x^{k}_{\tau + 1} - z^{k}_{\tau}  \|^{2} {+61 C_{\sigma,x}.} 
\end{align*}
\end{lemma}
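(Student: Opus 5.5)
Since $\nabla_z d_r(y,z) = r\big(z - x_r(y,z)\big)$ (Danskin, using the $(r-\rho)$-strong convexity of $F_r(\cdot,y,z)$), I would first evaluate it at a generic output pair $(\tilde y,\tilde x) = (y^k_{\tau+1}, x^k_{\tau+1})$. This gives $\|\nabla_z d_r(y^k_{\tau+1},x^k_{\tau+1})\|^2 = r^2\|x^k_{\tau+1} - x_r(y^k_{\tau+1},x^k_{\tau+1})\|^2$. I would then split the difference into three pieces passing through $x_r(y^k_\tau,z^k_\tau)$ and $x_r(y^k_\tau,x^k_{\tau+1})$:
\begin{align*}
x^k_{\tau+1} - x_r(y^k_{\tau+1},x^k_{\tau+1}) ={}& \big[x^k_{\tau+1} - x_r(y^k_\tau,z^k_\tau)\big] + \big[x_r(y^k_\tau,z^k_\tau) - x_r(y^k_\tau,x^k_{\tau+1})\big]\\
&+ \big[x_r(y^k_\tau,x^k_{\tau+1}) - x_r(y^k_{\tau+1},x^k_{\tau+1})\big].
\end{align*}
Applying Young's inequality with factor $3$ separates the three pieces.

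The second piece is handled by \eqref{eq:sc1} with $\sigma_1\le 2$ (since $r\ge 2\rho$), giving a multiple of $\|x^k_{\tau+1}-z^k_\tau\|^2$. The third piece is handled by \eqref{eq:sc2} with $\sigma_2\le 3$ (since $r\ge L_y+\rho$), giving a multiple of $\|y^k_{\tau+1}-y^k_\tau\|^2$. Together these produce the $12r^2$ coefficient on $\|x^k_{\tau+1}-z^k_\tau\|^2$ and part of the $35r^2$ coefficient on $\|y^k_{\tau+1}-y^k_\tau\|^2$.

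The first piece is controlled by Lemma~\ref{lemma:lemma_primal_error_bound}. It yields $(5\eta^2/2+5)\|x^k_{\tau+1}-x^k_\tau\|^2$, the accumulated SPIDER drift sums weighted by $5\eta^2\alpha_x^2L_x^2/M$ and $5\eta^2\alpha_x^2L_y^2/M$, and $\tfrac52\alpha_x^2\eta^2 C_{\sigma,x}$.

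The main obstacle is converting the $r^2\eta^2$ factors into the stated constants. For this I would use \eqref{eq:bound-eta-2}, i.e. $\eta^2 \le 2(r+L_x)^2/(r-\rho)^2 + 2/(\alpha_x^2(r-\rho)^2)$, together with $r/(r-\rho)\le 2$. The second term then gives $r^2\eta^2 \lesssim 8/\alpha_x^2$, and the first gives $r^2(r+L_x)^2/(r-\rho)^2 \lesssim 1/\alpha_x^2$ after invoking the upper bound $\alpha_x \le (r-\rho)^2/(24(r+L_x)^2(L_y+1))$ from \eqref{eq:cond-alpha-x}. For the drift weights I would use $\alpha_x^2L_y^2\eta^2\le \tfrac12$, from \eqref{eq:alpha_L_y_eta}, and $\alpha_x L_x\le 1$, so that they become $O(r^2/M)$ and $O(1/(\alpha_x^2M))$. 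Similarly $r^2\alpha_x^2\eta^2\, C_{\sigma,x}$ becomes an absolute constant times $C_{\sigma,x}$, which I expect to produce the $61\,C_{\sigma,x}$ term. Collecting the coefficients should give roughly $15r^2 + 97/\alpha_x^2$ on the $x$-increments.

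Finally, I would sum over $k$ and $\tau$, absorb the drift double sums through \eqref{intermediate_0} and \eqref{intermediate_1_1st} using $T=M$ (so the $1/M$ weights cancel the factor $T$), and divide by $KT$. By the uniform sampling of $(\tilde x,\tilde y)$ this gives the stated bound on $\mathbb{E}\|\nabla_z d_r(\tilde y,\tilde x)\|^2$. The $y$-drift contributions are merged into the $35r^2$ coefficient using $L_y\le r$.
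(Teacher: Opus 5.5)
Your proposal follows the paper's proof essentially step for step: Danskin, a three-term Young split, \eqref{eq:sc1}--\eqref{eq:sc2}, Lemma~\ref{lemma:lemma_primal_error_bound}, \eqref{eq:bound-eta-2}, \eqref{eq:alpha_L_y_eta}, and summation with $T=M$. The only difference is that you pass through $x_r(y^k_\tau,x^k_{\tau+1})$ rather than the paper's $x_r(y^k_{\tau+1},z^k_\tau)$, which gives the identical $27r^2$ and $12r^2$ coefficients. One bookkeeping fix: the bound $r^2(r+L_x)^2/(r-\rho)^2\le 4(r+L_x)^2\le \frac{1}{36\alpha_x^2}$ comes from $r\ge 2\rho$ together with the cap $\alpha_x\le\frac{1}{12(r+L_x+2L_y)}$; the cap $\frac{(r-\rho)^2}{24(r+L_x)^2(L_y+1)}$ does not give it. To land exactly on $97/\alpha_x^2$ you also need $\alpha_xL_x\le\frac12$ (likewise implied by that first cap) rather than $\alpha_xL_x\le 1$, which would only give roughly $182/\alpha_x^2$.
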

\begin{proof} 
By Danskin's Theorem, it holds
{$\nabla_{z}d_{r}(y^{k}_{\tau + 1},x^{k}_{\tau + 1}) = r\left( x^{k}_{\tau + 1} - x_{r}\left(y^{k}_{\tau + 1},x^{k}_{\tau + 1}\right)\right)$.}
Hence,
\begin{align}
&\mathbb{E}\left\|\nabla_{z}d_{r}(y^{k}_{\tau + 1},x^{k}_{\tau + 1})\right\|^{2} \nonumber\\ 
= &~ r^{2}\mathbb{E}\left\| x^{k}_{\tau + 1} - x_{r}\left(y^{k}_{\tau + 1},x^{k}_{\tau + 1}\right) \right\|^{2}\nonumber \\
\overset{(i)}{\leq} &~ 3r^{2}\left( \mathbb{E}\left\| x^{k}_{\tau + 1} - x_{r}(y^{k}_{\tau},z^{k}_{\tau}) \right\|^{2} + \mathbb{E}\left\| x_{r}(y^{k}_{\tau},z^{k}_{\tau}) - x_{r}(y^{k}_{\tau + 1},z^{k}_{\tau}) \right\|^{2} + \mathbb{E}\left\| x_{r}(y^{k}_{\tau + 1},z^{k}_{\tau}) - x_{r}(y^{k}_{\tau + 1},x^{k}_{\tau + 1}) \right\|^{2}   \right)\nonumber \\
\overset{(ii)}{\leq} &~  3r^{2}\mathbb{E}\left\| x^{k}_{\tau + 1} - x_{r}(y^{k}_{\tau},z^{k}_{\tau}) \right\|^{2} + 3r^{2}\sigma_{2}^{2}\mathbb{E} \|y^{k}_{\tau} - y^{k}_{\tau + 1}\|^{2} + 3r^{2}\sigma_{1}^{2}\mathbb{E} \| x^{k}_{\tau + 1} - z^{k}_{\tau}  \|^{2}\nonumber \\ 
\overset{(iii)}{\leq} &~ \left(\frac{15r^2\eta^{2}}{2}+15r^2\right)\mathbb{E}\left\|x^{k}_{\tau+1} - x^{k}_{\tau}  \right\|^{2} + 27r^{2}\mathbb{E} \|y^{k}_{\tau} - y^{k}_{\tau + 1}\|^{2} + 12r^{2}\mathbb{E} \| x^{k}_{\tau + 1} - z^{k}_{\tau}  \|^{2} \nonumber\\
\quad & + \frac{15r^2\eta^{2}\alpha_{x}^{2}L_{x}^{2}}{M}\sum_{b=0}^{\tau-1}\mathbb{E}\left\| x^{k}_{b+1} - x^{k}_{b}\right\|^{2} +\frac{15r^2\eta^{2}\alpha_{x}^{2}L_{y}^{2}}{M}\sum_{b=0}^{\tau-1}\mathbb{E}\left\| y^{k}_{b+1} - y^{k}_{b}\right\|^{2}{+\frac{15\alpha_x^2r^2\eta^2C_{\sigma,x}}{2}}, \label{eq:main_bound1}
\end{align}
where $(i)$ follows from Young's inequality, $(ii)$ is by Lemma~\ref{lem:lip-cont-x-r}, and $(iii)$ follows from Lemma \ref{lemma:lemma_primal_error_bound}, $\sigma_{1}\leq 2$ and  $\sigma_{2} \leq 3$ (since $r \geq \max\{2\rho, L_{y}+\rho\}  $). Taking sum on both sides of \eqref{eq:main_bound1},  plugging \eqref{intermediate_0} along with \eqref{intermediate_1_1st}, and utilizing $T=M$ renders
\begin{align}
&\sum_{k=0}^{K-1}\sum_{\tau=0}^{T-1}\mathbb{E}\left\|\nabla_{z}d_{r}(y^{k}_{\tau + 1},x^{k}_{\tau + 1})\right\|^{2} \nonumber\\ 
\leq &~ \underbrace{\left(\frac{15r^2\eta^{2}}{2}+15r^2+15r^2\eta^{2}\alpha_{x}^{2}L_{x}^{2}\right)}_{\textcircled{1}}\sum_{k=0}^{K-1}\sum_{\tau=0}^{T-1}\mathbb{E}\left\|x^{k}_{\tau+1} - x^{k}_{\tau}  \right\|^{2} + \underbrace{\left(27r^{2}+15r^2\eta^{2}\alpha_{x}^{2}L_{y}^{2}\right)}_{\textcircled{2}}\sum_{k=0}^{K-1}\sum_{\tau=0}^{T-1}\mathbb{E} \|y^{k}_{\tau} - y^{k}_{\tau + 1}\|^{2} \nonumber\\
\quad & + 12r^{2}\sum_{k=0}^{K-1}\sum_{\tau=0}^{T-1}\mathbb{E} \| x^{k}_{\tau + 1} - z^{k}_{\tau}  \|^{2}{+\underbrace{\frac{15\alpha_x^2r^2\eta^2}{2}}_{\textcircled{3}}C_{\sigma,x}KT}.
\label{final_dr_main}
\end{align}
Utilizing $\alpha_x \leq \frac{1}{2L_{x}}$ and \eqref{eq:bound-eta-2} , we bound \textcircled{1} in \eqref{final_dr_main} by
\[\textcircled{1} \leq \frac{24r^2(r+L_{x})^2}{(r-\rho)^2} + \frac{24r^2}{\alpha_{x}^{2}(r-\rho)^{2}} + 15r^2 {\le 96(r+L_{x})^2 + 15r^2 + \frac{96}{\alpha_x^2} \le 15r^2 + \frac{97}{\alpha_x^2}},\]
where the second inequality holds by $r\ge 2\rho$ and thus $\frac{r}{r-\rho} \le 2$, and the last inequality follows from $\frac{1}{\alpha_x} \ge 12(r+L_x)$.  
Also, from \eqref{eq:alpha_L_y_eta}, we have $\textcircled{2} \leq 35r^2$. {Furthermore, {utilizing \eqref{eq:bound-eta-2} and $r\ge 2\rho$, 
we get 
$$\textcircled{3}\leq \frac{15\alpha_x^2 r^2(r+L_{x})^2}{(r-\rho)^2} + \frac{15 r^2}{(r-\rho)^{2}} \le 60(\alpha_x^2(r+L_{x})^2 + 1)\le 61,$$
where the last inequality follows from $\alpha_x \le \frac{1}{12(r+L_x)}$.
}
}
Plugging these bounds into \eqref{final_dr_main}, dividing both sides of the above by $KT$, and using the definition of the uniform random output $\left(\tilde{x}, \tilde{y}\right)$, yields the desired result.
%
\end{proof}

We are now ready to present the main convergence result for the smooth case, {encompassing both the finite--sum and online settings}. 

\begin{proof}[Proof of Theorem \ref{main:theorem2}]
By substituting the bounds from Lemma \ref{lemma:scenario1} into Lemma \ref{lemma_sc}, and rearranging terms, we get
\begin{align}
&\mathbb{E}\left[\mathrm{dist}\left(0, \nabla_x F(\tilde x, \tilde y) + \cN_\cX(\tilde x)\right)^{2}\right]+ \mathbb{E}\left[\mathrm{dist}\left(0, -\nabla_{y}F\left(\tilde x, \tilde y\right) + \cN_\cY(\tilde y)\right)^{2} \right] \nonumber\\
\leq &~ \underbrace{\frac{8\alpha_x\Delta\Phi}{KT}
\left(\frac{4}{\alpha_x^{2}}+16L_x^{2}+8r^{2}+6L_y^{2}\right)}_{\textcircled{1}}
+ \underbrace{\frac{4\alpha_y\Delta\Phi}{KT}
\left(\frac{3}{\alpha_y^{2}}+22L_y^{2}\right)}_{\textcircled{2}} 
+ \underbrace{\frac{16r\Delta\Phi}{\beta KT}}_{\textcircled{3}}\nonumber\\
\quad &
+ \underbrace{8\alpha_x\chi_{\theta} C_{\beta}
\left(\frac{4}{\alpha_x^{2}}+16L_x^{2}+8r^{2}+6L_y^{2}\right)}_{\textcircled{4}} + \underbrace{4\alpha_y\chi_{\theta} C_{\beta}\left(\frac{3}{\alpha_y^{2}}+22L_y^{2}\right)}_{\textcircled{5}} 
+ \underbrace{\frac{16r\chi_{\theta} C_{\beta}}{\beta}}_{\textcircled{6}}\nonumber\\
\quad &
{+ \underbrace{8\alpha_x C_{\sigma}
\left(\frac{4}{\alpha_x^{2}}+16L_x^{2}+8r^{2}+6L_y^{2}\right)}_{\textcircled{7}} + \underbrace{4\alpha_yC_{\sigma}\left(\frac{3}{\alpha_y^{2}}+22L_y^{2}\right)}_{\textcircled{8}} 
+ \underbrace{\frac{16rC_{\sigma}}{\beta}}_{\textcircled{9}}+\underbrace{4C_{\sigma,x}+3C_{\sigma,y}}_{\textcircled{10}}}. \label{final_expression_for_optimality}
\end{align}
{We first discuss the finite--sum setting. In this case, since $C_{\sigma,x}$ and $C_{\sigma,y} = 0$, the last four terms vanish.} Hence, to have $(\tilde x, \tilde y)$ as an $\varepsilon$-GS point, it suffices to make each of the six remaining underbraced terms upper bounded by $\frac{\varepsilon^2}{6}$. When $\theta\in [0,\frac{1}{2}]$, $\chi_\theta=0$, and thus we only need to consider the first three terms. By the choice of $\alpha_x$ and $\alpha_y$, it follows that 
\textcircled{2} is dominated by \textcircled{1}, and to have both of \textcircled{1} and \textcircled{2} in the order of $\varepsilon^2$, it suffices to have $KT = \Theta\left(\frac{\Delta\Phi}{\alpha_x\varepsilon^{2}}\right)$. In addition, by the assumption of $\rho = \cO\big( \min\{L_x, L_y\} \big)$ and $\min\{L_x, L_y\} = \Omega(1)$, we have from \eqref{eq:bound-on-r} that $r=\Theta(L_y^2 + L_y\sqrt{L_x})$, and thus by \eqref{eq:cond-alpha-x}, it follows $\alpha_x = \Theta\left(\frac{1}{L_x+L_y^2}\right)$. Therefore, having \begin{equation}\label{eq:KT-1}
KT = \Theta\left(\frac{\Delta\Phi(L_x+L_y^2)}{\varepsilon^{2}}\right)
\end{equation}
will push both of \textcircled{1} and \textcircled{2} to $\cO(\varepsilon^2)$. Moreover, to have $\textcircled{3} = \cO(\varepsilon^2)$, we need $KT = \Theta\left(\frac{r\Delta\Phi}{\beta\varepsilon^{2}}\right)$. Notice that $\varpi = \Theta\left(\frac{L_y^2}{\mu^2(r-\rho)}\right)$. Thus $\frac{L_{y}}{2r\varpi}=\Theta\left(\frac{\mu^2}{L_y}\right)$, and the chosen $\beta= \Theta\left(\min\{\frac{1}{r}, \frac{\mu^2}{L_y}\}\right)$. Hence, having \begin{equation}\label{eq:KT-2-case}
KT=\Theta\left(\frac{\Delta\Phi\max\{r^2, \frac{rL_y}{\mu^2}\}}{\varepsilon^{2}}\right)=\Theta\left(\frac{L_y^2\Delta\Phi\max\{L_y^2 + L_x, \frac{L_y + \sqrt{L_x}}{\mu^2}\}}{\varepsilon^{2}}\right)
\end{equation}
will ensure $\textcircled{3} = \cO(\varepsilon^2)$. Together with \eqref{eq:KT-1}, 
this completes the proof for the case of $\theta\in [0,\frac{1}{2}] $\,{ in the finite--sum setting}.

When $\theta\in (\frac{1}{2},1]$, we have $\kappa = \Theta\left(\frac{L_y^{\frac{1}{\theta}}}{r \mu^{\frac{1}{\theta}}}\right)$, and thus 
\begin{align}
C_{\beta} = \left(\left(\frac{2\theta-1}{2\theta}\right)\left(\frac{20r\kappa}{\left(2\theta L_{y}\right)^{\frac{1}{2\theta}}}\right)^{\frac{2\theta}{2\theta-1}}\right)\beta^{\frac{2\theta}{2\theta-1}} = \Theta\left(\frac{L_y^\frac{1}{2\theta-1}}{\mu^\frac{2}{2\theta-1}}\beta^{\frac{2\theta}{2\theta-1}}\right).\label{C_beta_eq}
\end{align}
Again, by the choice of $\alpha_x$ and $\alpha_y$, \textcircled{5} is dominated by \textcircled{4}.  Since $\beta = \cO\left(\alpha_x^{\frac{2\theta - 1}{2\theta}} L_y^{-\frac{1}{2\theta}}\mu^{\frac{1}{\theta}}\varepsilon^{\frac{2\theta - 1}{\theta}}\right)$, we have
\[\textcircled{4} = \Theta\left(\alpha_{x}^{-1}C_{\beta}\right) = \Theta\left(\alpha_{x}^{-1}L_{y}^{\frac{1}{2\theta - 1}}\mu^{-\frac{2}{2\theta-1}}\beta^{\frac{2\theta}{2\theta - 1}}\right) 
= \cO\left(\varepsilon^2\right),\]
and $\textcircled{5}= \cO\left(\varepsilon^2\right)$ as well. In addition, by $\beta = \Theta\left(r^{-(2\theta - 1)} \mu^2 L_{y}^{-1}\; \varepsilon^{4\theta - 2}\right)$, we get
\[\textcircled{6} = \Theta\left(\frac{rC_{\beta}}{\beta}\right) = \Theta\left(r\;L_{y}^{\frac{1}{2\theta - 1}}\mu^{-\frac{2}{2\theta-1}}\beta^{\frac{1}{2\theta - 1}}\right) = \cO\left(\varepsilon^2\right).\]
To have $\textcircled{3} = \cO(\varepsilon^2)$, from the choice of $\beta$, we need 
$$KT = \Theta\left(\frac{r\Delta\Phi}{\beta\varepsilon^{2}}\right) = \Theta\left(\frac{r\Delta\Phi}{\min\left\{r^{-1}, \alpha_x^{\frac{2\theta - 1}{2\theta}} L_y^{-\frac{1}{2\theta}}\mu^{\frac{1}{\theta}}\varepsilon^{\frac{4\theta - 1}{\theta}}, r^{-(2\theta - 1)} \mu^2 L_{y}^{-1} \varepsilon^{4\theta },\; r^{-\frac{2\theta-1}{\theta}}L_{y}^{\frac{\theta-1}{\theta}} \mu^{\frac{1}{\theta}} \varepsilon^{\frac{4\theta-1}{\theta}}\right\}}\right).$$
Now plugging $\alpha_x = \Theta\left(\frac{1}{L_x+L_y^2}\right)$ and $r=\Theta(L_y^2 + L_y\sqrt{L_x})$ and combining the requirements on $KT$ in \eqref{eq:KT-1} and \eqref{eq:KT-2-case} completes the proof for the case of $\theta\in (\frac{1}{2},1]$ in the {finite--sum setting}.

{We now discuss the online setting. In this regime, since $C_{\sigma, x} = \frac{\sigma_x^2}{B}$ {and} $C_{\sigma, y} = \frac{\sigma_y^2}{B}$, we have
\begin{align}
C_{\sigma} = &~ \frac{\sigma_x^2}{B}
\left(2(1+L_y) \left( \frac{\alpha_x^2(r+L_{x})^2}{(r-\rho)^2} + \frac{1}{(r-\rho)^{2}} \right) + \frac{\alpha_x}{2}\right)
+\frac{\left(\left(16\alpha_y + 6\alpha_y^2\left(L_y+3\right)\right)\right)\sigma_y^2}{B}\nonumber\\
= &~ {\cO\left(\frac{\left(\sigma_x^2+\sigma_y^2\right)\alpha_x}{B}\right)}\label{for:c_sigma1},
\end{align}
{where the second equation follows from $\alpha_x = \cO\big(\frac{1}{r+L_x}\big)$, $L_y = \Omega(1)$, $r= \Theta(L_y^2 + L_y\sqrt{L_x})$, and $\alpha_y = \Theta(\alpha_x)$.}
{Hence, by the fact that \textcircled{7} dominates \textcircled{8},}
it suffices to have 
\begin{align}
B = \Theta\left(\frac{\left(\sigma_x^2+\sigma_y^2\right)\alpha_x}{\alpha_x\varepsilon^2}\right) = \Theta\left(\frac{\sigma_x^2+\sigma_y^2}{\varepsilon^2}\right)\label{B-eq-1}
\end{align}
to have both \textcircled{7} and \textcircled{8} to be  $\cO\left(\varepsilon^2\right)$. Additionally, {because $C_{\sigma, x} = \frac{\sigma_x^2}{B}$ and $C_{\sigma, y} = \frac{\sigma_y^2}{B}$}, having $B$ in \eqref{B-eq-1} makes 
\textcircled{10} is $\cO\left(\varepsilon^2\right)$ as well. 

Lastly, for bounding \textcircled{9}, we 
consider the cases when $\theta \in [0,\frac{1}{2}]$ and $(\frac{1}{2},1]$ separately. When $\theta \in [0,\frac{1}{2}]$, from $\beta= \Theta\left(\min\{\frac{1}{r}, \frac{\mu^2}{L_y}\}\right)$ {and \eqref{for:c_sigma1}}, having
\[B=\Theta\left(\frac{\left(\sigma_x^2+\sigma_y^2\right)\alpha_x\max\{r^2, \frac{rL_y}{\mu^2}\}}{\varepsilon^{2}}\right)=\Theta\left(\frac{\left(\sigma_x^2+\sigma_y^2\right)\alpha_x{L_y^2}\max\{L_y^2 + L_x, \frac{L_y + \sqrt{L_x}}{\mu^2}\}}{\varepsilon^{2}}\right)\]
ensures \textcircled{9} is $\cO\left(\varepsilon^2\right)$. When $\theta \in \left(\frac{1}{2},1\right]$, from the choice of $\beta$, we need
$$B = \Theta\left(\frac{r\alpha_x\left(\sigma_x^2+\sigma_y^2\right)}{\beta\varepsilon^{2}}\right) = \Theta\left(\frac{r\alpha_x\left(\sigma_x^2+\sigma_y^2\right)}{\min\left\{r^{-1}, \alpha_x^{\frac{2\theta - 1}{2\theta}} L_y^{-\frac{1}{2\theta}}\mu^{\frac{1}{\theta}}\varepsilon^{\frac{4\theta - 1}{\theta}}, r^{-(2\theta - 1)} \mu^2 L_{y}^{-1} \varepsilon^{4\theta },\; r^{-\frac{2\theta-1}{\theta}}L_{y}^{\frac{\theta-1}{\theta}} \mu^{\frac{1}{\theta}} \varepsilon^{\frac{4\theta-1}{\theta}}\right\}}\right)$$
to render \textcircled{9} is $\cO\left(\varepsilon^2\right)$. 
Hence, plugging $\alpha_x = \Theta\left(\frac{1}{L_x+L_y^2}\right)$ and $r=\Theta(L_y^2 + L_y\sqrt{L_x})$ and combining all the above requirements on $B$ completes the proof for the online setting.
}
We now prove $\mathbb{E}\left\|\nabla_{z}d_{r}\left(\tilde{y}, \tilde{x}\right)\right\|^{2} =\cO\left(\varepsilon^2\right)$. Substituting the bounds from Lemma \ref{lemma:scenario1} into Lemma \ref{dz_for_smooth_case} and rearranging terms, we get 
\begin{align}
&\mathbb{E}\left\|\nabla_{z}d_{r}\left(\tilde{y}, \tilde{x}\right)\right\|^{2} \nonumber\\
\leq &~ \underbrace{\frac{8\alpha_x\Delta\Phi}{KT}
\left(15r^2 + \frac{97}{\alpha_x^2}\right)}_{\textcircled{i}}
+ \underbrace{\frac{140\alpha_yr^2\Delta\Phi}{KT}
}_{\textcircled{ii}} 
+ \underbrace{\frac{24r\Delta\Phi}{\beta KT}}_{\textcircled{iii}}
 \nonumber\\
\quad & + \underbrace{8\alpha_x\chi_{\theta} C_{\beta}
\left(15r^2 + \frac{97}{\alpha_x^2}\right)}_{\textcircled{iv}} + \underbrace{140\alpha_yr^2\chi_{\theta}C_{\beta}}_{\textcircled{v}} 
+ \underbrace{\frac{24r\chi_{\beta}C_{\beta}}{\beta}}_{\textcircled{vi}}\nonumber\\
\quad & {+ \underbrace{8\alpha_x C_{\sigma}
\left(15r^2 + \frac{97}{\alpha_x^2}\right)}_{\textcircled{vii}} + \underbrace{140\alpha_yr^2C_{\sigma}}_{\textcircled{viii}} 
+ \underbrace{\frac{24rC_{\sigma}}{\beta}}_{\textcircled{vi}}{+\underbrace{61C_{\sigma,x}}_{\textcircled{ix}} }}. \label{final_expression_for_optimality2}
\end{align}
{{As before, we first discuss the finite--sum setting. Since in this regime, $C_{\sigma,x} = C_{\sigma,y} = 0$, the last four terms vanish.} Comparing the six remaining underbraced terms in \eqref{final_expression_for_optimality2} to those in \eqref{final_expression_for_optimality}, we only need to show that \textcircled{ii} is $\cO(\varepsilon^2)$ when $\theta\in[0,\frac{1}{2}]$ and that \textcircled{ii} and \textcircled{v} are both $\cO(\varepsilon^2)$ when $\theta\in(\frac{1}{2},1]$ with the selected $KT$ and $\beta$. Since $r=\Theta(L_y^2 + L_y\sqrt{L_x})$ and $\alpha_y = \Theta(\frac{1}{L_y})$, to ensure \textcircled{ii} is $\cO(\varepsilon^2)$, it suffices to have 
$$KT = \Theta\left(\frac{L_{y}\Delta\Phi(L_y + \sqrt{L_x})^2}{\varepsilon^{2}}\right)=\Theta\left(\frac{L_{y}\Delta\Phi(L_y^2 + L_x)}{\varepsilon^{2}}\right).$$
This finishes the proof for the case of $\theta\in[0,\frac{1}{2}]$. 
In addition, when $\theta\in(\frac{1}{2},1]$, using $\alpha_y = \Theta\left(\frac{1}{L_{y}}\right)$ and $\beta = \cO\left(r^{-\frac{2\theta-1}{\theta}}L_{y}^{\frac{\theta-1}{\theta}} \mu^{\frac{1}{\theta}} \varepsilon^{\frac{2\theta-1}{\theta}}\right)$, we have
\[\textcircled{v} = \Theta\left(\alpha_{y}r^2C_{\beta}\right) = \Theta\left(r^2 L_{y}^{\frac{2-2\theta}{2\theta - 1}}\mu^{-\frac{2}{2\theta-1}}\beta^{\frac{2\theta}{2\theta - 1}}\right) 
= \cO\left(\varepsilon^2\right).\]

{For the online setting, {we only need to further check \textcircled{viii}, which is dominated by \textcircled{vii} and thus is $\cO\left(\varepsilon^2\right)$ as well.}

Finally, the sample complexity is given by 
\[2KB+4KTM = 2KT \left(\frac{B}{T}+2M\right)= 2KT \left(\frac{B}{M}+2M\right)\le 4 KT \left\lceil \sqrt{2B} \right\rceil = \cO\left(\left\lceil\sqrt{B}\right\rceil KT\right),\]
which completes the proof.
}
}
\end{proof}

\section{Proofs for the nonsmooth case} In this section, we give the proofs of the claimed results for the nonsmooth case.
{\begin{lemma}[Lipschitz continuity]
\label{lem:Lip-f-lambda}
Suppose Assumption \ref{assumption:F-nonsmooth} holds. Then for every fixed sample {$\bm{\xi} \in \Xi$}, the function $f^{\lambda}(\cdot, \cdot; \bm{\xi})$ defined in \eqref{eq:smoothed-func} is $\ell_{\lambda}$-Lipschitz continuous with $\ell_{\lambda} = \max\left\{\ell_{\varphi}\ell_{h}\ell_{c}\sqrt{d_{h}},\; \ell_{\varphi}\right\}$, 
i.e., 
    \[
    \left\|f^{\lambda}(x_1, y_1; \bm{\xi}) - f^{\lambda}(x_2, y_2; \bm{\xi})\right\| \leq \ell_{\lambda} \left(\|x_1 - x_2\| + \|y_1 - y_2\|\right), \forall\, (x_1, y_1), (x_2, y_2) \in \mathcal{X} \times \mathcal{Y}.
    \]
Also, $F^\lambda$ is $\hat\ell$-Lipschitz continuous.
\end{lemma}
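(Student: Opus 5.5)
The plan is to prove Lipschitz continuity of $f^{\lambda}(\cdot,\cdot;\bm{\xi})$ by splitting the change into an $x$-part and a $y$-part via the triangle inequality:
\[
|f^{\lambda}(x_1,y_1;\bm{\xi}) - f^{\lambda}(x_2,y_2;\bm{\xi})| \le |\varphi(h^\lambda(c(x_1;\bm{\xi})),y_1;\bm{\xi}) - \varphi(h^\lambda(c(x_2;\bm{\xi})),y_1;\bm{\xi})| + |\varphi(h^\lambda(c(x_2;\bm{\xi})),y_1;\bm{\xi}) - \varphi(h^\lambda(c(x_2;\bm{\xi})),y_2;\bm{\xi})|.
\]
The second term is at most $\ell_\varphi\|y_1-y_2\|$, directly by Assumption~\ref{assumption:F-nonsmooth}(v).

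The first term is at most $\ell_\varphi\|h^\lambda(c(x_1;\bm{\xi})) - h^\lambda(c(x_2;\bm{\xi}))\|$, again by Assumption~\ref{assumption:F-nonsmooth}(v). It then suffices to show that $h^\lambda$ is $\ell_h\sqrt{d_h}$-Lipschitz. The key fact is that the Moreau envelope of a convex $\ell_h$-Lipschitz function is itself $\ell_h$-Lipschitz. To see this, write $h_j^\lambda(w) = \min_q \{h_j(w-u) + \frac{1}{2\lambda}\|u\|^2\}$ after the substitution $u = w-q$. For two points $w,w'$, take the minimizer $u'$ for $w'$ and plug it in as a feasible choice at $w$. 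This gives
\[
h_j^\lambda(w) - h_j^\lambda(w') \le h_j(w-u') - h_j(w'-u') \le \ell_h\|w-w'\|,
\]
and symmetry gives the absolute value. Convexity is used only to guarantee that the minimizer exists; alternatively one can use $\nabla h_j^\lambda(w) \in \partial h_j(\mathrm{prox}_{\lambda h_j}(w))$, whose norm is at most $\ell_h$. Stacking the $d_h$ components gives
\[
\|h^\lambda(w) - h^\lambda(w')\| \le \sqrt{d_h}\,\ell_h\|w-w'\|.
\]
Composing with the $\ell_c$-Lipschitz map $c(\cdot;\bm{\xi})$ from Assumption~\ref{assumption:F-nonsmooth}(iii) bounds the first term by $\ell_\varphi\ell_h\ell_c\sqrt{d_h}\,\|x_1-x_2\|$.

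Adding the two terms and bounding each coefficient by $\ell_\lambda = \max\{\ell_\varphi\ell_h\ell_c\sqrt{d_h}, \ell_\varphi\}$ yields the first claim.

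For $F^\lambda = \mathbb{E}_{\bm{\xi}}[f^\lambda(\cdot,\cdot;\bm{\xi})]$, Jensen's inequality gives
\[
|F^\lambda(x_1,y_1) - F^\lambda(x_2,y_2)| \le \mathbb{E}\,|f^\lambda(x_1,y_1;\bm{\xi}) - f^\lambda(x_2,y_2;\bm{\xi})|,
\]
so the pointwise bound passes through the expectation. Hence $F^\lambda$ is Lipschitz with $\hat\ell = \ell_\lambda$, which also verifies Assumption~\ref{assumption_for_smooth_case}(ii) for $F^\lambda$.

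The only nontrivial step is the Lipschitz continuity of the Moreau envelope. It is standard, and the feasible-point argument above settles it without any dependence on $\lambda$. Everything else is a direct chain of Lipschitz compositions.
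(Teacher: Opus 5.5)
Your proposal is correct and follows essentially the same chain of Lipschitz compositions as the paper. The paper bounds $\|h^\lambda(w_1)-h^\lambda(w_2)\|\le\sqrt{d_h}\,\ell_h\|w_1-w_2\|$ componentwise and then applies the $\ell_\varphi$- and $\ell_c$-Lipschitz continuity of $\varphi$ and $c$; the only differences are that you prove the Lipschitz continuity of each $h_j^\lambda$ directly by the feasible-point argument (the paper cites Lemma 2.1 of Drusvyatskiy--Paquette instead), and that you spell out the claim for $F^\lambda$ via Jensen's inequality with $\hat\ell=\ell_\lambda$, which the paper leaves implicit.
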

\begin{proof}
We note that $\forall w_{1}, w_{2} \in \mathbb{R}^{d_{c}}$, we have
\begin{align}
\left\|h^\lambda(w_{1}) - h^\lambda(w_{2})\right\|^{2} 
= \sum_{j=1}^{d_{h}}\lvert h^{\lambda}_{j}\left(w_{1}\right) - h^{\lambda}_{j}\left(w_{2}\right)\rvert^{2}
\overset{(i)}{\leq} \sum_{j=1}^{d_{h}} \ell_{h}^{2}\left\|w_{1} - w_{2}\right\|^{2}
= d_{h}\ell_{h}^{2}\left\|w_{1} - w_{2}\right\|^{2},\label{h nonsmooth only}
\end{align}
where $(i)$ follows from \citep[Lemma 2.1]{DrusvyatskiyPaquette2019}. 
Hence, 
\begin{align*}
\| f^{\lambda}(x_1, y_1; \bm{\xi}) - f^{\lambda}(x_2, y_2; \bm{\xi})\| 
= &~ \left\| \varphi\left(h^{\lambda}\left(c\left(x_{1};\bm{\xi}\right)\right), y_{1} ; \bm{\xi}\right) - \varphi\left(h^{\lambda}\left(c\left(x_{2};\bm{\xi}\right)\right), y_{2} ; \bm{\xi}\right)  \right\|\nonumber\\
\overset{(i)}{\leq} &~ \ell_{\varphi}\left\|h^{\lambda}\left(c\left(x_{1};\bm{\xi}\right)\right) - h^{\lambda}\left(c\left(x_{2};\bm{\xi}\right)\right)\right\|  + \ell_{\varphi}\left\|y_{1} - y_{2}\right\|\\
\overset{(ii)}{\leq} &~ \ell_{\varphi}\ell_{h}\ell_{c}\sqrt{d_{h}}\left\|x_{1} - x_{2}\right\|  + \ell_{\varphi}\left\|y_{1} - y_{2}\right\|\\
\leq &~ \ell_{\lambda}\big(\left\|x_{1} - x_{2}\right\|  + \left\|y_{1} - y_{2}\right\|\big),
\end{align*}
where $(i)$ holds by the $\ell_{\varphi}$-lipschitz continuity of $\varphi(\cdot, \cdot)$, and $(ii)$ follows from  \eqref{h nonsmooth only} and the $\ell_{c}$-lipschitz continuity of $c$. This completes the proof.
\end{proof}
The next lemma shows the Lipschitz continuity of $\nabla F^\lambda$.
\begin{lemma}[Lipschitz smoothness]\label{lemma for smooth f lambda}
Suppose Assumption \ref{assumption:F-nonsmooth} holds. The gradient $\nabla_x f^{\lambda}(x,y;\boldsymbol{\xi})$ is $L_{\lambda, x}$-Lipschitz continuous in $x$ and $L_{\lambda, y}$-Lipschitz continuous in $y$ in expectation, 
i.e., 
\begin{align*}
\mathbb{E}\| \nabla_{x}f^{\lambda}(x_{1},y;\bm{\xi}) - \nabla_{x}f^{\lambda}(x_{2},y;\bm{\xi}) \|^{2} \leq L_{\lambda,x}^{2}\left\|x_{1} - x_{2}\right\|^{2}, \forall\, (x_1, y), (x_2, y) \in \mathcal{X} \times \mathcal{Y} \\
\mathbb{E}\| \nabla_{x}f^{\lambda}(x,y_{1};\bm{\xi}) - \nabla_{x}f^{\lambda}(x,y_{2};\bm{\xi}) \|^{2} \leq L_{\lambda,y}^{2}\left\|y_{1} - y_{2}\right\|^{2},\; \forall\, (x, y_1), (x, y_2) \in \mathcal{X} \times \mathcal{Y}
\end{align*}
where $L_{\lambda,x} = \sqrt{\frac{3 \ell_c^4 \ell_\varphi^2 d_{h}}{\lambda^{2}} +  3 d_h\ell_h^2 \ell_\varphi^2 L_{c}^{2} + 3 \ell_c^4 d_h^{2}\ell_h^4 L_{\varphi}^{2}}$ and $L_{\lambda,y} = \max\left\{\sqrt{d_{h}}L_{\varphi}\ell_{h}\ell_{c}, \; L_{\varphi}\right\}$. Additionally, $\nabla_y f^{\lambda}(\cdot\,, \cdot\,; \bm{\xi})$ is $L_{\lambda,y}$-Lipschitz continuous in expectation, i.e., 
\[\mathbb{E}\| \nabla_{y}f^{\lambda}(x_{1},y_{1};\bm{\xi}) - \nabla_{y}f^{\lambda}(x_{2},y_{2};\bm{\xi}) \|^{2} \leq L_{\lambda,y}^{2}\left(\|x_{1} - x_{2}\|^2+\left\|y_{1} - y_{2}\right\|^{2}\right) .\]
\end{lemma}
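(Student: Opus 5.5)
We will work per sample $\bm{\xi}$, using the chain rule through the composition $f^\lambda(x,y;\bm{\xi}) = \varphi(h^\lambda(c(x;\bm{\xi})),y;\bm{\xi})$. Since each $h_j$ is convex and $\ell_h$-Lipschitz, its Moreau envelope $h_j^\lambda$ is differentiable with $\nabla h_j^\lambda(w) = \frac{1}{\lambda}(w - \mathrm{prox}_{\lambda h_j}(w))$. This gradient is $\frac{1}{\lambda}$-Lipschitz and bounded in norm by $\ell_h$ (cf. \citep[Lemma 2.1]{DrusvyatskiyPaquette2019}). Hence the Jacobian $J_\lambda(w)$ of $h^\lambda$ satisfies $\|J_\lambda(w)\|\le \sqrt{d_h}\,\ell_h$ and $\|J_\lambda(w_1)-J_\lambda(w_2)\|\le \frac{\sqrt{d_h}}{\lambda}\|w_1-w_2\|$, obtained by summing the $d_h$ rows as in \eqref{h nonsmooth only}. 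Writing $u(x) = h^\lambda(c(x;\bm{\xi}))$, we have
\[
\nabla_x f^\lambda(x,y;\bm{\xi}) = \nabla_x c(x;\bm{\xi})^\top J_\lambda(c(x;\bm{\xi}))^\top \nabla_1\varphi(u(x),y;\bm{\xi}),
\qquad
\nabla_y f^\lambda = \nabla_y\varphi(u(x),y;\bm{\xi}).
\]
We will also use that $\|\nabla_x c\|\le \ell_c$ (from Assumption~\ref{assumption:F-nonsmooth}(iii)), $\|\nabla_1\varphi\|\le \ell_\varphi$ (from (v)), and $\|u(x_1)-u(x_2)\|\le \sqrt{d_h}\ell_h\ell_c\|x_1-x_2\|$ (Lemma~\ref{lem:Lip-f-lambda}).

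For the $x$-Lipschitz bound we telescope the difference of the triple product at $x_1$ and $x_2$ (with $y$ fixed) into three terms, changing one factor at a time, and apply $(a+b+c)^2\le 3(a^2+b^2+c^2)$.
\begin{itemize}
\item[(a)] Changing $\nabla_x c$ contributes at most $(\sqrt{d_h}\ell_h\ell_\varphi)^2\|\nabla_x c(x_1)-\nabla_x c(x_2)\|^2$. Taking expectation and using \eqref{nabla_x_for_c} gives $d_h\ell_h^2\ell_\varphi^2L_c^2\|x_1-x_2\|^2$.
\item[(b)] Changing $J_\lambda$ contributes at most $\ell_c^2\ell_\varphi^2\frac{d_h}{\lambda^2}\ell_c^2\|x_1-x_2\|^2$.
\item[(c)] Changing $\nabla_1\varphi$ contributes at most $\ell_c^2 d_h\ell_h^2 L_\varphi^2\|u(x_1)-u(x_2)\|^2\le \ell_c^4d_h^2\ell_h^4L_\varphi^2\|x_1-x_2\|^2$, using \eqref{nabla_1_phi}.
\end{itemize}
Multiplying by $3$ gives exactly $L_{\lambda,x}^2$.

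For the $y$-Lipschitz bound of $\nabla_x f^\lambda$, only the $\nabla_1\varphi$ factor changes. This gives $\|\cdot\|\le \ell_c\sqrt{d_h}\ell_h L_\varphi\|y_1-y_2\|$, which is at most $L_{\lambda,y}\|y_1-y_2\|$. Here we note a small point: the stated constant $\sqrt{d_h}L_\varphi\ell_h\ell_c$ matches this bound.

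For $\nabla_y f^\lambda$, we use \eqref{nabla_y_phi} to get
\[
\|\nabla_y f^\lambda(x_1,y_1)-\nabla_y f^\lambda(x_2,y_2)\|^2\le L_\varphi^2\big(d_h\ell_h^2\ell_c^2\|x_1-x_2\|^2+\|y_1-y_2\|^2\big)\le L_{\lambda,y}^2(\|x_1-x_2\|^2+\|y_1-y_2\|^2).
\]

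The main obstacle is the Jacobian Lipschitz estimate for $h^\lambda$ together with the boundedness of $\nabla h_j^\lambda$ by $\ell_h$. For the latter, we would cite the standard facts that $\mathrm{prox}$ is firmly nonexpansive and that $\nabla h_j^\lambda(w)\in\partial h_j(\mathrm{prox}_{\lambda h_j}(w))$. All remaining steps are routine bookkeeping.
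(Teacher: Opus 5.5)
Your proof is correct and follows essentially the same route as the paper: the same chain-rule factorization of $\nabla_x f^\lambda$ through the Jacobians of $c$ and $h^\lambda$ and the gradient $\nabla_1\varphi$, the same one-factor-at-a-time three-term telescoping with $(a+b+c)^2\le 3(a^2+b^2+c^2)$, and the same Moreau-envelope facts (gradients bounded by $\ell_h$ and $\frac{1}{\lambda}$-Lipschitz), yielding exactly $L_{\lambda,x}^2$ and $L_{\lambda,y}^2$. Your treatment of the expectation is slightly cleaner than the paper's: you bound the deterministic factors pointwise before taking $\mathbb{E}$, whereas the paper writes the bound as a product of expectations.
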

\begin{proof} We first note that $\forall w_{1}, w_{2} \in \mathbb{R}^{d_{c}}$,
\begin{align}
\left\|\nabla h^\lambda(w_{1}) - \nabla h^\lambda(w_{2})\right\|^{2} 
= \sum_{j=1}^{d_{h}}\left\| \nabla h^{\lambda}_{j}\left(w_{1}\right) - \nabla h^{\lambda}_{j}\left(w_{2}\right)\right\|^{2}
\overset{(i)}{\leq} \sum_{j=1}^{d_{h}} \frac{1}{\lambda^{2}}\left\|w_{1} - w_{2}\right\|^{2}
=\frac{d_{h}}{\lambda^{2}}\left\|w_{1} - w_{2}\right\|^{2}, \label{nabla h nonsmooth only}
\end{align}
where $(i)$ follows from \citep[Lemma 2.1]{DrusvyatskiyPaquette2019}. 
{In addition, for each $\bm{\xi}\in\{\bm{\xi}_i\}_{i=1}^N$, we have
$$\nabla_{x}f^{\lambda}(x,y;\bm{\xi}) = \nabla_x c(x;\bm{\xi}) \nabla h^{\lambda}\left(c(x;\bm{\xi})\right) \nabla_1 \varphi\left(h^{\lambda}\left(c\left(x;\bm{\xi}\right)\right), y ; \bm{\xi}\right),$$
where $\nabla_x c \in \mathbb{R}^{d_{x} \times d_{c}}$ denotes the Jacobian matrix of $c$ about $x$, $\nabla h^{\lambda} \in \mathbb{R}^{d_{c} \times d_{h}}$ is for the Jacobian matrix of $h^{\lambda}$, and $\nabla_1 \varphi \in \mathbb{R}^{d_{h}}$ denotes the partial gradient of $\varphi$ about the first argument. For any $(x,y)$, $(x_1,y_1)$ and $(x_2, y_2)$, we let
}
\begin{align*}
&c = c(x; \bm{\xi}), \;c_1 = c(x_1; \bm{\xi}),\; c_2 = c(x_2; \bm{\xi}),\\
& h^{\lambda} = h^{\lambda}\left(c\left(x;\bm{\xi}\right)\right), \;h^{\lambda}_{1} = h^{\lambda}\left(c\left(x_{1};\bm{\xi}\right)\right),\; h^{\lambda}_{2} = h^{\lambda}\left(c\left(x_{2};\bm{\xi}\right)\right),\\
&J = \nabla_x c(x, \bm{\xi}),\; J_1 = \nabla_x c(x_1, \bm{\xi}),\ J_2 = \nabla_x c(x_2, \bm{\xi}), \\
& H = \nabla h^\lambda(c),\; H_1 = \nabla h^\lambda(c_1), \ H_2 = \nabla h^\lambda(c_2), \\
&b_1 = \nabla_1 \varphi\left(h^{\lambda}_{1}, y ; \bm{\xi}\right), \ b_2 = \nabla_1 \varphi\left(h^{\lambda}_{2}, y ; \bm{\xi}\right),\\
&q_1 = \nabla_1 \varphi\left(h^{\lambda}, y_1 ; \bm{\xi}\right), \ q_2 = \nabla_1 \varphi\left(h^{\lambda}, y_2 ; \bm{\xi}\right).
\end{align*}
Then
\begin{align}
&~\mathbb{E}\| \nabla_{x}f^{\lambda}(x_{1},y;\bm{\xi}) - \nabla_{x}f^{\lambda}(x_{2},y;\bm{\xi}) \|^{2}\nonumber\\
= &~   \mathbb{E}\|J_1H_1b_1 - J_2H_2b_2\|^2\nonumber\\
\le &~ 3\|J_1H_1b_1 - J_2H_1b_1\|^2 + 3\|J_2H_1b_1 - J_2H_2b_1\|^2 + 3\|J_2H_2b_1 -  J_2H_2b_2\|^2 \nonumber\\
\le &~ 3\mathbb{E}\|H_1b_1\|^2\mathbb{E}\|J_1 - J_2\|^2 + 3\mathbb{E}\|J_2\|^2 \|b_1\|^2 \mathbb{E}\|H_1 - H_2\|^2 + 3\mathbb{E}\|J_2H_2\|^2 \mathbb{E}\|b_1 -  b_2\|^2\nonumber\\
\overset{(i)}{\le} &~  3 d_h\ell_h^2 \ell_\varphi^2  \mathbb{E}\|J_1 -J_2\|^2 + 3 \ell_c^2 \ell_\varphi^2 \mathbb{E}\|H_1 - H_2\|^2 + 3 \ell_c^2 d_h\ell_h^2 \mathbb{E}\|b_1 - b_2\|^2\nonumber\\
\overset{(ii)}{\leq} &~ 3 d_h\ell_h^2 \ell_\varphi^2 L_{c}^{2}  \|x_1 -x_2\|^2 + \frac{3 \ell_c^4 \ell_\varphi^2 d_{h}}{\lambda^{2}} \|x_1 - x_2\|^2 + 3 \ell_c^4 d_h^{2}\ell_h^4 L_{\varphi}^{2} \|x_1 - x_2\|^2 \nonumber\\
= &~ \left(\frac{3 \ell_c^4 \ell_\varphi^2 d_{h}}{\lambda^{2}} +  3 d_h\ell_h^2 \ell_\varphi^2 L_{c}^{2} + 3 \ell_c^4 d_h^{2}\ell_h^4 L_{\varphi}^{2} \right) \|x_1 - x_2\|^2,  \label{nonsmooth-lipschitz-1}
\end{align}
where $(i)$ follows from the $\ell_\varphi$- and $\ell_c$-Lipschitz continuity of $\varphi$ and $c$ and the $\sqrt{d_h} \ell_h$-Lipschitz continuity of $h^\lambda$ by \eqref{h nonsmooth only}, and $(ii)$ holds by the \eqref{nabla_x_for_c}, $\ell_{c}$-Lipschitz continuity of $c$ and $\frac{\sqrt{d_h}}{\lambda}$-Lipschitz continuity of $\nabla h^\lambda$ by \eqref{nabla h nonsmooth only}, \eqref{nabla_1_phi} and the $\sqrt{d_h} \ell_h$-Lipschitz continuity of $h^\lambda$ by \eqref{h nonsmooth only}. 

Similarly, we have
\begin{align}
\mathbb{E}\| \nabla_{x}f^{\lambda}(x,y_{1};\bm{\xi}) - \nabla_{x}f^{\lambda}(x,y_{2};\bm{\xi}) \|^{2}
= &~ \mathbb{E}\|JHq_1 - JHq_2\|^2\nonumber\\
= &~ \mathbb{E}\left\|J\right\|^{2}\left\|H\right\|^{2}\left\|q_{1} - q_{2}\right\|^{2}\nonumber\\
\overset{(i)}{\leq} &~ d_{h}\ell_{h}^{2}\ell_{c}^{2}L_{\varphi}^{2}\left\|y_{1} - y_{2}\right\|^{2} \label{nonsmooth-lipschitz-2},
\end{align}
where $(i)$ follows from \eqref{nabla_1_phi}, $\ell_{c}$-Lipschitz continuity of $c$ and $\sqrt{d_h}\ell_{h}$-Lipschitz continuity of $h^{\lambda}$. 
Moreover, we have
\begin{align}
\mathbb{E}\| \nabla_{y}f^{\lambda}(x_{1},y_{1};\bm{\xi}) - \nabla_{y}f^{\lambda}(x_{2},y_{2};\bm{\xi}) \|^{2}
= &~ \mathbb{E}\left\| \nabla_{y}\varphi\left(h^{\lambda}\left(c\left(x_{1};\bm{\xi}\right)\right), y_{1} ; \bm{\xi}\right) - \nabla_{y}\varphi\left(h^{\lambda}\left(c\left(x_{2};\bm{\xi}\right)\right), y_{2} ; \bm{\xi}\right)  \right\|^{2} \nonumber\\
\overset{(i)}{\leq} &~ L_{\varphi}^{2}\big(\left\|h^{\lambda}_{1} - h^{\lambda}_{2}\right\|^{2} + \left\|y_{1} - y_{2}\right\|^{2}\big)\nonumber\\
\overset{(ii)}{\leq} &~ d_{h}L_{\varphi}^{2}\ell_{h}^{2}\ell_{c}^{2}\left\|x_{1} - x_{2}\right\|^{2} + L_{\varphi}^{2} \left\|y_{1} - y_{2}\right\|^{2} \label{nonsmooth-lipschitz-2_2nd},
\end{align}
where $(i)$ follows from \eqref{nabla_1_phi}, and $(ii)$ is by $\ell_{c}$-Lipschitz continuity of $c$ and \eqref{h nonsmooth only}. The desired result then follows from the specified choices of $L_{\lambda,x}$ and $L_{\lambda,y}$ and thus, the proof is complete.
\end{proof}
We now show that $F$ and $F^\lambda$ are weakly convex and have the same weak convexity module.
{\begin{lemma}[Weak Convexity]\label{lem:weak-f-lambda}
Suppose Assumption \ref{assumption:F-nonsmooth} holds and let {$\rho_{\scriptscriptstyle \lambda} = d_{h}L_{\varphi}\ell_{h}^{2}\ell_{c}^{2} + L_{c} \ell_{\varphi}\ell_{h}\sqrt{d_{h}}$.}
Then for each $y\in\cY$, the functions $F(\cdot, y)$ and $F^\lambda(\cdot, y)$ are both $\rho_{\scriptscriptstyle \lambda}$-weakly convex.
\end{lemma}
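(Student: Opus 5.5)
The plan is to prove weak convexity sample-wise for $f(\cdot,y;\bm{\xi})=\varphi(h(c(\cdot;\bm{\xi})),y;\bm{\xi})$ and for $f^\lambda(\cdot,y;\bm{\xi})$ with the same modulus $\rho_\lambda$, and then pass to $F$ and $F^\lambda$ by taking expectations. Weak convexity is preserved under expectation, because $\mathbb{E}[f(\cdot,y;\bm{\xi})]+\frac{\rho_\lambda}{2}\|\cdot\|^2=\mathbb{E}[f(\cdot,y;\bm{\xi})+\frac{\rho_\lambda}{2}\|\cdot\|^2]$ is an expectation of convex functions. Both cases can be handled together. Write $g$ for either $h$ or $h^\lambda$. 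Each component $g_j$ is convex and $\ell_h$-Lipschitz: this is the hypothesis for $h_j$, and for $h^\lambda_j$ it holds since the Moreau envelope of a convex $\ell_h$-Lipschitz function is convex and $\ell_h$-Lipschitz \citep[Lemma 2.1]{DrusvyatskiyPaquette2019}. In both cases $\|g(w_1)-g(w_2)\|\le\sqrt{d_h}\,\ell_h\|w_1-w_2\|$, as shown in \eqref{h nonsmooth only}.

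Fix $\bm{\xi}$, $y$, and $x_1,x_2$, and set $u_i=g(c(x_i;\bm{\xi}))$. I would establish a lower model of the form
\[
f(x_2)\ge f(x_1)+\langle v,x_2-x_1\rangle-\tfrac{\rho_\lambda}{2}\|x_2-x_1\|^2
\]
for a suitable $v$, which is equivalent to $\rho_\lambda$-weak convexity. The argument chains three one-step bounds.
\begin{itemize}
\item[(a)] Smoothness of $\varphi$ in its first argument, with $L_\varphi$-Lipschitz gradient by \eqref{nabla_1_phi}, gives $\varphi(u_2)\ge\varphi(u_1)+\langle\nabla_1\varphi(u_1),u_2-u_1\rangle-\frac{L_\varphi}{2}\|u_2-u_1\|^2$.
\item[(b)] Let $p=\nabla_1\varphi(u_1)$. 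Since $\varphi$ is nondecreasing, $p\ge0$ componentwise, and $\|p\|\le\ell_\varphi$. Convexity of each $g_j$ with a subgradient $s_j\in\partial g_j(c_1)$, where $\|s_j\|\le\ell_h$, then gives $\langle p,u_2-u_1\rangle\ge\sum_j p_j\langle s_j,c_2-c_1\rangle$.
\item[(c)] The $L_c$-smoothness of $c$ in \eqref{nabla_x_for_c} gives $\|c_2-c_1-\nabla c(x_1)^\top(x_2-x_1)\|\le\frac{L_c}{2}\|x_2-x_1\|^2$. Hence $\sum_jp_j\langle s_j,c_2-c_1\rangle\ge\langle v,x_2-x_1\rangle-\|\sum_jp_js_j\|\frac{L_c}{2}\|x_2-x_1\|^2$ with $v=\nabla c(x_1)\sum_jp_js_j$, and $\|\sum_jp_js_j\|\le\ell_\varphi\ell_h\sqrt{d_h}$ by Cauchy--Schwarz.
\end{itemize}
Finally, $\frac{L_\varphi}{2}\|u_2-u_1\|^2\le\frac{L_\varphi}{2}d_h\ell_h^2\ell_c^2\|x_2-x_1\|^2$. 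Adding the bounds gives exactly $\rho_\lambda=d_hL_\varphi\ell_h^2\ell_c^2+L_c\ell_\varphi\ell_h\sqrt{d_h}$. Since $\rho_\lambda$ does not depend on $\lambda$, the same modulus holds for $F$ and for $F^\lambda$.

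The main obstacle is step (c). Assumption \eqref{nabla_x_for_c} only states $L_c$-Lipschitzness of $\nabla c$ in mean square, whereas the per-sample descent inequality needs it for each $\bm{\xi}$. I would first try to avoid this by taking expectations before applying the smoothness bound. Because $p\ge0$ and $\|\sum_jp_js_j\|$ is bounded by a deterministic constant, the error term $\mathbb{E}[\|\sum_jp_js_j\|\,\|c_2-c_1-\nabla c(x_1)^\top(x_2-x_1)\|]$ is at most $\ell_\varphi\ell_h\sqrt{d_h}\,\mathbb{E}\|\cdot\|$. Writing the residual as $\int_0^1(\nabla c(x_1+t(x_2-x_1))-\nabla c(x_1))^\top(x_2-x_1)\,dt$ and applying Jensen's inequality to the mean-square bound yields $\mathbb{E}\|\cdot\|\le\frac{L_c}{2}\|x_2-x_1\|^2$. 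So the argument goes through for $F$ and $F^\lambda$ directly, even without a per-sample bound.

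A second, smaller point is that the monotonicity of $\varphi$ must be used so that $p\ge0$, which is what lets convexity of $g_j$ pass through the composition in step (b).
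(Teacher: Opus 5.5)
Your proposal is correct and follows essentially the same route as the paper. The paper also (i) takes a lower quadratic model from the $L_\varphi$-Lipschitz gradient of $\varphi$ in its first argument, (ii) uses convexity of each $h_j$ (resp.\ $h^\lambda_j$) together with $\nabla_1\varphi\ge 0$ to linearize the inner composition, and (iii) applies $\mathbb{E}\|c(\hat x;\bm{\xi})-c(x;\bm{\xi})-\nabla_x c(x;\bm{\xi})^\top(\hat x-x)\|\le\frac{L_c}{2}\|\hat x-x\|^2$ only after taking expectations, treating $h^\lambda$ in the same way. The paper merely asserts that last bound from the mean-square assumption \eqref{nabla_x_for_c}, whereas you justify it via the integral representation and Jensen's inequality, which fills in a step the paper skips.
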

\begin{proof}{For any $x, \hat{x}\in\cX$ and $y\in\cY$, 
we define the following notations for simplicity:
\[\vartheta_{\bm{\xi}} \in \partial h(c(x; \bm{\xi}));\; w_{\bm{\xi}} =  h(c(\hat{x}; \bm{\xi})) -  h(c(x; \bm{\xi})) -  \vartheta_{\bm{\xi}}^{\top}\left( c(\hat{x}; \bm{\xi}) - c(x; \bm{\xi}) \right); \; v_{\bm{\xi}} = c(\hat{x}; \bm{\xi}) - c(x; \bm{\xi}) - \nabla_{x} c(x;\bm{\xi})^{\top}(\hat{x} - x).\]
Then we have 
\begin{align}
&F(\hat{x},y) \nonumber\\
= &~ \mathbb{E}_{\bm{\xi} \sim \mathbb{P}}\left[\varphi\big( h(c(\hat{x}; \bm{\xi})), y; \bm{\xi} \big)\right]\nonumber\\
\overset{(i)}{\geq} &~ {\mathbb{E}_{\bm{\xi} \sim \mathbb{P}}\Bigg(\varphi\big( h(c(x; \bm{\xi})), y; \bm{\xi} \big) + \big\langle \nabla_{1} \varphi(h(c(x; \bm{\xi})), y; \bm{\xi})\;,\;h(c(\hat{x}; \bm{\xi})) -  h(c(x; \bm{\xi}))  \big\rangle}  \nonumber\\
\quad & {-\frac{L_{\varphi}}{2}\left\|h(c(\hat{x}; \bm{\xi})) -  h(c(x; \bm{\xi}))\right\|^{2}\Bigg)} \nonumber\\
\overset{(ii)}{\geq} &~ \mathbb{E}_{\bm{\xi} \sim \mathbb{P}}\left[\varphi\big( h(c(x; \bm{\xi})), y; \bm{\xi} \right] + \underbrace{\mathbb{E}_{\bm{\xi} \sim \mathbb{P}}\big\langle \nabla_{1} \varphi(h(c(x; \bm{\xi})), y; \bm{\xi})\;,\;\vartheta_{\bm{\xi}}^{\top}\left( c(\hat{x}; \bm{\xi}) - c(x; \bm{\xi}) \right)\big\rangle}_{\textcircled{1}} \nonumber\\
\quad &  + \underbrace{\mathbb{E}_{\bm{\xi} \sim \mathbb{P}}\big \langle \nabla_{1} \varphi(h(c(x; \bm{\xi})), y; \bm{\xi}),w_{\bm{\xi}} \big \rangle}_{\textcircled{2}}-\frac{d_{h}L_{\varphi}\ell_{h}^{2}\ell_{c}^{2}}{2}\left\|\hat{x} -x\right\|^{2} \label{lispchitz-constant-F-F-lambda},
\end{align}
where {$(i)$ holds  since $\nabla_{1}\varphi\left(\cdot,\cdot;\bm{\xi}\right)$ is $L_{\varphi}$-Lipschitz continuous},
and $(ii)$ follows from $\sqrt{d_{h}}\ell_{h}$-Lipschitz continuity of $h$ and $\ell_{c}$-Lipschitz continuity of $c$.

We bound \textcircled{1} in \eqref{lispchitz-constant-F-F-lambda} by 
\begin{align}
&\mathbb{E}_{\bm{\xi} \sim \mathbb{P}}\big\langle \nabla_{1} \varphi(h(c(x; \bm{\xi})), y; \bm{\xi})\;,\;\vartheta_{\bm{\xi}}^{\top}\left( c(\hat{x}; \bm{\xi}) - c(x; \bm{\xi}) \right)\big\rangle\nonumber\\
= &~ \mathbb{E}_{\bm{\xi} \sim \mathbb{P}}\big\langle \vartheta_{\bm{\xi}} \nabla_{1} \varphi(h(c(x; \bm{\xi})), y; \bm{\xi})\;,\;\nabla_{x} c(x;\bm{\xi})^{\top}(\hat{x} - x) \big\rangle + \mathbb{E}_{\bm{\xi} \sim \mathbb{P}}\big\langle \nabla_{1} \varphi(h(c(x; \bm{\xi})), y; \bm{\xi})\;,\;\vartheta_{\bm{\xi}}^{\top}v_{\bm{\xi}}\big\rangle\nonumber\\
\overset{(i)}{\geq} & \mathbb{E}_{\bm{\xi} \sim \mathbb{P}}\big\langle \nabla_{x} c(x;\bm{\xi})\;\vartheta_{\bm{\xi}}\;\nabla_{1} \varphi(h(c(x; \bm{\xi})), y; \bm{\xi}),\; \hat{x} - x \big\rangle - \mathbb{E}_{\bm{\xi} \sim \mathbb{P}}\left\|\nabla_{1} \varphi(h(c(x; \bm{\xi})), y; \bm{\xi})\right\|\left\|\vartheta_{\bm{\xi}}\right\|\left\|v_{\bm{\xi}}\right\|\nonumber\\
\overset{(ii)}{\geq} &  \mathbb{E}_{\bm{\xi} \sim \mathbb{P}}\big\langle \nabla_{x} c(x;\bm{\xi})\;\vartheta_{\bm{\xi}}\;\nabla_{1} \varphi(h(c(x; \bm{\xi})), y; \bm{\xi}),\; \hat{x} - x \big\rangle - \frac{\ell_{\varphi}\ell_{h}L_{c}\sqrt{d_{h}}}{2}\left\|\hat{x} - x\right\|^{2} \label{second-component},
\end{align}
where $(i)$ holds by Cauchy-Schwarz inequality, and $(ii)$ follows from $\ell_{\varphi}$-Lipschitz continuity of $\varphi$, $\sqrt{d_{h}}\ell_{h}$-Lipschitz continuity of $h$ and $\mathbb{E}_{\bm{\xi} \sim \mathbb{P}}\left\|v_{\bm{\xi}}\right\| \leq \frac{L_{c}}{2}\left\|\hat{x} - x\right\|^{2}$ (from \eqref{nabla_x_for_c}). Now, since $h$ is convex and $\varphi$ is non-decreasing in its first argument, it follows that 
\begin{align}
\textcircled{2} = \mathbb{E}_{\bm{\xi} \sim \mathbb{P}}\big \langle \nabla_{1} \varphi(h(c(x; \bm{\xi})), y; \bm{\xi}),w_{\bm{\xi}} \big \rangle \geq 0. \label{third-component}
\end{align}
Plugging \eqref{second-component} and \eqref{third-component} into \eqref{lispchitz-constant-F-F-lambda}, we get 
\begin{align}
F(\hat{x},y) &\ge F(x,y) + \left\langle \mathbb{E}_{\bm{\xi} \sim \mathbb{P}}\left[\nabla_{x} c(x;\bm{\xi})\;\vartheta_{\bm{\xi}}\;\nabla_{1} \varphi(h(c(x; \bm{\xi})), y; \bm{\xi})\right],\; \hat{x} - x \right\rangle - \frac{\rho_{\scriptscriptstyle \lambda}}{2}\left\|\hat{x} -x\right\|^{2},
\end{align}
which renders that $F(\cdot,y)$ is $\rho_{\scriptscriptstyle \lambda}$-weakly convex.} 

Noting that $h^{\lambda}(\cdot)$ is convex with the same Lipschitz continuity constant as $h$, i.e., $\ell_h \sqrt{d_h}$, we can follow the same arguments to show that $F^{\lambda}(x,y)$ is also $\rho_{\scriptscriptstyle \lambda}$-weakly convex and thus complete the proof.
\end{proof}}}

The lemma below establishes that the smoothed objective function $F^{\lambda}(x,y)$ satisfies the K\L{} property with respect to $y$ by choosing an appropriate smoothing parameter $\lambda$.
\begin{lemma}
\label{lemma:kl-regularized}
Suppose Assumptions \ref{assumption:F-nonsmooth} and \ref{assumption:g-kl} hold and let $\lambda \leq \frac{2\tilde{\delta}}{\ell_{h}^{2}\sqrt{d_{h}}}$. Then for all $x \in \mathcal{X}$, $F^{\lambda}(x,\cdot)$ satisfies 
the {extended K\L{} property}
with parameters $\mu$ and $ \theta$.
\end{lemma}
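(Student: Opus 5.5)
The plan is to reduce the claim directly to Assumption~\ref{assumption:g-kl} by exhibiting $F^{\lambda}(x,\cdot)$ as $g(u,\cdot)$ for a suitable admissible $u$. Fix $x\in\mathcal{X}$ and define $u_x:\Xi\to\mathbb{R}^{d_h}$ by $u_x(\bm{\xi}) := h^{\lambda}(c(x;\bm{\xi}))$. By the definitions of $g$ and $F^{\lambda}$ in \eqref{eq:smoothed-func}, we have $g(u_x,y)=\mathbb{E}_{\bm{\xi}}[\varphi(h^{\lambda}(c(x;\bm{\xi})),y;\bm{\xi})]=F^{\lambda}(x,y)$ for every $y\in\mathcal{Y}$. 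Since $u_x$ does not depend on $y$, it follows that $\nabla_y g(u_x,y)=\nabla_y F^{\lambda}(x,y)$ and $\max_{y'}g(u_x,y')=\max_{y'}F^{\lambda}(x,y')$. Hence \eqref{KL-g} applied with $u=u_x$ is literally \eqref{KL} for $F^{\lambda}(x,\cdot)$ with the same $\mu$ and $\theta$. The interchange of $\nabla_y$ and expectation is justified by the Lipschitz bounds in \eqref{nabla_y_phi} together with dominated convergence; I would note this only briefly.

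The only real work is verifying admissibility, i.e., that $\|u_x(\bm{\xi})-h(c(x;\bm{\xi}))\|\le\tilde{\delta}$ for every $\bm{\xi}$. For this I would use the standard Moreau envelope estimate for a convex $\ell_h$-Lipschitz function $h_j$: for every $w$,
\[
0\le h_j(w)-h_j^{\lambda}(w)\le \tfrac{\lambda\ell_h^2}{2}.
\]
The lower bound comes from taking $q=w$ in the definition of $h_j^{\lambda}$. The upper bound follows because, for any $q$, $h_j(q)+\frac{1}{2\lambda}\|w-q\|^2\ge h_j(w)-\ell_h\|w-q\|+\frac{1}{2\lambda}\|w-q\|^2\ge h_j(w)-\frac{\lambda\ell_h^2}{2}$, minimizing the quadratic in $t=\|w-q\|$. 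This is also the source of the constant in Remark~\ref{remark-f_lambda_compact}. Stacking the $d_h$ components then gives
\[
\|h^{\lambda}(w)-h(w)\|\le \sqrt{d_h}\,\tfrac{\lambda\ell_h^2}{2}.
\]
With $w=c(x;\bm{\xi})$ and $\lambda\le \frac{2\tilde{\delta}}{\ell_h^2\sqrt{d_h}}$, the right-hand side is at most $\tilde{\delta}$, uniformly in $\bm{\xi}$ and $x$.

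I expect no serious obstacle. The delicate point is only bookkeeping: the hypothesis on $\lambda$ must match the Moreau gap bound exactly, which it does with the factor $\frac{\lambda\ell_h^2}{2}$ per component. One should also stress that the KL constants $\mu,\theta$ are inherited unchanged because the assumption is imposed uniformly over the whole $\tilde{\delta}$-tube of perturbations $u$. Combining the two steps yields the extended KL property of $F^{\lambda}(x,\cdot)$ for all $x\in\mathcal{X}$.
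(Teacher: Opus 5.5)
Your proposal is correct and is essentially the paper's proof: set $u(\bm{\xi}) = h^{\lambda}(c(x;\bm{\xi}))$, bound $\|h^{\lambda}(w)-h(w)\|\le \frac{\lambda\ell_h^2\sqrt{d_h}}{2}\le\tilde{\delta}$ to get admissibility, and then apply Assumption~\ref{assumption:g-kl} using the identity $F^{\lambda}(x,y)=g(u,y)$. The only difference is that you derive the componentwise gap $0\le h_j(w)-h_j^{\lambda}(w)\le \frac{\lambda\ell_h^2}{2}$ directly, which needs only Lipschitz continuity; the paper instead cites Lemma~2.1 of Drusvyatskiy--Paquette for this bound.
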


\begin{proof}
By  \citep[Lemma 2.1]{DrusvyatskiyPaquette2019}, 
it follows that for any 
$w \in \mathbb{R}^{d_c}$,
\begin{align}
\left\|h^\lambda(w) - h(w)\right\| 
= \sqrt{\sum_{j=1}^{d_{h}}\lvert  h^{\lambda}_{j}\left(w\right) - h_{j}\left(w\right)\rvert^{2}}
\leq \sqrt{\sum_{j=1}^{d_{h}} \frac{\lambda^{2}\ell_{h}^{4}}{4}}
=\frac{\lambda \ell_{h}^{2}\sqrt{d_{h}}}{2}.
\label{h_minus_h_lambda_nonsmooth}
\end{align}
Since $\lambda \leq \frac{2\tilde{\delta}}{\ell_{h}^{2}\sqrt{d_{h}}}$, {$\forall \bm{\xi} \in \Xi$}, it holds $\left\|h^\lambda(c(x;\bm{\xi})) - h(c(x;\bm{\xi}))\right\| \leq \tilde{\delta}.$ 
{Let $\hat{u}: \Xi \mapsto \mathbb{R}^{d_h}$ be defined as $u(\bm{\xi}) = h^\lambda(c(x;\bm{\xi}))$. Consequently, we get}
{\[
F^{\lambda}(x,y)
= \mathbb{E}_{\bm{\xi}\sim \mathbb{P}} \left[\varphi\left(h^{\lambda}(c(x;\bm{\xi})),\, y;\, \bm{\xi}\right)\right]
= g(\hat{u}, y),
\]}
and thus from Assumption \ref{assumption:g-kl}, 
the desired result follows. 
\end{proof}

{
The next lemma will be used to certify a $\delta$-subgradient defined in \eqref{eq:-delta-subdiff}.
\begin{lemma}
Under Assumption~\ref{assumption:F-nonsmooth}, it holds
\begin{align}
&\lvert F(x,y) - F^{\lambda}(x,y)\rvert \le \frac{\lambda L_{\varphi}L_{h}^{2}\sqrt{d_{h}}}{2},\ \forall\, x \in \mathcal{X}, y\in \mathcal{Y},\label{f_minus_f_lambda} \\
& \lvert F(x_{1},y) - F(x_{2},y)\rvert \le  \ell_{\varphi}\ell_{h}\ell_{c}\sqrt{d_{h}}\left\|x_{1} - x_{2}\right\|, \ \forall\, x_1, x_2 \in \mathcal{X}, y\in \mathcal{Y}, \label{f_minus_f_lambda2} \\
& \left\|\nabla_{y}F^{\lambda}\left(x,y\right) - \nabla_{y}F\left(x,y\right) \right\| \le \frac{\lambda L_{\varphi}\ell_{h}^{2}\sqrt{d_{h}}}{2}, \ \forall\, x \in \mathcal{X}, y\in \mathcal{Y}.
\label{nabla h nonsmooth y}
\end{align}
\end{lemma}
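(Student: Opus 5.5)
The plan is to prove the three inequalities one at a time, each by working inside the expectation over $\bm{\xi}$ and reducing everything to the componentwise estimate \eqref{h_minus_h_lambda_nonsmooth}, namely $\|h^\lambda(w)-h(w)\|\le \frac{\lambda\ell_h^2\sqrt{d_h}}{2}$, which comes from \citep[Lemma 2.1]{DrusvyatskiyPaquette2019}. I read $L_h$ in \eqref{f_minus_f_lambda} as $\ell_h$.

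For \eqref{f_minus_f_lambda}, fix $x,y$ and apply Jensen's inequality to get
\[
|F(x,y)-F^\lambda(x,y)|\le \mathbb{E}\big|\varphi(h(c(x;\bm{\xi})),y;\bm{\xi})-\varphi(h^\lambda(c(x;\bm{\xi})),y;\bm{\xi})\big|.
\]
The Lipschitz continuity of $\varphi$ in its first argument then bounds the right side by $\ell_\varphi\|h(c)-h^\lambda(c)\|\le \ell_\varphi\frac{\lambda\ell_h^2\sqrt{d_h}}{2}$. The stated constant uses $L_\varphi$ rather than $\ell_\varphi$, so I will try to get $L_\varphi$ more directly. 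Since $h^\lambda\le h$ componentwise and $\varphi$ is nondecreasing, the difference has a fixed sign. One could then use a gradient-based bound $\|\nabla_1\varphi\|\le L_\varphi$ on the compact region. If that does not work, I will note that the estimate holds with $\max\{\ell_\varphi,L_\varphi\}$, which is all the later application needs, since only the order $\mathcal{O}(\lambda)$ matters.

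For \eqref{f_minus_f_lambda2}, fix $y$ and bound
\[
|F(x_1,y)-F(x_2,y)|\le \mathbb{E}\,\ell_\varphi\|h(c(x_1;\bm{\xi}))-h(c(x_2;\bm{\xi}))\|.
\]
Each component $h_j$ is $\ell_h$-Lipschitz, so $h$ is $\sqrt{d_h}\ell_h$-Lipschitz, just as in \eqref{h nonsmooth only}. Combining this with the $\ell_c$-Lipschitz continuity of $c$ gives the claimed constant $\ell_\varphi\ell_h\ell_c\sqrt{d_h}$. This is the same computation as in Lemma~\ref{lem:Lip-f-lambda}, carried out with $h$ in place of $h^\lambda$.

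For \eqref{nabla h nonsmooth y}, first justify that the $y$-derivative can be exchanged with the expectation, so that $\nabla_yF=\mathbb{E}\nabla_y\varphi(h(c),y;\bm{\xi})$ and $\nabla_yF^\lambda=\mathbb{E}\nabla_y\varphi(h^\lambda(c),y;\bm{\xi})$. This follows from dominated convergence, because $\nabla_y\varphi$ is bounded by the $\ell_\varphi$-Lipschitz continuity of $\varphi$. Then Jensen's inequality and \eqref{nabla_y_phi} give
\[
\|\nabla_yF^\lambda-\nabla_yF\|\le \mathbb{E}\,L_\varphi\|h^\lambda(c)-h(c)\|\le \frac{\lambda L_\varphi\ell_h^2\sqrt{d_h}}{2}.
\]

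I expect the only real obstacle to be the constant in the first inequality: Lipschitz continuity of $\varphi$ alone yields $\ell_\varphi$ rather than $L_\varphi$. The other two bounds are routine.
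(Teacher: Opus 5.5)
Your argument is the same as the paper's for all three bounds: move the absolute value or norm inside the expectation, apply Lipschitz continuity of $\varphi$ (respectively of $\nabla_y\varphi$ via \eqref{nabla_y_phi}), and finish with the Moreau-envelope estimate \eqref{h_minus_h_lambda_nonsmooth}. Your dominated-convergence justification for exchanging $\nabla_y$ and $\mathbb{E}$ is a detail the paper leaves implicit.

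On the constant in \eqref{f_minus_f_lambda}, you are right and the statement is wrong (reading $L_h$ as $\ell_h$, as you do). The paper obtains $L_\varphi$ only by citing ``$L_\varphi$-Lipschitz continuity of $\varphi$''. But Assumption~\ref{assumption:F-nonsmooth}(v) gives Lipschitz constant $\ell_\varphi$, and $L_\varphi$ in (vii) controls only the variation of $\nabla_1\varphi$ and $\nabla_y\varphi$, not their size.

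So drop your idea of bounding $\|\nabla_1\varphi\|$ by $L_\varphi$ on a compact region. No such bound is available, and the $L_\varphi$ version of \eqref{f_minus_f_lambda} is false in general. Take $d_h=1$, $c(x;\bm{\xi})=x$, $h=|\cdot|$, $\varphi(u,y;\bm{\xi})=au$ with $a>0$, $\cX=[-1,1]$ and any compact $\cY$. All assumptions hold with $\ell_c=\ell_h=1$, $\ell_\varphi=a$ and any $L_\varphi>0$. Since $h^\lambda$ is the Huber function, $F(1,y)-F^\lambda(1,y)=a\lambda/2$ for $\lambda\le 1$, which exceeds $\lambda L_\varphi/2$ whenever $L_\varphi<a$.

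The correct inequality is $|F-F^\lambda|\le \frac{\lambda\ell_\varphi\ell_h^2\sqrt{d_h}}{2}$. This is exactly what your first step gives, and it is the version the paper itself uses later in the proof of Lemma~\ref{lemma:for_application}. The change only alters the constant in $\underline{F}^\lambda$, which is harmless because only the $\cO(\lambda)$ order is used.
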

\begin{proof}
{For any $ x \in \mathcal{X}, y\in \mathcal{Y}$, it holds
\begin{align*}
\lvert F(x,y) - F^{\lambda}(x,y)\rvert 
= &~ \left\lvert \mathbb{E}_{\bm{\xi} \sim \mathbb{P}}\left[f(x,y,\bm{\xi})\right] -  \mathbb{E}_{\bm{\xi} \sim \mathbb{P}}\left[f^{\lambda}(x,y,\bm{\xi})\right] \right\rvert \nonumber\\
{\leq} & ~\mathbb{E}_{\bm{\xi} \sim \mathbb{P}} \left\lvert f(x,y,\bm{\xi}) - f^{\lambda}(x,y,\bm{\xi})  \right\rvert\nonumber\\
= &~ \mathbb{E}_{\bm{\xi} \sim \mathbb{P}}\left\lvert \varphi\big( h(c(x; \bm{\xi})), y; \bm{\xi} \big) - \varphi\big( h^{\lambda}(c(x; \bm{\xi})), y; \bm{\xi} \big) \right\rvert \nonumber\\
\overset{(i)}{\leq} &~ \mathbb{E}_{\bm{\xi} \sim \mathbb{P}} \left\| h(c(x; \bm{\xi})) - h^{\lambda}(c(x; \bm{\xi})) \right\|\nonumber\\
\overset{(ii)}{\leq} &~\frac{\lambda L_{\varphi}L_{h}^{2}\sqrt{d_{h}}}{2} ,
\end{align*}
where 
$(i)$ follows from $L_{\varphi}$-Lipschitz continuity of $\varphi$, and $(ii)$ holds by  \eqref{h_minus_h_lambda_nonsmooth}. This proves \eqref{f_minus_f_lambda}.

Additionally, for any $ x_{1}, x_{2} \in \mathcal{X}, y\in \mathcal{Y}$, we have 
\begin{align*}
\lvert F(x_{1},y) - F(x_{2},y)\rvert 
= &~ \left | \mathbb{E}_{\bm{\xi} \sim \mathbb{P}}\left[f(x_{1},y,\bm{\xi})\right] -  \mathbb{E}_{\bm{\xi} \sim \mathbb{P}}\left[f(x_{2},y,\bm{\xi})\right] \right | \nonumber\\
\leq &~ \mathbb{E}_{\bm{\xi} \sim \mathbb{P}} \left | f(x_{1},y,\bm{\xi}) - f(x_{2},y,\bm{\xi})  \right |\nonumber\\
= &~ \mathbb{E}_{\bm{\xi} \sim \mathbb{P}}\left | \varphi\big( h(c(x_{1}; \bm{\xi})), y; \bm{\xi} \big) - \varphi\big( h(c(x_{2}; \bm{\xi})), y; \bm{\xi} \big) \right | \nonumber\\
\leq &~ \ell_{\varphi}\ell_{h}\ell_{c}\sqrt{d_{h}}\left\|x_{1} - x_{2}\right\|,
\end{align*}
where the last inequality 
follows from $\ell_{\varphi}$-, $\sqrt{d_h}\ell_h$-, and $\ell_{c}$-Lipschitz continuity of $\varphi$, $h$, and $c$. Thus \eqref{f_minus_f_lambda2} follows. 

Moreover, we have  for any  $x \in \mathcal{X}, \; y \in \mathcal{Y}$ that  
\begin{align*}
\left\|\nabla_{y}F^{\lambda}\left(x,y\right) - \nabla_{y}F\left(x,y\right) \right\| 
= &~ \left\|\mathbb{E}_{\bm{\xi} \sim \mathbb{P}}\left[ \nabla_{y}f^{\lambda}(x, y; \bm{\xi}_i)\right] - \mathbb{E}_{\bm{\xi} \sim \mathbb{P}}\left[ \nabla_{y}f(x, y; \bm{\xi}_i)\right]\right\|\nonumber\\
\leq &~ \mathbb{E}_{\bm{\xi} \sim \mathbb{P}}\left\| \nabla_{y}f^{\lambda}(x, y; \bm{\xi}_i) - \nabla_{y}f(x, y; \bm{\xi}_i)\right\|\nonumber\\
= &~ \mathbb{E}_{\bm{\xi} \sim \mathbb{P}}\left\| \nabla_{y}\varphi\big( h^{\lambda}(c(x; \bm{\xi})), y; \bm{\xi} \big) - \nabla_{y}\varphi\big( h(c(x; \bm{\xi})), y; \bm{\xi} \big)\right\|\nonumber\\
\overset{(i)}{\leq} &~ \mathbb{E}_{\bm{\xi} \sim \mathbb{P}}L_{\varphi}\left\| h^{\lambda}(c(x; \bm{\xi})) - h(c(x; \bm{\xi}))\right\|\nonumber\\
\overset{(ii)}{\leq} &~ \frac{\lambda L_{\varphi}\ell_{h}^{2}\sqrt{d_{h}}}{2},
\end{align*}
where $(i)$ follows from 
\eqref{nabla_y_phi}, and $(ii)$ is by 
\eqref{h_minus_h_lambda_nonsmooth}. This proves \eqref{nabla h nonsmooth y} and completes the proof.}
\end{proof}
}

\begin{lemma}[Stationarity]
Suppose Assumption \ref{assumption:F-nonsmooth} holds. 
Then for all $x \in \mathcal{X}$ and $y \in \mathcal{Y}$ we have
\begin{align}
\mathrm{dist}\left(0,  \partial_{x}^{\delta} \big(F(x,y) + \iota_\cX(x)\big)\right)^{2}
\leq &~  \left\|\nabla_{z}d_{r}^{\lambda}(y,x)\right\|^{2},
\label{eq:dist_x_final}
\\
\mathrm{dist}\left(0, -\nabla_{y}F\left(x,y\right) + \cN_\cY(y)\right)^{2}
\leq &~ 2\mathrm{dist}\left(0, -\nabla_{y}F^{\lambda}\left(x,y\right) + \cN_\cY(y)\right)^{2} + \frac{\lambda^{2}d_{h}L_{\varphi}^{2} \ell_{h}^{4}}{2} ,\label{eq:dist_final}
\end{align}
{where}
 \[\delta = \frac{\rho_{\scriptscriptstyle \lambda} \; D_{\cX}}{r}\left\|\nabla_{z}d_{r}^{\lambda}(y,x)\right\| + \frac{\ell_{\varphi}\ell_{h}\ell_{c}\sqrt{d_{h}}}{r}\left\|\nabla_{z}d_{r}^{\lambda}(y,x)\right\|+\left(\frac{\rho_{\scriptscriptstyle \lambda}}{2r^{2}} + \frac{1}{r}\right)\left\|\nabla_{z}d_{r}^{\lambda}(y,x)\right\|^{2} + \lambda \ell_{\varphi}\ell_{h}^{2}\sqrt{d_{h}}. \]
\label{lemma:for_application}
\end{lemma}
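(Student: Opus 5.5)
The second inequality \eqref{eq:dist_final} is a perturbation argument. Take any $g \in -\nabla_y F^{\lambda}(x,y) + \cN_\cY(y)$. Then
\[
g + \big(\nabla_y F^{\lambda}(x,y) - \nabla_y F(x,y)\big) \in -\nabla_y F(x,y) + \cN_\cY(y).
\]
Young's inequality $\|a+b\|^2 \le 2\|a\|^2 + 2\|b\|^2$, together with \eqref{nabla h nonsmooth y}, gives
\[
\mathrm{dist}^2 \le 2\|g\|^2 + 2\left(\tfrac{\lambda L_\varphi \ell_h^2\sqrt{d_h}}{2}\right)^2 = 2\|g\|^2 + \tfrac{\lambda^2 d_h L_\varphi^2\ell_h^4}{2}.
\]
Minimizing over $g$ yields \eqref{eq:dist_final}.

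For \eqref{eq:dist_x_final}, I will exhibit an explicit $\delta$-subgradient. Let $\hat x := x_r^{\lambda}(y,x)$ be the proximal point with $z=x$, and set $v := r(x-\hat x)$. As in the proof of Lemma~\ref{dz_for_smooth_case}, Danskin's theorem gives $\nabla_z d_r^{\lambda}(y,x) = r(x-\hat x) = v$. It therefore suffices to show $v \in \partial_x^{\delta}\big(F(\cdot,y)+\iota_\cX\big)(x)$, i.e., for every $x'\in\cX$,
\[
F(x',y) \ge F(x,y) + \langle v, x'-x\rangle - \tfrac{\rho_\lambda}{2}\|x'-x\|^2 - \delta,
\]
with the weak-convexity modulus taken to be $\rho_\lambda$.

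The steps, in order, are as follows.

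(a) The optimality condition for $\hat x$ reads $0 \in \nabla_x F^{\lambda}(\hat x,y) + r(\hat x - x) + \cN_\cX(\hat x)$, so $\langle \nabla_x F^{\lambda}(\hat x,y) - v, x'-\hat x\rangle \ge 0$ for all $x'\in\cX$. Combining this with the $\rho_\lambda$-weak convexity of $F^{\lambda}(\cdot,y)$ from Lemma~\ref{lem:weak-f-lambda} gives
\[
F^{\lambda}(x',y) \ge F^{\lambda}(\hat x,y) + \langle v, x'-\hat x\rangle - \tfrac{\rho_\lambda}{2}\|x'-\hat x\|^2 .
\]

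(b) Next I move the base point from $\hat x$ to $x$. Write $\langle v, x'-\hat x\rangle = \langle v, x'-x\rangle + \|v\|^2/r$; the extra term is nonnegative and can be dropped, or kept as slack. Expand
\[
\|x'-\hat x\|^2 = \|x'-x\|^2 + 2\langle x'-x, x-\hat x\rangle + \|x-\hat x\|^2 \le \|x'-x\|^2 + \tfrac{2D_\cX}{r}\|v\| + \tfrac{1}{r^2}\|v\|^2,
\]
using compactness of $\cX$ (Assumption~\ref{assumption:F-nonsmooth}(i)). This produces the terms $\frac{\rho_\lambda D_\cX}{r}\|v\|$ and $\frac{\rho_\lambda}{2r^2}\|v\|^2$ in $\delta$. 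The paper's $\delta$ also contains a $\frac{1}{r}\|v\|^2$ term, which I would include as harmless slack in case the sign bookkeeping requires it.

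(c) Pass from $F^{\lambda}$ to $F$. Apply \eqref{f_minus_f_lambda} at both $x'$ and $\hat x$; the total error is at most $\lambda\ell_\varphi\ell_h^2\sqrt{d_h}$ once the $\varphi$-Lipschitz constant is taken to be $\ell_\varphi$, as the argument in that proof actually uses. Then replace $F(\hat x,y)$ by $F(x,y)$ via \eqref{f_minus_f_lambda2}, at a cost of $\ell_\varphi\ell_h\ell_c\sqrt{d_h}\,\|x-\hat x\| = \frac{\ell_\varphi\ell_h\ell_c\sqrt{d_h}}{r}\|v\|$.

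Collecting the terms from (a)–(c) gives exactly the stated $\delta$. Hence $\mathrm{dist}(0,\partial^{\delta}_x(\cdot)) \le \|v\| = \|\nabla_z d_r^{\lambda}(y,x)\|$, which is \eqref{eq:dist_x_final}.

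The main obstacle is the constant bookkeeping in (b)–(c). The smoothing error is stated with $L_\varphi L_h^2$ in \eqref{f_minus_f_lambda} but should read $\ell_\varphi\ell_h^2$, so I would re-derive it with $\ell_\varphi$ to match the $\lambda\ell_\varphi\ell_h^2\sqrt{d_h}$ term. I would also verify that the cross term $2\langle x'-x, x-\hat x\rangle$ is controlled by $D_\cX\|x-\hat x\|$ rather than needing a Young split, since a Young split would alter the quadratic coefficient.
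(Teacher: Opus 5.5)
Your proposal is correct and follows essentially the same argument as the paper. For the $x$-part, both use the Danskin identity $\nabla_z d_r^{\lambda}(y,x)=r(x-x_r^{\lambda}(y,x))$ together with the optimality condition at the proximal point, invoke the $\rho_\lambda$-weak convexity of $F^\lambda(\cdot,y)$ there, shift the base point using the diameter $D_{\mathcal{X}}$, and then pass from $F^\lambda$ to $F$ via the smoothing and $x$-Lipschitz bounds; the $y$-part is the same Young's-inequality perturbation. The only differences are cosmetic. You notice that $\frac{1}{r}\|v\|^2$ enters with a favorable sign, whereas the paper conservatively places it in $\delta$. Your remark that the smoothing error should carry $\ell_\varphi\ell_h^2$ matches the constant the paper actually uses inside this proof.
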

\begin{proof}
The optimality condition for $x_{r}^{\lambda}(y,x)$ gives 
$0 \in \nabla_{x} F^{\lambda}(x_{r}^{\lambda}(y,x),y) - r\left(x - x_{r}^{\lambda}(y,x)\right) + \cN_\cX(x_{r}^{\lambda}(y,x))$. In addition, by Danskin's Theorem, we have 
\begin{align} 
\nabla_{z}d_{r}^{\lambda}(y,x) = r(x - x_{r}^{\lambda}(y,x)).\label{dz_smooth_lambda}\end{align}
Hence,
\begin{align}\nabla_{z}d_{r}^{\lambda}(y,x) \in \nabla_{x} F^{\lambda}(x_{r}^{\lambda}(y,x),y) + \cN_\cX(x_{r}^{\lambda}(y,x)).\label{dz_epsilon_subdiff}
 \end{align}
 
Now, from the $\rho_{\scriptscriptstyle \lambda}$-weak convexity of $F^{\lambda}(\cdot,y)$ and \eqref{dz_epsilon_subdiff}, we have for any $\hat{x} \in \mathcal{X}$ that
\begin{align*}
&F^{\lambda}(\hat{x},y) \geq F^{\lambda}(x_{r}^{\lambda}(y,x),y) + \langle \nabla_{z}d_{r}^{\lambda}(y,x), \hat{x} - x_{r}^{\lambda}(y,x) \rangle - \frac{\rho_{\scriptscriptstyle \lambda}}{2}\left\|\hat{x} - x_{r}^{\lambda}(y,x)\right\|^{2}
\end{align*}
{which can be written as}
\begin{align*}
  F(\hat{x},y) + \big(F^{\lambda}(\hat{x},y) - F(\hat{x},y)\big) &~\geq F(x,y) + \big(F(x_{r}^{\lambda}(y,x),y) - F(x,y)\big) +\big(F^{\lambda}(x_{r}^{\lambda}(y,x),y) - F(x_{r}^{\lambda}(y,x),y)\big)  \\
\quad & + \langle \nabla_{z}d_{r}^{\lambda}(y,x), \hat{x} - x \rangle + \langle \nabla_{z}d_{r}^{\lambda}(y,x), x - x_{r}^{\lambda}(y,x) \rangle - \frac{\rho_{\scriptscriptstyle \lambda}}{2}\left\|\hat{x} - x\right\|^{2} \nonumber\\
\quad &  - \rho_{\scriptscriptstyle \lambda}\left\langle \hat{x} - x, x - x_{r}^{\lambda}(y,x) \right\rangle - \frac{\rho_{\scriptscriptstyle \lambda}}{2}\left\|x - x_{r}^{\lambda}(y,x)\right\|^{2}.  \end{align*}
By \eqref{f_minus_f_lambda}, \eqref{f_minus_f_lambda2}, and Cauchy-Schwarz inequality, we obtain from the above inequality that
\begin{align*}
&
F(\hat{x},y) + \frac{\lambda \ell_{\varphi}\ell_{h}^{2}\sqrt{d_{h}}}{2}  \geq F(x,y) -\ell_{\varphi}\ell_{h}\ell_{c}\sqrt{d_{h}}\left\|x - x_{r}^{\lambda}(y,x)\right\| -\frac{\lambda \ell_{\varphi}\ell_{h}^{2}\sqrt{d_{h}}}{2}  + \langle \nabla_{z}d_{r}^{\lambda}(y,x), \hat{x} - x \rangle \nonumber  \\
\quad &  + \langle \nabla_{z}d_{r}^{\lambda}(y,x), x - x_{r}^{\lambda}(y,x) \rangle - \frac{\rho_{\scriptscriptstyle \lambda}}{2}\left\|\hat{x} - x\right\|^{2} - \rho_{\scriptscriptstyle \lambda}\left\| \hat{x} - x\right\|\left\| x - x_{r}^{\lambda}(y,x) \right\| - \frac{\rho_{\scriptscriptstyle \lambda}}{2}\left\|x - x_{r}^{\lambda}(y,x)\right\|^{2},
\end{align*}
which together with \eqref{dz_smooth_lambda} and $\left\|\hat{x} - x\right\| \leq D_\cX$ implies
\begin{align*}
&
F(\hat{x},y)  \geq F(x,y) + \langle \nabla_{z}d_{r}^{\lambda}(y,x), \hat{x} - x \rangle  - \frac{\rho_{\scriptscriptstyle \lambda}}{2}\left\|\hat{x} - x\right\|^{2} \\
\quad & - \left[\frac{\rho_{\scriptscriptstyle \lambda} \; D_{\cX}}{r}\left\|\nabla_{z}d_{r}^{\lambda}(y,x)\right\| + \frac{\ell_{\varphi}\ell_{h}\ell_{c}\sqrt{d_{h}}}{r}\left\|\nabla_{z}d_{r}^{\lambda}(y,x)\right\|+\left(\frac{\rho_{\scriptscriptstyle \lambda}}{2r^{2}} + \frac{1}{r}\right)\left\|\nabla_{z}d_{r}^{\lambda}(y,x)\right\|^{2} + \lambda \ell_{\varphi}\ell_{h}^{2}\sqrt{d_{h}}\right].\end{align*}
Hence, from the definition of $\delta$, it follows
\begin{align*}
F(\hat{x},y)  \geq F(x,y) + \langle \nabla_{z}d_{r}^{\lambda}(y,x), \hat{x} - x \rangle  - \frac{\rho_{\scriptscriptstyle \lambda}}{2}\left\|\hat{x} - x\right\|^{2} - \delta .
\end{align*}
Thus, we deduce from the definition of $\delta$-subdifferential that 
\[\nabla_{z}d_{r}^{\lambda}(y,x) \in \partial_{x}^{\delta} \big(F(x,y) + \iota_\cX(x)\big) \implies \mathrm{dist}\left(0,  \partial_{x}^{\delta} \big(F(x,y) + \iota_\cX(x)\big)\right)^{2}
\leq  \left\|\nabla_{z}d_{r}^{\lambda}(y,x)\right\|^{2}.\]

Finally, by Young's inequality and \eqref{nabla h nonsmooth y}, we have
\begin{align}
\mathrm{dist}\left(0, -\nabla_{y}F\left(x,y\right) + \cN_\cY(y)\right)^{2}
\leq &~ 2\mathrm{dist}\left(0, -\nabla_{y}F^{\lambda}\left(x,y\right) + \cN_\cY(y)\right)^{2} +  2\left\|\nabla_{y}F^{\lambda}\left(x,y\right) - \nabla_{y}F\left(x,y\right) \right\|^{2}\nonumber\\
\leq &~  2\mathrm{dist}\left(0, -\nabla_{y}F^{\lambda}\left(x,y\right) + \cN_\cY(y)\right)^{2} + \frac{\lambda^{2}d_{h}L_{\varphi}^{2} \ell_{h}^{4}}{2}.\nonumber
\end{align}
This completes the proof.
\end{proof}
{We are now ready to present the main convergence result for the nonsmooth case.}
\begin{proof}[Proof of Theorem \ref{last:theorem}] 
We first note that from the definitions of $F^{\lambda}_{r}(x,y,z)$, $d_{r}(y,z)$ and $p_{r}(z)$ in Table \ref{tab:mylabel} we have for all $x \in \cX$, $y \in \cY$ and $z \in \mathbb{R}^{d_{x}}$,
{\begin{align}
\hat{\Phi}_r(x, y, z) =&~ \left(F^{\lambda}_{r}(x,y,z) - d^{\lambda}_{r}(y,z)\right) + \left(p^{\lambda}_{r}(z) - d^{\lambda}_{r}(y,z)\right) + p^{\lambda}_{r}(z)\nonumber\\
\geq&~p^{\lambda}_{r}(z)\nonumber\\
{=}&~\max_{y \in \mathcal{Y}}\min_{x \in \mathcal{X}}F^{\lambda}_{r}(x,y,z)\nonumber\\
\ge&~\max_{y \in \mathcal{Y}}\min_{x \in \mathcal{X}}F^{\lambda}(x,y)\nonumber\\
\ge&~ {\max_{y \in \mathcal{Y}}\min_{x \in \mathcal{X}}F(x,y) - \frac{\lambda L_{\varphi}L_{h}^{2}\sqrt{d_{h}}}{2}} \nonumber\\
{\geq}&~ {\underline{F} - \frac{\lambda L_{\varphi}L_{h}^{2}\sqrt{d_{h}}}{2} =:\underline{F}^{\lambda}},
\label{intermediate_1_6th}
\end{align}
where 
{the second inequality follows from  
$F_{r}^{\lambda}(x,y,z)\ge F^{\lambda}(x,y)$, the third one is by \eqref{f_minus_f_lambda},} and 
the last inequality holds from Remark \ref{remark-f_lambda_compact}.} 

Now, in light of Lemma \ref{lemma:for_application}, we have
\begin{align}
&\mathbb{E}\left[\mathrm{dist}\left(0,  \partial_{x}^{\delta} \big(F(x,y) + \iota_\cX(x)\big)\right)^{2}\right]+ \mathbb{E}\left[\mathrm{dist}\left(0, -\nabla_{y}F\left(\tilde{x},\tilde{y}\right) + \cN_\cY(\tilde{y})\right)^{2} \right]\nonumber\\
\leq &~ \underbrace{\mathbb{E}\left\|\nabla_{z}d_{r}^{\lambda}\left(\tilde{y}, \tilde{x}\right)\right\|^{2}}_{\textcircled{1}}+ \underbrace{2\mathbb{E}\left[\mathrm{dist}\left(0, -\nabla_{y}F^{\lambda}\left(\tilde x, \tilde y\right) + \cN_\cY(\tilde y)\right)^{2} \right]}_{\textcircled{2}} + \underbrace{\frac{\lambda^{2}d_{h}L_{\varphi}^{2} \ell_{h}^{4}}{2}}_{\textcircled{3}}\label{final:1st},
\end{align} 
where
\begin{align}
\delta =  \frac{\rho_{\scriptscriptstyle \lambda} \; D_{\cX}}{r}\left\|\nabla_{z}d_{r}^{\lambda}(\tilde{y}, \tilde{x})\right\| + \frac{\ell_{\varphi}\ell_{h}\ell_{c}\sqrt{d_{h}}}{r}\left\|\nabla_{z}d_{r}^{\lambda}(\tilde{y}, \tilde{x})\right\|+\left(\frac{\rho_{\scriptscriptstyle \lambda}}{2r^{2}} + \frac{1}{r}\right)\left\|\nabla_{z}d_{r}^{\lambda}(\tilde{y}, \tilde{x})\right\|^{2} + \lambda \ell_{\varphi}\ell_{h}^{2}\sqrt{d_{h}} .\label{final_delta_final_1st}
\end{align}
Directly from Theorem \ref{main:theorem2}, it follows that the algorithm produces $(\tilde{x}, \tilde{y})$ such that 
\(
\textcircled{1}, \textcircled{2} = \cO\left(\varepsilon^{2}\right).
\)
Additionally, since $\lambda = \Theta\left(\varepsilon\right)$, \textcircled{3} in \eqref{final:1st} is also $\mathcal{O}\left(\varepsilon^{2}\right)$. 

The argument for the sample complexity is the same as the one presented in Theorem \ref{main:theorem2} and thus, the proof is complete.
\end{proof}

\section{Motivating Applications - Formulations}\label{application sections}
{In this section, we present a detailed description of two classes of problems whose cost function can be formulated in accordance with \eqref{eq:composite-f}. 
\begin{itemize}
\item[i. ] \textbf{$\phi$-Divergence DRO}

As discussed earlier, the objective of distributionally robust modeling is to ensure that the learned models remain reliable not only under adversarial perturbations but also in the presence of distributional shifts. In contrast to traditional empirical risk minimization, which optimizes performance under the empirical data distribution, distributionally robust optimization seeks to minimize the worst-case loss over an uncertainty set of probability distributions. 

\citet{levy2020large} study this DRO problem and consider a slight generalization of $\phi$-divergence--based DRO. Specifically, let $\phi : \mathbb{R}_+ \to \mathbb{R} \cup \{+\infty\}$ be a convex function satisfying $\phi(1) = 0$, and define the $\phi$-divergence between distributions $\mathbb{Q}$ and $\mathbb{P}$ as follows:
\begin{align}\label{phi_divergence_dro_problem}
D_{\phi}(\mathbb{Q}, \mathbb{P}) := \int \phi\left(\frac{d\mathbb{Q}}{d\mathbb{P}}\right) d\mathbb{P},
\end{align}
where $\mathbb{Q}$ is absolutely continuous with respect to $\mathbb{P}$. Then, for a convex function $\psi$ with $\psi(1) = 0$, a constraint radius $\rho \ge 0$, penalty parameter $\lambda \ge 0$, and $\ell(\theta;\cdot)$ denoting the loss function parameterized by $\theta$, the objective considered in \cite{levy2020large}
takes the form:
\begin{align}\label{dro_orig}
 \min_{\theta \in \Theta} \max_{\mathbb{Q} : D_{\phi}(\mathbb{Q}, \mathbb{P}) \le \rho}
\left\{ \mathbb{E}_{\bm{\xi} \sim \mathbb{Q}}[\ell(\theta; \bm{\xi})] - \lambda D_{\psi}(\mathbb{Q}, \mathbb{P}) \right\}.
\end{align}

We consider the case when $\mathbb{P}$ is the empirical distribution induced by $N$ i.i.d samples with support $\Xi = \left\{\bm{\xi}_{i},\ldots, \bm{\xi}_{N}\right\}$. Since $\mathbb{Q}$ is absolutely continuous wrt $\mathbb{P}$, it follows that $\mathbb{Q}$  assigns probability mass only to points in $\Xi$. By the Radon--Nikodym theorem, for each $\bm{\xi}_i \in \Xi$, we then have:
\begin{align}\label{radon-nikodym} 
\mathbb{Q}(\bm{\xi}_i) = \frac{d\mathbb{Q}}{d\mathbb{P}}(\bm{\xi}_i)\, \mathbb{P}(\bm{\xi}_i) \implies \frac{d\mathbb{Q}}{d\mathbb{P}}(\bm{\xi}_i) = \frac{\mathbb{Q}(\bm{\xi}_i)}{\mathbb{P}(\bm{\xi}_i)}.
\end{align}
Without loss of generality, denote $\mathbb{Q}(\bm{\xi}_{i}) = q_{i}$ with $q \in \Delta_{N}$, the $N$-probability simplex. Then, 
utilizing  \eqref{phi_divergence_dro_problem} and \eqref{radon-nikodym} yields 
\[D_{\phi}(\mathbb{Q}, \mathbb{P}) =\sum_{i=1}^{N} \phi\left(\frac{d\mathbb{Q}}{d\mathbb{P}}(\bm{\xi}_i)\right) \mathbb{P}(\bm{\xi}_i) = \sum_{i=1}^{N} \phi\left(\frac{\mathbb{Q}(\bm{\xi}_{i})}{\mathbb{P}(\bm{\xi}_{i})}\right) \mathbb{P}(\bm{\xi}_{i}) = \sum_{i=1}^{N} \phi\left(\frac{q_i}{\frac{1}{N}}\right) \frac{1}{N} = \frac{1}{N}\sum_{i=1}^{N}\phi(Nq_{i}). \]
Similarly, we get
\[D_{\psi}(\mathbb{Q}, \mathbb{P}) = \frac{1}{N}\sum_{i=1}^{N}\psi(Nq_{i}). \]
Now, assuming each data sample $\bm{\xi}_i$ is a pair $(x_{i},y_{i})$ of features and targets, denoting the predictor function parameterized by $\theta$ as $f_{\theta}$ and letting $e_{\bm{\xi}_{i}}$ denote the $i^{th}$ standard basis vector in $\mathbb{R}^{N}$, the DRO problem in \eqref{dro_orig} can be expressed as
\begin{align}
\min_{\theta \in \Theta} \max_{q \in \Delta_{N}:\sum_{i=1}^{N} \phi\left(Nq_i\right)\leq N\rho} \;
\frac{1}{N} \sum_{i=1}^N 
Nq_{i}\ell\left(f_{\theta}(x_{i}),y_{i}\right) - \lambda\psi (Nq_{i}).\label{final-eq:app}
\end{align}
Under the choices of
\begin{equation*}
\begin{aligned}
c\left(\theta; \bm{\xi}_{i}\right) = \left(f_{\theta}(x_{i}), y_{i}\right),\;
h(v_{1}, v_{2}) = \ell(v_{1}, v_{2}),\;\text{and }
\varphi\left(w_{1}, w_{2}; \bm{\xi}_{i}\right) = N\left(e_{\bm{\xi}_{i}}^{\top} w_{2}\right) w_{1}
   - \lambda\psi\left(
      N\left(e_{\bm{\xi}_{i}}^{\top} w_{2}\right)
     \right),
\end{aligned}
\end{equation*}
it then follows that the cost function in \eqref{final-eq:app} exhibits the structure in \eqref{eq:composite-f}.

 \item[ii.] \textbf{Group--DRO} 

In many real-world applications, data points naturally belong to disjoint groups or subpopulations, and models trained via standard empirical risk minimization often underperform on minority or underrepresented groups. Group distributionally robust optimization (Group--DRO) addresses this by explicitly optimizing the worst-case performance across groups, rather than the average loss. This approach is particularly useful in domains where performance differences across groups can have undesired consequences; in healthcare where it can help address subgroup performance disparities \cite{Pfohl2022}, in recommendation systems to improve worst-case user experience \cite{wen2022distributionallyrobust}, and in Natural Language Processing (NLP) to train models that are robust to topic shifts \cite{OrenSagawaHashimotoLiang2019}.

Formally, let the dataset be partitioned into $M$ groups with distributions $\{\mathbb{P}_g\}_{g=1}^M$, and let $f_\theta$ denote a predictor function parameterized by $\theta$. Following \cite{SagawaKohHashimotoLiang2020}, we have the Group--DRO problem objective as follows:
\begin{equation*}
\begin{aligned}
\min_{\theta \in \Theta} \max_{q \in \Delta_M} \; \sum_{g=1}^M q_g \; \mathbb{E}_{(x,y) \sim \mathbb{P}_g} \left[ \ell(f_{\theta}(x), y) \right],
\end{aligned}
\end{equation*}
where $\Delta_M$ is the $M$-dimensional probability simplex, and $q$ assigns weights to the groups. We note that:
\begin{equation*}
\begin{aligned}
\sum_{g=1}^M q_g 
\mathbb{E}_{(x,y) \sim \mathbb{P}_g}
\left[ \ell\left(f_{\theta}(x), y\right)\right]= \sum_{g=1}^M q_g \;
\frac{1}{\lvert \mathcal{G}_{g} \rvert}
\sum_{i=1}^{\lvert \mathcal{G}_{g} \rvert}
\ell\left(f_{\theta}(x_{i}), y_{i}\right) \overset{(i)}{=}
\frac{1}{N}
\sum_{i=1}^{N}
\frac{N q_{g_{i}}}{\lvert \mathcal{G}_{g_{i}} \rvert}
\ell\left(f_{\theta}(x_{i}), y_{i}\right),
\end{aligned}
\end{equation*}
where $(i)$ follows from the fact that the groups are disjoint and
collectively cover all data points, and $g_i$ denotes the index of the
group to which the data point $(x_i, y_i)$ belongs. Let $e_{g_i}$ denote the $g_i^{\text{th}}$ standard basis vector in
$\mathbb{R}^{M}$. Then under the choices of
\begin{equation*}
\begin{aligned}
\mathcal{G}
= \left\{ \lvert \mathcal{G}_{j} \rvert \right\}_{j=1}^{M}, \;
\bm{\xi}_{i} = (x_{i}, y_{i}), \;
c\left(\theta; \bm{\xi}_{i}\right) = \left(f_{\theta}(x_{i}), y_{i}\right), \;
h(v_{1}, v_{2}) = \ell(v_{1}, v_{2}),\;
\varphi\left(w_{1}, w_{2}; \bm{\xi}_{i}\right)
= \frac{
N \left(e_{g_{i}}^{\top} w_{2}\right) w_{1}
}{
e_{g_{i}}^{\top} \mathcal{G}
},
\end{aligned}
\end{equation*}
it then follows that the cost function exhibits the structure in \eqref{eq:composite-f}.
\end{itemize}}

{\section{K\L{} example}\label{final-section-kl}
In this section we present an explicit example of a smooth but nonconcave function that satisfies the K\L{} property.
Consider the function $g : [-2,2] \to \mathbb{R}$ defined as
\begin{equation}\label{eq:example-KL}
g(y) =
\begin{cases}
2e^{y+1}-1, & -2 \leq y \leq -1, \\[6pt]
- y^2 + 2, & -1 < y \leq 1, \\[6pt]
2e^{-y+1}-1, & 1 < y \leq 2.
\end{cases}
\end{equation}
As can be observed, the function $g$ is continuously differentiable on the compact set $[-2,2]$. Moreover, despite being nonconcave, it attains its unique global maximum at $y^\star = 0$.

Figure~\ref{fig:Nonconcave function} illustrates the shape of the function.
\begin{figure}[H]
    \centering
    \includegraphics[width=0.7\textwidth]{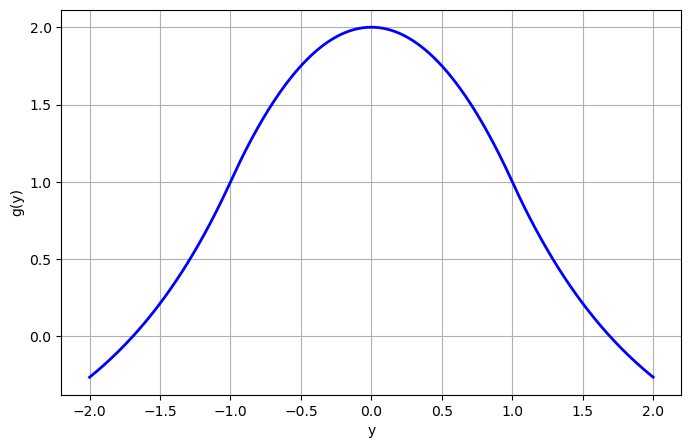}
    \caption{Illustration of the nonconcave function defined in \eqref{eq:example-KL}.}
    \label{fig:Nonconcave function}
\end{figure}

\begin{proposition}
    The function $g(y)$ defined in \eqref{eq:example-KL} satisfies the K\L{} property with $\theta = \frac{1}{2}$ and $\mu = \frac{1}{10}$, i.e., for all $y \in [-2,2]$, 
    \begin{equation}\label{eq:kl-prop-example}\mathrm{dist}\left(0, -g'(y) + \cN_{[-2,2]}(y)\right) \geq \mu\left(\max_{y^{'} \in [-2,2]}g(y') - g(y)\right)^{\theta}.\end{equation}
\end{proposition}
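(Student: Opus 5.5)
The plan is a direct, piecewise verification of \eqref{eq:kl-prop-example}. I will first record the basic data. The maximum is $\max_{y'} g(y') = g(0) = 2$, so the right-hand side of \eqref{eq:kl-prop-example} is $\frac{1}{10}\sqrt{2-g(y)}$. The derivative is
\[
g'(y) = 2e^{y+1} \text{ on } [-2,-1], \qquad g'(y) = -2y \text{ on } (-1,1], \qquad g'(y) = -2e^{-y+1} \text{ on } (1,2].
\]
The pieces glue continuously: at $y=\pm1$ both values and slopes match, since $g(\pm1)=1$ and $g'(\mp 1)=\pm 2$. The left-hand side of \eqref{eq:kl-prop-example} is just $|g'(y)|$ at interior points. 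At the endpoints $y=\pm 2$ the normal cone is a half-line, so I compute it explicitly there.

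For the interior I split into three cases. On $(-1,1]$ the left side is $2|y|$ and $2-g(y)=y^2$, so the inequality reads $2|y|\ge \frac{1}{10}|y|$, which is trivial. On $[-2,-1]$ (interior part) I set $t=y+1\in[-1,0]$. Then the left side is $2e^{t}\ge 2e^{-1}$, while $2-g = 3-2e^{t}\le 3-2e^{-1}<3$. The right side is therefore at most $\sqrt3/10<0.18$, whereas the left side is at least $0.73$. The case $(1,2)$ is the same by the symmetry $g(y)=g(-y)$, which holds because $2e^{-y+1}-1$ is the reflection of $2e^{y+1}-1$.

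For the endpoints, take $y=-2$. There $\cN_{[-2,2]}(-2)=(-\infty,0]$ and $-g'(-2)=-2e^{-1}<0$. The set $-g'(-2)+\cN$ is then $(-\infty,-2e^{-1}]$, and its distance to $0$ is $2e^{-1}$. This still exceeds $\frac{1}{10}\sqrt{3-2e^{-1}}$. At $y=2$ we have $-g'(2)=2e^{-1}>0$ and $\cN=[0,\infty)$, so the distance is again $2e^{-1}$, and the same bound applies. In both cases the normal cone points in the same direction as $-g'$, so it does not reduce the distance.

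The only delicate point is checking the sign alignment at the endpoints, since a normal cone in the opposite direction would make the distance zero. Once that is settled, everything reduces to the elementary numerical comparison $2e^{-1}>\sqrt3/10$.
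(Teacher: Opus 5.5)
Your proof is correct and is essentially the paper's argument: you reduce the left-hand side to $|g'(y)|$ on $(-2,2)$, check the three pieces separately (the middle one is the trivial $2|y|\ge \frac{1}{10}|y|$, the outer ones compare $2e^{-1}$ with $\frac{1}{10}\sqrt{3-2e^{-1}}$), and compute the normal cone at $y=\pm 2$ to get a distance of exactly $2e^{-1}$. Your only deviations are cosmetic: you use the evenness of $g$ to handle $(1,2)$ and the cruder bound $\sqrt{3}/10$ (and your value $2e^{-1}\approx 0.74$ is the correct one).
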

\begin{proof}
First, we note that  $\max_{y' \in [-2,2]} g(y') = 2$. Additionally, we have
\[\cN_{[-2,2]}(y) = \begin{cases}
(-\infty,\,0], & y = -2, \\[6pt]
0, & y \in (-2,2), \\[6pt]
[0,\,\infty), & y = 2,\end{cases}  \qquad \text{and} \qquad g'(y) = \begin{cases}
2e^{y+1}, & -2 \leq y \leq -1, \\[6pt]
- 2y, & -1 < y \leq 1, \\[6pt]
-2e^{-(y-1)}, & 1 < y \leq 2.\end{cases}\]

Since $g'(-2)>0$, we have
\[\mathrm{dist}\left(0, -g'(-2) + \cN_{[-2,2]}(-2)\right) = \mathrm{dist}\Big(0, \left(-\infty,- g'(-2)\right] \Big) = \lvert g'(-2) \rvert = 2e^{-1}\geq \frac{1}{10}\left(3 - 2e^{-1}\right)^{\frac{1}{2}}.\]
Thus \eqref{eq:kl-prop-example} is satisfied for $y = -2$. 
Similarly, since $g'(2) < 0$, we have
\[\mathrm{dist}\left(0, -g'(2) + \cN_{[-2,2]}(2)\right) = \mathrm{dist}\Big(0, \left[-g'(2), \infty\right) \Big) = \lvert g'(2) \rvert = 2e^{-1}\geq \frac{1}{10}\left(3 - 2e^{-1}\right)^{\frac{1}{2}}.\]
Hence, \eqref{eq:kl-prop-example} is also satisfied at $y = 2$.

We now discuss the scenario when $y \in (-2,2)$.  Here, we have
$\mathrm{dist}\left(0, -g'(y) + \cN_{[-2,2]}(y)\right) = \lvert g'(y)\rvert.$
Thus, it suffices to verify
       \begin{align}
        \lvert g'(y)\rvert \geq  \frac{1}{10}\left(2 - g(y)\right)^{\frac{1}{2}},\; \forall y \in \left(-2,2\right) .\label{kl_example_smooth}\end{align}

For the interval $y \in (-2,-1]$, we have $g'(y) = 2e^{y+1}$ and $2 - g(y) = 3 - 2e^{y+1}$. Then
\[\inf_{y \in (-2,-1]} \, \lvert g'(y)\rvert = 2e^{-1} \approx 0.37, \qquad
\sup_{y \in (-2,-1]} \frac{1}{10} \left(3 - 2e^{y+1}\right)^{\frac{1}{2}} = \frac{1}{10} \left(3 - 2e^{-1}\right)^{\frac{1}{2}} \approx 0.15.\]
Thus, \eqref{kl_example_smooth} is satisfied on this interval.

For the interval $y \in (-1,1]$, we have $g'(y) = -2y$ and $2 - g(y) = y^2 = \lvert y \rvert^2$. Then \eqref{kl_example_smooth} follows from $2 \lvert y\rvert \geq \frac{1}{10}\lvert y\rvert$. 

For the interval $y \in (1,2)$, we have $g'(y) = -2 e^{-(y-1)}$ and $2 - g(y) = 3 - 2 e^{-(y-1)}$. Then
\[\inf_{y \in (1,2)} \, \lvert g'(y)\rvert = 2e^{-1} \approx 0.37, \qquad
\sup_{y \in (1,2)} \frac{1}{10} \left(3 - 2e^{-(y-1)}\right)^{\frac{1}{2}} = \frac{1}{10} \left(3 - 2e^{-1}\right)^{\frac{1}{2}} \approx 0.15.\] Hence, \eqref{kl_example_smooth} is satisfied on this interval as well. Thus, the proof is complete.
\end{proof}

}

\end{document}